\newcommand{\GNE}{\ensuremath{L^{\mathrm{NE}}}}
\newcommand{\weight}[1]{\ensuremath{W({#1})}}
\newcommand{\paths}[1]{\ensuremath{\Pi(#1)}}
\newcommand{\ptime}[1]{\ensuremath{L(#1)}}
\newcommand{\lshape}{\ensuremath{\ell}}
\DeclarePairedDelimiterXPP\Exhat[1]{\widehat{\mathbb{E}}}[]{}{
\renewcommand\given{\nonscript\:\delimsize\vert\nonscript\:\mathopen{}}
#1}
\newcommand{\overbar}[1]{\mkern 1.5mu\overline{\mkern-1.5mu#1\mkern-1.5mu}\mkern 1.5mu}
\newcommand{\lowerbar}[1]{\mkern 1.5mu\underline{\mkern-1.5mu#1\mkern-1.5mu}\mkern 1.5mu}
\numberwithin{equation}{section}
\newcommand{\nocontentsline}[3]{} 
\let\origcontentsline\addcontentsline
\newcommand\stoptoc{\let\addcontentsline\nocontentsline}
\newcommand\resumetoc{\let\addcontentsline\origcontentsline}
\title{The Busemann Process and Steep Highways in Directed First Passage Percolation}
\author{Sam McKeown}
\begin{document}
\maketitle
\begin{abstract}
	We consider the Busemann process in planar directed first passage percolation. We extend existing techniques to establish the existence of the process in our setting and determine its distribution in a number of integrable models. As examples of their utility, we show how these explicit distributions may be used to quantify the semi-infinite geodesics passing through thin rectangles, and the clustering phenomenon observed in competition interface angles. There is a natural connection with various particle systems, and in particular we obtain the multi-class invariant distributions for discrete-time TASEP with parallel updates.
\end{abstract}
\tableofcontents
\section{Introduction}
\label{sec:Intro}
First passage percolation (FPP) is a natural model of a random metric on a graph, which assigns distances, or \emph{passage times}, between vertices as the minimal sum of weights along paths connecting those points. We will restrict ourselves to the graph $\bZ^2$. One tool in understanding the long geodesics (minimising paths), and thus the large scale behaviour of the model, is the \emph{Busemann process}. Writing $L(x, y)$ for the (random) passage time between vertices $x,\, y \in \bZ^2$, this is the collection of (random) limits
\begin{equation}
	\label{eq:IntroBuse}
	B^{\xi}(x, y) = \lim_{v \to \xi \cdot \infty} L(x, v) - L(y, v),
\end{equation}
where by $v \to \xi \cdot \infty$ we mean that $\abs{v} \to \infty$ and $v / \abs{v} \to \xi \in \bR^2$. Such limits were introduced by Newman \cite{newman-first-buse} and the program continued in \cite{licea-newman-geodesics, hoffman-geodesics}, wherein they were used to study the large-scale geometry of FPP and derive many properties of (semi-)infinite geodesic rays, such as their existence and limiting directions, under certain unproved assumptions.

The full, unconditional existence of the limits in \eqref{eq:IntroBuse} is available only under a directedness assumption on the geodesics and was proven in the case of \emph{last passage percolation} (LPP) in \cite{geor-rass-sepp-17-buse}. So-called ``generalised Busemann functions'' have been shown to exist in broad generality by \cite{groa-janj-rass-gen-buse}, serving much the same purpose as the Busemann process and in particular giving the existence of geodesic rays in fixed directions. Their construction as weak limits, however, makes questions such as ergodicity much harder, despite being immediate when we start with the representation in \eqref{eq:IntroBuse}. 

Apart from trying to understand the general picture, it is possible also to pin down the exact distribution of the Busemann process in a handful of \emph{solvable} (or \emph{integrable}) models, which has yielded a much finer picture of their geometry. In particular, the distribution of the process has been found for exponential LPP in \cite{fan-sepp-joint-buss} and for the inverse-gamma directed polymer --- its positive temperature relative --- in \cite{bates-fan-sepp}. This understanding has also been extended, not without technical difficulty, to their various scaling limits. See \cite{sepp-soren-blpp-buse,bus-buse-scaling,bus-sepp-soren-DL, grass-KPZ}.

We give the existence of the Busemann limits in the same terms as \cite{geor-rass-sepp-17-buse}, but in the context of a more flexible \emph{edge-weight} FPP. We then determine their joint distributions for certain models with Bernoulli-exponential and Bernoulli-geometric weights, of the type considered in \cite{mart-bergeom, mart-prab-fixed}. Our description relies, as does much prior work, on variations of the \emph{multi-line process} developed by Ferrari and Martin in their influential series of papers \cite{ferr-mart-mtasep, ferr-mart-dual, ferr-mart-had}.

We give an application to the \emph{highways and byways problem} of Hammersley and Welsh \cite{hamm-welsh-fpp}. This is the problem of determining the limiting expected density of semi-infinite geodesics within the tree of geodesics, and has been resolved in its original formulation by \cite{ahlb-hans-hoff-density}, then quantitatively in \cite{dem-elb-pel-midpoint}. We give what appears to be the first almost sure result in this direction, albeit under very specific assumptions on the weights and the subsets through which we take the limit.

Finally, we find the asymptotic density of the \emph{convoys} of our model. We must defer the definitions until after we have properly introduced the model, but readers may be familiar with the phenomenon by the same name which has been observed in TASEP. See \cite{amir-angel-valko-tasep-speed}.

\section{Definitions and main results}
\label{sec:Summary}
\stoptoc
\subsection{The model}
\label{ssec:ModelDefs}
Our setting throughout is the integer lattice $\bZ^2$. Write $e_1$ for the horizontal edge emanating from the origin, or interchangeably as the point $(1, 0)$, and similarly for $e_2 = (0, 1)$. For each pair $x \le y \in \bZ^2$ (where $(x_1, x_2) \le (y_1, y_2)$ if $x_1 \le y_1$ and $x_2 \le y_2$), let $\Pi(x, y)$ be the set of directed nearest neighbour paths connecting $x$ to $y$. A path $\pi \in \paths{x, y}$ is represented by a sequence of vertices $x = \pi^0 \le \pi^1 \cdots \le \pi^n = y$, where $\pi^{i} - \pi^{i - 1} \in \set{e_1, e_2}$, $1 \le i \le n$. To lighten the notational load we also consider $\pi$ to consist of the edges $(\pi^{i - 1}, \pi^i)$.

The random environment consists of weights $\set{\weight{e}}_{e \in \cE(\bZ^2)}$ living on the edges of the lattice. These weights are used to assign to each path $\pi \in \paths{x, y}$ a \emph{passage time} $\ptime{\pi}$, by summing the weights along the edges taken:
\begin{equation}
	\ptime{\pi} = \sum_{e \in \pi}\weight{e}.
\end{equation}
The minimal passage time among $\paths{x, y}$ is the point-to-point passage time $\ptime{x, y}$, and the path attaining this minimal passage time is called a \emph{geodesic}. Observe that when the weight distribution is continuous and i.i.d, the geodesic is almost surely unique. Even when it is not, we can adopt the convention of always taking the right-most of the available geodesics. We write $\gamma_{x, y}$ for \emph{the} geodesic connecting $x$ to $y$ chosen according to this rule, and abbreviate $\gamma_{x} = \gamma_{0, x}$,

Overloading notation to write $0 \in \bZ^2$ for the origin, we will abbreviate $\paths{x} = \paths{0, x}$ and $\ptime{x} = \ptime{0, x}$. Also abbreviate $\weight{(x - e_i, x)} = W(x; i)$. Write $\lambda$ for the joint distribution of $(W(0; 1), W(0; 2))$ and $\lambda_i$ for the marginal of $W(0; i)$. The following blanket assumption will be implicit in the sequel.

\begin{assumption}
	\label{ass:WeightAssump}
	Assume that the pairs $\set{(W(x; 1), W(x; 2))}_{x \in \bZ^{2}}$ are i.i.d and non-constant, and that $\Exabs{W(0; i)}^{2 + \epsilon} < \infty$ for $i \in \set{1, 2}$ and some $\epsilon > 0$. Assume also that $(0, 0) \in \supp(\lambda)$. 
\end{assumption}

\begin{remark}
	Insisting that $(0, 0)$ lie in the support of our weights is no restriction in the directed setting. A directed path connecting $x$ to $y$ has exactly $\inner{y - x, e_1}$ horizontal edges and $\inner{y - x, e_2}$ vertical edges. Shifting the distribution of the weights by a constant affects the passage times by a linear shift and is of no consequence. 
\end{remark}

Those specialisations of the model for which there is only one random weight per vertex often admit a queueing or interacting particle representation. Taking $\weight{x; 1} = \weight{x; 2}$ gives vertex-weight FPP (or the familiar LPP, after swapping signs), with its well-known correspondence to the G/G/1 queue and to TASEP (see for example \cite{drai-mair-ocon}). If we take instead $\weight{x; 2} = 0$ and insist that the remaining horizontal weights are non-negative, then we call the model \emph{strict-weak first passage percolation} (SWFPP). SWFPP with Bernoulli weights has been studied under the name of the Sepp\"al\"ainen-Johansson model. An illustration of this model is provided in \cref{fig:SWFPP}

\begin{figure}
	\centering
	\resizebox{0.4\textwidth}{!}{\includegraphics[scale=1, page=1]{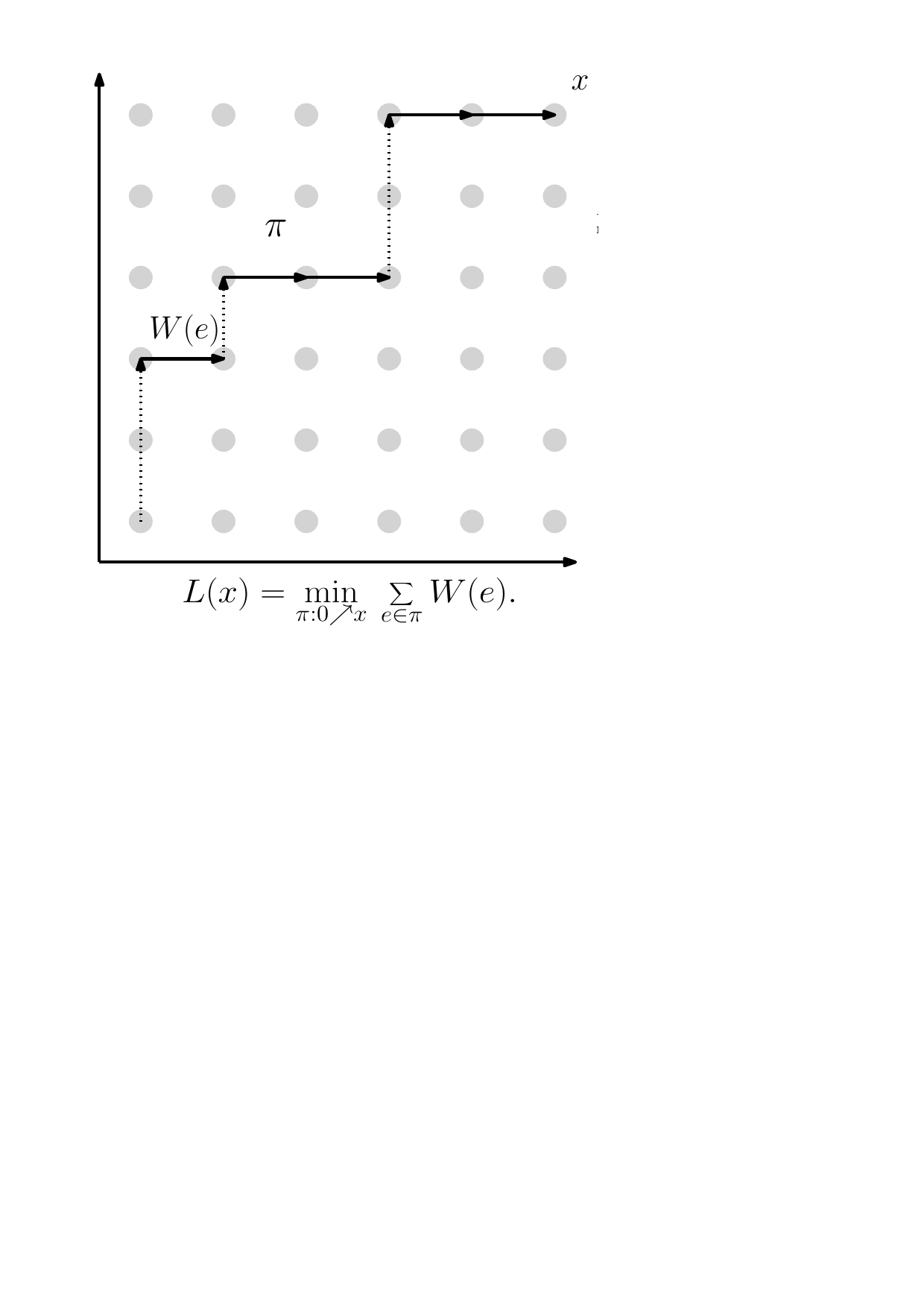}}
	\caption{An illustration of the paths considered in strict-weak first passage percolation and the rule for assigning passage times. Namely, one looks at all up-right directed paths which connect two vertices and sums over the horizontal weights collected, taking the path which minimises this quantity. Observe that there is a pronounced asymmetry between the directions, which does not appear in the more familiar last passage percolation.}
	\label{fig:SWFPP}
\end{figure}

SWFPP can be seen as a degenerate case of what we will call the \emph{Sepp\"al\"ainen-Johansson-Ransford} (SJR) \emph{model}, introduced in \cite{rans-sjr}. Here we associate to the vertices $x$ i.i.d Bernoulli switching variables $S(x)$, and set $\weight{x; 1} = S(x) \widetilde{W}(x; 1)$, $\weight{x; 2} = (1 - S(x)) \widetilde{W}(x; 2)$. Here $(\widetilde{W}(x; 1), \widetilde{W}(x; 2))$ are weights satisfying \cref{ass:WeightAssump}. In words, we choose independently at each vertex one of the horizontal or vertical incoming weights to be random, and the other to be zero. The SJR model may be seen as a random directed metric on a tree of coalescing random walks, in the sense described in \cite{veto-virag}.

We mention an undirected relative of SWFPP, where vertical edges are given a shared constant weight and horizontal weights remain random. This model was introduced in \cite{hass-directed-fpp} and we will refer to results on the limit shape established for this model.

\subsubsection{The store model}
A useful perspective in analysing SWFPP is that of a discrete-time queueing model. So as to avoid confusion with the better known G/G/1 queue, we will follow \cite{drai-mair-ocon} and hereafter call it the \emph{store model}. In this model, we consider a storage bin --- later a tandem of such bins --- which receives material and subsequently output it. In the most fundamental situation, we have a sequence $I = (I_k)_{k \in \bZ}$ of inputs and a sequence $(W_k)_{k \in \bZ}$ of attempted outputs, or \emph{service}. The formulas remain valid if these are taken to be real-valued, but for the purpose of our analogy we imagine them to be non-negative. At time $k$, our bin receives an input $I_k$, which is added to the store, and subsequently attempts to output $W_k$. If there is not enough material in the store, the entire contents of the store are output. The situation is depicted in \cref{fig:StoreDynamics}.

\begin{figure}
	\begin{subfigure}[]{0.3\textwidth}
		\centering
		\resizebox{\textwidth}{!}{\begin{tikzpicture}[x=1cm,y=1cm]
\pgfmathsetmacro{\X}{0.5}   
\pgfmathsetmacro{\I}{0.3}   
\pgfmathsetmacro{\W}{0.60}   
\pgfmathsetmacro{\Gap}{0.15}   

\pgfmathsetmacro{\XI}{\X+\I}
\pgfmathsetmacro{\OUT}{min(\W,\XI)}
\pgfmathsetmacro{\REM}{max(\XI-\W,0)} 

\def\BW{2.2}     
\def\BH{3.0}     
\def\GAP{20}    

\pgfmathsetmacro{\hX}{\X*\BH}
\pgfmathsetmacro{\hI}{\I*\BH}
\pgfmathsetmacro{\hXI}{\XI*\BH}
\pgfmathsetmacro{\hW}{\W*\BH}
\pgfmathsetmacro{\hOUT}{\OUT*\BH}
\pgfmathsetmacro{\hREM}{\REM*\BH}
\tikzset{
	>={Classical TikZ Rightarrow[length=3pt,width=6pt]},
	bin/.style={draw,rounded corners=2pt,very thick,minimum width=\BW cm,minimum height=\BH cm},
	levelX/.style={fill=blue!50},
	levelXp/.style={fill=blue!30},
	levelI/.style={fill=green!55!black!25},
	levelIp/.style={fill=green!35!black!15},
	levelIt/.style={fill=green!40!black!5},
	levelY/.style={fill=orange!80!black!55},
	levelYp/.style={fill=orange!60!black!35},
	lab/.style={font=\sffamily\small},
	title/.style={font=\sffamily\bfseries,align=center},
	demandbrace/.style={decorate,decoration={brace,amplitude=5pt},thick},
	flowarrow/.style={->,very thick},
	panel/.style={draw=black!10,rounded corners=6pt,fill=black!2,inner sep=8pt}
}	
	
	\node[inner sep=10pt] (p1) at (\BW/2, -0.2) {%
		\begin{tikzpicture}[x=1cm,y=1cm]
			\path[levelX] (0,0) rectangle (\BW,\hX);
			\node[lab] at (\BW/2, \hX/2) {$X_k$};
			
			\path[levelI] (-\BW - \Gap,\hX) rectangle (- \Gap, \hX + \hI);
			\node[lab] at (-\BW/2 - \Gap,\hX + \hI/2) {$I_k$};
			
			\draw[flowarrow] (-\BW + \Gap,\hX + \hI + 2 * \Gap) -- ++(\BW - 4 * \Gap,0);
			
			\draw[very thick,rounded corners=2pt]
			(0,0) -- (0,\BH)   
			(0,0) -- (\BW,0)   
			(\BW,0) -- (\BW,\BH); 
		\end{tikzpicture}
	};
\end{tikzpicture}}
		\subcaption{}
	\end{subfigure}
	\begin{subfigure}[]{0.32\textwidth}
		\centering
		\resizebox{\textwidth}{!}{\begin{tikzpicture}[x=1cm,y=1cm]
	\pgfmathsetmacro{\X}{0.5}   
	\pgfmathsetmacro{\I}{0.3}   
	\pgfmathsetmacro{\W}{0.60}   
	\pgfmathsetmacro{\Gap}{0.15}   
	
	\pgfmathsetmacro{\XI}{\X+\I}
	\pgfmathsetmacro{\OUT}{min(\W,\XI)}
	\pgfmathsetmacro{\REM}{max(\XI-\W,0)} 
	
	\def\BW{2.2}     
	\def\BH{3.0}     
	\def\GAP{20}    
	
	\pgfmathsetmacro{\hX}{\X*\BH}
	\pgfmathsetmacro{\hI}{\I*\BH}
	\pgfmathsetmacro{\hXI}{\XI*\BH}
	\pgfmathsetmacro{\hW}{\W*\BH}
	\pgfmathsetmacro{\hOUT}{\OUT*\BH}
	\pgfmathsetmacro{\hREM}{\REM*\BH}
	\tikzset{
		>={Classical TikZ Rightarrow[length=3pt,width=6pt]},
		bin/.style={draw,rounded corners=2pt,very thick,minimum width=\BW cm,minimum height=\BH cm},
		levelX/.style={fill=blue!50},
		levelXp/.style={fill=blue!30},
		levelI/.style={fill=green!55!black!25},
		levelIp/.style={fill=green!35!black!15},
		levelIt/.style={fill=green!40!black!5},
		levelY/.style={fill=orange!80!black!55},
		levelYp/.style={fill=orange!60!black!35},
		lab/.style={font=\sffamily\small},
		title/.style={font=\sffamily\bfseries,align=center},
		demandbrace/.style={decorate,decoration={brace,amplitude=5pt},thick},
		flowarrow/.style={->,very thick},
		panel/.style={draw=black!10,rounded corners=6pt,fill=black!2,inner sep=8pt}
	}	
	
		\node[inner sep=10pt] (p1) at (\BW/2, -0.2) {%
			\begin{tikzpicture}[x=1cm,y=1cm]
				\draw[black!0] (- \BW * 0.5, \BH * 0.5) -- ++ (\BW * 2, 0);
				
				\path[levelX] (0,0) rectangle (\BW,\hX);
				\node[lab] at (\BW/2, \hX/2) {$X_k$};
				\path[levelI] (0,\hX) rectangle (\BW, \hXI);
				\node[lab] at (\BW/2, \hX + \hI/2) {$I_k$};

				\draw[very thick]
				(\BW+0.3,\hI + \hX - \hW) -- ++(0.2,0)              
				(\BW+0.5,\hI + \hX) -- ++(0,-\hW)          
				(\BW+0.3,\hI + \hX) -- ++(0.2,0);           
				\node[right] at (\BW+0.5,\hI + \hX - \hW / 2) {$W_k$};
				
				\draw[very thick,rounded corners=2pt]
				(0,0) -- (0,\BH)   
				(0,0) -- (\BW,0)   
				(\BW,0) -- (\BW,\BH); 

			\end{tikzpicture}
		};
\end{tikzpicture}}
		\subcaption{}
	\end{subfigure}
	\begin{subfigure}[]{0.3\textwidth}
		\centering
		\resizebox{\textwidth}{!}{\begin{tikzpicture}[x=1cm,y=1cm]
	\pgfmathsetmacro{\X}{0.5}   
	\pgfmathsetmacro{\I}{0.3}   
	\pgfmathsetmacro{\W}{0.60}   
	\pgfmathsetmacro{\Gap}{0.15}   
	
	\pgfmathsetmacro{\XI}{\X+\I}
	\pgfmathsetmacro{\OUT}{min(\W,\XI)}
	\pgfmathsetmacro{\REM}{max(\XI-\W,0)} 
	
	\def\BW{2.2}     
	\def\BH{3.0}     
	\def\GAP{20}    
	
	\pgfmathsetmacro{\hX}{\X*\BH}
	\pgfmathsetmacro{\hI}{\I*\BH}
	\pgfmathsetmacro{\hXI}{\XI*\BH}
	\pgfmathsetmacro{\hW}{\W*\BH}
	\pgfmathsetmacro{\hOUT}{\OUT*\BH}
	\pgfmathsetmacro{\hREM}{\REM*\BH}
	\tikzset{
		>={Classical TikZ Rightarrow[length=3pt,width=6pt]},
		bin/.style={draw,rounded corners=2pt,very thick,minimum width=\BW cm,minimum height=\BH cm},
		levelX/.style={fill=blue!50},
		levelXp/.style={fill=blue!30},
		levelI/.style={fill=green!55!black!25},
		levelIp/.style={fill=green!35!black!15},
		levelIt/.style={fill=green!40!black!5},
		levelY/.style={fill=orange!80!black!55},
		levelYp/.style={fill=orange!60!black!35},
		lab/.style={font=\sffamily\small},
		title/.style={font=\sffamily\bfseries,align=center},
		demandbrace/.style={decorate,decoration={brace,amplitude=5pt},thick},
		flowarrow/.style={->,very thick},
		panel/.style={draw=black!10,rounded corners=6pt,fill=black!2,inner sep=8pt}
	}	
	
		\node[inner sep=8pt] (p1) at (\BW/2, -0.2) {%
			\begin{tikzpicture}[x=1cm,y=1cm]
				\path[levelXp] (0,0) rectangle (\BW,\hREM);
				\node[lab] at (\BW/2, \hREM/2) {$X_{k + 1}$};
				
				\path[levelIp] (\BW + \Gap,\hREM) rectangle (2*\BW + \Gap, \hREM + \hOUT);
				\node[lab] at (\BW + \BW/2 + \Gap,\hREM + \hOUT/2) {$I'_{k}$};
				
				\draw[flowarrow] (\BW + \BW * 0.1 + 1 * \Gap,\hREM + \hOUT + 2 * \Gap) -- ++(\BW - 4 * \Gap,0);
				
				\draw[very thick,rounded corners=2pt]
				(0,0) -- (0,\BH)   
				(0,0) -- (\BW,0)   
				(\BW,0) -- (\BW,\BH); 
			\end{tikzpicture}
		};
\end{tikzpicture}}
		\subcaption{}
	\end{subfigure}
	\caption{The movement of material within our store at time step $k$. The store has some initial quantity $X_k$, which is added to by input $I_k$. Then an attempt is made to output the desired amount $W_k$. In this instance, there was enough material to cover the demand. Here the new store size is $X_{k + 1} = (X_k + I_k - W_k)^+$ and the output is $I'_k = (X_k + I_k) \wedge W_k$.}
	\label{fig:StoreDynamics}
\end{figure}
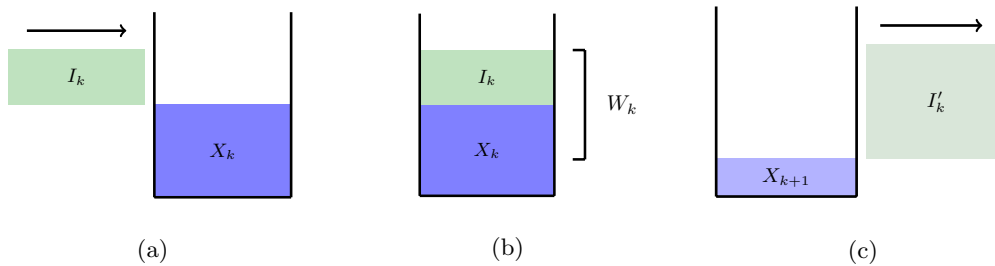

To express this in formulas, let $I' = (I'_k)$ be the sequence of outputs from the store and write $I' = H(I, W)$ to for this operation. Let $X_k = \max_{l \le k} \sum_{j = l}^{k - 1}(I_j - W_j)$ represent the initial amount in the bin at time $k$. Then we have
\begin{equation}
	\label{eq:HDef}
	I'_k = (I_k + X_k) \wedge W_k.
\end{equation}
We assume that our sequences are such that the maximum in $X_k$ is always attained.

There are any number of interpretations we may attach to these dynamics. For example, the bin may represent the stock available to a wholesaler on day $k$, for which we write $X_k$. The wholesaler receives stock $I_k$ in the morning and attempts to fill orders to the amount of $W_k$ in the afternoon, to the extent that the stock will allow. The amount of stock dispatched will be $I'_k$. The authors of \cite{mart-prab-fixed} prefer to consider a discrete-time G/G/1 queue. We adopt the more colourless image of a bin receiving and outputting material in the hope that it permits clarity of language.

A more thorough picture of this model can be found in either \cite{drai-mair-ocon} or \cite{mart-bergeom}. The precise definitions, further maps based on this model, and particle system interpretations will be given in \cref{ssec:UpdateMaps}.

\subsubsection{The limit shape}
We call a function $\ell: \bR_{\ge 0}^2 \to \bR$ the \emph{time constant}, or by conflating it with its level sets, the \emph{limit shape}, if we have 
\begin{equation}
	\label{eq:LimitShapeDef}
	\ell(s, t) = \lim_{n \to \infty}\frac{L(\floor{n s}, \floor{n t})}{n} \as.
\end{equation}
Such limit shapes exist under much weaker assumptions than we take here. It will be useful for this convergence to be uniform across all directions. A result of this kind was proved for vertex-weight last passage percolation by Martin in \cite{mart-limit-shape}. It may be of independent interest to note that such convergence holds in our setup too.
\begin{theorem}
	\label{thm:LimitShapeUniform}
	The limit shape $\lshape$ is continuous on $\bR^2_{\ge 0}$, and almost surely,
	\begin{equation}
		\lim_{n \to \infty}\frac{1}{n}\max_{x \in \bZ_{\ge}^{2}, \abs{x} = n}\abs{\ptime{x} - \lshape(x)} = 0.
	\end{equation}
\end{theorem}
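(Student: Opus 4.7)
The plan is to adapt Martin's argument for vertex-weight LPP \cite{mart-limit-shape} to our edge-weight setting. It would proceed in three phases: pointwise convergence of $\ptime{\floor{nx}}/n$ along rational directions, convexity and continuity of the resulting $\lshape$ on the closed orthant, and a covering argument promoting pointwise to uniform convergence using the $(2+\epsilon)$-moment assumption.

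For the pointwise statement, I would apply Kingman's subadditive ergodic theorem to $(\ptime{\floor{n x}})_{n \ge 1}$ for each $x \in \bQ_{\ge 0}^2$; the required subadditivity is $\ptime{(m + n)x} \le \ptime{mx} + \ptime{mx, (m + n)x}$, with the second factor independent of $\ptime{mx}$ and distributed as $\ptime{nx}$ under \cref{ass:WeightAssump}. The limit $\lshape$ extends to $\bR_{\ge 0}^2$ by $1$-homogeneity, and the SLLN along the axes identifies $\lshape(1, 0) = \Ex{\weight{0;1}}$ and $\lshape(0, 1) = \Ex{\weight{0; 2}}$. The same subadditivity lifts to $\lshape(x + y) \le \lshape(x) + \lshape(y)$; together with $1$-homogeneity this makes $\lshape$ convex on $\bR_{\ge 0}^2$ and hence continuous on the open orthant. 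Upper semi-continuity onto the axes follows from $\lshape(s, t) \le s \Ex{\weight{0; 1}} + t \Ex{\weight{0; 2}}$, and lower semi-continuity is automatic for a finite convex function on a closed convex cone.

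For the uniform statement, I would fix $\delta > 0$ and cover the unit arc $\{x \in \bR_{\ge 0}^2 : \abs{x} = 1\}$ by finitely many rational directions $\xi_1, \ldots, \xi_K$ on which the continuous $\lshape$ oscillates by less than $\delta$. For any $x \in \bZ_{\ge 0}^2$ with $\abs{x} = n$, I would locate the closest $\xi_k$, set $y_k = \floor{n \xi_k}$, and bound $\ptime{x}$ above and below by $\ptime{y_k}$ plus a corrective passage time between $y_k$ and $x$ (or between a common dominating antidiagonal point and each of them). The pointwise convergence at each $\xi_k$ handles the main term, and each correction is a passage time across a region of diameter $O(\delta n)$.

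The hard part is the uniform control of those corrections as $x$ ranges over all $O(n)$ lattice points on the $n$-th antidiagonal. Here the $(2 + \epsilon)$-moment assumption becomes essential: combined with a Fuk--Nagaev or Rosenthal-type tail bound for sums of i.i.d.\ weights, a union bound over starting points, and a Borel--Cantelli step along a subsequence of $n$, it should force the normalised maximum correction to vanish almost surely. The edge-weight adaptation of \cite{mart-limit-shape} is otherwise cosmetic, since any directed path of antidiagonal length $n$ still consists of exactly $n$ edges.
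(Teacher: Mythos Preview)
Your outline has a genuine gap in the continuity step. The claim that ``lower semi-continuity is automatic for a finite convex function on a closed convex cone'' is false: take $\ell(0,s)=s$ and $\ell(x,y)=0$ for $x>0$, which is convex, $1$-homogeneous, and finite on $\bR_{\ge 0}^2$ but not lower semicontinuous on the $e_2$-axis. In the FPP setting this is exactly the delicate point. With minimisation, as soon as a few transverse steps are available the passage time can drop, and nothing you have written prevents $\liminf_{t\to 0}\lshape(1,t)<\lshape(1,0)=\Ex{W(0;1)}$. Convexity only gives upper semicontinuity at the boundary; you already obtained that separately from the single-path bound. The paper (following Martin) proves boundary continuity via Talagrand's concentration inequality for bounded weights (our \cref{lem:MartinConcentration,lem:MartinUniformBoundary}) and then passes to unbounded weights by truncation with a lattice-animal bound (\cref{lem:MartinPTBounds,lem:MartinTruncationApprox}). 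You need some substitute for this.

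A secondary concern is the union bound in your covering step. With only $2+\epsilon$ moments, Fuk--Nagaev gives tails of order $n^{-1-\epsilon}$ for a single correction of size $\eta n$; after a union over the $O(n)$ lattice points on the antidiagonal this is $O(n^{-\epsilon})$, which is not summable, and the subsequence-and-fill-in manoeuvre you allude to would itself need uniform control over short passage times between consecutive antidiagonals --- the same difficulty again. The paper sidesteps this entirely: Talagrand gives sub-Gaussian tails for bounded weights so the union bound is free, and the passage from bounded to unbounded weights is handled by comparing to vertex-weight models above and below (\cref{lem:VertexBound}) and invoking Martin's greedy lattice-animal estimate, which is precisely where the condition $\int_{-\infty}^{0}\underline{F}(s)^{1/2}\,ds<\infty$ implied by the moment assumption enters.
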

We once again abbreviate $\lshape(0, x) = \lshape(x)$.

In fact, Martin's results go further in the planar case to give universal asymptotics for the limit shape near the axis \cite[Theorem 2.4]{mart-limit-shape}. An extension of his argument yields the same asymptotics for edge weights (or indeed on the directed triangular lattice), albeit with a subtlety when weights in one direction are taken to be constant. Such a discussion would lead us too far astray, however.

\subsection{Existence of the Busemann process}
\label{ssec:BusemannResults}
Our first set of results extend those of \cite{geor-rass-sepp-17-buse} to our setting. The theorem below summarises our knowledge of the Busemann process when $\lshape$ is assumed to be differentiable and strictly convex. This remains unproven in any level of generality, but can be directly verified for the models we treat in the remainder of the paper, where explicit representations exist. The most general statement, and the attendant notation, are given in \cref{app:Busemann}.

In the following, $\scrU = \set{(t, 1 - t) : t \in (0, 1)}$ is the set of directions into the first quadrant. We order these angles by their first component, so that larger angles are those closer to horizontal. Let $\Theta_z : \bR^{\bZ^2} \to \bR^{\bZ^2}$ be the shift of the environment by $z \in \bZ^2$. That is, $\Theta_z(W){x} = \weight{x - z}$. 

\begin{theorem}
	\label{thm:BusemannExistence}
	Suppose the limit shape $\ell$ is differentiable and strictly convex. With probability $1$, there is a (random) subset $\scrU_0 \subseteq \scrU$ with countable complement, such that for a fixed $\xi$ we have $\Prob{\xi \in \scrU_0} = 1$, and for all $\xi \in \scrU_0$ the limits
	\begin{equation}
		\label{eq:BusemannExistenceLimits1}
		B^{\xi}(x, y) = \lim_{v_n \to \xi \cdot \infty} \ptime{x, v_n} - \ptime{y, v_n}
	\end{equation}
	exist for all $x,\, y \in \bZ^2$, where $\brac{v_n} \subseteq \bZ^2$ is a sequence with $\abs{v_n} \to \infty$ and $v_n / \abs{v_n} \to \xi$. Moreover, for all $\xi \in \scrU$, the limits
	\begin{equation}
		\label{eq:BusemannExistenceLimits2}
		B^{\xi^+}(x, y) = \lim_{\ov{\zeta \to \xi^+}{\zeta \in \scrU_0}}B^{\zeta}(x, y), \qquad B^{\xi^-}(x, y) = \lim_{\ov{\zeta \to \xi^-}{\zeta \in \scrU_0}}B^{\zeta}(x, y)
	\end{equation}
	exist. 
	
	Let $x,\, y,\, z \in \bZ^2$ be arbitrary vertices. The random functions $B^{\xi \pm}(x, y)$ satisfy the following properties:
	\begin{thmlist}
		\item \label{thm:BusemannExistenceCocycle} (Cocycle) We have the identity 
		\begin{equation}
			\label{eq:BusemannExistenceCocycle}
			B^{\xi \pm}(x, y) = B^{\xi \pm}(x, z) + B^{\xi \pm}(z, y).
		\end{equation}
		
		\item \label{thm:BusemannExistenceErgodicity} (Ergodicity) The $\bR^{4}$-valued process $\set{\psi_{x}^{\xi \pm}}_{x \in \bZ^{2}, \xi \in \scrU}$ defined by
		\begin{equation}
			\label{eq:BusemannExistenceStat}
			\psi_{x}^{\xi \pm} = (\weight{x; 1}, \weight{x; 2}, B^{\xi \pm}( x, x + e_{1}), B^{\xi \pm}(x, x + e_{2}))
		\end{equation}
		is stationary under translations $\Theta_{z}$, and moreover is \emph{jointly ergodic} under the group of such translations. 
		
		\item \label{thm:BusemannExistenceMeasurability} (Measurability) The values $B^{\xi +}(x, y),\, B^{\xi -}(x, y)$ are measurable with respect to $\set{W(z) : z \ge x \text{ or } z \ge y}$, and hence independent of all other weights.
		
		\item \label{thm:BusemannExistenceAdaptedness} (Adaptedness) We have
		\begin{equation}
			\label{eq:BusemannExistenceAdapt}
			\min_{i \in \set{1, 2}} \brac[\Big]{W(x, x + e_{i}) - B^{\xi \pm}(x, x + e_{i})} = 0.
		\end{equation}
		
		\item \label{thm:BusemannExistenceMonotonicity} (Monotonicity) If $\xi \cdot e_{1} < \zeta \cdot e_{1}$, then
		\begin{equation}
			\label{eq:BusemannExistenceMono1}
			B^{\xi -}(x, x + e_{1}) \le B^{\xi +}(x, x + e_{1}) \le B^{\zeta -}(x, x + e_{1})
		\end{equation}
		and
		\begin{equation}
			\label{eq:BusemannExistenceMono2}
			B^{\xi -}(x, x + e_{2}) \ge B^{\xi +}(x, x + e_{2}) \ge B^{\zeta - }(x, x + e_{2}).
		\end{equation}
		
		\item \label{thm:BusemannExistenceCont} (One-sided continuity) If $\zeta_{n} \cdot e_{1} \searrow \xi \cdot e_{1}$, then
		\begin{equation}
			\label{eq:BusemannExistenceCont1}
			\lim_{n \to \infty}B^{\zeta_{n}\pm}(x, y) = B^{\xi +}(x, y).
		\end{equation}
		Similarly, if $\zeta_{n} \cdot e_{1} \nearrow \xi \cdot e_{1}$, then
		\begin{equation}
			\label{eq:BusemannExistenceCont2}
			\lim_{n \to \infty}B^{\zeta_{n}\pm}(x, y) = B^{\xi - }(x, y).
		\end{equation}
		When $\xi \in \scrU_0$, these limits coincide and the map $\xi \mapsto B^{\xi}(x, y)$ is continuous here.
		
		\item \label{thm:BusemannExistenceDistinct} (Distinct means) The mean vectors $h(\xi \pm) = h(B^{\xi \pm})$ defined by
		\begin{equation}
			h(\xi \pm) \cdot e_{i} = \Ex{B^{\xi \pm}(0, e_{i})}
		\end{equation}
		satisfy
		\begin{equation}
			h(\xi \pm) = \nabla \lshape(\xi).
		\end{equation}
		In particular, Busemann limits in two distinct directions can not have the same expectation.
\end{thmlist}
\end{theorem}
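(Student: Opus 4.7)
The plan is to adapt the framework of \cite{geor-rass-sepp-17-buse} to the edge-weight setting, proceeding in three stages: construct stationary cocycles $B^{\rho}$ indexed by a countable dense set of parameters, pass to monotone one-sided limits $B^{\xi \pm}$ for every direction $\xi$, and identify these limits with the pointwise Busemann limits (\ref{eq:BusemannExistenceLimits1}) on a cocountable set $\scrU_{0}$. For the first stage I would either invoke the generalized Busemann functions of \cite{groa-janj-rass-gen-buse}, whose existence under \cref{ass:WeightAssump} and whose cocycle, stationarity, and adaptedness properties are built in, or construct the cocycles directly by iterating the store map $H$ of (\ref{eq:HDef}) on stationary input--service sequences in the manner of \cite{mart-bergeom}. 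The mean vector $h(B^{\rho})$ must then be identified with $\nabla \lshape(\rho)$: Birkhoff's theorem applied to the stationary FPP with boundary $B^{\rho}$ yields $n^{-1} L^{\rho}(0, \lfloor n \xi \rfloor) \to h(B^{\rho}) \cdot \xi$ a.s., which together with the shape theorem (\cref{thm:LimitShapeUniform}) and a standard convex-duality argument identifies the characteristic direction associated with $B^{\rho}$ as $\rho$ and forces $h(B^{\rho}) = \nabla \lshape(\rho)$, with strict convexity of $\lshape$ making the assignment injective.

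Monotonicity of $B^{\rho}(x, x + e_{i})$ in $\rho$ (increasing for $i = 1$, decreasing for $i = 2$), combined with the $L^{1}$ bound that $|B^{\rho}(x, y)|$ is dominated by the total weight along any fixed lattice path from $x$ to $y$, produces a.s.\ monotone limits $B^{\xi \pm}(x, y) = \lim_{\rho \to \xi \pm} B^{\rho}(x, y)$ which inherit the cocycle, stationarity, adaptedness, monotonicity, and one-sided continuity properties (i), (iv)--(vi). Property (vii) is immediate from continuity of $\nabla \lshape$ under differentiability. Defining $\scrU_{0} = \{\xi : B^{\xi +}(x, y) = B^{\xi -}(x, y) \text{ for all } x, y\}$, the monotonicity (v) forces $\scrU \setminus \scrU_{0}$ to be at most countable, and for any fixed $\xi$ the random variable $B^{\xi +}(0, e_{1}) - B^{\xi -}(0, e_{1})$ is nonnegative with zero mean (by (vii)), hence vanishes a.s.; a countable union over $(x, i)$ extends this to all nearest-neighbour edges, and then the cocycle property extends it to all $(x, y)$.

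The main technical obstacle is upgrading the cocycle equality $B^{\xi +} = B^{\xi -}$ on $\scrU_{0}$ to the genuine pointwise limit (\ref{eq:BusemannExistenceLimits1}). I would sandwich $\ptime{x, v_{n}} - \ptime{y, v_{n}}$ between stationary passage-time differences driven by $B^{\rho}$ for $\rho$ slightly on either side of $\xi$. Strict convexity and differentiability of $\lshape$, together with \cref{thm:LimitShapeUniform}, concentrate the geodesic directions to $v_{n}$ (both in the free and in the stationary models) near $\xi$, so the sandwich tightens as $\rho \to \xi$ and $n \to \infty$. The delicate point is controlling exit points of the stationary-model geodesics off the cocycle boundary, where both the uniform shape theorem and the $(2 + \epsilon)$-moment hypothesis do the real work. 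Finally, measurability (iii) is automatic from the adapted construction, and joint ergodicity in (ii) reduces to the standard fact that a measurable factor of an i.i.d.\ environment is ergodic under the lattice shifts of $\bZ^{2}$.
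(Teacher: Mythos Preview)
Your proposal is correct and follows essentially the same route as the paper: both start from the generalised Busemann functions of \cite{groa-janj-rass-gen-buse}, use the path-crossing monotonicity to take one-sided limits in the direction parameter, and then identify the cocycles with genuine Busemann limits by a sandwich argument driven by stationary FPP with cocycle boundary data together with the uniform shape theorem (\cref{thm:LimitShapeUniform}). The paper carries out the last step by explicitly adapting the lemmas of Section~6 of \cite{geor-rass-sepp-17-buse} to edge weights (defining $\GNE$ with Busemann boundary and controlling the centred cocycle via \cite[Theorem~A.1]{geor-rass-sepp-17-buse}), which is precisely the machinery underlying your sandwich; the only cosmetic difference is that the paper squeezes between $B^{\zeta-}$ and $B^{\eta+}$ for the endpoints of the linear segment through $\xi$, which under your strict-convexity hypothesis degenerates to your two-sided $\rho \to \xi\pm$ squeeze. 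Your alternative of building the cocycles directly from the store map $H$ would only cover the integrable weight distributions, not the general setting of \cref{ass:WeightAssump}, so it is right that you list \cite{groa-janj-rass-gen-buse} as the primary input.
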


From the cocycle property in \eqref{eq:BusemannExistenceCocycle}, we see that it is enough to consider nearest neighbour Busemann functions. We abbreviate $B^\xi_i(x) = B^\xi(x, x + e_i)$.

\begin{remark}
	\label{rem:GeodesicConstruction}
	For a fixed $\xi \in \scrU$, the above theorem tells us that the Busemann limit exists almost surely and we may construct \emph{Busemann geodesics} by the following procedure. Starting from any $x_0 \in \bZ^2$, we take $i_1 \in \set{1, 2}$ such that $B^{\xi}(x_0, x_0 + e_{i_1}) = W(x_0, x_0 + e_{i_1})$ (although such a choice may not be unique). Then we repeat this process at $x_1 = x_0 + e_{i_1}$, and so on. The resulting infinite path is a semi-infinite geodesic in direction $\xi$. This is the standard method of producing geodesics from the Busemann process, but for completeness we include a proof in \cref{app:Busemann}. See also \cite[Section 5.3]{auff-dam-hans-50-years} and \cite[Section 8]{rass-cgm-18}. Even when the limit does not exist, we may carry this process out using $B^{\xi+}$ or $B^{\xi-}$, producing two distinct geodesics.
	
	There is a pleasing simplification in SWFPP: to produce a geodesic from $B^{\xi}$, we need only look at $B^{\xi}(x, x + e_2)$. If it's zero, we can take an up step, and otherwise go right. That the decision may be done without reference to the horizontal increments or the environment of weights allows us to perform direct computations which would be quite unwieldy in exponential LPP. Some results of this type are in \cref{ssec:HighwaysResults,ssec:Competition}.
\end{remark}

\begin{remark}
	\label{rem:NESWBuse}
The functions defined in \cref{thm:BusemannExistence} are \emph{north-east} Busemann functions, in the sense that the sequence of points $v_n$ in the definition eventually lie in the top-right quadrant. We may equivalently consider \emph{south-west} Busemann functions, of the form
\begin{equation}
	\label{eq:SWBusemannLimits}
	B^{\xi}(x, y) = \lim_{v_n \to - \xi \cdot \infty} \ptime{v_n, x} - \ptime{v_n, y}.
\end{equation}
	In fact, it is these functions whose joint distribution we identify in \cref{thm:BuseDists}. The distribution of the north-east Busemann process is then the result of applying a reflection about the origin.
\end{remark}

\subsection{Exact distributions for Bernoulli-exponential weights}
\label{ssec:ExactDists}
In \cite{fan-sepp-joint-buss}, the joint distributions of the Busemann functions in exponential LPP are characterised as the unique distribution invariant under a vector-valued M/M/1 queue. This is then identified as the result of a certain recursive process, which we will choose to call a \emph{multi-line} distribution. Such constructions were introduced by Ferrari and Martin in the context of multi-class TASEP \cite{ferr-mart-mtasep}, and as we mention in the introduction, have been successfully adapted to treat many other models. We follow the same route, although now with a slightly expanded set of weight distributions, namely \emph{Bernoulli-exponential} (and Bernoulli-geometric) variables. 

A Bernoulli-exponential variable is simply the product of independent Bernoulli and exponential variables. By $W \sim \Ber(p)\Exp(\tau)$, we mean that
\begin{equation}
	\label{eq:BerExpDef}
	\Prob{W \ge x} = (1 - p) \bone(x = 0) + p e^{- \tau x}, \quad x \ge 0.
\end{equation}
These variables arise naturally when working with exponentials and max-plus operations. If, for example, $X \sim \Exp(\tau)$ and $Y \ge 0$ independent, then $(X - Y)^+ \sim \Ber(p)\Exp(\tau)$ for some $p$. Taking $p = 1$ gives us an exponential variable. 

We denote by $\Geom_+(\alpha)$ a geometric variable with positive support, and by $\Geom_0(\alpha)$ one with non-negative support. That is, if $X_0 \sim \Geom_0(\alpha)$, then
\begin{equation}
	\Prob{X_0 = k} = (1 - \alpha)^k\alpha,\quad k \ge 0,
\end{equation}
and $X_0 + 1 \sim \Geom_+(\alpha)$. The Bernoulli-geometric distribution is defined in the obvious way, as $\Ber(p)\Geom_+(\alpha)$. Observe that $p = 1$ gives a $\Geom_+(\alpha)$ variable and $p = 1 - \alpha$ gives a $\Geom_0(\alpha)$ variable.

For concreteness, let us consider only $\Ber(p)\Exp(1)$ weights, $0 < p \le 1$, and let $\lambda$ be the distribution on $\bR^{\bZ}$ whose components are i.i.d $\Ber(p)\Exp(1)$ distributed. We may equally consider $\Ber(p)\Geom_+(\alpha)$ or $\Ber(p)$ distributions, with little modification.

For $\rho_1 < p$, let $\nu_{H}^{\rho_1}$ have instead i.i.d $\Ber(q)\Exp(\tau)$ components, where $q,\, \tau$ satisfy
\begin{equation}
	\label{eq:HParamConst}
	\rho_1 = \frac{q}{\tau}, \qquad \frac{q \tau }{1 - q} = \frac{p}{1 - p}.
\end{equation}
When $p = 1$, we simply fix $q = 1$. These equations can be solved to give rather complicated expressions:
\begin{equation}
	\label{eq:ParamExpres}
	q = \frac{2 \sqrt{p \rho_1}}{\sqrt{p \rho_1} + \sqrt{4 (1 - p) + p \rho_1}}, \quad \tau = \frac{q}{\rho_1}.
\end{equation}
Observe that $\rho_1$ is the mean of $\nu_{H}^{\rho_1}$. Varying $\rho_1 \in (0, p)$ gives us parameters $q$ lying in $(0, p)$ and $\tau$ in $(1, \infty)$.

Now let $\rho = (\rho_1, \dots, \rho_n)$ be a vector of means with $p > \rho_1 > \cdots > \rho_n$, and write $\nu_{H}^{\rho} = \nu_{H}^{\rho_1} \otimes \cdots \otimes \nu_{H}^{\rho_n}$. Take $(I^1, \dots, I^n) \sim \nu_{H}^{\rho}$ and consider the following procedure. Set $J^1 = I^1$, and for $k \ge 2$ let $J^k = H(I^k, J^{k - 1})$. Finally, write $\mu_{H}^{\rho}$ for the distribution of $(J^1, \dots, J^n)$. We call the coupling resulting from this iterative procedure a \emph{multi-line} distribution. Martin and Prabhakar \cite{mart-prab-fixed} have shown the following for store output map $H$.

\begin{theorem}[Theorem 7.1 of \cite{mart-prab-fixed}]
	\label{thm:HMultiStat}
	Take $W \sim \Ber(p)\Exp(1)$ and $(J^1, \dots, J^n) \sim \mu_{H}^{\rho}$, independent. Then:
	\begin{thmlist}
		\item The marginal distributions are $J^k \sim \nu_{H}^{\rho_k}$.
		\item We have component-wise monotonicity: $J^1 \ge J^2 \ge \cdots \ge J^n$.
		\item The distribution $\mu_{H}^{\rho}$ is jointly invariant under $H$:
		\begin{equation}
			\label{eq:HMultiStat}
			(H(J^1, W), \dots, H(J^n, W)) \disteq (J^1, \dots, J^n).
		\end{equation}
	\end{thmlist}
\end{theorem}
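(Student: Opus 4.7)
The plan is to mimic the Ferrari--Martin multi-class invariance scheme, as carried out for the store in the quoted paper. The workhorse is a two-line Burke-type identity for $H$ in the Bernoulli-exponential setting: if $I \sim \nu_H^\rho$ and $W \sim \nu_H^{\rho'}$ are independent sequences with $\rho < \rho'$, then $J := H(I, W) \sim \nu_H^\rho$, and moreover there is a measurable ``dual service'' $\hat W = \hat W(I, W)$, jointly constructible with $J$, that is independent of $J$ and whose law sits in the same Bernoulli-exponential family. This is the analogue of Burke's theorem for the $\cdot/M/1$ queue in this setting, and can be established by a direct computation with the Lindley recursion satisfied by $H$ together with a time-reversal argument.

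Granting this, (i) is an induction on $k$. The base case $k = 1$ is immediate. For $k \ge 2$, $J^{k-1}$ is a function of $(I^1, \dots, I^{k-1})$ and hence independent of $I^k$; since $\rho_{k-1} > \rho_k$, the two-line Burke identity applied with input $I^k$ and service $J^{k-1}$ gives $J^k = H(I^k, J^{k-1}) \sim \nu_H^{\rho_k}$. The monotonicity (ii) is purely pathwise: from \eqref{eq:HDef}, $H(I, W)_j = (I_j + X_j) \wedge W_j \le W_j$ for every $j$, so $J^k \le J^{k-1}$ componentwise.

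The substantive part is the joint invariance (iii). Set $\tilde J^k := H(J^k, W)$. I would establish, by induction on $k$, the existence of independent sequences $\tilde I^1, \dots, \tilde I^k$, with $\tilde I^j \sim \nu_H^{\rho_j}$, such that $\tilde J^1 = \tilde I^1$ and $\tilde J^j = H(\tilde I^j, \tilde J^{j-1})$ for $2 \le j \le k$. At level $1$ this is exactly the two-line Burke identity applied to $(J^1, W)$, which produces $\tilde J^1 \sim \nu_H^{\rho_1}$ together with a dual $\tilde W^1$, independent of $\tilde J^1$. The inductive step rests on a pathwise identity of the form
\begin{equation*}
H(J^k, W) = H(\tilde I^k, \tilde J^{k-1}),
\end{equation*}
where the new input $\tilde I^k$ is built from $I^k$ and the Burke duals produced at previous levels; one then verifies that $\tilde I^k$ is independent of $\tilde I^1, \dots, \tilde I^{k-1}$ and has the correct marginal $\nu_H^{\rho_k}$. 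Given this decomposition, $(\tilde J^1, \dots, \tilde J^n)$ is itself the output of the multi-line procedure with input parameter vector $\rho$, and (iii) follows.

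The principal obstacle is the construction of the new inputs $\tilde I^k$ in step (iii) and the verification of their mutual independence. The underlying pathwise identity is a consequence of how $H$ responds to inputs bounded above by the ordering $J^k \le J^{k-1}$ from (ii), but exposing the probabilistic structure requires careful bookkeeping of the Burke duals as they cascade down the tower. Everything else either reduces to (i) applied to the transformed process or is immediate.
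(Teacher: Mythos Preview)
Your proposal is correct and follows essentially the same route as the cited Martin--Prabhakar argument (summarised in the paper as Proposition~4.2) and the paper's own proof of the $A$ and $V$ analogues in \cref{thm:MultiStat}: a Burke-type reversibility supplies the dual service (with the same law as $W$, not merely in the same family --- this is needed for the cascade), and the ``pathwise identity'' you invoke is precisely the intertwining relation $H(H(I^3,I^2),I^1)=H(H(I^3,O^1),O^2)$, iterated as in \cref{lem:VMultiIntertwining}. The obstacle you flag --- building the $\tilde I^k$ and checking their independence --- is exactly where the work lies, and is handled by the independence of reverse weights from outputs together with the nested intertwining.
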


When the parameters are suitably adjusted, the same construction produces the multi-class (or vector-valued) invariant measures for two further maps arising from our store model. For the moment let us just say that the $H$ map captures the relationship between the Busemann functions on a horizontal line and on the line directly above it. Similarly, we may introduce maps $A$ and $V$, capturing this evolution on antidiagonal and vertical lines. These are introduced properly in \cref{ssec:UpdateMaps}. 

Given a mean $\rho_1 > 0$, let $\nu_{A}^{\rho_1}$ and $\nu_{V}^{\rho_1}$ be distributions on $\bZ$-indexed sequences whose entries have i.i.d Bernoulli-Exponential distribution with mean $\rho_1$, the parameters explicit functions of $\rho_1$. The formulas for these parameters are involved and uninformative. Later we will consult \cref{tab:SWFPPInvariantDists} and choose the distribution corresponding to our weights and with the desired mean.


As before, define $\nu_{A}^{\rho} = \nu_{A}^{\rho_1} \otimes \cdots \otimes \nu_{A}^{\rho_n}$, and $\nu_{V}^{\rho}$ analogously. Let $\mu_{A}^{\rho}$ (resp. $\mu_{V}^{\rho}$) be the distributions after iteratively applying $H$ to $\nu_{A}^{\rho}$ (resp. $\nu_{V}^{\rho}$), as in the definition of $\mu_{H}^{\rho}$. We will prove in \cref{sec:Multiline} that these distributions satisfy the conclusions of \cref{thm:HMultiStat}, now with maps $A$ and $V$. It is curious to note that the maps $A$ and $V$ do not appear in the constructions of their respective multi-line distributions.

This detour into the fixed points of these maps is ultimately with the purpose of identifying the distribution of the Busemann process along horizontal, antidiagonal and vertical lines.

\begin{theorem}
	\label{thm:BuseDists}
	In SWFPP, let our weights follow a Bernoulli-exponential, Bernoulli-geometric, or Bernoulli distribution. Set $e_H = e_1$, $e_A = e_1 - e_2$ and $e_V = -e_2$ and take $U \in \set{H, A, V}$. Let $\xi_1 > \cdots > \xi_n$ be a sequence of directions with $\Ex{B^{\xi_i}(0, e_{U})} = \rho_i$, where here our Busemann functions are directed \emph{south-west}. Then 
	\begin{equation}
		\label{eq:BuseDists}
		\brac[\big]{B^{\xi_1}(k e_U, (k + 1)e_U), \dots, B^{\xi_n}(k e_U, (k + 1)e_U)}_{k \in \bZ} \sim \mu_{U}^{\rho}.
	\end{equation}
\end{theorem}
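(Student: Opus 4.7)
The strategy is the now-standard one from \cite{fan-sepp-joint-buss} and its extensions: identify the joint law of the Busemann increments along a line as the unique jointly invariant measure for a multi-component queueing dynamic with prescribed marginals and monotone ordering, and then invoke the multi-line construction of \cref{sec:Multiline} to recognise this fixed point as $\mu_U^\rho$.

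\textbf{Step 1 (Transition).} Fix $U = H$; the antidiagonal and vertical cases are analogous. I would first derive the recursion relating the Busemann increments on two adjacent horizontal lines. Writing $J^i_k = B^{\xi_i}((k, j), (k+1, j))$ for the increments on line $j$ and $W_k$ for the horizontal weights on line $j+1$, the SWFPP passage-time recursion (where the vertical branch carries zero weight) combined with the Busemann cocycle \cref{thm:BusemannExistenceCocycle} and the Adaptedness property \cref{thm:BusemannExistenceAdaptedness} should yield that the increments on line $j + 1$ are exactly $H(J^i, W)$ component-wise. This is the key structural identity: the joint Busemann process evolves line-to-line according to the store map driven by the intervening horizontal weights.

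\textbf{Step 2 (Marginals and monotonicity).} By \cref{thm:BusemannExistenceErgodicity} the sequence $(J^i_k)_{k \in \bZ}$ is stationary and ergodic, by \cref{thm:BusemannExistenceDistinct} it has mean $\rho_i$, and by Step 1 its law is $H$-invariant under $\Ber(p)\Exp(1)$ service. The one-dimensional characterisation of $H$-invariant measures of a given mean (\cref{sec:Multiline}, building on \cite{mart-prab-fixed}) then pins the marginal down as $\nu_H^{\rho_i}$. Applying Step 1 simultaneously to all directions shows that the joint process $(J^1, \ldots, J^n)$ is invariant under the simultaneous application of $H$ with a shared service sequence, and the component-wise ordering $J^1 \ge \cdots \ge J^n$ follows from \cref{thm:BusemannExistenceMonotonicity}.

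\textbf{Step 3 and main obstacle.} I would then invoke the uniqueness of the jointly invariant measure with prescribed $\nu_H^{\rho_i}$ marginals and monotone ordering (to be established in \cref{sec:Multiline} alongside the invariance of $\mu_H^\rho$) to conclude that the joint law is exactly $\mu_H^\rho$. The main effort is split between Step 1 --- correctly tracing the passage-time recursion through the south-west Busemann limit and reconciling it with the formal definition of $H$ (and of $A$, $V$ for the other line directions), with careful handling of the adaptedness boundary conditions and of which sequence plays ``input'' versus ``service'' --- and the uniqueness in \cref{sec:Multiline}, which is typically a coupling-from-the-past argument that exploits the strict monotonicity of the directions and the ergodicity of the driving weights.
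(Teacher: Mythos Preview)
Your proposal is essentially the paper's own route: show that the Busemann increments along the line form a jointly invariant, ergodic measure for the update map (this is \cref{prop:BusemannFixedPoint}), show that the multi-line distribution $\mu_U^\rho$ is also a fixed point (\cref{thm:MultiStat}), and conclude by uniqueness (\cref{thm:UniqueFixedPoints}). Two points where your sketch diverges from what the paper actually does are worth flagging.

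First, the uniqueness criterion. You phrase Step~3 as uniqueness among jointly invariant measures ``with prescribed $\nu_H^{\rho_i}$ marginals and monotone ordering''. The paper's \cref{thm:UniqueFixedPoints} instead characterises elements of $M^U$ (shift-ergodic, $U$-invariant) by their \emph{mean vector} alone; monotone ordering plays no role in the uniqueness argument and is simply a by-product of both the Busemann monotonicity and the multi-line construction. The proof is not a coupling-from-the-past argument but a contraction in Chang's $\overline{\rho}$ metric (\cref{prop:AContract}): one shows that two ergodic fixed points with the same mean cannot stay at positive $\overline{\rho}$-distance under repeated application of the map. The ergodicity you need is the joint ergodicity of the Busemann process under lattice shifts from \cref{thm:BusemannExistenceErgodicity}, which via \cref{prop:BusemannFixedPoint} yields ergodicity under the update map as well.

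Second, the paper does not redo the uniqueness argument for each of $H$, $A$, $V$. It proves it only for $A$ (where the finite range of dependence makes the contraction argument cleanest) and then uses \cref{prop:MeasureBijection} --- a bijection between the sets $M^H$, $M^A$, $M^V$ that preserves mean vectors, obtained by passing through the full two-dimensional stationary increment field --- to transfer uniqueness to the other two maps. Your plan to handle each line direction separately would work but is more laborious.
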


A surprising but straightforward fact is that when $p = 1$ and our weights are simply rate $1$ exponential, the distributions $\mu_{H}^{\rho}$ and $\mu_{A}^{\rho}$ coincide when the choice of $\rho$ is valid for each. Less straightforwardly, these are also related to the distributions $\mu^{\rho}$ described by Fan and Sepp\"al\"ainen in \cite{fan-sepp-joint-buss}, there in the context of exponential LPP.

\begin{proposition}
	\label{prop:EqualDists}
	Denote by $B$ the \emph{south-west} Busemann process for exponential SWFPP, and by $\overline{B}$ the \emph{north-east} Busemann process for exponential LPP. For a parameter $0 < \rho < 1$, let $\xi(\rho)$ be the angle such that $\Ex{B^{\xi(\rho)}(e_2, e_1)} = \rho^{-1}$ and $\overline{\xi}(\rho)$ the angle such that $\Ex{\overline{B}^{\overline{\xi}(\rho)}(e_2, 0)} = \rho^{-1}$. Then
	\begin{multline}
		\label{eq:EqualDists}
			\set{B^{\xi(\rho)\pm}(k(e_1 - e_2), (k + 1)(e_1 - e_2)) : k \in \bZ, 0 < \rho < 1} \\
			\disteq \set{\overline{B}^{\overline{\xi}(\rho)\pm}((k + 1)e_2, k e_2) : k \in \bZ, 0 < \rho < 1}
	\end{multline}
\end{proposition}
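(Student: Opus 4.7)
The plan is to identify both sides of \eqref{eq:EqualDists} with the same multi-line distribution $\mu_H^\rho$. On the SWFPP side, I would apply \cref{thm:BuseDists} with $U = A$ to the pure exponential model ($p = 1$): for any ordered directions $\xi_1 > \cdots > \xi_n$ with $\Ex{B^{\xi_i}(0, e_A)} = \rho_i$, the joint law along the antidiagonal is $\mu_A^\rho$. The observation noted immediately above \cref{prop:EqualDists} then gives $\mu_A^\rho = \mu_H^\rho$ when $p = 1$, since both input distributions $\nu_A^\rho$ and $\nu_H^\rho$ degenerate to the same product of $\Exp(\rho_k^{-1})$ sequences (there is no Bernoulli thinning when $q=1$), and the iterative construction applies the identical map $H$ to the same input.

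On the LPP side, I would invoke the main result of \cite{fan-sepp-joint-buss}, which characterises the joint distribution of the north-east Busemann process along a vertical column in exponential LPP as the Ferrari--Martin multi-class invariant measure for the M/M/1 queue. Because $H$ applied to a pair of exponential sequences is precisely the output map of an M/M/1 queue (essentially Burke's theorem combined with the definition \eqref{eq:HDef}), the Fan--Seppäläinen multi-line construction coincides with $\mu_H^\rho$. With the marginal identification $\Ex{\overline{B}^{\overline{\xi}(\rho)}(e_2, 0)} = \rho^{-1} = \Ex{B^{\xi(\rho)}(e_2, e_1)}$, the rate parameters of the two sides line up directly, so the two joint laws agree for each finite collection of $\rho$-values.

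To pass to the full process indexed by $k \in \bZ$ and $\rho \in (0,1)$, together with the $\pm$ refinements, I would use the one-sided continuity from \cref{thm:BusemannExistenceCont} (and its exponential LPP analogue from \cite{fan-sepp-joint-buss}): the $\pm$ processes are pointwise limits of the $\xi$-indexed processes along rational approaches, so finite-dimensional equality over rational $\rho$ is enough to conclude equality of the whole countable-dimensional objects in \eqref{eq:EqualDists}.

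The main obstacle is the parameter matching in the middle step. Verifying that $\mu_H^\rho$ as built in \cref{thm:HMultiStat} is the \emph{same} joint law as the Fan--Seppäläinen multi-class exponential M/M/1 invariant measure—and not merely isomorphic under some reshuffling—requires bookkeeping of the queue orientation, the precise identification of service vs.\ arrival streams under the $H$ map, and the orientation reversal between south-west SWFPP and north-east LPP Busemann functions. Once this alignment is set up correctly the rest of the argument is routine.
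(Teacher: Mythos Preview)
Your overall strategy---identify both sides with the same multi-line distribution---is exactly the paper's. The gap is in your middle step. You assert that ``$H$ applied to a pair of exponential sequences is precisely the output map of an M/M/1 queue,'' and hence that the Fan--Sepp\"al\"ainen multi-line measure \emph{is} $\mu_H^\rho$. This is not correct: the $H$ of this paper and the $D$ of \cite{fan-sepp-joint-buss} are genuinely different maps. The paper's \cref{lem:HEqualsR} shows instead that $H(I,W) = \sigma_{-1} R(\sigma_1 W, I)$, where $R$ is the \emph{reverse weights} operator from exponential LPP, not the departure map $D$. The paper even flags this just above the statement of \cref{prop:EqualDists}: relating $\mu_H^\rho$ to the Fan--Sepp\"al\"ainen $\mu^\rho$ is ``less straightforwardly'' done.

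Consequently, the identification you call ``bookkeeping'' is in fact the substance of the proof. The paper spends all of \cref{ssec:LPPConnection} on it: \cref{lem:HEqualsR} gives the $H = R$ identity; \cref{lem:IDIWEqualsRIWW,lem:WHIWEqualsDWII} combine this with Burke's theorem and the time-reversal relation \eqref{eq:BackwardsD} to get the two-class equality $(I^2, H(I^1,I^2)) \disteq (\overleftarrow{D}(I^2,I^1), I^1)$; and \cref{prop:NestedHEqualsNestedD} extends this inductively to show that the $H$-iterated and $\overleftarrow{D}$-iterated constructions yield the same joint law. Only then does quoting Theorem~3.2 of \cite{fan-sepp-joint-buss} finish the argument. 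Your proposal would need all of this machinery, and it is not a consequence of Burke alone.
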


Much is known about the Busemann functions in exponential LPP, and we can, for example, observe that the results of \cite{sepp-soren-blpp-buse, bus-buse-scaling} imply convergence of our SWFPP Busemann process to the stationary horizon when $p = 1$. One expects that the stationary horizon appears as a limit of the Busemann process for all choices of $p > 0$, though we avoid the necessary verifications here. Indeed, this convergence has already been shown for Bernoulli weights in \cite{bus-sepp-soren-tasep-speed}.

We remark that whereas the results of \cite{fan-sepp-joint-buss} capture the distribution of the Busemann process along horizontal and (equivalently) vertical lines in exponential LPP, we are additionally able to describe the distribution along the antidiagonal for our model.

\subsection{Parallel TASEP}
\label{ssec:PTASEP}
As a byproduct of identifying the multi-class invariant distributions for the maps $A$ and $V$, we can also answer a question found in \cite{ferr-mart-mtasep, mart-schi-discrete} concerning the \emph{parallel TASEP}. We will be more explicit with our definition in \cref{ssec:UpdateMaps}, but informally, this is a discrete-time TASEP wherein all of the particles moving at a given time step move simultaneously. If a particle is to move forward on a given time step, then the space in front of the particle must be vacant before that time step. The multi-line distribution in the following statement should be constructed with Bernoulli-geometric variables, rather than Bernoulli-exponential.

\begin{corollary}
	\label{cor:ParallelTASEP}
	The multi-class distribution on $\set{0, 1}^{\bZ}$ whose inter-particle distances are given by $\mu_{A}^\rho$ is invariant under the dynamics of parallel TASEP with Bernoulli-geometric jumps.
\end{corollary}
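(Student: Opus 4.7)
The plan is to reduce this corollary to the invariance of $\mu_A^\rho$ under the map $A$, which is the analogue of \cref{thm:HMultiStat} for $A$ asserted in the paragraph following the multi-line definition and to be proved in \cref{sec:Multiline}. The argument has two parts: translating one time step of parallel TASEP into a single application of the map $A$ on gap sequences, and then invoking the joint $A$-invariance of $\mu_A^\rho$.

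First I would identify the parallel-TASEP gap dynamics with $A$. Label the particles of a configuration in $\{0,1\}^\bZ$ in increasing order of position and let $g_k$ denote the number of empty sites between particles $k$ and $k+1$. In parallel TASEP with Bernoulli-geometric jumps, independent service variables $W_k \sim \Ber(p)\Geom_+(\alpha)$ are drawn at each time step, and each particle attempts to jump $W_k$ sites forward, blocked by the pre-update position of the particle in front; its effective move is $W_k \wedge g_k$. The new gap is
\begin{equation}
g'_k \;=\; g_k + (W_{k+1} \wedge g_{k+1}) - (W_k \wedge g_k),
\end{equation}
which is precisely the local update rule encoded by the map $A$ defined in \cref{ssec:UpdateMaps}. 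Verifying this identification, together with the fact that the Bernoulli-geometric parameters for $\nu_A^{\rho_i}$ listed in \cref{tab:SWFPPInvariantDists} match the jump distribution at the appropriate density, is the main technical content.

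Second, once $A$ is matched to the one-step parallel TASEP, the corollary follows from the joint invariance
\begin{equation}
(A(J^1, W), \dots, A(J^n, W)) \disteq (J^1, \dots, J^n)
\end{equation}
for $(J^1, \dots, J^n) \sim \mu_A^\rho$ and independent Bernoulli-geometric service $W$, together with the almost-sure domination $J^1 \ge \cdots \ge J^n$. The common driving sequence $W$ across coordinates is exactly the basic coupling of $n$ single-class parallel TASEPs; reading off class priorities from the dominance ordering recovers the multi-class configuration in $\{0,1,\dots,n\}^\bZ$, and projecting onto any subset of classes gives a configuration in $\{0,1\}^\bZ$ with gap sequence marginally distributed as the corresponding row of $\mu_A^\rho$.

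The main obstacle I expect is the bookkeeping in the first step: confirming the simultaneous-update convention that makes $W_k \wedge g_k$ (rather than $W_k \wedge g'_k$, which would correspond to sequential updates and a different particle system), and matching the Bernoulli-geometric parameter choice for $\nu_A^{\rho_i}$ with the jump law. Once this dictionary between the map $A$ and parallel TASEP is in place, the corollary is an immediate consequence of the $A$-analogue of \cref{thm:HMultiStat} and requires no further probabilistic input.
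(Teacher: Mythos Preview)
Your proposal is correct and matches the paper's approach exactly: the paper does not give a separate proof of this corollary, treating it as an immediate byproduct of the identification of the $A$ map with parallel TASEP in \cref{sssec:UpdateMapsA} (the particle rule $\eta'_k=(\eta_k+W_k)\wedge\eta_{k+1}$) together with the joint $A$-invariance in \cref{thm:MultiStat}. Your local gap formula differs from \eqref{eq:AIncrements} by an index reflection (your $g_k$ is the gap in front of particle $k$, the paper's $Y_k$ is the amount in bin $k$ receiving from bin $k-1$), but this is purely a labelling convention and does not affect the argument.
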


\subsection{The SJR model}
\label{ssec:SJRDists}
We may also consider the Busemann process of the SJR model. As it turns out, the distribution in this case largely reduces to the question for SWFPP.

\begin{theorem}
	\label{thm:SJRfromSWFPP}
	Let the switching variables of our SJR model be $\Ber(\alpha)$-distributed and let $\xi^* = (1 - \alpha, \alpha)$ be the percolation angle. Let $\xi_1 > \cdots > \xi_n > \xi^*$ be a sequence of angles and let $(B^{\xi_1}, \dots, B^{\xi_n})$ be the associated Busemann functions. Consider now $(B_0^{\xi_1}, \dots, B_0^{\xi_n})$, the Busemann functions for the SWFPP model whose horizontal weights are identical to those of our original model, but whose vertical weights are all zero. Then
	\begin{equation}
		\label{eq:SJRfromSWFPP}
		(B^{\xi_1}, \dots, B^{\xi_n}) \disteq (B_0^{\xi_1}, \dots, B_0^{\xi_n}).
	\end{equation}
\end{theorem}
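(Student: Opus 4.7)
The plan is a coupling reduction showing the SJR and modified SWFPP Busemann processes agree almost surely for the relevant angles.

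I would place the SJR model and the auxiliary SWFPP model (with horizontal weights $W_0(x;1) = W(x;1) = S(x)\tilde{W}(x;1)$ and $W_0(x;2) \equiv 0$) on the same probability space through shared switching variables $S(x)$ and noise $\tilde{W}(x;i)$. Under this coupling the horizontal weights agree and $W(x;2) \ge W_0(x;2)$ gives the pointwise passage-time ordering $L(u,v) \ge L_0(u,v)$. Using the south-west definition of Busemann functions (\cref{rem:NESWBuse}) together with the cocycle property \cref{thm:BusemannExistenceCocycle}, the joint distributional equality for $(B^{\xi_1},\dots,B^{\xi_n})$ and $(B_0^{\xi_1},\dots,B_0^{\xi_n})$ reduces to the a.s.\ statement that, for each $\xi_i > \xi^*$ individually and each nearest-neighbour pair $(x,x+e_j)$, the differences $L(v_n,x) - L(v_n,x+e_j)$ and $L_0(v_n,x) - L_0(v_n,x+e_j)$ share the same limit along any $v_n \to -\xi_i \cdot \infty$. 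The joint statement then follows by intersecting countably many almost sure events.

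The central step is a coupling identity for the passage times: I aim to show that, almost surely and for $v_n$ far enough in direction $-\xi$, one in fact has $L(v_n,x) = L_0(v_n,x)$ for all $x$ in a fixed compact set. Since $L \ge L_0$, this amounts to exhibiting a modified SWFPP geodesic from $v_n$ to $x$ whose vertical steps all enter vertices with $S = 1$: along such an \emph{admissible} path one has $W(\cdot;2) = 0$, so the SJR cost equals the modified SWFPP cost and the path is a minimiser in both models. Heuristically, such a geodesic exists because the macroscopic vertical density $\xi \cdot e_2 < \alpha$ is strictly below the density $\alpha$ of $S = 1$ vertices, leaving enough slack to route the geodesic's vertical transitions through $S = 1$ columns without inflating the horizontal cost.

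The main obstacle is making the admissible-geodesic step rigorous, since the modified SWFPP geodesic is constrained both by horizontal cost minimisation and by the spatial layout of $S = 1$ vertices. My plan is to compare the modified SWFPP with a restricted variant in which vertical steps into $S = 0$ vertices are forbidden (assigned cost $+\infty$) and show that the two limit shapes coincide on the subcritical region $\{\xi \cdot e_2 < \alpha\}$. This reduces to a supercritical oriented percolation statement for the $S = 1$ configuration: their density exceeds the geodesic's vertical demand, so the restriction does not raise the asymptotic cost. Once this shape equality is in place, the uniform convergence of \cref{thm:LimitShapeUniform} promotes it to the a.s.\ passage-time identity $L(v_n,x) = L_0(v_n,x)$ eventually, along which the Busemann limits coincide and the desired distributional equality follows.
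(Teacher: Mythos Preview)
Your coupling set-up is sound, and the inequality $L \ge L_0$ is immediate, but the crucial step fails. You claim that once the restricted and unrestricted SWFPP limit shapes agree on $\{\xi\cdot e_2 < \alpha\}$, uniform shape convergence (\cref{thm:LimitShapeUniform}) ``promotes'' this to the eventual identity $L(v_n,x) = L_0(v_n,x)$. It does not. Shape equality and uniform convergence give only $L(v_n,x) - L_0(v_n,x) = o(\abs{v_n})$; they say nothing about whether this $o(\abs{v_n})$ error is eventually zero, or even bounded. Busemann functions are finite differences $L(v_n,x) - L(v_n,y)$, so an $o(\abs{v_n})$ correction that depends on the endpoint can shift them arbitrarily. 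To run your argument you would need to show that some SWFPP geodesic from $v_n$ to $x$ is eventually \emph{exactly} admissible, and this is a much finer statement than anything the limit shape can deliver; near the endpoints the geodesic may well be forced through an $S=0$ vertical edge.

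The paper takes a completely different route that avoids any pathwise comparison. It works at the level of the antidiagonal update map $A$: the SJR Busemann increments along the antidiagonal form an ergodic fixed point of the SJR version of $A$ (\cref{prop:BusemannFixedPoint}), and for $\xi > \xi^*$ this fixed point has positive mean. \Cref{prop:RansStat} then shows, by a ``positive and negative material annihilate'' argument, that any such ergodic SJR fixed point is supported on non-negative sequences; on non-negative sequences the SJR update \eqref{eq:RansRec} collapses to the SWFPP update $A(\cdot,W^+)$. Thus the SJR Busemann process is in fact a fixed point of the SWFPP map with the same mean as the SWFPP Busemann process, and uniqueness of ergodic fixed points (\cref{thm:UniqueFixedPointsSWFPP}) forces them to coincide in distribution. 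The essential ingredient --- that SJR and SWFPP are indistinguishable in stationarity --- is the content of Lemma~2 of \cite{rans-sjr}, as the paper notes after the statement of \cref{thm:SJRfromSWFPP}. If you want to salvage your direct approach, you would need a genuinely new argument for eventual exact admissibility of SWFPP geodesics, which looks hard; the fixed-point route sidesteps this entirely.
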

This gives a near-complete description of the Busemann process when the weights are Bernoulli-exponential, although we still cannot say much about the joint distribution of $(B^{\xi_1}, B^{\xi_2})$, when $\xi_1 > \xi^* > \xi_2$. Two special cases are the river delta model \cite{barr-rych-river-delta} with exponential weights, and the degenerate model wherein each vertex has weight $1$ on a random incoming edge and $0$ on the other. 
%

The connection between the Busemann process and the limit shape contained in \cref{thm:BusemannExistenceDistinct} gives a decomposition in terms of two SWFPP limit shapes.
\begin{corollary}[Theorem 1 of \cite{rans-sjr}]
	\label{cor:SJRLimitShape}
	Let $\lshape$ be the time constant for an SJR model, and $\lshape_H$ (resp. $\lshape_V$) be the time constant for the SWFPP whose horizontal (resp. vertical) weights are unchanged but whose vertical (resp. horizontal) weights are constant zero. Then
	\begin{equation}
		\label{eq:SJRLimitShape}
		\lshape = \lshape_H \vee \lshape_V.
	\end{equation}
\end{corollary}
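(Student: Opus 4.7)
The plan is to combine \cref{thm:SJRfromSWFPP} with the identity $\nabla \lshape(\xi) = h(\xi \pm)$ from \cref{thm:BusemannExistenceDistinct}, matching the gradients of the three limit shapes $\lshape,\lshape_H,\lshape_V$ on the two subcones of $\scrU$ separated by $\xi^*$, and then invoking positive $1$-homogeneity to lift the equality from gradients to the functions themselves.

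First I would establish the easy lower bound $\lshape \ge \lshape_H \vee \lshape_V$. By construction the SJR edge weights dominate both the pure-horizontal and the pure-vertical SWFPP weights edgewise (in each case, one family of weights is simply set to zero while the other is left unchanged). Minimising over the same set of directed paths yields $L(x,y) \ge L_H(x,y)$ and $L(x,y) \ge L_V(x,y)$ almost surely, and passing to the limit via \cref{thm:LimitShapeUniform} gives $\lshape \ge \lshape_H \vee \lshape_V$ on all of $\bR^2_{\ge 0}$.

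For the reverse inequality on the cone $\set{\xi \in \scrU : \xi > \xi^*}$, I would apply \cref{thm:SJRfromSWFPP} with $n = 1$ to conclude $B^\xi \disteq B_0^\xi$, and hence agreement of the mean vectors $h(\xi \pm) = h_H(\xi \pm)$. Invoking \cref{thm:BusemannExistenceDistinct} in both the SJR model and the associated pure-horizontal SWFPP model (or its general form from \cref{app:Busemann}, working with one-sided gradients where $\lshape$ fails to be differentiable) identifies $\nabla \lshape(\xi) = \nabla \lshape_H(\xi)$ throughout this cone. Euler's identity $\lshape(\xi) = \xi \cdot \nabla \lshape(\xi)$, valid by positive $1$-homogeneity, then promotes this to $\lshape \equiv \lshape_H$ on $\set{\xi > \xi^*}$. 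The coordinate swap $(x_1,x_2) \mapsto (x_2,x_1)$ turns the SJR model with switching parameter $\alpha$ into an SJR model with parameter $1 - \alpha$ and reflects $\xi^*$; applying \cref{thm:SJRfromSWFPP} in the reflected model and pulling back gives $\lshape \equiv \lshape_V$ on $\set{\xi < \xi^*}$. At $\xi = \xi^*$ itself, continuity of all three limit shapes (\cref{thm:LimitShapeUniform}) extends the equalities and forces $\lshape(\xi^*) = \lshape_H(\xi^*) = \lshape_V(\xi^*)$. Combining these three pieces with the lower bound completes the identification $\lshape = \lshape_H \vee \lshape_V$.

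The main obstacle I anticipate is bookkeeping around the hypotheses of \cref{thm:BusemannExistenceDistinct}, whose main-text form requires $\lshape$ to be differentiable and strictly convex, a property not known a priori for SJR limit shapes in full generality. The cleanest workaround is to rely on the appendix version of the Busemann existence theorem, reading off one-sided gradients $\nabla \lshape(\xi \pm)$ from the Busemann means $h(\xi \pm)$; Euler's identity still reconstructs a $1$-homogeneous function from its one-sided gradients, so the argument carries through essentially verbatim. A secondary point is that \cref{thm:SJRfromSWFPP} is stated only for directions $\xi > \xi^*$, which is why the case $\xi < \xi^*$ has to be accessed through the explicit reflection step rather than by a direct application.
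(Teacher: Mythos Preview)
Your proposal is correct and matches the paper's intended approach: the corollary is presented as an immediate consequence of \cref{thm:SJRfromSWFPP} together with the identification $h(\xi\pm)=\nabla\lshape(\xi\pm)$ from \cref{thm:BusemannExistenceDistinct} (in its general appendix form), and the paper does not spell out more than this. Your additional care in supplying the easy edgewise lower bound, the Euler identity step, the reflection for the cone $\xi<\xi^*$, and the workaround via one-sided gradients is exactly the bookkeeping the paper leaves implicit.
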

Ransford's original proof is considerably more direct and avoids the Busemann machinery, but both Ransford's proof and our own proof of \cref{thm:SJRfromSWFPP} have as their essential ingredient the observation contained in Lemma 2 of \cite{rans-sjr}, which implies that SJR and an appropriate SWFPP are indistinguishable in stationarity.

\subsection{Highways and byways} 
\label{ssec:HighwaysResults}
The immediate purpose of Newman's introduction of Busemann functions to FPP \cite{newman-first-buse} was to study the existence and structure of semi-infinite geodesics. The existence of the Busemann limits and their ability to produce semi-infinite geodesics in a given direction immediately implies the following general theorem. This is a direct analogue of Theorem 2.1 of \cite{geor-rass-sepp-geodesics} in our slightly more general setting, and is largely contained in Theorem 5.8 of \cite{groa-janj-rass-gen-buse}.

\begin{corollary}
	\label{thm:BuseGeodesics}
	Return to the general setting of \cref{ass:WeightAssump} and consider directions $\xi \in \scrU$ at which the limit shape is differentiable and strictly convex. Then
	\begin{thmlist}
		\item Almost surely, simultaneously for all such $\xi$, there is at least one geodesic ray from the origin in direction $\xi$.
		\item For a fixed $\xi$, almost surely the geodesic ray from the origin is unique.
		\item For fixed $\xi$, almost surely all geodesic rays (and not just those from the origin) in direction $\xi$ coalesce.
	\end{thmlist}
\end{corollary}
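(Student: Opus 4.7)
The approach is to deduce all three parts from Theorem \ref{thm:BusemannExistence}, transposing the template of \cite{geor-rass-sepp-geodesics} (proved there for vertex-weight LPP) to the present edge-weight FPP setting. Throughout I would work on a single full-measure event on which the conclusions of Theorems \ref{thm:BusemannExistence} and \ref{thm:LimitShapeUniform} hold, so that the one-sided Busemann cocycles $B^{\xi \pm}$ are simultaneously available for every $\xi \in \scrU$. For (i), I would then apply the construction of Remark \ref{rem:GeodesicConstruction} to $B^{\xi+}$ and $B^{\xi-}$: by \ref{thm:BusemannExistenceAdaptedness}, at each vertex $x$ there is at least one $i \in \set{1, 2}$ with $W(x, x + e_i) = B^{\xi \pm}(x, x + e_i)$, which iteratively produces an infinite path $\gamma$ from the origin. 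To check that $\gamma$ has asymptotic direction $\xi$, the cocycle \ref{thm:BusemannExistenceCocycle} telescopes along $\gamma$ to give $\ptime{0, \gamma^n} = B^{\xi \pm}(0, \gamma^n)$; the stationary structure \ref{thm:BusemannExistenceErgodicity} and the mean identification $h(\xi \pm) = \nabla \lshape(\xi)$ of \ref{thm:BusemannExistenceDistinct} yield $B^{\xi \pm}(0, \gamma^n)/\abs{\gamma^n} \to \nabla \lshape(\xi) \cdot \zeta$ along subsequences with $\gamma^n/\abs{\gamma^n} \to \zeta$, while Theorem \ref{thm:LimitShapeUniform} gives $\ptime{0, \gamma^n}/\abs{\gamma^n} \to \lshape(\zeta)$. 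The resulting identity $\lshape(\zeta) = \nabla \lshape(\xi) \cdot \zeta$, combined with $1$-homogeneity of $\lshape$ and its strict convexity and differentiability at $\xi$, forces $\zeta = \xi$. Since $\scrU_0$ is almost surely cocountable in $\scrU$ and one-sided limits are defined everywhere, all such directions are handled on a single full-measure event.

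For (ii), fix a nice direction $\xi$; then $\xi \in \scrU_0$ almost surely, so $B^{\xi +} = B^{\xi -} = B^\xi$. The plan is first to show that any geodesic ray $\pi$ from $0$ in direction $\xi$ must be a Busemann geodesic: the cocycle decomposes $\ptime{0, \pi^n}$ as $B^\xi(0, \pi^n)$ plus a sum of non-negative edgewise slack terms from \ref{thm:BusemannExistenceAdaptedness}, and since both $\ptime{0, \pi^n}/n$ and $B^\xi(0, \pi^n)/n$ tend to the same limit $\lshape(\xi)$ by Theorem \ref{thm:LimitShapeUniform} and the ergodic theorem, the nonnegative slack must vanish on every edge of $\pi$. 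Ruling out two distinct Busemann geodesics from $0$ then reduces to showing that the stationary set of ``branching'' vertices, at which $W(x, x + e_i) = B^\xi(x, x + e_i)$ for both $i$, carries no pair of disjoint infinite Busemann rays from a common point. This is a Licea--Newman type planar translation-invariance argument: two distinct rays in a common asymptotic direction within a stationary tree would contradict the joint ergodicity in \ref{thm:BusemannExistenceErgodicity}.

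For (iii), coalescence of all direction-$\xi$ geodesic rays follows from (ii) and \ref{thm:BusemannExistenceErgodicity}: the Busemann geodesic tree towards $\xi$ is a shift-covariant spanning tree on $\bZ^2$, and any two non-coalescing branches would give a positive translation-invariant density of disjoint infinite rays in a planar tree, contradicting ergodicity by the standard Burton--Keane / mass-transport argument. The proof of Theorem 2.1 in \cite{geor-rass-sepp-geodesics} transfers essentially verbatim to our setting once Theorem \ref{thm:BusemannExistence} is in hand.

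The main obstacle is the uniqueness step (ii): identifying any direction-$\xi$ geodesic with the Busemann geodesic is routine given the cocycle and limit shape, but excluding two simultaneous Busemann geodesics from a fixed vertex is where the planar translation-invariance machinery of Licea--Newman genuinely enters, and is also the step in which strict convexity of $\lshape$ at $\xi$ is used substantively beyond the mean identification \ref{thm:BusemannExistenceDistinct}.
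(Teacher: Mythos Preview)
Your outline for (i) and (iii) is essentially what the paper does: it states the corollary as a direct transposition of \cite[Theorem 2.1]{geor-rass-sepp-geodesics} and \cite[Theorem 5.8]{groa-janj-rass-gen-buse}, and the appendix carries out the directedness argument for the Busemann geodesic exactly as you describe (telescope via the cocycle, identify the limit via uniform shape convergence and the gradient formula, conclude $\zeta=\xi$ from strict convexity). The Burton--Keane / Licea--Newman machinery for coalescence is likewise the standard route the paper defers to.

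There is, however, a genuine gap in your argument for (ii). From the decomposition
\[
\ptime{0,\pi^n}-B^\xi(0,\pi^n)=\sum_{k=0}^{n-1}\bigl(W(\pi^k,\pi^{k+1})-B^\xi(\pi^k,\pi^{k+1})\bigr)\ge 0
\]
and the fact that both sides divided by $n$ tend to $\lshape(\xi)$, you only get that the \emph{Ces\`aro average} of the nonnegative slack tends to zero. This does not force each individual slack term to vanish (think of $s_k=\bone\{k \text{ is a perfect square}\}$), so you cannot yet conclude that $\pi$ follows the Busemann rule at every step. The clean fix is to use the Busemann limit representation \eqref{eq:BusemannExistenceLimits1} directly: for a geodesic ray $\pi$ in direction $\xi$, take $v_n=\pi^n$ in the definition. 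Since sub-paths of geodesics are geodesics, $L(\pi^k,\pi^n)-L(\pi^{k+1},\pi^n)=W(\pi^k,\pi^{k+1})$ for all $n>k$, and passing to the limit gives $B^\xi(\pi^k,\pi^{k+1})=W(\pi^k,\pi^{k+1})$ exactly. This is the argument implicit in the references the paper cites, and it bypasses the ergodic step entirely.
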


With the explicit descriptions of the Busemann process for the Bernoulli-exponential SWFPP described above, we may hope for finer information on the nature of the semi-infinite geodesics. Let $\Gamma$ be the tree of (right-)geodesics rooted at $0$ and let $\Gamma_\infty$ be the subtree formed by its infinite branches. We identify $\Gamma_\infty$ with its set of vertices. The subtree trivially contains the axes, and from \cref{thm:BuseGeodesics} we see that it contains a continuum of topological ends.

We call the edges (and by abuse of language, also the vertices) of $\Gamma_\infty$ \emph{highways}. The underlying analogy is that one expects a geodesic between distant points to first travel to seek out a path of highly favourable edges going in the desired direction, travel with this path for most of the journey, and then turn off only when close to its destination. These highly favourable edges form the semi-infinite geodesics. The highways-and-byways problem mentioned in the introduction may be understood as determining the asymptotics of
\begin{equation}
	\label{eq:HighwaySquare}
	\Ex*{\frac{\abs[\big]{\Gamma_\infty \cap ([0, n] \times [0, n])}}{n^2}},
\end{equation}
which is the expected proportion of vertices inside an $n \times n$ square footed at the origin which lie on a highway from the origin. The specific question found in \cite{hamm-welsh-fpp} is whether this proportion goes to zero. While this has been answered affirmatively in the harder setting of undirected FPP by \cite{ahlb-hans-hoff-density, dem-elb-pel-midpoint}, one may ask: rather than look at the limit in expectation, is there anything to be said of the almost sure limit? That the expected proportion in \eqref{eq:HighwaySquare} goes to zero immediately implies by Fatou's lemma that the limit infimum of the proportion is zero almost surely, but doesn't rule out \emph{a priori} some intermittent explosive branching in the tree producing large clusters of highways.

We don't provide an answer to the question outlined above, but can say something in this spirit. Namely, we consider the intersection of $\Gamma_\infty$ not with a square but with a thin rectangle, and find that there is nontrivial behaviour. The following results are stated for SWFPP with rate 1 exponential weights. 

\begin{theorem}
	\label{thm:HighwaysRect}
	Write $A_k(n)$ for the cardinality of $\Gamma_\infty \cap (\set{k} \times [0, n])$. Then for any $k \ge 1$, we have
	\begin{equation}
		\label{eq:HighwaysRect}
		\liminf_{n\to\infty}\frac{{A_{k}(n)}}{n}=0,\qquad \limsup_{n\to\infty}\frac{{A_{k}(n)}}{n}=1\mathrm{\;{a.s.}}
	\end{equation}
\end{theorem}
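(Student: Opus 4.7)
The plan is to characterize $\Gamma_\infty \cap (\{k\} \times \bZ_{\ge 0})$ through the Busemann geodesics of \cref{rem:GeodesicConstruction}, and then to exploit the explicit distributions of \cref{thm:BuseDists} to identify both the large ``highway blocks'' giving $\limsup = 1$ and the large gaps giving $\liminf = 0$. I focus on $k = 1$; the case of general $k$ requires only bookkeeping for the path from $0$ to column $k - 1$.

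By \cref{rem:GeodesicConstruction}, each $\xi \in \scrU$ produces a Busemann geodesic from the origin whose intersection with column $1$ is an interval $[J_0^\xi, J_1^\xi]$, where $J_0^\xi$ is the smallest $j \ge 0$ for which $B^\xi((0, j), (0, j+1))$ is non-zero, and $J_1^\xi$ is the smallest $j \ge J_0^\xi$ for which $B^\xi((1, j), (1, j+1))$ is non-zero. Hence
$$\Gamma_\infty \cap (\{1\} \times \bZ_{\ge 0}) = \bigcup_{\xi \in \scrU} [J_0^\xi, J_1^\xi].$$
By \cref{thm:BusemannExistenceMonotonicity}, each increment $B^\xi((i, j), (i, j+1))$ is monotone in $\xi \cdot e_1$, so each edge has a threshold $\xi \cdot e_1$-value at which its increment switches from zero to non-zero. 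The entry heights $\{J_0^\xi\}_\xi$ are exactly the running minima of the column-$0$ threshold sequence; at each such record, say at position $R_i$ with threshold value $V_i$, the corresponding segment in column $1$ has length $L_i$ controlled by when the column-$1$ thresholds first drop below $V_i$.

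\cref{thm:BuseDists} gives Bernoulli-exponential laws for the Busemann increments in a fixed direction. Granting that the column threshold sequences are i.i.d.\ with a continuous distribution on $(0, 1)$ (the main obstacle, addressed below), pushing forward by the CDF reduces the problem to i.i.d.\ Uniform$(0, 1)$ thresholds. Classical record theory then gives $\log(1/V_i) \sim i$ and $\log R_i \sim i$, and conditional on $V_i$ the segment length $L_i$ is geometrically distributed with mean of order $1/V_i \sim e^i$, independent of $R_i$. Consequently both $L_i / R_i$ and $(R_{i+1} - R_i - L_i)/R_i$ have nondegenerate heavy-tailed limiting distributions. Using the second Borel-Cantelli lemma on events supported at records $i$ spaced far apart (which are approximately independent in the Uniform$(0, 1)$ reduction), one obtains: (a) $L_i \ge M R_i$ for every fixed $M$ infinitely often, yielding $A_1(R_i + L_i)/(R_i + L_i) \ge L_i/(R_i + L_i) \to 1$, hence $\limsup A_1(n)/n = 1$; and (b) $R_{i+1} - (R_i + L_i) \ge M (R_i + L_i)$ for every fixed $M$ infinitely often, yielding $A_1(n)/n \to 0$ at $n$ just below $R_{i+1}$, hence $\liminf A_1(n)/n = 0$.

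The main obstacle is justifying that the column threshold sequences are i.i.d., or at least mix fast enough to apply Borel-Cantelli. The thresholds are nonlinear functionals of the Busemann process coupled across all directions, and their joint law is not immediate from \cref{thm:BuseDists}, which gives only fixed-$\xi$ multi-direction marginals. The most direct route is to couple the threshold process for a fixed column explicitly to the multi-line construction of \cref{ssec:ExactDists} via the $V$-map, so that the required independence and stationarity can be read off from the structural properties of $\mu_V^\rho$. An alternative is to extract the needed mixing from the joint ergodicity of \cref{thm:BusemannExistenceErgodicity}.
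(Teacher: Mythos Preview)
Your proposal has a genuine gap that the paper's own results actually demonstrate is unfixable as stated. You write ``Granting that the column threshold sequences are i.i.d.\ with a continuous distribution'', but the thresholds you introduce are precisely the competition interface angles $\xi^*(j e_2)$, and \cref{thm:SWFPPCompetitionAngles} shows these are \emph{not} i.i.d.: in particular $\Prob{\xi^*(0) = \xi^*(e_2)} = 1/6$, and the convoy structure of \cref{thm:SWFPPCompetitionAnglesConvoy} makes the joint law far from a product. So the reduction to records of i.i.d.\ uniforms collapses, and neither of your suggested workarounds (reading independence off $\mu_V^\rho$, or extracting enough mixing from joint ergodicity alone) is carried out. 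Without that, the Borel--Cantelli step has no foundation.

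The paper's approach is quite different and bypasses the threshold process entirely. For $\liminf = 0$ it does not use Busemann functions at all: it builds explicit events $A_n$, depending only on the \emph{positions} of minima of the weights in the first $l$ columns, on which a long stretch of column $l$ is ``boxed in'' between two finite geodesics and hence excluded from $\Gamma_\infty$. Because argmin locations are uniform and enjoy a nesting independence property, the $A_n$ along a geometric subsequence are genuinely independent with probability bounded below, and the second Borel--Cantelli lemma applies cleanly; this works for any continuous weight distribution. For $\limsup = 1$ the paper does use the explicit Busemann distributions, but again via hand-built events $A'_n$: one chooses a direction $\xi_n$ scaled with $n$, prescribes where the single nonzero vertical Busemann increment in column $l$ sits and where the small weights in columns $0, \dots, l-1$ sit, and on $A'_n$ the Busemann geodesic is forced to occupy a $(1 - 1/m)$-fraction of column $l$ up to height $lmn$. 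The probabilities $\Prob{A'_n}$ are computed directly from \cref{tab:SWFPPInvariantDists} and shown to be bounded below; the covariances $\Cov{A'_{n_1}, A'_{n_2}}$ are controlled using the independence structure within the multi-line distribution and shown to be summable, after which a Kochen--Stone / second-moment Borel--Cantelli argument finishes. The paper never needs the full threshold process or any record theory.
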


\begin{theorem}
	\label{thm:HighwaysBranch}
	Write $B(n)$ for the number of branches of $\Gamma_\infty$ along the vertical axis up to height $n$ --- that is, the number of connected components of $\Gamma_\infty \cap (\set{1} \times [0, n])$. Then there exist constants $0 < C_1 \le C_2 < 1$, such that for all $n$ large enough we have 
	\begin{equation}
		\label{eq:HighwaysBranch}
		C_1 \log n \le B(n) \le C_2 \log n.
	\end{equation}
\end{theorem}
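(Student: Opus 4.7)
The plan is to reduce $B(n)$ to a thinned sequence of records in the column-$1$ horizontal weights, and combine the classical $\log n$ asymptotics for records with the explicit Busemann distribution of \cref{thm:BuseDists}.

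Write $W_j := W((0, j), (1, j))$. In SWFPP the right-most geodesic from the origin to $(1, k)$ uses the single horizontal edge at height $J_k := \argmin_{0 \le j \le k} W_j$, so in the geodesic tree $\Gamma$ the parent of $(1, k)$ is $(0, k)$ exactly when $W_k$ is a new (lower) record of the i.i.d.\ exponential sequence $(W_j)_{j \ge 0}$, and is $(1, k - 1)$ otherwise. Hence the connected components of $\Gamma \cap (\{1\} \times [0, n])$ are exactly the intervals $[k_i, k_{i+1} - 1]$ cut out by consecutive records $0 = k_0 < k_1 < \cdots$, and their number $R(n)$ is the classical record count, with $R(n) / \log n \to 1$ almost surely. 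Passing from $\Gamma$ to $\Gamma_\infty$ within one such interval, the set $\{j \in [k_i, k_{i+1} - 1] : (1, j) \in \Gamma_\infty\}$ is downward-closed (prepending an up-edge of $\Gamma$ to an infinite continuation produces another), so it is either empty or of the form $[k_i, M_i]$. A short competitive argument on horizontal weights further shows that any semi-infinite geodesic through a vertex in this interval must enter column 1 exactly at the record $k_i$, since the edge $(1, k_i - 1) \to (1, k_i)$ is absent from $\Gamma$. Thus
\begin{equation*}
	B(n) = \sum_{i \,:\, k_i \le n} \mathbf{1}\bigl[(1, k_i) \in \Gamma_\infty\bigr],
\end{equation*}
and by the Busemann construction of \cref{rem:GeodesicConstruction}, the event in the indicator is equivalent to $k_i$ being the entry height $H_0(\xi) := \inf\{j : B^\xi((0, j), (0, j+1)) \ne 0\}$ for some direction $\xi$.

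To estimate the thinning density $p_i := \Prob{(1, k_i) \in \Gamma_\infty}$, the key tool is \cref{thm:BuseDists}: under rate-$1$ exponential weights the Busemann increments along a vertical line are Bernoulli--exponential with direction-dependent parameters, and the multi-line coupling describes them jointly across directions. One extracts from this a limit $p^* := \lim_{i \to \infty} p_i$ lying strictly in $(0, 1)$. The lower bound $p^* > 0$ is produced by exhibiting a positive-probability Busemann configuration in column $0$ whose ray enters column 1 exactly at $k_i$. The upper bound $p^* < 1$, which is what forces $C_2 < 1$, follows from a positive-probability ``overshoot'' configuration in which every semi-infinite geodesic skips $k_i$; here the monotonicity of the Busemann functions in $\xi$ (\cref{thm:BusemannExistenceMonotonicity}) reduces the continuum-of-directions requirement to a condition on the one-sided limits $B^{\xi \pm}$ at a single critical angle, and the explicit Bernoulli--exponential density supplies a positive-probability event. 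Combining with the independence of weights in disjoint regions (\cref{thm:BusemannExistenceMeasurability}) and coalescence of rays (\cref{thm:BuseGeodesics}), the activation events at widely separated records decorrelate sufficiently; a second-moment / Borel--Cantelli argument on a geometrically thinned subsequence then upgrades $\mathbb{E}[B(n)] \sim p^* \log n$ to $B(n) / \log n \to p^*$ almost surely, giving the stated inequalities with any $0 < C_1 < p^* < C_2 < 1$.

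The hardest piece is proving $p^* < 1$ together with the decorrelation needed for the almost-sure upgrade. Strict positivity of the overshoot probability is a simultaneous constraint on a continuum of directions and must be reduced, via the monotonicity and one-sided continuity of the Busemann process, to an event on a finite-dimensional slice of the increments whose positive probability can be read off the explicit Bernoulli--exponential marginals. The decorrelation between activation events requires a quantitative coalescence rate compatible with the geometric spacing $k_{i+1} / k_i$ of records; again this should be derivable from the explicit distributions of \cref{thm:BuseDists} but is the technical crux of the argument.
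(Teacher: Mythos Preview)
Your reduction in the first two paragraphs is correct and the paper makes the same observation: only record heights of the column-$0$ weights can lie in $\cB$, so $B(n)\le R(n)$ and the upper bound with $C_2=1$ is immediate (this is exactly the content of the Remark following \cref{prop:BranchUpperBound}). Note however that the equivalence ``$(1,k_i)\in\Gamma_\infty$ iff $k_i=H_0(\xi)$ for some $\xi$'' is the identification $\cB=\cB'$, which is \cref{thm:SWFPPN3G}; you are using it without saying so, and without it you only get one containment.

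The remaining programme, however, has real gaps and is not the route the paper takes. First, you assert the existence of $p^*=\lim_i p_i\in(0,1)$ but give no argument for convergence; the events $\{k_i\in\cB\}$ involve both the random record height $k_i$ and the full Busemann process, and there is no obvious stationarity to exploit. Second, your decorrelation step appeals to \cref{thm:BusemannExistenceMeasurability}, but that statement says $B^{\xi}(x,y)$ is measurable with respect to \emph{all} weights above and to the right of $x$ and $y$ --- the Busemann increments at two record heights $k_i<k_j$ share dependence on the entire quadrant above $(0,k_j)$, so ``independence of weights in disjoint regions'' does not apply. The second-moment argument you describe would need quantitative control on these long-range correlations, and nothing in the multi-line description of \cref{thm:BuseDists} gives this directly.

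The paper proceeds quite differently. It views $\cB$ as the range of the nondecreasing jump process $t\mapsto b(t)=H_0(\xi(t))$, so that $B(n)$ is the number of jumps before $b$ exceeds $n$, and then analyses the jump structure of $b$ via the multi-line representation. For the lower bound it constructs an explicit dominating process $\overline{b}(t)$ with independent jumps (\cref{lem:bOver}), shows $\overline{b}$ reaches level $N$ in roughly $\log_2 N$ dyadic steps (\cref{lem:HittingTimeAsympts}), and proves that on a positive-density set of those steps $b$ must also jump (\cref{lem:bOverUpper}). For the strict upper bound $C_2<1$ it couples the jump times of $b$ to the simpler process $X(t)$ (\cref{lem:XCoupling}), discretises on scales $1-\alpha^{-k}$, and shows via \cref{lem:bUnder} that after each jump $b$ is, with uniformly positive probability, large enough to suppress the next jump --- so a definite fraction of potential dyadic jumps is skipped. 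No limiting thinning probability $p^*$ is ever identified; the paper only proves the two-sided inequalities.

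In short: your setup is right, but the heart of your argument (existence of $p^*$ and decorrelation across records) is asserted rather than proved, and it is not clear it can be carried out. The paper's jump-process/coupling approach avoids both issues by never trying to isolate a single-record activation probability.
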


The strict bound $C_2 < 1$ is conditional on a claimed result in \cite{mart-schi-discrete} concerning collisions in a discrete-time TASEP speed process, for which the authors do not provide a proof. Without this input, our statement still holds, except weakened to have $C_2 \le 1$.

\subsection{The competition interface} 
\label{ssec:Competition}
The geodesic connecting the origin to $x \in \bZ_{\ge 0}^2$, if it is unique, must go through exactly one of $e_1$ or $e_2$. Write $\cR_1 = \set{x \in \bZ_{\ge 0}^2 : x \ne 0,\, e_1 \in \gamma_x}$, and $\cR_2$ the same with $e_2$ in place of $e_1$. These sets are disjoint and the overlap of their boundaries is called the \emph{competition interface} for the tree rooted at the origin. The interface is known to be asymptotically linear in LPP under mild assumptions on the limit shape \cite[Theorem 2.6]{geor-rass-sepp-geodesics}. To state the analogous result here, let $r_n = \max \set{r \ge 0 : (r, n) \in \cR_1}$ be the horizontal position of the interface at height $n$.

\begin{proposition}
	\label{prop:CriticalAngle}
	In the general setup of \cref{ass:WeightAssump}, suppose our weights follow a continuous distribution and that $\lshape$ is differentiable everywhere. Then the limit
	\begin{equation}
		\label{eq:CriticalAngleDef}
		\xi^* = \lim_{n \to \infty}\frac{r_n}{r_n + n}
	\end{equation}
	exists almost surely, and $\lshape$ has an exposed point in direction $\xi^*$.
\end{proposition}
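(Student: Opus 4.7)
The plan is to identify $\xi^*$ with the direction at which the rightmost Busemann geodesic from the origin changes its first step from $e_2$ to $e_1$, then to rule out any local affine behaviour of $\ell$ at $\xi^*$ by combining pointwise monotonicity of the Busemann process with equality of means (\cref{thm:BusemannExistenceDistinct}).

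First I would set
\begin{equation*}
	\xi^* = \inf\set{\zeta \in \scrU : B^{\zeta +}(0, e_1) = W(0, e_1)}.
\end{equation*}
The monotonicity of $\zeta \mapsto B^{\zeta +}(0, e_1)$ (\cref{thm:BusemannExistenceMonotonicity}) together with the adaptedness identity (\cref{thm:BusemannExistenceAdaptedness}) make this well defined (allowing the degenerate cases $\xi^* \in \set{0, 1}$, which are trivially exposed directions). Since for any fixed $\zeta \in \scrU_0$ the semi-infinite geodesic from the origin is almost surely unique (\cref{thm:BuseGeodesics}), exactly one of the two adaptedness equalities holds at $\zeta$; it is the $e_1$ equality when $\zeta \cdot e_1 > \xi^* \cdot e_1$ and the $e_2$ equality when $\zeta \cdot e_1 < \xi^* \cdot e_1$.

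Next I would prove the convergence of $r_n/(r_n + n)$. For $\zeta \in \scrU_0$ with $\zeta \cdot e_1 > \xi^* \cdot e_1$ and any sequence $x_n \to \zeta \cdot \infty$, the cocycle property together with the definition of the Busemann limit yields
\begin{equation*}
	L(e_1, x_n) - L(e_2, x_n) \to B^\zeta(e_1, e_2) = B^\zeta(0, e_2) - W(0, e_1),
\end{equation*}
and the dichotomy in the previous step gives $B^\zeta(0, e_2) < W(0, e_2)$ strictly. Hence eventually $W(0, e_1) + L(e_1, x_n) < W(0, e_2) + L(e_2, x_n)$, so the (a.s.\ unique, by continuity) geodesic $\gamma_{0, x_n}$ begins with $e_1$ and $x_n \in \cR_1$. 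This yields $\liminf_n r_n/(r_n + n) \ge \zeta \cdot e_1$, and letting $\zeta \downarrow \xi^*$ along a countable dense subset of $\scrU_0$ gives the lower bound $\xi^* \cdot e_1$. The symmetric argument with $\zeta \cdot e_1 < \xi^* \cdot e_1$ supplies the matching upper bound.

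The main obstacle is the exposed-point claim. I would argue by contradiction: since $\ell$ is convex and finite, it has at most countably many maximal affine facets, so it suffices to fix one such facet $(\zeta_1, \zeta_2)$ and show $\Prob{\xi^* \in (\zeta_1, \zeta_2)} = 0$. On $(\zeta_1, \zeta_2)$ the gradient $\nabla \ell \equiv v$ is constant by differentiability, so by \cref{thm:BusemannExistenceDistinct} the means $\Ex{B^{\zeta +}(0, e_i)}$ are constant in $\zeta$ as well. Combined with the pointwise monotonicity of $\zeta \mapsto B^{\zeta +}(0, e_i)$ (each non-negative monotone increment has zero mean and hence vanishes a.s.), this forces $\zeta \mapsto B^{\zeta +}(0, e_i)$ to be a.s.\ constant on any countable dense subset of $(\zeta_1, \zeta_2)$, and then by one-sided continuity constant throughout. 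On the event $\xi^* \in (\zeta_1, \zeta_2)$, however, picking $\zeta_- \in \scrU_0 \cap (\zeta_1, \xi^*)$ and $\zeta_+ \in \scrU_0 \cap (\xi^*, \zeta_2)$ would give $B^{\zeta_- +}(0, e_1) < W(0, e_1) = B^{\zeta_+ +}(0, e_1)$ (with the strict inequality coming from the uniqueness-of-geodesics dichotomy at $\zeta_-$), violating constancy. A union bound over the countably many facets completes the proof.
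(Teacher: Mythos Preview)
The paper does not supply its own proof of this proposition; it is stated as the edge-weight analogue of \cite[Theorem~2.6]{geor-rass-sepp-geodesics} and the argument is implicitly deferred there. Your approach via the Busemann process is precisely the standard one from that reference and is fully consonant with the paper's characterisation of $\xi^*$ in \cref{prop:CritFromBuse}. In particular, the exposed-point argument---deducing almost-sure constancy of $\zeta \mapsto B^{\zeta+}(0,e_i)$ on a facet from equality of means (\cref{thm:BusemannExistenceDistinct}) plus pointwise monotonicity, and then contradicting the definition of $\xi^*$---is correct and clean.

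Two points deserve care. First, in the convergence step the direction of the bounds is reversed. Since $\cR_1$ lies to the right of the interface (points on the $e_1$-axis belong to $\cR_1$, those on the $e_2$-axis to $\cR_2$, and the geodesic tree forbids crossings), showing $x_n\in\cR_1$ for $x_n$ in a direction $\zeta$ with $\zeta\cdot e_1>\xi^*\cdot e_1$ yields an \emph{upper} bound $\limsup_n r_n/(r_n+n)\le\zeta\cdot e_1$, and taking $\zeta\downarrow\xi^*$ gives $\limsup\le\xi^*\cdot e_1$; the symmetric case with $\zeta\cdot e_1<\xi^*\cdot e_1$ supplies the matching $\liminf$. (The paper's displayed definition of $r_n$ appears to carry a typo, $\cR_1$ for $\cR_2$, which may be the source of the confusion.) This is only a relabelling and does not affect the substance.

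Second, you justify the dichotomy ``exactly one of the two adaptedness equalities holds at $\zeta$'' by citing \cref{thm:BuseGeodesics}, but that result is stated only for directions at which $\ell$ is strictly convex, whereas \cref{prop:CriticalAngle} assumes only differentiability. In the exposed-point step this is harmless: the strict inequality $B^{\zeta_-+}(0,e_1)<W(0,e_1)$ at $\zeta_-<\xi^*$ follows directly from the definition of $\xi^*$ as an infimum together with $B^{\zeta+}(0,e_1)\le W(0,e_1)$ from adaptedness, with no appeal to geodesic uniqueness. In the convergence step one can either restrict $\zeta$ to exposed points (which are dense once $\ell$ is not globally affine), or argue the dichotomy at any fixed $\zeta$ directly: $B^\zeta(e_1,e_2)=\lim_n\bigl(L(e_1,v_n)-L(e_2,v_n)\bigr)$ is measurable with respect to the weights $\{W(z):\abs{z}_1\ge 2\}$ and hence independent of the continuous pair $(W(0,e_1),W(0,e_2))$, so the event $\{B^\zeta(e_1,e_2)=W(0,e_2)-W(0,e_1)\}$ is null.
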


This $\xi^*$ is the \emph{critical angle} at the origin. We may define $\xi^*(x)$ in the same way, using the tree of geodesics rooted at $x$. 

The critical angle is intimately related to the discontinuity points of the Busemann process. The following may be seen by considering the procedure of \cref{rem:GeodesicConstruction}. Recall that we order angles by saying that an angle closer to the horizontal is larger.

\begin{proposition}
	\label{prop:CritFromBuse}
	In SWFPP and under the assumptions of \cref{prop:CriticalAngle}, we may alternatively define the critical angle as
	\begin{equation}
		\xi^* = \sup \set{\xi : B^\xi (0, e_2) = 0}.
	\end{equation}
	In particular, by monotonicity,
	\begin{equation}
		\Prob{\xi^* \ge \xi} = \Prob{B^\xi (0, e_2) = 0}.
	\end{equation}
\end{proposition}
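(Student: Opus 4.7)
The plan is to sandwich $\xi^*$ by two inequalities involving $\tau := \sup \set{\xi \in \scrU : B^\xi(0, e_2) = 0}$, and then deduce the ``In particular'' assertion from monotonicity.

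For the lower bound $\tau \ge \xi^*$, I fix $\xi \in \scrU$ with $\xi \cdot e_1 < \xi^* \cdot e_1$ and choose a sequence $v_n \to \xi \cdot \infty$. By \cref{prop:CriticalAngle}, $r_n/(r_n + n) \to \xi^* \cdot e_1$ almost surely, while $v_n \cdot e_1/(v_n \cdot e_1 + v_n \cdot e_2) \to \xi \cdot e_1$; comparing these places $v_n$ on the more vertical side of the competition interface for large $n$, so $v_n \in \cR_2$ and $\gamma_{v_n}$ begins with $e_2$. Since $W(0, e_2) = 0$ in SWFPP, this gives $L(0, v_n) = W(0, e_2) + L(e_2, v_n) = L(e_2, v_n)$ for all such $n$, and the Busemann limit yields $B^\xi(0, e_2) = 0$.

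For the upper bound $\tau \le \xi^*$ I argue by contradiction: suppose some $\xi > \xi^*$ has $B^\xi(0, e_2) = 0$. Adaptedness (\cref{thm:BusemannExistenceAdaptedness}) at the origin is then realized in the $e_2$ direction, and the procedure of \cref{rem:GeodesicConstruction}, as simplified for SWFPP, produces a semi-infinite geodesic $\gamma_\infty^\xi$ from $0$ in direction $\xi$ whose first edge is $(0, e_2)$. The continuous-weight hypothesis inherited from \cref{prop:CriticalAngle} makes finite geodesics almost surely unique, so for each $x_n$ far along $\gamma_\infty^\xi$ the geodesic $\gamma_{x_n}$ must agree with the corresponding prefix of $\gamma_\infty^\xi$ and begin with $e_2$, placing $x_n \in \cR_2$. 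But $x_n \cdot e_1/(x_n \cdot e_1 + x_n \cdot e_2) \to \xi \cdot e_1 > \xi^* \cdot e_1$ would force $x_n \in \cR_1$ eventually by the same interface argument, a contradiction.

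The ``In particular'' assertion follows from monotonicity: by \cref{thm:BusemannExistenceMonotonicity} the map $\xi \mapsto B^\xi(0, e_2)$ is non-increasing, and the trivial bound $B^\xi(0, e_2) \le 0$ (obtained by comparing $L(0, v_n)$ with the path $0 \to e_2 \to v_n$ in SWFPP) makes $\set{\xi' : B^{\xi'}(0, e_2) = 0}$ a downward-closed random interval with right endpoint $\tau = \xi^*$, so $\set{\xi^* \ge \xi}$ and $\set{B^\xi(0, e_2) = 0}$ coincide up to the zero-probability event $\set{\xi^* = \xi}$. The main obstacle I anticipate is justifying the qualitative membership statements $v_n \in \cR_2$ and $x_n \in \cR_1$ from the quantitative convergence $r_n/(r_n + n) \to \xi^*$; this relies on the monotone organization of the competition interface, which is not spelled out in \cref{prop:CriticalAngle} itself and so may warrant separate verification.
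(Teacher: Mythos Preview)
Your proof is correct and aligns with what the paper indicates: the paper itself does not give a detailed argument, merely remarking that the proposition ``may be seen by considering the procedure of \cref{rem:GeodesicConstruction}'', and your two-direction sandwich is a faithful expansion of that hint. Your concern about the monotone organisation of the competition interface is legitimate but standard: uniqueness of geodesics under continuous weights forces the geodesic tree to be planar, so the subtrees through $e_1$ and $e_2$ are separated by a down-right path, and the comparison of $x_n$ (or $v_n$) with $r_{b_n}$ at the relevant height goes through directly.

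One small refinement for the ``In particular'' clause: you do not actually need $\Prob{\xi^* = \xi} = 0$. For a fixed $\xi$, almost surely $\xi \in \scrU_0$ by \cref{thm:BusemannExistence}, so $\zeta \mapsto B^\zeta(0,e_2)$ is continuous at $\xi$; hence on the event $\{\xi^* = \xi\}$ one has $B^\xi(0,e_2) = \lim_{\zeta \nearrow \xi} B^\zeta(0,e_2) = 0$ already, giving the set equality $\{\xi^* \ge \xi\} = \{B^\xi(0,e_2) = 0\}$ almost surely without appealing to the distribution of $\xi^*$.
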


The joint distributions of critical angles along a line are, morally, related to the distribution of speeds in a corresponding speed process. For exponential LPP, this is the TASEP speed process \cite{amir-angel-valko-tasep-speed}, while the integrable models in SWFPP are closely related to the discrete-time particle systems considered in \cite{mart-schi-discrete}. There is some nontrivial correlation between the speeds of each particle, and the formation of ``convoys'' is observed. These are sets of particles sharing the same speed, which after a long time eventually meet and travel together. This is surprising in light of the fact that the speeds have continuous marginal distributions. A description of a single convoy is found in \cite[Theorem 1.8]{amir-angel-valko-tasep-speed}, and here we give the analogue in SWFPP. For the following, recall that a renewal process can be defined as a point set in $\bR$ whose increments (\emph{holding times}) are independent and identically distributed.

\begin{theorem}
\label{thm:SWFPPCompetitionAngles}
\begin{thmlist}
	Consider Bernoulli or Bernoulli-Exponential weights.
	\item \label{thm:SWFPPCompetitionAnglesProbs} The ordering of adjacent competition interfaces has
	\begin{align}
		\Prob{\xi^*(0) > \xi^*(e_2)} &= \frac{1}{3},\\
		\Prob{\xi^*(0) = \xi^*(e_2)} &= \frac{1}{6},\\
		\Prob{\xi^*(0) < \xi^*(e_2)} &= \frac{1}{2}.
	\end{align}
	
	\item \label{thm:SWFPPCompetitionAnglesInterval} For fixed $\xi_1 \in \scrU$, the set $\set{k : \xi^*(k e_2) \ge \xi_1}$ is a Bernoulli process with density $p_1 = \Prob{B^{\xi_1} (0, e_2) = 0}$. If $\xi_2 > \xi_1$, then the set $\set{k : \xi_1 \le \xi^*(k e_2) \le \xi_2}$ is a renewal process. whose holding times have mean $(p_1 - p_2)^{-1}$.
	
	\item \label{thm:SWFPPCompetitionAnglesConvoy} Write $\alpha = \xi^*(0)$. Then $C_\alpha = \set{k : \xi^*(k e_2) = \alpha}$ is a renewal process whose holding times have infinite mean, and asymptotically satisfies
	\begin{equation}
		\label{eq:IntroConvoyDensity}
		\lim_{n \to \infty}\frac{\abs{C_\alpha \cap [0, n]}}{\sqrt{n}} = c_\alpha \as,
		\end{equation}
		where $c_\alpha$ is an explicit constant depending on $\alpha$. In particular, almost surely every convoy along the vertical axis is infinite.
	\end{thmlist}
\end{theorem}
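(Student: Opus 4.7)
My plan is to translate everything to the Busemann process via Proposition~\ref{prop:CritFromBuse}: $\xi^*(ke_2) = \sup\{\xi \in \scrU : B^\xi(ke_2, (k+1)e_2) = 0\}$. By Theorem~\ref{thm:BuseDists} the joint law of vertical increments for any finite family of directions $\xi_1 > \cdots > \xi_n$ is the multi-line distribution $\mu_V^\rho$. For a single direction, vertical increments are i.i.d.\ Bernoulli-exponential, and this gives the Bernoulli statement in (ii) immediately: $\{k : \xi^*(ke_2) \ge \xi_1\} = \{k : B^{\xi_1}(ke_2, (k+1)e_2) = 0\}$ is a Bernoulli$(p_1)$ process. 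For the renewal statement with two directions, I would exploit the Markov structure implicit in the multi-line construction: the pair $(B^{\xi_1}_k, B^{\xi_2}_k)_k$ is generated by iterating the store map $H$ in the $V$-direction, and the event $\{\xi_1 \le \xi^*(ke_2) \le \xi_2\} = \{B^{\xi_1}_k = 0,\, B^{\xi_2+}_k > 0\}$ is regenerative for the Markov chain whose state is the associated store level. The mean holding time $(p_1 - p_2)^{-1}$ is then forced by the density identity $\Prob{\xi_1 \le \xi^*(0) \le \xi_2} = p_1 - p_2$ and the elementary renewal theorem.

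For part (i), I would reparametrize directions so that $V(x) := F(\xi^*(x))$ is marginally uniform on $(0, 1)$, where $F$ is the CDF of $\xi^*$. The joint law of $(V(0), V(e_2))$ is then determined by the two-position marginal of $\mu_V^{(\rho_1, \rho_2)}$ as $\rho_1, \rho_2$ range over all valid pairs; this marginal can be obtained by applying Burke-type stationarity identities for $H$ to the pair $(J^1, J^2)$ at two adjacent positions, in the spirit of \cite{mart-prab-fixed}. Integrating the resulting joint mass over the three regions $\{V(0) > V(e_2)\}$, $\{V(0) = V(e_2)\}$, $\{V(0) < V(e_2)\}$ then yields the three probabilities. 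The atomic mass on the diagonal $V(0) = V(e_2)$ is precisely the smallest scale of the convoy phenomenon developed in (iii) --- the positive probability that the critical angles at two adjacent positions coincide, which under the $H$-dynamics corresponds to the store being empty and the fresh input and service at the next position matching.

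For part (iii), I would obtain the renewal description of $C_\alpha$ by taking $\xi_1, \xi_2 \to \alpha$ in part (ii), working conditionally on $\alpha = \xi^*(0)$. The heavy tail $\Prob{T \ge n} \asymp c/\sqrt{n}$ should come from the fluctuation theory of the store random walk, which at the critical point $\xi_1 = \xi_2 = \alpha$ has zero drift and hence the characteristic $n^{-1/2}$ excursion-length tail. The main obstacle is upgrading the resulting distributional convergence --- heavy-tailed renewal theory alone gives a Mittag-Leffler limit, not almost-sure convergence to a constant. I would overcome this by expressing $\abs{C_\alpha \cap [0, n]}$ as an ergodic additive functional of the underlying store random walk (for example, as a local-time count at its running extremum), and then invoking Birkhoff's theorem on the appropriate invariant extension to obtain the almost-sure limit and identify the explicit constant $c_\alpha$.
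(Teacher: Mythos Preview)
Your approach to parts~(i) and~(ii) is essentially the paper's: translate to Busemann increments via Proposition~\ref{prop:CritFromBuse}, use that vertical increments in a single direction are i.i.d.\ to get the Bernoulli statement, and read the renewal structure from the store underlying the two-direction multi-line distribution. The paper makes~(ii) explicit by forming the random walk $S_k = \sum_{j\le k}(I^2_j - I^1_j)$ from the multi-line \emph{inputs}, so that the store is empty exactly at weak running minima of~$S$, and the event $\{\rho_1 \le \rho^*(ke_2) \le \rho_2\}$ becomes ``new weak running minimum via a down step, and that step came from zero input''. This packages the holding time as a geometric sum of first-descent times~$\tau$, which is more than you need for~(ii) but is the engine for~(iii).

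Where you diverge is in~(iii). The paper does not appeal to general fluctuation theory: it uses memorylessness of the exponential steps and Wald's third identity to solve for the characteristic function of the first-descent time~$\sigma$ explicitly (Lemma~\ref{lem:sigmaCHF}), then extracts the coefficient asymptotic $p_k \sim c\,k^{-3/2}$ by singularity analysis \`a la Flajolet--Sedgewick. This is what pins down the explicit constant~$c_\alpha$; your qualitative route gives the exponent~$1/2$ but not the constant.

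Your Birkhoff proposal for the almost-sure limit has a genuine gap. Birkhoff produces averages at the linear scale~$n$, and there is no stationary ergodic process here whose additive functional counts convoy members at scale~$\sqrt n$: the running-extremum local time you mention grows like~$\sqrt n$ precisely because it is \emph{not} an ergodic sum, and for a generic zero-drift walk its normalised limit is random, not deterministic. You are right that this is the delicate point. The paper handles it by invoking a heavy-tailed strong renewal theorem (stated as Theorem~\ref{thm:RenewalTheorem}, attributed to Erickson) once the tail $1-F(x) = Lx^{-1/2} + o(x^{-1/2})$ has been established from the generating-function computation, rather than through any ergodic-theoretic argument.
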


Of these, \cref{thm:SWFPPCompetitionAnglesProbs} is a straightforward computation using the multi-line distributions, and we refer to \cite[Theorem 1.7]{amir-angel-valko-tasep-speed} for an example. To our knowledge, the asymptotic density for a convoy in the TASEP speed process (in the sense of \eqref{eq:IntroConvoyDensity}) has not appeared explicitly, but our calculation may be followed with little modification to produce the leading order term. The explicit constant $c_\alpha$ for exponential weights is given in \cref{prop:ConvoyDensityConstant}.


\subsection{Outline for the remainder of the paper} 
\label{ssec:Outline}
In \cref{sec:GeneralUpdates} we give the background on the queueing maps which underlie the proofs in the integrable cases and prove the uniqueness of their fixed points. That the multi-line distributions yield such fixed points is verified in \cref{sec:Multiline}. The calculations behind the results on semi-infinite geodesics and competition interfaces are given in \cref{sec:NearAxisComps}. The extension of Martin's uniform limit shape convergence to our setting forms \cref{app:UniformShape}, and the piecing together of the elements of \cite{geor-rass-sepp-17-buse} and \cite{groa-janj-rass-gen-buse} to establish the convergence of the Busemann limits is in \cref{app:Busemann}.

\resumetoc
\section{Update maps and their fixed points}
\label{sec:GeneralUpdates}
As well as point-to-point passage times $\ptime{x, y}$, we may also consider \emph{line-to-point} passage times $L^\Gamma (x)$ from a path $\Gamma \subseteq \bZ^2$. We must specify a function $g : \Gamma \to \bR$ representing passage times on the path, and then define
\begin{equation}
	\label{eq:BoundaryPassage}
	L^\Gamma (x) = \min_{p \in \Gamma}g(p) + \ptime{p, x}.
\end{equation}
Observe that if $\Gamma \subseteq \bZ_{\ge 0}^2$ is a \emph{down-right} path (one consisting only of steps $e_1$ and $-e_2$), then by setting $g(p) = \ptime{p}$, we have that the passage times $\ptime{x}$ and $L^\Gamma(x)$ are equal whenever both are defined.

Shifting $g$ by a constant will merely shift the resulting passage times $L^\Gamma(x)$ by the same constant, so we may consider the passage times to be instead defined, up to a constant, by the increments $I = \Delta g = \set{g(p_{i + 1}) - g(p_i) : i \in \bZ,\,(p_i, p_{i + 1}) \in \Gamma}$. In this indexing, the points of our path are ordered from top-left to bottom-right. 

Suppose we translate our path forward to $\Gamma + e_1$ and look at $g' : \Gamma \to \bR$ given by $g'(p) = L^\Gamma(p + e_1)$. The resulting map on the increments taking $\Delta g = I \mapsto I' = \Delta g'$ is called an \emph{update map}. Since this map depends on the weights on edges above and to the right of vertices of $\Gamma$, we denote this set of weights by $W$ and write $I' = U^\Gamma(I, W, e_1)$. We can of course replace $e_1$ by any vector in $\bZ_{\ge 0}^2$.

This map is not well-defined for arbitrary sequences $I,\, W$, so for the time being let us call these sequences \emph{admissible} if the map makes sense. We will give precise constraints when we consider specific examples of update maps below.

A useful property that holds in complete generality is that update maps are monotone in the first two arguments. 
\begin{lemma}
	\label{lem:UpdateMapMono} Let $\Gamma \subseteq \bZ^2$ be an arbitrary down-right path and take $v \in \bZ_{\ge 0}^2$. If $I,\, I'$ are sequences of increments along $\Gamma$ and $W,\, W'$ are sets of weights adjacent to $\Gamma$, all admissible, then $I \ge I'$ and $W \ge W'$ implies
	\begin{equation}
		\label{eq:UpdateMapMono}
		U^\Gamma(I, W, v) \ge U^\Gamma(I', W', v).
	\end{equation}
	Here the comparisons are made component-wise.
\end{lemma}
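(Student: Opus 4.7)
The plan is to derive the claim from the more basic pointwise monotonicity of the line-to-point passage time $L^\Gamma(x) = \min_{p \in \Gamma}\sqrbrac[\big]{g(p) + \ptime{p, x}}$ in both its boundary values $g$ and its environment $W$. First I would note that, for fixed $x$, $L^\Gamma(x)$ is a minimum of quantities affine and non-decreasing in each $g(p)$, hence non-decreasing in each $g(p)$; likewise, via $\ptime{p, x} = \min_{\pi \in \paths{p, x}}\sum_{e \in \pi}\weight{e}$, which is itself a minimum of non-decreasing linear combinations of the edge weights, $L^\Gamma(x)$ is non-decreasing in each $\weight{e}$. Applied to the shifted path, this says that the new boundary function $g'(p) := L^\Gamma(p + v)$ is pointwise non-decreasing in $g$ on $\Gamma$ and in $W$ on the edges accessible from $\Gamma$ in the direction of $v$.

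Next I would translate this to the increments. Fixing a common reference vertex $p_0 \in \Gamma$ with $g(p_0) = g^-(p_0)$, the componentwise comparison $I \ge I^-$ upgrades to a pointwise dominance $g \ge g^-$ along the portion of $\Gamma$ accessible from below by the shifted vertices $p + v$ --- which, since $v \in \bZ^2_{\ge 0}$, is precisely the part of $\Gamma$ forward of $p_0$ in the top-left to bottom-right ordering. Combined with $W \ge W^-$, the pointwise statement of the previous paragraph yields $g' \ge g^{-\prime}$ pointwise along $\Gamma$, and taking first differences returns the desired componentwise inequality $U^\Gamma(I, W, v) \ge U^\Gamma(I^-, W^-, v)$.

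The main thing to be careful about will be the reconciliation of componentwise comparison of (possibly bi-infinite) increment sequences with pointwise comparison of the underlying absolute functions $g$: the admissibility hypothesis is what supplies the decay at infinity needed to fix a canonical normalization at $p_0$, so that the correspondence between the two orderings is unambiguous. If one prefers to sidestep the normalization discussion, the identical conclusion follows by a different route: first decompose $v$ into a composition of unit shifts $e_1$ and $e_2$, and then decompose each unit shift into a sequence of elementary updates at individual vertices of $\Gamma$. Each such local update expresses the new $g$-value at a single vertex as a minimum of affine functions of finitely many neighboring $g$- and $W$-values, making componentwise monotonicity manifest at each elementary step and hence, by composition, at the level of $U^\Gamma$.
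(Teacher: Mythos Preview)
The paper states this lemma without proof. More importantly, the statement as written is not entirely correct, and your argument contains a genuine gap which, were it valid, would establish the incorrect half.

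Your first route correctly observes that the map $g\mapsto g'$, with $g'(p)=L^\Gamma(p+v)$, is pointwise non-decreasing in both $g$ and $W$. The flaw is the closing step: ``taking first differences returns the desired componentwise inequality.'' From $g'(p_i)\ge (g^-)'(p_i)$ and $g'(p_{i+1})\ge (g^-)'(p_{i+1})$ one simply cannot conclude that $g'(p_{i+1})-g'(p_i)\ge (g^-)'(p_{i+1})-(g^-)'(p_i)$; pointwise dominance of functions says nothing about dominance of their first differences. Your second route (local updates) has the same defect: you track monotonicity of $g$-\emph{values} through each elementary step, but the lemma concerns \emph{increments}, and monotonicity does not descend from values to differences.

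That the gap is real is witnessed by the fact that your argument would equally establish monotonicity in $W$, which is false. For the SWFPP antidiagonal map $Y'_k=(Y_k-W_k)^+ + Y_{k-1}\wedge W_{k-1}$, take $Y\equiv 2$ and $W\equiv 1$ (so $Y'\equiv 2$), then raise $W_0$ to $3$: one computes $Y'_0=(2-3)^+ + 2\wedge 1=1<2$. Thus increasing a weight strictly decreased an output increment. The paper only ever invokes the $I$-monotonicity (via Lemma~\ref{lem:AMono}), so the $W$ half is a harmless overstatement there, but it does mean no correct proof of the lemma as stated can exist.

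The $I$-monotonicity is genuine, and your local-update idea can be repaired to prove it, provided you check monotonicity of the two \emph{new increments} at each corner flip --- e.g.\ that both coordinates of
\[
(\tilde I_{i-1},\tilde I_i)=\bigl((I_{i-1}+I_i+W_2)\wedge W_1,\ (I_{i-1}+I_i-W_1)\vee(-W_2)\bigr)
\]
are non-decreasing in $I_{i-1}$ and $I_i$ (they are; note they are \emph{not} monotone in $W_1,W_2$) --- and then compose. Alternatively, for the three maps $H,A,V$ actually used later, $I$-monotonicity is immediate from their explicit formulas \eqref{eq:HIncrements}, \eqref{eq:AIncrements}, \eqref{eq:VIncrements}.
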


Our interest is less so in the update maps themselves, but rather in their fixed points. We call a distribution $\rho$ a \emph{fixed point} of the update map if when $I \sim \rho$ and $W \sim \lambda$, we get $U^\Gamma(I, W, v) \sim \rho$ (either for some fixed $v$ or all $v \in \bZ_{\ge 0}^2$ --- clearly having this for all $v \in \set{e_1, e_2}$ is enough to imply the latter). The Busemann process provides a source of particularly strong fixed points. In the statement below, it is important to consider south-west Busemann functions. 

\begin{proposition}
	\label{prop:BusemannFixedPoint}
	In the general setting of \cref{ass:WeightAssump}, let $\xi_1 > \cdots > \xi_n \in \scrU$ be a sequence of directions and consider $I^k = \set{B^{\xi_k}(p_{i + 1}, p_{i}) : i \in \bZ,\, (p_i, p_{i + 1}) \in \Gamma}$. Then if $W$ is the set of weights adjacent to $\Gamma$,
	\begin{equation}
		\label{eq:BusemannFixedPoint1}
		U^\Gamma(I^k, W, v) = (I')^k,
	\end{equation}
	where the right sequence consists of the Busemann functions evaluated along the shifted path $\Gamma + v$. 
	
	In particular
	\begin{equation}
		\label{eq:BusemannFixedPoint2}
		\brac[\big]{U^\Gamma(I^1, W, v), \dots,  U^\Gamma(I^n, W, v)} \disteq (I^1, \dots, I^n).
	\end{equation}
	and the distributions $(I^1, \dots, I^n)$ are jointly ergodic under applications of the update map.
\end{proposition}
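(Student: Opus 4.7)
The plan is to reduce the whole statement to the single identity
\begin{equation}
L^\Gamma(x) = B^{\xi_k}(x, q)
\end{equation}
valid for every $x$ in the north-east region of $\Gamma$, where the source function has been set to $g(p) = B^{\xi_k}(p, q)$ on $\Gamma$ for a fixed reference $q \in \Gamma$. Given this identity, evaluating at $x = p + v$ yields $g'(p) = L^\Gamma(p + v) = B^{\xi_k}(p + v, q)$. The consecutive increments of $g'$ along $\Gamma$ are then $g'(p_{i+1}) - g'(p_i) = B^{\xi_k}(p_{i+1} + v, p_i + v)$ by the cocycle property \eqref{eq:BusemannExistenceCocycle}, which is exactly the Busemann increment sequence along $\Gamma + v$. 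Since $U^\Gamma$ depends only on the increments and is blind to the choice of $q$, this delivers the deterministic equality \eqref{eq:BusemannFixedPoint1}.

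To prove the key identity I would induct over the forward cone of $\Gamma$. The base case $x \in \Gamma$ is immediate from the definition of $L^\Gamma$. The inductive step compares the two recursions
\begin{equation}
L^\Gamma(x) = \min_{i \in \{1,2\}} \bigl(L^\Gamma(x - e_i) + W(x - e_i, x)\bigr)
\end{equation}
and
\begin{equation}
B^{\xi_k}(x, q) = \min_{i \in \{1,2\}} \bigl(B^{\xi_k}(x - e_i, q) + W(x - e_i, x)\bigr).
\end{equation}
The first is the standard dynamic programming recursion for line-to-point passage times. For the second, the $\leq$ direction comes from passing $v \to -\xi_k \cdot \infty$ in $L(v, x) \leq L(v, x - e_i) + W(x - e_i, x)$ and invoking cocycle; the matching equality for some $i$ is the south-west analogue of the adaptedness property in \cref{thm:BusemannExistenceAdaptedness}, obtained by applying that statement to the reflected environment in the sense of \cref{rem:NESWBuse}. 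This south-west adaptedness is the main technical content, and it is where all the real work lives.

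The distributional equality \eqref{eq:BusemannFixedPoint2} is then immediate from the stationarity of the process $\{\psi_x^{\xi \pm}\}$ in \cref{thm:BusemannExistenceErgodicity}: the Busemann increments along $\Gamma + v$ have the same joint law as those along $\Gamma$. The joint ergodicity assertion is inherited directly from the joint ergodicity in \cref{thm:BusemannExistenceErgodicity}, since iterating $U^\Gamma$ by $v$ corresponds exactly to translating the augmented process of weights together with all nearest-neighbour $B^{\xi_k}$-increments by $v$, an action under which \cref{thm:BusemannExistenceErgodicity} already guarantees ergodicity. Once the south-west adaptedness is checked, the rest of the proof is a two-line induction and a direct quotation of \cref{thm:BusemannExistenceErgodicity}.
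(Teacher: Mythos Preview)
Your proposal is correct and amounts to the same argument as the paper's, only unpacked in more detail. The paper dispatches the first claim in a single sentence, pointing to the south-west limit definition \eqref{eq:SWBusemannLimits}: since a geodesic from $v_n \to -\xi_k \cdot \infty$ to any $x$ above-right of $\Gamma$ must cross $\Gamma$, one has $L(v_n, x) = \min_{p \in \Gamma} L(v_n, p) + L(p, x)$, and passing to the limit gives $B^{\xi_k}(x, q) = L^\Gamma(x)$ directly. Your inductive route via adaptedness is the local reformulation of the same fact (adaptedness is itself a consequence of the limit definition), and both the distributional equality and the ergodicity are handled identically by quoting \cref{thm:BusemannExistenceErgodicity}.
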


The first claim may be seen directly from the definition of the Busemann limits in \eqref{eq:SWBusemannLimits}. The second claim follows from the stationarity in \cref{thm:BusemannExistenceErgodicity}.

\subsection{Update maps in SWFPP}
\label{ssec:UpdateMaps}
As one might expect, the update maps associated with the horizontal and vertical lines, as well as the antidiagonal, have especially simple explicit expressions and are quite tractable. This is even more true when we restrict ourselves to SWFPP. For each choice of path, let us define the maps in terms of increments and weights, and see how they may be recast in terms of the store model introduced in \cref{ssec:ModelDefs} and as the dynamics of various interacting particle systems. 

We quickly note that passage times in SWFPP possess a monotonicity not found in general edge-weight FPP. Recall that in SWFPP, horizontal edges are non-negative and vertical edges are zero.
\begin{lemma}
	\label{lem:SWFPPMono}
	Take $y \le x \in \bZ_{\ge 0}^2$ and suppose we are in the setting of SWFPP. Then
	\begin{equation}
		\label{eq:SWFPPMono}
		\ptime{y, x + e_2} \le \ptime{y, x} \le \ptime{y, x + e_1}.
	\end{equation}
	If the horizontal weights satisfy $\weight{x, x + e_1} > 0$, then the right inequality is strict and the left has equality if and only if the geodesic $\gamma(y, x + e_2)$ goes through $x$.
\end{lemma}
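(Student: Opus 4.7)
The plan is to reduce everything to the one-step recursion for passage times. In SWFPP, vertical edge weights vanish, so for any $y \le z$,
\begin{equation*}
\ptime{y, z} = \min\bigl(\ptime{y, z - e_1} + W(z; 1),\ \ptime{y, z - e_2}\bigr),
\end{equation*}
with the convention that $\ptime{y, w} = +\infty$ when $w \not\ge y$. Setting $z = x + e_2$ immediately yields the left inequality $\ptime{y, x + e_2} \le \ptime{y, x}$, since $\ptime{y, x}$ itself is one of the two candidates in the minimum. The right inequality is not as direct: applied at $z = x + e_1$, the recursion produces the minimum of $\ptime{y, x} + W(x + e_1; 1)$ and $\ptime{y, x + e_1 - e_2}$, and there is no obvious reason the second candidate must dominate $\ptime{y, x}$.

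For the right inequality I would therefore argue pathwise. Let $\gamma$ be a geodesic from $y$ to $x + e_1$ and let $(x_1 + 1, m)$ be the first vertex it visits in the column $\{z_1 = x_1 + 1\}$, with $y_2 \le m \le x_2$. The step preceding this vertex must be horizontal, coming from $(x_1, m)$, and every step after it is vertical (hence weightless) up to $x + e_1$. Therefore
\begin{equation*}
\ptime{y, x + e_1} = \ptime{y, (x_1, m)} + W((x_1, m), (x_1 + 1, m)).
\end{equation*}
On the other hand, concatenating a geodesic from $y$ to $(x_1, m)$ with the zero-cost vertical segment $(x_1, m) \to x$ exhibits a path to $x$ of cost at most $\ptime{y, (x_1, m)}$. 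Combining, $\ptime{y, x} \le \ptime{y, x + e_1} - W((x_1, m), (x_1 + 1, m))$, and strictness follows whenever the subtracted horizontal weight is positive.

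For the equality clause, I would return to the recursion at $z = x + e_2$, which reads $\ptime{y, x + e_2} = \min\bigl(\ptime{y, x + e_2 - e_1} + W(x + e_2; 1),\ \ptime{y, x}\bigr)$. One direction is immediate: if $\gamma(y, x + e_2)$ passes through $x$, its final edge is the zero-cost vertical $x \to x + e_2$, forcing $\ptime{y, x + e_2} = \ptime{y, x}$. Conversely, when $\ptime{y, x + e_2} = \ptime{y, x}$, the second candidate attains the minimum, so some geodesic exists whose last step is vertical out of $x$. The right-most tie-breaking convention selects this option whenever the two candidates coincide, since its penultimate vertex $x$ has strictly larger first coordinate than the alternative $x + e_2 - e_1$, which closes the equivalence.

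The main subtlety I expect to wrestle with is the strictness hypothesis in the right inequality. The estimate above yields $\ptime{y, x + e_1} - \ptime{y, x} \ge W((x_1, m), (x_1 + 1, m))$ for a horizontal edge whose height $m$ is dictated by $\gamma$ and need not equal $x_2$. Small examples with some zero horizontal weights show that strict inequality can fail if only $W(x, x + e_1) > 0$ is imposed at the single edge $(x, x+e_1)$. I would therefore read the hypothesis as the typical case where the weight distribution places no atom at zero on horizontal edges, under which the subtracted weight in the display above is almost surely positive and strictness is clean; alternatively one could attempt to pin $m = x_2$ under stronger structural assumptions, but this does not appear possible in full generality.
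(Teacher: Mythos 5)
The paper gives no proof of this lemma, so I assess your argument directly. It is correct. The recursion gives the left inequality immediately, and your pathwise argument for the right inequality is sound: if $\gamma(y, x+e_1)$ first enters the column $\{z_1 = x_1+1\}$ at height $m$, it does so by a horizontal step from $(x_1, m)$, and since $m \le x_2$ the free vertical segment from $(x_1, m)$ to $x$ yields $\ptime{y, x} \le \ptime{y, (x_1, m)} = \ptime{y, x+e_1} - W\bigl((x_1, m), (x_1+1, m)\bigr)$. The equality clause is also handled correctly via the recursion and the right-most convention; I would phrase the tie-breaking step globally (the right-most geodesic attains at each height the maximum $x$-coordinate achieved by any geodesic, so if any geodesic to $x+e_2$ reaches $x_1$ at height $x_2$, then the right-most one does and its last step is forced to be vertical), but your local comparison of penultimate vertices conveys the same point.

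Your concern about the strictness hypothesis is a correct and worthwhile observation about the paper's statement, not a defect in your proof. Taking $y = 0$, $x = e_2$, $W\bigl((0,0),(1,0)\bigr) = 0$ and $W\bigl((0,1),(1,1)\bigr) > 0$ gives $\ptime{y,x} = 0 = \ptime{y, x+e_1}$, so positivity of the single weight $W(x, x+e_1)$ alone does not deliver strictness. The hypothesis has to be read as requiring positivity of every horizontal weight that can lie on a geodesic from $y$ to $x+e_1$ (in particular, of $W\bigl((x_1, m),(x_1+1,m)\bigr)$), under which your bound gives strict inequality; this holds almost surely for atomless weight distributions, which is presumably what was intended.
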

It will be convenient in this context to write $\weight{x - e_1, x} = \weight{x; 1} = \weight{x}$.

\subsubsection{The antidiagonal update} 
\label{sssec:UpdateMapsA}
Let $\Gamma = \set{k(e_1 - e_2) : k \in \bZ}$ be the antidiagonal. We can happily take our increments $I$ to be completely arbitrary non-negative sequences, but it will be enough for us to consider those in the set
\begin{equation}
	\label{eq:AIncSet}
	\cS^{A} = \set[\Big]{(Y_k) \in \bR_{\ge 0}^{\bZ} : \lim \frac{1}{n}\sum_{j = k}^{k + n - 1}Y_j = \mu, \text{ for some } \mu \ge 0  \text{ and all } k \in \bZ}.
\end{equation}
This is simply the set of non-negative sequences with a well-defined average value. This average value is a conserved quantity (see \cref{lem:AMass}), and so it is natural to impose this restriction. The same restriction is imposed in \cite{fan-sepp-joint-buss}, and is the discrete version of the slope condition which appears in the context of the KPZ equation and other continuous models \cite{grass-KPZ}.

Let $g : \Gamma \to \bR$ be passage times along the boundary. Write $W_k = \weight{k(e_1 - e_2) + e_1}$. Then if $x_k = k(e_1 - e_2) \in \Gamma$, the passage time of $x + e_1$ is simply
\begin{equation}
	\label{eq:APassageTimes}
	L^\Gamma(x_k + e_1) = (g(x_k) + W_k) \wedge g(x_{k + 1}).
\end{equation}
If $Y = \Delta g$ and $Y'$ is the sequence increments along $\Gamma + e_1$, then
\begin{equation}
	\label{eq:AIncrements}
	Y'_k = (Y_k - W_k)^+ + Y_{k - 1} \wedge W_{k - 1}.
\end{equation}
Define $A(Y, W) = Y'$ to represent this updating of the sequence $Y$.

Recall the store model from earlier, which we now extend to a $\bZ$-indexed tandem of bins. The bins receive input from their lower-indexed neighbour and output to their higher-indexed neighbour. At each time step, each bin is given an amount of service, and will move the lesser of the service amount, or the entire amount in the bin. Let $Y_k$ represent the amount in bin $k$ at a particular time, and imagine that we apply the services $W_k$ starting from the higher-indexed bins and working down. Then when bin $k$ has its service, it has not yet received any input from bin $k - 1$. It outputs $Y_k \wedge W_k$ and has $(Y_k - W_k)^+$ remaining. Then bin $k - 1$ has its service, and the total in bin $k$ comes to $(Y_k - W_k)^+ + Y_{k - 1} \wedge W_{k - 1}$, matching the expression in \eqref{eq:AIncrements}. This process is depicted in \cref{fig:TandemDynamics}.

\begin{figure}
	\begin{subfigure}[]{\textwidth}
		\centering
		\resizebox{0.75\textwidth}{!}{\begin{tikzpicture}[x=1cm,y=1cm]
	\pgfmathsetmacro{\X}{0.5}   
	\pgfmathsetmacro{\I}{0.3}   
	\pgfmathsetmacro{\W}{0.60}   
	\pgfmathsetmacro{\Gap}{0.05}   
	
	\pgfmathsetmacro{\XI}{\X+\I}
	\pgfmathsetmacro{\OUT}{min(\W,\XI)}
	\pgfmathsetmacro{\REM}{max(\XI-\W,0)} 
	
	\def\BW{2.2}     
	\def\BH{3.0}     
	\def\GAP{3.6}    
	
	\pgfmathsetmacro{\hX}{\X*\BH}
	\pgfmathsetmacro{\hI}{\I*\BH}
	\pgfmathsetmacro{\hXI}{\XI*\BH}
	\pgfmathsetmacro{\hW}{\W*\BH}
	\pgfmathsetmacro{\hOUT}{\OUT*\BH}
	\pgfmathsetmacro{\hREM}{\REM*\BH}
	\tikzset{
		>={Classical TikZ Rightarrow[length=3pt,width=6pt]},
		bin/.style={draw,rounded corners=2pt,very thick,minimum width=\BW cm,minimum height=\BH cm},
		levelX/.style={fill=blue!50},
		levelXp/.style={fill=blue!30},
		levelI/.style={fill=green!55!black!25},
		levelIp/.style={fill=green!35!black!15},
		levelIt/.style={fill=green!40!black!5},
		levelY/.style={fill=orange!80!black!55},
		levelYp/.style={fill=orange!60!black!35},
		lab/.style={font=\sffamily\small},
		title/.style={font=\sffamily\bfseries,align=center},
		demandbrace/.style={decorate,decoration={brace,amplitude=5pt},thick},
		flowarrow/.style={->,very thick},
		panel/.style={draw=black!10,rounded corners=6pt,fill=black!2,inner sep=8pt}
	}
	\begin{scope}[shift={(-100,0)}]
		\node[inner sep=10pt] (p1) at (0,0) {%
			\begin{tikzpicture}
				\def\BW{1cm}
				\def\BH{1.5cm}
				\def\props{{0.6,0.7,0.4,0.5,0.9,0.3,0.4}}
				\def\gap{0.4cm}
				
				\foreach[evaluate=\i as \myi using int(\i - 3)] \i in {0,...,2} {
					\begin{scope}[shift={(\i * \BW + \i * \gap, 0)}]
						\path[levelY] (0,0) rectangle (\BW,\BH * \props[\i]);
						\node[lab] at (\BW/2, -0.3) {$Y_{\myi}$};
						
						\draw[very thick,rounded corners=2pt] 
						(0,0) -- (0,\BH)      
						(0,0) -- (\BW,0)      
						(\BW,0) -- (\BW,\BH); 
					\end{scope}
				}
				\foreach[evaluate=\i as \myi using int(\i - 3)] \i in {3,...,3} {
					\begin{scope}[shift={(\i * \BW + \i * \gap, 0)}]
						\path[levelX] (0,0) rectangle (\BW,\BH * \props[\i]);
						\node[lab] at (\BW/2, -0.3) {$Y_{\myi} - I_{1}$};
						
						\draw[very thick, dashed, levelIt] (0, \BH * \props[\i]) rectangle (\BW, \BH * \props[\i] + \BH * 0.2);
						
						\draw[very thick,rounded corners=2pt] 
						(0,0) -- (0,\BH)      
						(0,0) -- (\BW,0)      
						(\BW,0) -- (\BW,\BH); 
						
						\draw[very thick]
						(\BW+5.0,\BH * \props[\i]) -- ++(-0.1,0)              
						(\BW+5.0,\BH * \props[\i]) -- (\BW+5.0,\BH * \props[\i] + \BH * 0.2)          
						(\BW+5.0,\BH * \props[\i] + \BH * 0.2) -- ++(-0.1,0);           
						\node[right] at (\BW+0.5,\BH * \props[\i] + \BH * 0.1) {$W_{\myi}$};
					\end{scope}
				}
				\foreach[evaluate=\i as \myi using int(\i - 3)] \i in {4,...,6} {
					\begin{scope}[shift={(20 + \i * \BW + \i * \gap, 0)}]
						\path[levelYp] (0,0) rectangle (\BW,\BH * \props[\i]);
						\node[lab] at (\BW/2, -0.3) {$Y'_{\myi}$};
						
						\draw[very thick,rounded corners=2pt] 
						(0,0) -- (0,\BH)      
						(0,0) -- (\BW,0)      
						(\BW,0) -- (\BW,\BH); 
					\end{scope}
				}
				\foreach \i in {-1,...,2} {
					\draw[->, dashed, thick, black]
					( {\i*(\BW+\gap) + \BW * 0.6}, \BH ) to[bend left=30] ( {(\i+1)*(\BW+\gap) + \BW * 0.4}, \BH );
				}
				\foreach \i in {3,...,3} {
					\draw[->, thick, black]
					( {\i*(\BW+\gap) + \BW * 0.6}, \BH ) to[bend left=30] ( {(\i+1)*(\BW+\gap) + 20 + \BW * 0.4}, \BH );
				}
				\foreach \i in {4,...,6} {
					\draw[->, thick, black]
					( {\i*(\BW+\gap) + 20 + \BW * 0.6}, \BH ) to[bend left=30] ( {(\i+1)*(\BW+\gap) + 20 + \BW * 0.4}, \BH );
				}
			\end{tikzpicture}
		};
	\end{scope}
	
\end{tikzpicture}}
		\subcaption{The movement of material considered in the $A$ map. The bins are processed right to left. Here all of the bins to the right of and including bin $0$ have been processed. Since the service $W_0$ was less than the initial amount $Y_0$, the output from bin $0$ is $I_1 = W_0$. Next, the output from bin $-1$ will be received, completing the movement of material in bin $0$.}
	\end{subfigure}
	\begin{subfigure}[]{\textwidth}
		\centering
		\resizebox{0.75\textwidth}{!}{\begin{tikzpicture}[x=1cm,y=1cm]
	\pgfmathsetmacro{\X}{0.5}   
	\pgfmathsetmacro{\I}{0.3}   
	\pgfmathsetmacro{\W}{0.60}   
	\pgfmathsetmacro{\Gap}{0.05}   
	
	\pgfmathsetmacro{\XI}{\X+\I}
	\pgfmathsetmacro{\OUT}{min(\W,\XI)}
	\pgfmathsetmacro{\REM}{max(\XI-\W,0)} 
	
	\def\BW{2.2}     
	\def\BH{3.0}     
	\def\GAP{3.6}    
	
	\pgfmathsetmacro{\hX}{\X*\BH}
	\pgfmathsetmacro{\hI}{\I*\BH}
	\pgfmathsetmacro{\hXI}{\XI*\BH}
	\pgfmathsetmacro{\hW}{\W*\BH}
	\pgfmathsetmacro{\hOUT}{\OUT*\BH}
	\pgfmathsetmacro{\hREM}{\REM*\BH}
	\tikzset{
		>={Classical TikZ Rightarrow[length=3pt,width=6pt]},
		bin/.style={draw,rounded corners=2pt,very thick,minimum width=\BW cm,minimum height=\BH cm},
		levelX/.style={fill=blue!50},
		levelXp/.style={fill=blue!30},
		levelI/.style={fill=green!55!black!25},
		levelIp/.style={fill=green!35!black!15},
		levelIt/.style={fill=green!40!black!5},
		levelY/.style={fill=orange!80!black!55},
		levelYp/.style={fill=orange!60!black!35},
		lab/.style={font=\sffamily\small},
		title/.style={font=\sffamily\bfseries,align=center},
		demandbrace/.style={decorate,decoration={brace,amplitude=5pt},thick},
		flowarrow/.style={->,very thick},
		panel/.style={draw=black!10,rounded corners=6pt,fill=black!2,inner sep=8pt}
	}
	\begin{scope}[shift={(0,0)}]
		\node[inner sep=10pt] (p1) at (0,0) {%
			\begin{tikzpicture}
				\def\BW{1cm}
				\def\BH{1.5cm}
				\def\props{{0.3,0.0,0.7,0.5,0.6,0.9,0}}
				\def\gap{0.4cm}
				
				\foreach[evaluate=\i as \myi using int(\i - 3)] \i in {0,...,2} {
					\begin{scope}[shift={(\i * \BW + \i * \gap, 0)}]
						\path[levelXp] (0,0) rectangle (\BW,\BH * \props[\i]);
						\node[lab] at (\BW/2, -0.3) {$X'_{\myi}$};
						
						\draw[very thick,rounded corners=2pt] 
						(0,0) -- (0,\BH)      
						(0,0) -- (\BW,0)      
						(\BW,0) -- (\BW,\BH); 
					\end{scope}
				}
				\foreach[evaluate=\i as \myi using int(\i - 3)] \i in {3,...,3} {
					\begin{scope}[shift={(\i * \BW + \i * \gap, 0)}]
						\path[levelX] (0,0) rectangle (\BW,\BH * \props[\i]);
						\node[lab] at (\BW/2, -0.3) {$X_{\myi} + I_{\myi}$};
						
						\path[levelI] (0, \BH * \props[\i]) rectangle (\BW, \BH * \props[\i] + \BH * 0.2);
						
						\draw[very thick,rounded corners=2pt] 
						(0,0) -- (0,\BH)      
						(0,0) -- (\BW,0)      
						(\BW,0) -- (\BW,\BH); 
						
						\draw[very thick]
						(\BW+5.0,0) -- ++(-0.1,0)              
						(\BW+5.0,0) -- (\BW+5.0,\BH * 0.8)          
						(\BW+5.0,\BH * 0.8) -- ++(-0.1,0);           
						\node[right] at (\BW+0.5,\BH * 0.4) {$W_{\myi}$};
					\end{scope}
				}
				\foreach[evaluate=\i as \myi using int(\i - 3)] \i in {4,...,6} {
					\begin{scope}[shift={(20 + \i * \BW + \i * \gap, 0)}]
						\path[levelX] (0,0) rectangle (\BW,\BH * \props[\i]);
						\node[lab] at (\BW/2, -0.3) {$X_{\myi}$};
						
						\draw[very thick,rounded corners=2pt] 
						(0,0) -- (0,\BH)      
						(0,0) -- (\BW,0)      
						(\BW,0) -- (\BW,\BH); 
					\end{scope}
				}
				\foreach \i in {-1,...,2} {
					\draw[->, thick, black]
					( {\i*(\BW+\gap) + \BW * 0.6}, \BH ) to[bend left=30] ( {(\i+1)*(\BW+\gap) + \BW * 0.4}, \BH );
				}
				\foreach \i in {3,...,3} {
					\draw[->, dashed, thick, black]
					( {\i*(\BW+\gap) + \BW * 0.6}, \BH ) to[bend left=30] ( {(\i+1)*(\BW+\gap) + 20 + \BW * 0.4}, \BH );
				}
				\foreach \i in {4,...,6} {
					\draw[->, dashed, thick, black]
					( {\i*(\BW+\gap) + 20 + \BW * 0.6}, \BH ) to[bend left=30] ( {(\i+1)*(\BW+\gap) + 20 + \BW * 0.4}, \BH );
				}
			\end{tikzpicture}
		};
	\end{scope}
	
\end{tikzpicture}}
		\subcaption{The movement of material considered in the $V$ map. The bins are processed left to right. Here all of the bins to the left of bin $0$ have been processed. Bin $-1$ has placed its output in bin $0$, adding it to the initial amount $X_0$. Bin $0$ will produce output according to the service amount $W_0$. Because $W_0 > X_0 + I_0$, all of the material in bin $0$ will be output, giving $I_1 = X_0 + I_0$.}
	\end{subfigure}
	\caption{}
	\label{fig:TandemDynamics}
\end{figure}

For another perspective, consider the boundary passage times $g(x_k)$ as being the position of a particle $\eta_k$ on the real line. These particles then move according to a simple rule: particle $\eta_k$ will attempt to move forward by $W_k$ but stops upon hitting particle $\eta_{k + 1}$. If we update the particles with lower index first, then the new positions $\eta'_k$ are
\begin{equation}
	\label{eq:AParticleDynamics}
	\eta'_k = (\eta_k + W_k) \wedge \eta_{k + 1},
\end{equation}
which we see is precisely the same relation as in \eqref{eq:APassageTimes}. We might consider applying the update in \eqref{eq:AParticleDynamics} repeatedly, to arrive at a discrete-time interacting particle system. If the initial positions are $\set{\eta_{k, 0}}$, then denote by $\set{\eta_{k, t}}$ the position after $t$ steps, using weights $\set{W_{k, s}}$. With Bernoulli and geometric weights, this is the parallel TASEP of \cite{ferr-mart-dual} and R2 of \cite{mart-schi-discrete}, respectively.

The point-to-point passage times can be seen as line-to-point passage times with appropriate boundary conditions.
\begin{lemma}
	\label{lem:AParticles}
	Let $\eta_{k, 0} = 0$ for $k \le 0$ and $\eta_{k, 0} = \infty$ for $k \ge 1$. Consider SWFPP passage times with horizontal weights $\set{\weight{x}}$. Then if we set $W_{k, t} = \weight{(t + 1)e_1 + k(e_1 - e_2)}$, we have the identity
	\begin{equation}
		\label{eq:AParticles}
		\eta_{k, t} = \ptime{0, t e_1 + k(e_1 - e_2)}
	\end{equation}
	for all $k \le 0$.
\end{lemma}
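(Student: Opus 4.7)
My plan is to induct on $t \ge 0$, matching the particle update against the standard dynamic-programming recursion for directed passage times in SWFPP. Writing $x_{k, t} = t e_1 + k(e_1 - e_2) = (t+k, -k)$, the key identifications are $x_{k, t} - e_1 = x_{k, t-1}$, $x_{k, t} - e_2 = x_{k+1, t-1}$, and $W_{k, t-1} = \weight{x_{k, t}; 1}$, the horizontal weight on the edge terminating at $x_{k, t}$. Because vertical weights vanish in SWFPP, passage times satisfy $\ptime{0, x} = (\ptime{0, x - e_1} + \weight{x; 1}) \wedge \ptime{0, x - e_2}$ for $x \in \bZ_{\ge 0}^2 \setminus \set{0}$, and my task is to verify that the particle recursion reproduces this.

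For the base case at $t = 0$ I have $\eta_{0, 0} = 0 = \ptime{0, 0}$. For $k < 0$, the initial value $\eta_{k, 0} = 0$ acts as a free source along the upper-left tail of $\Gamma_0$; iterating the update shows $\eta_{k, t} = 0$ for $0 \le t \le -k$, which matches $\ptime{0, x_{k, t}} = 0$ when the target lies on the non-negative $y$-axis (no horizontal edges are required).

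For the inductive step I substitute the inductive hypothesis into $\eta_{k, t} = (\eta_{k, t-1} + W_{k, t-1}) \wedge \eta_{k+1, t-1}$. For $k < 0$ this rewrites as
\begin{equation*}
\eta_{k, t} = \bigl(\ptime{0, x_{k, t} - e_1} + \weight{x_{k, t}; 1}\bigr) \wedge \ptime{0, x_{k, t} - e_2},
\end{equation*}
which is exactly the SWFPP recursion, hence equal to $\ptime{0, x_{k, t}}$. For $k = 0$, the update uses $\eta_{1, t-1} = \infty$, which correctly discards the non-existent vertical predecessor $(t, -1) \notin \bZ_{\ge 0}^2$ and yields $\eta_{0, t} = \ptime{0, (t-1, 0)} + \weight{(t, 0); 1} = \ptime{0, (t, 0)}$, reflecting that points on the positive $x$-axis are reached from the origin only by horizontal steps.

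The hard part will be checking that the boundary data are coherent: the $0$ source on $\set{k \le 0}$ must not spuriously lower passage times inside the first quadrant, and the $\infty$ boundary on $\set{k \ge 1}$ must not exclude any optimal path from the origin. Both hold because directed paths from the origin stay in $\bZ_{\ge 0}^2$, and the free source agrees with the true passage times at every point where the front first enters the quadrant (the non-negative $y$-axis), so no artificial cost can propagate forward.
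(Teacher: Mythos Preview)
Your proof is correct, and since the paper states this lemma without proof, there is nothing to compare against; your induction on $t$ is the natural argument. One small point of presentation: the identity \eqref{eq:AParticles} only makes sense when $x_{k,t} = (t+k,-k) \in \bZ_{\ge 0}^2$, i.e.\ when $t \ge -k$, so your base case is really just $k=0$, $t=0$. To make the induction go through cleanly you should carry the auxiliary claim $\eta_{k,t} = 0$ for $k \le 0$, $t < -k$ (and $\eta_{k,t} = \infty$ for $k \ge 1$) alongside the main identity, as you implicitly do; this is what ensures that when the front first reaches the $y$-axis at $x_{-t,t} = (0,t)$ the particle value agrees with $L(0,(0,t)) = 0$, and that the $\infty$ boundary correctly suppresses the non-existent predecessor $(t,-1)$ at $k=0$.
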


\subsubsection{The vertical update} 
\label{sssec:UpdateMapsV}
Now let $\Gamma = \set{-k e_2 : k \in \bZ}$ be the vertical axis oriented downward. We would again like our sequences to have a well-defined average, but now this average must be positive to ensure the update is well-defined. Set
\begin{equation}
	\label{eq:VIncSet}
	\cS^{V} = \set[\Big]{(X_k) \in \bR_{\ge 0}^{\bZ} : \lim \frac{1}{n}\sum_{j = k}^{k + n - 1}X_j = \mu, \text{ for some } \mu > 0  \text{ and all } k \in \bZ}.
\end{equation}
As a means of distinguishing the two settings, we use $X = (X_k)$ to denote the increments here.

As before, let $g : \Gamma \to \bR$ be passage times along the boundary and write $W_k = \weight{- k e_2 + e_1}$. Letting $x_k = -k e_2 \in \Gamma$,
\begin{equation}
	\label{eq:VPassageTimes}
	L^\Gamma(x_k + e_1) = \min_{l \ge k} \brac[\big]{g(x_l) + W_{l}}.
\end{equation}
The expression on the increment side is more complicated:
\begin{equation}
	\label{eq:VIncrements}
	X'_k = (X_k + I_k - W_k)^+,
\end{equation}
where $I_k = \min_{l \ge k + 1} W_l + \sum_{j = k}^{l - 1}X_j$. This minimum exists for $X \in \cS^{V}$ almost surely, precisely because we require the average $\mu$ appearing \eqref{eq:VIncSet} to be positive. Write $V(X, W) = X'$.

This map can be interpreted in the same ways as the antidiagonal map. Now the tandem of bins is updated from the lower-indexed bins up, so that the output from bin $k - 1$ is added to the quantity in bin $k$ before the service is processed. We refer again to \cref{fig:TandemDynamics}. Similarly, in the interacting particle system, we move the particles beginning with the lowest index. With Bernoulli weights, this is sequential TASEP of \cite{ferr-mart-dual}, or alternatively the R1 of \cite{mart-schi-discrete}. This rule is illustrated in \cref{fig:SeqTASEP}

\begin{figure}
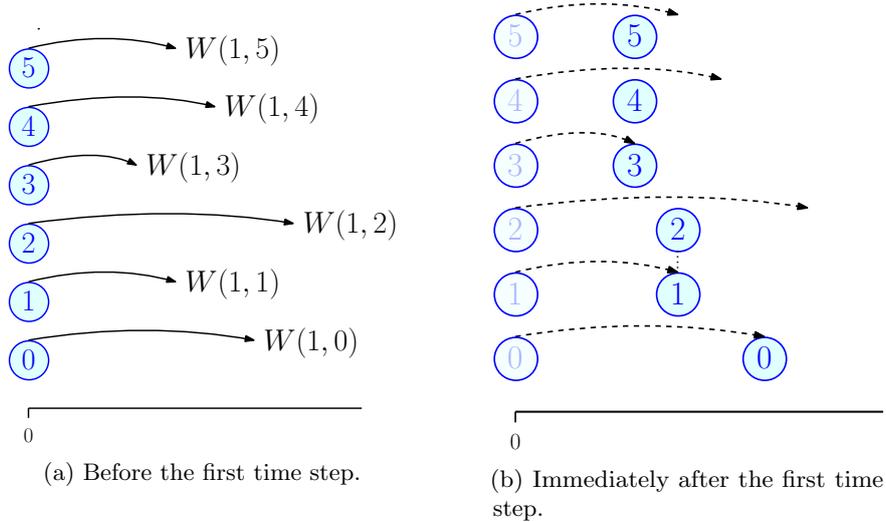

	\centering
	\begin{subfigure}[]{0.35\textwidth}
	\resizebox{\textwidth}{!}{\includegraphics[scale=1, page=7]{figures/ipe_figures}}
	\subcaption{Before the first time step.}	
	\end{subfigure}
	\hspace{1cm}
	\begin{subfigure}[]{0.35\textwidth}
		\resizebox{\textwidth}{!}{\includegraphics[scale=1, page=8]{figures/ipe_figures}}
		\subcaption{Immediately after the first time step.}	
	\end{subfigure}
	\caption{A time step under the sequential TASEP dynamics, with the particles initially stacked up at the origin. The particles each receive a jump size (weight), and will attempt to jump that distance unless blocked by the particle in front of them.}
	\label{fig:SeqTASEP}
\end{figure}

\begin{remark}
	TASEP enjoys a symmetry under exchanging particles and holes: as particles in TASEP move rightwards, the holes move leftwards under the same dynamics. This symmetry breaks down in discrete-time TASEP, but there remains some particle-hole duality. If we consider sequential TASEP with Bernoulli jumps, then the holes undergo parallel TASEP with geometric jumps. Using this duality, we may write passage times under Bernoulli weights as a function of passage times under a coupled geometric-weight model, and vice versa. In this sense, it is natural that both Bernoulli and geometric / exponential weights give rise to solvable SWFPP models, and that interpolations in the form of Bernoulli-geometric weights remain tractable. 
\end{remark}

\subsubsection{The horizontal update} 
\label{sssec:UpdateMapsH}
The last map to consider is that for the horizontal line $\Gamma = \set{k e_1 : k \in \bZ}$. For a fixed sequence of weights $W$, we require our increments to be smaller on average than the weights, in the sense of belonging to the set
\begin{equation}
	\label{eq:HIncSet}
	\cS^{H} = \set[\Big]{(I_k) \in \bR_{\ge 0}^{\bZ} : \lim \frac{1}{n}\sum_{j = k}^{k + n - 1}I_j = \mu < \limsup \frac{1}{n}\sum_{j = k}^{k + n - 1}W_j  \text{ for all } k \in \bZ}.
\end{equation}

With $g : \Gamma \to \bR$ the passage times along $\Gamma$ with increments $\Delta g = I \in \cS^{H}$ and $x_k = k e_1 \in \Gamma$, we have
\begin{equation}
	\label{eq:HPassageTimes}
	L^\Gamma(x_k + e_2) = \min_{l \le k} \brac[\big]{g(x_l) + \sum_{j = l + 1}^{k}W_{j}}.
\end{equation}
For the increments, the update is
\begin{equation}
	\label{eq:HIncrements}
	I'_k = (I_k + X_k) \wedge W_k,
\end{equation}
where $X_k = \max_{l \le k} \sum_{j = l}^{k - 1}(I_j - W_j)$. The existence of the maximum is where we need $I \in \cS^{H}$. Write $H(I, W) = I'$.

This map $H$ is precisely the $D$ map considered in \cite{mart-prab-fixed}.\footnote{Except that in Section 2 of that paper, they have $D$ refer to the G/G/1 queue instead.} Indeed, that the measures $\mu_{H}^{\rho}$ are multi-class invariant measures for this map was already proved in that work. 

We do not make use of the particle system representation for $H$, but we provide an illustration in \cref{fig:HParticles}. The picture makes clear a connection to the G/G/1 queue, particularly the graphical representation found in \cite[Figure 2]{ferr-mart-mtasep}. We come back to this comparison in \cref{ssec:LPPConnection}, where we show that our $H$ map is dual in some sense to the $D$ map of \cite{fan-sepp-joint-buss}, which can be seen as the output of a G/G/1 queue.

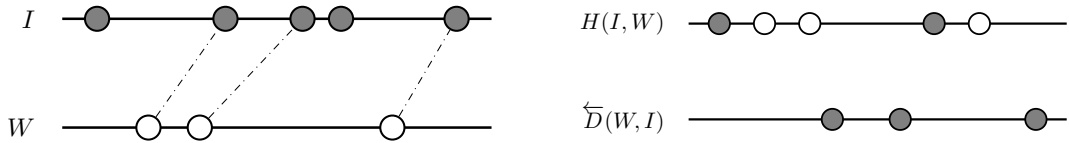
\begin{figure}
	\centering
	\begin{subfigure}[]{0.45\textwidth}
		\centering
		\resizebox{\textwidth}{!}{\begin{tikzpicture}
    [service/.style={circle,draw=black!100,fill=white!100,thick},
    input/.style={circle,draw=black!100,fill=gray!100,thick}]
    \node (upper) at (0, 1.5) [label=left:$I$]{};
    \draw[line width=1] (upper) -- ++(6,0);
    \node (lower) at (0, 0) [label=left:$W$]{};
    \draw[line width=1] (lower) -- ++(6,0);
    
    \node[service] (serv1) [right=of lower, xshift=0] {};
    \node[service] (serv2) [right=of lower, xshift=20] {};
    \node[service] (serv3) [right=of lower, xshift=95] {};
    
    \node[input] (input1) [right=of upper, xshift=-20] {};
    \node[input] (input2) [right=of upper, xshift=30] {};
    \node[input] (input3) [right=of upper, xshift=60] {};
    \node[input] (input4) [right=of upper, xshift=75] {};
    \node[input] (input5) [right=of upper, xshift=120] {};

    \draw[dash dot] (serv1) -- (input2);
    \draw[dash dot] (serv2) -- (input3);
    \draw[dash dot] (serv3) -- (input5);
\end{tikzpicture}}
	\end{subfigure}
	\hspace{2em}
	\begin{subfigure}[]{0.45\textwidth}
		\centering
		\resizebox{\textwidth}{!}{\begin{tikzpicture}
    [service/.style={circle,draw=black!100,fill=white!100,thick},
    input/.style={circle,draw=black!100,fill=gray!100,thick}]
    \node (upperpost) at (0, 1.5) [label=left:{$H(I, W)$}]{};
    \draw[line width=1] (upperpost) -- ++(6,0);
    \node (lowerpost) at (0, 0) [label=left:{$\overleftarrow{D}(W, I)$}]{};
    \draw[line width=1] (lowerpost) -- ++(6,0);
    
    \node[service] (serv1) [right=of upperpost, xshift=0] {};
    \node[service] (serv2) [right=of upperpost, xshift=20] {};
    \node[service] (serv3) [right=of upperpost, xshift=95] {};
    
    \node[input] (input1) [right=of upperpost, xshift=-20] {};
    \node[input] (input2) [right=of lowerpost, xshift=30] {};
    \node[input] (input3) [right=of lowerpost, xshift=60] {};
    \node[input] (input4) [right=of upperpost, xshift=75] {};
    \node[input] (input5) [right=of lowerpost, xshift=120] {};
\end{tikzpicture}}
	\end{subfigure}
	\caption{An illustration of a particle interpretation of the $H$ map. On the left are the arguments to the map. The inter-particle distances on the top row are given by the sequence $I$ and on the bottom by $W$. The particles on the bottom row are matched to a particle ahead of and above them. The unmatched particles on the top row stay where they are, and the matched particles move back, switching places with their match. The result is $H(I, W)$. The leftover particles are themselves interesting and give the output of a G/G/1 queue. Compare with \cite[Figure 2]{ferr-mart-mtasep}.}
	\label{fig:HParticles}
\end{figure}

\subsection{Uniqueness of fixed points} 
\label{ssec:FPUniqueness}
The strategy for identifying the distribution of the Busemann process, which we borrow from \cite{fan-sepp-joint-buss}, requires that the maps we defined above have unique ergodic fixed points. We introduce some notation before stating the theorem. Fix $n$ and draw weights $(\weight{x})_{x \in \bZ^2}$ from $\lambda$ and let $M^{V}$ be the set of measures on $(\cS^{V})^n$ which are jointly stationary and ergodic under shifts in the index and under applications of $V(\cdot, W)$. That is, if $X_{(0)} = ((X^1_{0, k}), \dots, (X^n_{0, k})) \sim \mu \in M^{V}$ and $X_{(t + 1)} = V(X_{(t)}, (W_{t, k})_{k \in \bZ})$, we would like the tuples $(X^1_{t, k}, \dots, X^n_{t, k})$ to be stationary and ergodic under shifts in $t$ and $k$, $\lambda$-a.s. Define $M^{A}$ and $M^{H}$ analogously.

\begin{theorem}
	\label{thm:UniqueFixedPoints}
	For a map $U \in \set{A, V, H}$, the elements of $M^{U}$ are uniquely determined by their mean vectors, provided we are in one of the following situations:
	\begin{thmlist}
		\item \label{thm:UniqueFixedPointsGeneral} For each choice of $c,\, \epsilon > 0$, either $\Prob{\weight{0; 1} > c,\, 0 \le \weight{0; 2} < \epsilon} > 0$, or this condition holds with the axes swapped.
		\item \label{thm:UniqueFixedPointsSWFPP} The weights are non-negative and $\weight{0; 2} = 0 \as$, or this condition holds with the axes swapped. That is, we are in SWFPP.
		\item \label{thm:UniqueFixedPointsSJR} The weights are non-negative and $\weight{0; 1} \wedge \weight{0; 2}= 0 \as$. That is, we are in the SJR model.
	\end{thmlist}
\end{theorem}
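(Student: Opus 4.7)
I would mirror the strategy of \cite{fan-sepp-joint-buss}: a monotone coupling argument combined with mean conservation, with an induction on the number of classes $n$ that reduces to the single-class case. For the inductive step, given two $n$-class elements of $M^U$ with common mean vector $\rho$, the marginals of the first $n-1$ coordinates themselves lie in $M^U$ (stationarity, ergodicity, and fixedness under $U$ are all inherited since $U$ acts coordinatewise on the classes) with mean vector $(\rho_1,\ldots,\rho_{n-1})$, and by the inductive hypothesis they agree in distribution. Couple the two $n$-tuples so that the first $n-1$ coordinates agree a.s.\ (via the standard measurable selection of a translation-invariant coupling); the problem then reduces to determining the $n$-th coordinate, which is a single-class question carrying the extra ordering constraint $I^n \le I^{n-1}$ forced by the multi-class dynamics.

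For the single-class base case, take two jointly stationary ergodic fixed points $I, J \in M^U$ of mean $\rho$, coupled on a common probability space with shared weights $W \sim \lambda$ so that $(I, J, W)$ is jointly stationary and ergodic under the index shift and under the update step with a fresh independent slab of weights. By the monotonicity of $U$ in \cref{lem:UpdateMapMono},
\begin{equation*}
U(I, W) \vee U(J, W) \le U(I \vee J, W), \qquad U(I, W) \wedge U(J, W) \ge U(I \wedge J, W).
\end{equation*}
Each of $A$, $V$, $H$ preserves the mean on its admissible domain, as is a direct bookkeeping check from the formulas \eqref{eq:AIncrements}, \eqref{eq:VIncrements}, \eqref{eq:HIncrements}. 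Since $\mathbb{E}[(I \vee J)_0] + \mathbb{E}[(I \wedge J)_0] = 2\rho$, taking expectations in the two inequalities above forces them to be a.s.\ equalities. In other words, on the coupling the update commutes with the componentwise operations $\vee$ and $\wedge$.

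From this rigidity one argues that the set of \emph{crossings} of $I$ and $J$ — the sites $k$ where the sign of $I_k - J_k$ flips — forms a translation-invariant configuration whose density is preserved under the update. The irreducibility hypotheses (i)--(iii) each produce a local event of positive probability on which the update would \emph{strictly decrease} the density of crossings unless $I = J$ locally: in case (ii), the identically zero vertical weights feed directly into \eqref{eq:AIncrements} and force adjacent entries to merge in the $A$-update (and analogously for $V$ and $H$); in case (iii), the random SJR switching variables play the same merging role; and in case (i), the assumed coexistence of a large weight in one coordinate direction and a near-zero weight in the other engineers the same forced collapse. An application of the ergodic theorem to the joint process then gives $I = J$ a.s.

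The main obstacle is this final step — translating the max/min-commutation rigidity into a strict contradiction with non-trivial crossings — which must be verified case by case for the three maps and the three irreducibility regimes. This is essentially the Fan--Sepp\"al\"ainen argument adapted to our setting; for the map $H$ it overlaps with the uniqueness statement implicit in \cite[Theorem 7.1]{mart-prab-fixed}, and we can appeal to that work directly, while for $A$ and $V$ the SWFPP structure makes the crossing-reduction step cleanest.
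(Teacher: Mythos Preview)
Your high-level plan (induction on $n$, monotonicity plus mean conservation, then a crossing argument) is the right spirit, but there are two concrete gaps relative to what the paper actually does.

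First, you propose to treat $A$, $V$, $H$ separately and to run the crossing-reduction argument for each. The paper avoids this entirely: before any contraction argument, it proves (\cref{prop:MeasureBijection}) that there are bijections $M^V \leftrightarrow M^A \leftrightarrow M^H$ which restrict to bijections on mean vectors, so uniqueness for $A$ alone suffices. This is not merely a convenience: $A$ has finite range of dependence (\eqref{eq:AIncrements} involves only $Y_k,Y_{k-1},W_k,W_{k-1}$), whereas $V$ and $H$ have infinite range through the running maxima in $X_k$ and the queue-length variable in \eqref{eq:HIncrements}. Your ``local event forces crossing collapse'' step would be substantially harder for $V$ and $H$, and you give no indication of how to handle it there. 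The appeal to \cite[Theorem 7.1]{mart-prab-fixed} for $H$ does not help: that result gives invariance of the multi-line distribution, not uniqueness among ergodic fixed points.

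Second, even for $A$, your crossing-reduction step is underspecified in a way that matters. The paper does not show that a \emph{single} application of $A$ strictly decreases crossings; rather (\cref{prop:AContract}), it shows weak contraction in the $\overline{\rho}$ metric after one step, and \emph{strict} contraction only after $N$ iterations, where $N$ is chosen large enough to push all material from a crossing configuration into a single bin via a specific weight event $B$. Your sketch implicitly assumes a one-step strict decrease, which is not what happens. Relatedly, you assume the existence of a jointly stationary, ergodic, \emph{update-invariant} coupling of $(I,J)$; the paper sidesteps this by working with the $\overline{\rho}$ metric over all jointly stationary couplings and deriving a contradiction from strict contraction, so no invariant coupling needs to be constructed.
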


We will not need \cref{thm:UniqueFixedPointsGeneral} in the sequel and merely note that it may be proved along the lines of \cite{chang-unique-queue}, following \cite{fan-sepp-joint-buss}.

Although we have only given explicit descriptions in the case of SWFPP, each of the update maps makes sense in our most general setting (that of \cref{ass:WeightAssump}). Of the three, the $A$ map is always the simplest, owing to its finite range of dependence. The following proposition shows that it is enough to establish uniqueness for $A$. 

\begin{proposition}
	\label{prop:MeasureBijection}
	Then there are bijections $M^{V} \leftrightarrow M^{A} \leftrightarrow M^{H}$. Moreover, if $m^{V}$ is the set of mean vectors for measures in $M^{V}$, and $m^{A},\, m^{H}$ are analogous, then these bijections restrict to bijections $m^{V} \leftrightarrow m^{A} \leftrightarrow m^{H}$.
\end{proposition}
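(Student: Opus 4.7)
The plan is to realise the three spaces $M^A$, $M^V$, $M^H$ as three different presentations of a single family of stationary ergodic two-dimensional random fields on $\bZ^2$, each such field consisting of an i.i.d.\ weight environment drawn from $\lambda$ together with a compatible $n$-tuple of passage time functions on $\bZ^2$ (determined up to additive constants). The bijections are then just the maps sending such a field to its increment sequence along the relevant line.

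First I would build the global field from an invariant measure. Take $\mu \in M^A$ and use $A$-invariance to construct the standard two-sided Markov-chain extension: a stationary ergodic process $(Y_t, W_t)_{t \in \bZ}$ with $Y_t \sim \mu$, $Y_{t+1} = A(Y_t, W_t)$, and $(W_t)$ i.i.d.\ from $\lambda^{\bZ}$. Viewing each $W_t$ as the weights on a strip of edges, and each coordinate $Y^k_t$ as the boundary-increment of a passage time function on a translate of the antidiagonal, these glue together into an $n$-tuple of passage time fields $(g^k(x))_{x \in \bZ^2}$ (up to additive constants), consistent with $\lambda$-i.i.d. weights. The combined data is $\bZ^2$-shift stationary --- the index shift along the antidiagonal and the one-step $A$-time shift together generate shifts by $e_1$ and $e_2$ --- and is ergodic because the joint process $(Y_t, W_t)$ is. Analogous constructions work starting from elements of $M^V$ or $M^H$.

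I would then define $\Phi_{AV}: M^A \to M^V$ and $\Phi_{AH}: M^A \to M^H$ by reading off the vertical- and horizontal-line increments of the global field, with reverse maps defined symmetrically. Membership of $\Phi_{AV}(\mu)$ in $M^V$ is direct: index stationarity and $V$-invariance both follow from $\bZ^2$-shift stationarity of the field (the $V$ update is precisely the one-step $e_1$-shift acting on the vertical line's increments), and joint ergodicity carries over from the global field. For the mean-vector claim, the mean increment along any chosen line is an explicit affine function of the field's stationary expectations (derivable from the $A$, $V$, $H$ update formulas together with $\Ex{W(0; i)}$), and this affine map is invertible on the relevant ranges, so the measure bijection restricts to a bijection of mean vectors.

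\textbf{Main obstacle.} The substantive point is showing that these forward and reverse maps compose to the identity, which reduces to the uniqueness in law of the global field from the first step given its increment distribution along any one of the three line types. Uniqueness on the ``positive'' side of the chosen line is immediate from the deterministic passage time formula with boundary and bulk data fixed. The nontrivial content is the uniqueness of the backward-in-time extension through $A^{-1}$, $V^{-1}$, or $H^{-1}$: one must verify that these three backward extensions, started from compatible boundary data, produce a common random field. The tools here are \cref{lem:UpdateMapMono} --- which controls how boundary perturbations propagate through the update maps --- together with ergodic-theoretic arguments exploiting that all three backward dynamics must compute the same FPP passage times in the opposite half-plane. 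I expect the bookkeeping required to pass between the three backward extensions to be the most technical part of the argument.
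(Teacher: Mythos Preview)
Your overall strategy --- build a $\bZ^2$-stationary field of passage times from an element of $M^U$ and then read off increments along the other two line types --- is exactly the paper's approach. You are overcomplicating the backward extension: the paper simply uses stationarity under the update map to extend the boundary data one step at a time into the past (via Kolmogorov consistency), and the forward passage-time formula then determines the whole field from any sufficiently early boundary. There is no need to invert $A$, $V$, or $H$ or to reconcile three separate backward dynamics; once the field exists, all three line-increment processes are deterministic functionals of it, and the same construction started from any of them reproduces the same field. So the ``main obstacle'' you identify is not really there.

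There is, however, a real gap in your mean-vector argument. You assert that the mean along one line is an invertible affine function of the mean along another, ``derivable from the $A$, $V$, $H$ update formulas together with $\Ex{W(0;i)}$''. This is not right. The mean along any line is a linear combination of the pair $(\Ex{g(0,e_1)},\Ex{g(0,e_2)})$, but knowing the mean along a single line fixes only one linear combination of this pair, not both components. The update formulas alone give no further constraint (invariance just says the mean is preserved under the map). The missing ingredient is the limit shape: the paper computes $\lim_n n^{-1}L^X(n,n)$ two ways --- once as $\beta-\alpha$ from stationarity of the increments, once as $\min_{0\le t\le 1}\bigl(\ell(1,t)+(1-t)\alpha\bigr)$ from the boundary-to-point variational formula --- and equates them. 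This pins down $\beta$ as a (generally nonlinear) function of $\alpha$, which is what makes the mean bijection well-defined. Without invoking $\ell$, your argument does not show that two measures in $M^V$ with the same mean map to measures in $M^H$ with the same mean.
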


An immediate consequence is that if suitably ergodic invariant measures for one map are determined by their mean, then this is true for the other maps.

\begin{proof}
	This is mostly obvious once we pass to the FPP setting, but for lightness of notation let us take $n = 1$ and just look at the relationship between $V$ and $H$. Suppose $X_{(0)} \sim \mu \in M^{V}$ and set $\Gamma_{(0)} = \set{- k e_2 : k \in \bZ}$. Denote by $L_{(0)}(t, k)$ the passage time on the right half-plane with boundary increments $X$. Recall that we defined such passage times in \eqref{eq:BoundaryPassage}. Then the $X_{(t)}$ in the statement is merely the sequence consisting of increments
	\begin{equation}
		X_{t, k} = L_{(0)}(t, -k - 1) - L_{(0)}(t, -k).
	\end{equation}
	By stationarity, we may find increments $X_{(-1)} \sim \mu$ such that the corresponding passage times $L_{(-1)}(x)$ with boundary $\set{-e_1 - k e_2: k \in \bZ}$ satisfy
	\begin{equation}
		X_{t, k} = L_{(-1)}(t, -k - 1) - L_{(-1)}(t, -k).
	\end{equation}
	for all $t \ge -1,\, k \in \bZ$. Continuing in this way, we may define $L_{(-s)}$ for all $s \ge 0$, and from this a consistent set of increments $\set{X_{t, k}: t \in \bZ,\, k \in \bZ}$, valid for all $t \in \bZ$.
	
	Next define horizontal increments
	\begin{equation}
		I_{k, t} = L_{(-s)}(k + 1, t) - L_{(-s)}(k, t),
	\end{equation}
	where $-s \le t$. Since the vertical increments are stationary and ergodic under vertical and horizontal shifts, it is easy to see the horizontal increments are also. Denote by $\nu$ the distribution of $I_{(0)} = (I_{k, 0})$. Then we take $\nu$ to be the image of $\mu$ in $M^{H}$ under our map. It is easy to see that we may go the other way: starting with horizontal increments and using the same weights, we may recover the distribution of the vertical increments.
	
	For the statement about means, it is enough to show that this map is well-defined. That is, two measures of $M^{V}$ with the same mean will be mapped to measures with the same mean. Injectivity will follow by reversing the roles of $V$ and $H$. Suppose $X \sim \mu \in M^{V}$ and $\widetilde{X} \sim \widetilde{\mu} \in M^{V}$, with $\Ex{X_0} = \Ex{\widetilde{X}_0} = \alpha$. Let $\nu,\, \widetilde{\nu} \in M^H$ be the corresponding horizontal increment distributions, with means $\beta,\, \widetilde{\beta}$. Let $L^X,\, L^{\widetilde{X}}$ be passage times with $X,\, \widetilde{X}$ as boundary increments. Stationarity of the increments gives us the law of large numbers for these passage times:
	\begin{equation}
		\label{eq:BoundaryLLN}
		\lim_{n \to \infty}\frac{1}{n} L^X(n,n) = \beta - \alpha, \qquad \lim_{n \to \infty}\frac{1}{n} L^{\widetilde{X}}(n,n) = \widetilde{\beta} - \alpha.
	\end{equation}
	On the other hand, if $\ell$ is the time constant for the passage times without boundary, then 
	\begin{equation}
		\lim_{n \to \infty}\frac{1}{n} L^X(n,n) = \min_{0 \le t \le 1} (\ell(1, t) + (1 - t)\alpha),
	\end{equation}
	and the same expression for $\widetilde{X}$. Hence both quantities in \eqref{eq:BoundaryLLN} are equal, and we get $\beta = \widetilde{\beta}$. For another example of this computation, see \cite[Section 3]{sepp-cgm-18}.
\end{proof}

\subsubsection{Uniqueness in SWFPP} In this section, $Y \in (\bR_{\ge 0})^{\bZ}$ is a sequence of store quantities and $W \in (\bR_{\ge 0})^{\bZ}$ is the service. We write $Y'$ for $A(Y, W)$, the result of applying the service. We begin with some easy lemmas.

\begin{lemma}
	\label{lem:AMass}
	With the notation above and $l < k \in \bZ$:
	\begin{equation}
		\sum_{i = l}^{k}Y'_i =  \sum_{i = l}^{k}Y_i + (Y_{l - 1} \wedge W_{l - 1}) - (Y_{k} \wedge W_{k}).
	\end{equation}
	\begin{proof}
		This follows from observing $(Y_i - W_i)^+ + (Y_{i} \wedge W_{i}) = Y_i$ and summing. 
	\end{proof}
\end{lemma}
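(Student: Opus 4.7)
The plan is to unwind the definition of $Y'_i$ directly and let the sum telescope. Recall from equation \eqref{eq:AIncrements} that $Y'_i = (Y_i - W_i)^+ + (Y_{i-1} \wedge W_{i-1})$, and the elementary identity
\begin{equation}
(a - b)^+ + (a \wedge b) = a,
\end{equation}
which holds for all real $a,\, b$ (checking the two cases $a \ge b$ and $a < b$ separately). The hint in the statement points exactly at this identity.

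First I would split the sum using the definition of $Y'_i$:
\begin{equation}
\sum_{i=l}^{k} Y'_i = \sum_{i=l}^{k} (Y_i - W_i)^+ + \sum_{i=l}^{k} (Y_{i-1} \wedge W_{i-1}).
\end{equation}
Then I would reindex the second sum as $\sum_{i=l-1}^{k-1} (Y_i \wedge W_i)$, and rewrite the first sum using the elementary identity as $\sum_{i=l}^{k} Y_i - \sum_{i=l}^{k} (Y_i \wedge W_i)$. Combining the two $(Y_i \wedge W_i)$ sums produces a telescoping difference in which all terms with $l \le i \le k-1$ cancel, leaving precisely $(Y_{l-1} \wedge W_{l-1}) - (Y_k \wedge W_k)$ plus the $\sum_{i=l}^k Y_i$ term, which is the claimed formula.

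There is no real obstacle here: the only thing to watch is the bookkeeping of the index shift in the second sum and the sign of the boundary term at index $k$. The identity $(a-b)^+ + (a \wedge b) = a$ requires no sign assumption on $a$ or $b$, so the lemma holds for arbitrary real-valued $Y$ and $W$ and not only for the non-negative case emphasised in the surrounding discussion.
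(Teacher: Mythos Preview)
Your proof is correct and is exactly the argument the paper intends: apply the identity $(Y_i - W_i)^+ + (Y_i \wedge W_i) = Y_i$ termwise, reindex, and let the $(Y_i \wedge W_i)$ sums telescope. The paper's one-line proof is just a terse version of what you wrote out.
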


\begin{lemma}
	\label{lem:AMono}
	If $\widetilde{Y}$ is another sequence of quantities such that $Y \ge \widetilde{Y}$ component-wise, then $Y' \ge \widetilde{Y}'$ (where we apply the same service $W$ to both).
	\begin{proof}
		This is just the statement of \cref{lem:UpdateMapMono} adapted to our setting.
	\end{proof}
\end{lemma}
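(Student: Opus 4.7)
The plan is to invoke the explicit formula for the antidiagonal update given in \eqref{eq:AIncrements}, namely $Y'_k = (Y_k - W_k)^+ + Y_{k-1} \wedge W_{k-1}$, and then observe that each component of $Y'_k$ is a non-decreasing function of the $Y$-entries (with $W$ fixed). Both operations $y \mapsto (y - w)^+$ and $y \mapsto y \wedge w$ are monotone non-decreasing in $y$ for any fixed $w \ge 0$, so if $Y_k \ge \widetilde{Y}_k$ and $Y_{k-1} \ge \widetilde{Y}_{k-1}$, then
\begin{equation}
Y'_k = (Y_k - W_k)^+ + Y_{k-1} \wedge W_{k-1} \ge (\widetilde{Y}_k - W_k)^+ + \widetilde{Y}_{k-1} \wedge W_{k-1} = \widetilde{Y}'_k.
\end{equation}
This gives the claim component-wise.

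Alternatively, as the hint in the statement suggests, one can just cite \cref{lem:UpdateMapMono}: the $A$ map is by definition the update map $U^\Gamma$ associated with the antidiagonal path $\Gamma = \{k(e_1 - e_2) : k \in \bZ\}$, and \cref{lem:UpdateMapMono} gives monotonicity in both the increments and the weights, so taking $W = W'$ and $Y \ge \widetilde{Y}$ yields the result immediately. There is no real obstacle here — the lemma is essentially a one-line verification from the formula, and is being recorded as a named statement only because it will be reused when establishing the uniqueness of fixed points of $A$.
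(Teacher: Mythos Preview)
Your proposal is correct, and the second approach you outline (citing \cref{lem:UpdateMapMono} with $W = W'$ and $Y \ge \widetilde{Y}$) is exactly the paper's proof. The direct verification from the explicit formula \eqref{eq:AIncrements} that you give first is a perfectly valid, more self-contained alternative, but the paper simply defers to the general monotonicity lemma.
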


\begin{lemma}
	\label{lem:AStab}
	Suppose $W$ consists of i.i.d random variables and $Y \sim \mu$, where $\mu$ is stationary and ergodic under shifts, with finite mean. Then $\Ex{Y'_0} = \Ex{Y_0}$.
	\begin{proof}
		Being a function of $Y$ and $W$, $Y'$ is itself stationary and ergodic. With full probability then
		\begin{equation}
			\Ex{Y'_0} = \lim_{k \to \infty}\frac{1}{k}\sum_{i = 0}^{k}Y'_i.
		\end{equation}
		But by \cref{lem:AMass}, this can be rewritten as 
		\begin{equation}
			\Ex{Y'_0} = \lim_{k \to \infty}\frac{1}{k}\sum_{i = 0}^{k}Y_i + \lim_{k \to \infty}\frac{Y_{-1} \wedge W_{-1}}{k} - \lim_{k \to \infty}\frac{Y_{k} \wedge W_{k}}{k}.
		\end{equation}
		The first limit is $\Ex{Y_0}$ almost surely, by ergodicity, and the second is zero trivially. Rearranging then,
		\begin{equation}
			\Ex{Y_0} - \Ex{Y'_0} =  \lim_{k \to \infty}\frac{Y_{k} \wedge W_{k}}{k}.
		\end{equation}
		Thus the limit on the right exists almost surely and is deterministic. As the $W_k$ are i.i.d, we can easily extract a subsequence which converges to zero, forcing the full limit to be zero. Hence $\Ex{Y'_0} = \Ex{Y_0}$, as needed.
	\end{proof}
\end{lemma}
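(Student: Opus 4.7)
The plan is to bypass the ergodic-theoretic route entirely and reduce the claim to the elementary pointwise identity $(a - b)^+ + a \wedge b = a$, valid for all reals $a, b$.

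I would begin from the defining formula for the $A$ map, namely $Y'_0 = (Y_0 - W_0)^+ + Y_{-1} \wedge W_{-1}$, and take expectations on both sides. Both summands are non-negative and, respectively, dominated by $Y_0$ and $Y_{-1}$, each of which is integrable by the stationarity of $\mu$ and the hypothesis $\Ex{Y_0} < \infty$. Linearity of expectation therefore yields $\Ex{Y'_0} = \Ex{(Y_0 - W_0)^+} + \Ex{Y_{-1} \wedge W_{-1}}$.

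Next, I would use that $\mu$ is stationary under the shift and that $W$ is i.i.d.\ and independent of $Y$: these together imply that the pair $(Y_{-1}, W_{-1})$ has the same joint distribution as $(Y_0, W_0)$, so $\Ex{Y_{-1} \wedge W_{-1}} = \Ex{Y_0 \wedge W_0}$. Substituting and applying the pointwise identity to $a = Y_0$, $b = W_0$ inside the expectation gives $\Ex{Y'_0} = \Ex{(Y_0 - W_0)^+ + Y_0 \wedge W_0} = \Ex{Y_0}$, as required. The only real point to check is the domination needed to split the sum under expectation, which is immediate from non-negativity; notably, this approach does not use the ergodicity of $\mu$ at all, so the conclusion in fact holds for any shift-stationary $\mu$ with $\Ex{Y_0} < \infty$.
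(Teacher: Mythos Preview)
Your argument is correct and considerably more direct than the paper's. The paper proceeds via the ergodic theorem: it writes $\Ex{Y'_0}$ as the almost-sure limit of $\frac{1}{k}\sum_{i=0}^k Y'_i$, applies the telescoping mass-conservation identity of \cref{lem:AMass} to relate this to $\frac{1}{k}\sum_{i=0}^k Y_i$ plus boundary terms, and then argues that the boundary term $\frac{Y_k \wedge W_k}{k}$ vanishes along a subsequence (using that the $W_k$ are i.i.d.) and hence along the full sequence. Your approach instead applies the pointwise identity $(a-b)^+ + a\wedge b = a$ directly to the single-site formula $Y'_0 = (Y_0 - W_0)^+ + Y_{-1}\wedge W_{-1}$, using only shift-stationarity of the pair $(Y,W)$ to match the two expectations.

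What your route buys is exactly what you note: ergodicity is irrelevant, and no limit argument is needed. The paper's route, while heavier for this particular lemma, is of a piece with the surrounding machinery --- the mass-conservation identity \cref{lem:AMass} and ergodic averaging are the tools used in the contraction estimate of \cref{prop:AContract} immediately after, so the lemma serves partly as a warm-up. One small point worth making explicit in your write-up: the equality $(Y_{-1},W_{-1}) \disteq (Y_0,W_0)$ uses joint shift-stationarity of $(Y,W)$, which follows from the (implicit) independence of $Y$ and $W$ together with the separate stationarity assumptions; the paper's proof relies on the same joint stationarity when it asserts that $Y'$ is stationary and ergodic.
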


Now we proceed along a contraction argument, following \cite{chang-unique-queue}. We will show that ergodic fixed points are unique in the single class case, when $n = 1$. This extends to general $n$ along the lines of \cite{fan-sepp-joint-buss}. The central tool will be the $\overline{\rho}$ metric on stationary sequences. Given stationary measures $\nu, \mu$, define
\begin{equation}
	\label{eq:rhobar}
	\overline{\rho}(\nu, \mu) = \inf_{(X, Y) \in \cM} \Exabs{X_0 - Y_0},
\end{equation}
where $\cM$ consists of the jointly stationary couplings of $(X, Y)$, where $X \sim \nu$, $Y \sim \mu$. When $\eta,\,\mu$ are ergodic, this is equal to the infimum over jointly ergodic and stationary couplings \cite[Theorem 8.3.1]{gray-ergodic}.

Chang proceeds by showing that the update map is strictly contractive for the $\overline{\rho}$ metric, under the assumption that the weights $W$ are unbounded. We are unable to show this strict contractivity here, but we find a weaker statement sufficient for our purposes.

In the following, let $\mu'$ be the distribution of $Y'$ when $Y \sim \mu$. Write $\mu^{(N)}$ for the result of applying this procedure $N$ times (with fresh, independent realisations of the weights $W$).

\begin{proposition}
	\label{prop:AContract}
	Let $\nu, \mu$ be ergodic and stationary measures on $\bR_{\ge 0}^{\bZ}$ with finite mean.
	\begin{proplist}[label=(\roman*)]
		\item \label{prop:AContractWeak} We have $\overline{\rho}(\nu', \mu') \le \overline{\rho}(\nu, \mu)$.
		\item \label{prop:AContractStrict} Suppose also that $\nu,\, \mu$ are distinct distributions but with the same mean, and that the support of $\lambda$ contains, but is not equal to, $0$. Then there is $N \ge 1$ such that $\overline{\rho}(\nu^{(N)}, \mu^{(N)}) < \overline{\rho}(\nu, \mu)$.
	\end{proplist} 
	\begin{proof}
		Fix some jointly ergodic and stationary coupling of $(\nu, \mu)$ and take sample sequences $X,\, Y$. To prove (i), we need to show that $\Exabs{X'_0 - Y'_0} \le \Exabs{X_0 - Y_0}$. Set $Z_i = X_i \vee Y_i$ and $\widetilde{Z}_i =  X'_i \vee Y'_i$, and write $Z'$ for the output of $A(Z, W)$. By \cref{lem:AMono}, we see that $Z'_i \ge \widetilde{Z}_i$.
		
		Observe
		\begin{equation}
			\abs{X_0 - Y_0} = 2Z_0 - X_0 - Y_0.    
		\end{equation} 
		On the other side,
		\begin{equation}
			\abs{X'_0 - Y'_0} = 2\widetilde{Z}_0 - X'_0 - Y'_0 \le 2Z'_0 - X'_0 - Y'_0.    
		\end{equation}
		Now by \cref{lem:AStab}, we take expectations on both sides to find 
		\begin{equation}
			\label{eq:AContract1}
			\Exabs{X'_0 - Y'_0} \le 2\Exabs{Z'_0} - \Exabs{X'_0} - \Exabs{Y'_0} = 2\Exabs{Z_0} - \Exabs{X_0} - \Exabs{Y_0} = \Exabs{X_0 - Y_0}.    
		\end{equation} 
		
		For (ii), we would like to identify an event on which $\widetilde{Z}_0 < Z'_0$, which will in turn make the inequality in \eqref{eq:AContract1} strict. In words, we would like to demonstrate a situation in which taking the maximum of the two tandems before the update leaves us with more material than taking the maximum after updating them separately. 
		
		Following Chang, the ergodicity, equal means, and the distributions being distinct together imply that a crossing occurs with positive probability. That is, there is $k \ge 1$, $c > 0$ such that the event 
		\begin{equation}
			A = \set{X_0 < Y_0,\, X_1 = Y_1, \dots, X_{k - 1} = Y_{k - 1},\, X_k > Y_k,\, X_i,\, Y_i \le c, \text{ for } 0 \le i \le k}
		\end{equation}
		has $\Prob{A} > 0$. 
		
		For the sake of simplicity, assume $\Prob{W = 0} > 0$. Choose $d > 0$ such that $\Prob{W \ge d} > 0$ and set $q = \ceil{c / d}$. As we will be iterating the process, let $(W^r_i)_{i \in \bZ}$ be the weights for the $r$-th iteration, all i.i.d. Set $N = q (k + 1)$ and consider the event
		\begin{equation}
			B = \set{W^r_{-1} = W^r_{k} = 0, W^r_{i} \ge d \text{ for } 1 \le r \le N,\, 1 \le i \le k - 1}.
		\end{equation}
		Our assumptions ensure $\Prob{B} > 0$, and independence means that also $\Prob{A \cap B} > 0$. On $A \cap B$ and with initial amounts given by $X$, no material enters bin $0$ and no material leaves bin $k$ over $N$ iterations. All of the material is eventually moved to bin $k$, which is left with $\sum_{i = 0}^{k}X_i$. Similarly, starting from $Y$ leaves bin $k$ with $\sum_{i = 0}^{k}Y_i$, and from $Z$ with $\sum_{i = 0}^{k}Z_i$. The crossing means that 
		\begin{equation}
			\sum_{i = 0}^{k}Z_i > \max\brac[\large]{\sum_{i = 0}^{k}X_i, \sum_{i = 0}^{k}Y_i},
		\end{equation}
		which is what we wanted.
		
		When $\Prob{W = 0} = 0$, we add an additional parameter $\epsilon > 0$ to the crossing event and make it
		\begin{equation}
			A = \set{X_0 \le Y_0 - \epsilon,\, X_1 = Y_1, \dots, X_{k - 1} = Y_{k - 1},\, X_k \ge Y_k + \epsilon,\, X_i,\, Y_i \le c, \text{ for } 0 \le i \le k}.
		\end{equation}
		Then we set
		\begin{equation}
			B = \set{W^r_{-1},\, W^r_{k} < \epsilon / 3 N, W^r_{i} \ge d, \text{ for } 1 \le r \le N,\, 1 \le i \le k - 1}.
		\end{equation}
		The argument proceeds in the same way.
	\end{proof}
\end{proposition}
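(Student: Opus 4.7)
The natural plan splits the two parts cleanly: (i) is a monotone-coupling / $\overline{\rho}$-nonexpansiveness argument using the elementary identity $|a - b| = 2(a \vee b) - a - b$, while (ii) requires producing a positive-probability event on which the pointwise inequality used in (i) is strict after finitely many iterations. This is the classical Chang template for uniqueness of stationary distributions of monotone, mass-preserving queueing operators.

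For (i), I would realise $\overline{\rho}(\nu, \mu)$ by a jointly ergodic stationary coupling $(X, Y)$, feed both through $A$ using the same i.i.d.\ service sequence $W$ (drawn independently of $(X, Y)$), and compare with $Z_i = X_i \vee Y_i$. Writing $Z' = A(Z, W)$ and $\widetilde{Z}_i = X'_i \vee Y'_i$, the monotonicity in \cref{lem:AMono} gives $Z' \ge \widetilde{Z}$ componentwise, so the identity rewrites $|X'_0 - Y'_0| = 2 \widetilde{Z}_0 - X'_0 - Y'_0 \le 2 Z'_0 - X'_0 - Y'_0$. Taking expectations and invoking the mean-preservation of \cref{lem:AStab} (applied to each of $X, Y, Z$) collapses the right-hand side to $2 \Ex{Z_0} - \Ex{X_0} - \Ex{Y_0} = \Exabs{X_0 - Y_0}$. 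Since the coupled evolution is again jointly stationary, this gives (i).

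For (ii) the plan is as follows. Ergodicity together with $\Ex{X_0} = \Ex{Y_0}$ and $\nu \ne \mu$ force a \emph{crossing}: there exist $k \ge 1$ and $c > 0$ such that the event $A = \{X_0 < Y_0, \, X_i = Y_i \text{ for } 1 \le i < k, \, X_k > Y_k, \, X_i, Y_i \le c \text{ for } 0 \le i \le k\}$ has positive probability. On $A$, the partial sum $\sum_{i=0}^k Z_i$ strictly exceeds both $\sum_{i=0}^k X_i$ and $\sum_{i=0}^k Y_i$. I would then build a positive-probability event $B$ on $N$ iterations of the service that both \emph{isolates} the window $\{0, \ldots, k\}$ (no mass crosses its boundary) and \emph{consolidates} all of its material into bin $k$. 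Taking services at positions $-1$ and $k$ equal to zero (using $0 \in \supp \lambda$) blocks the boundary flux, while taking interior services of size at least $d$ (where $\Prob{W \ge d} > 0$) for $N \asymp \lceil c/d \rceil k$ iterations drives all of the window's mass into bin $k$. On $A \cap B$, bin $k$ after $N$ steps satisfies $Z^{(N)}_k > \widetilde{Z}^{(N)}_k$ strictly, with positive probability. By stationarity of the marginals, $\Ex{Z^{(N)}_0} = \Ex{Z^{(N)}_k} > \Ex{\widetilde{Z}^{(N)}_k} = \Ex{\widetilde{Z}^{(N)}_0}$, so the expectation version of the inequality in (i) becomes strict at step $N$, yielding $\overline{\rho}(\nu^{(N)}, \mu^{(N)}) < \overline{\rho}(\nu, \mu)$.

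The main obstacle will be the atomless case $\Prob{W = 0} = 0$, where one cannot hard-block flux across the boundary of the window. The plan is to thicken the crossing event with an $\epsilon$-gap (replacing $X_0 < Y_0$ by $X_0 \le Y_0 - \epsilon$, and similarly at $k$), replace the ``zero boundary services'' condition by ``boundary services smaller than $\epsilon/3N$,'' and then verify that the total leakage across the window over $N$ steps is strictly dominated by the $\epsilon$-gap, preserving the strict inequality of accumulated masses in bin $k$. The crossing lemma itself is standard from Chang's paper and follows from equal means plus ergodicity; one should also confirm that the coupled dynamics $(X, Y, W)$ remain jointly ergodic under shifts, which is immediate since $W$ is i.i.d.\ and independent of $(X, Y)$.
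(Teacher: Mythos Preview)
Your proposal is correct and follows essentially the same approach as the paper's proof: the same $|a-b| = 2(a\vee b) - a - b$ identity combined with \cref{lem:AMono} and \cref{lem:AStab} for part (i), and the same Chang-style crossing event plus a service-configuration event that isolates and consolidates the window for part (ii), including the $\epsilon$-thickening fix in the atomless case. Your remarks on joint stationarity of the evolved coupling and on the index shift from bin $k$ to bin $0$ via stationarity are slightly more explicit than the paper, but the arguments are otherwise identical.
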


\begin{proof}[Proof of \cref{thm:UniqueFixedPointsSWFPP}]
	We prove only the case $n = 1$. If $\mu,\, \nu$ are two shift ergodic measures invariant under $A$ with the same mean, then in the above notation, $\mu^{(N)} = \mu$ for all $N$, and similarly for $\nu$. Thus
	\begin{equation}
		\overline{\rho}(\nu^{(N)}, \mu^{(N)}) = \overline{\rho}(\nu, \mu)
	\end{equation}
	for all $N$. But by \cref{prop:AContractStrict}, if they were distinct measures then this would eventually be strict inequality. Hence $\mu = \nu$. Generalising to $n \ge 2$ is straightforward, see \cite{fan-sepp-joint-buss}.
\end{proof}

We've actually proved \cref{thm:UniqueFixedPointsSWFPP} under weaker assumptions - we didn't need ergodicity under applications of $A$ here.

\subsubsection{Extension to SJR} 
The SJR model is a modest generalisation of SWFPP: instead of weights on horizontals and zeros on verticals, we allow the roles to swap independently on each vertex. It might then be unsurprising that the store and particle system pictures valid in SWFPP can be adapted to accommodate this generalisation. In the store picture, the correct thing to do is to allow both \emph{positive} and \emph{negative} material, one annihilating the other. The positive material behaves as before, but the negative material obeys different dynamics.\footnote{The generalisation of the particle picture is less natural: particles may have some random width and we allow some form of limited overtaking.}

As before, we consider a $\bZ$-indexed tandem of storage bins which push material forward, simultaneously at each time step. We take a sequence $Y \in (\bR)^{\bZ}$ of real-valued quantities, and real-valued weights $W \in (\bR)^{\bZ}$. That we allow the $Y$ to be negative reflects the failure of monotonicity in the SJR model (\cref{lem:SWFPPMono} need not hold in general). The signs of our weights reflect whether the weight lives on a horizontal or vertical edge. If $Y_i \ge 0$, the $i$-th bin contains positive material and we \emph{move} up to $W_i^+$ to the next bin. If $Y_i < 0$, then we have negative material, and $W_i^-$ tells us how much material to \emph{keep}, moving the remainder to the next bin. Positive and negative material in the same bin annihilate one another (i.e., we add the quantities). The update can be written succinctly as 
\begin{equation}
	\label{eq:RansRec}
	Y'_i = (Y^+_i - W_i^+)^+ - (Y_i^- \wedge W_i^-) + (Y^+_{i - 1} \wedge W^+_{i - 1}) - (Y_{i - 1}^- - W_{i - 1}^-)^+.
\end{equation}
Observe that if $Y \ge 0$, then this is the same as $A(Y, W^+)$ and the expression reduces to that for SWFPP.

The similarity of \eqref{eq:RansRec} to \eqref{eq:AIncrements} leads to immediate generalisations of \cref{lem:AMono,lem:AMass,lem:AStab}. Monotonicity and stability of the mean are as before, and the conservation of mass is adjusted as follows.

\begin{lemma}
	\label{lem:RansMass}
	With $l < k \in \bZ$:
	\begin{equation}
		\sum_{i = l}^{k}Y'_i = (Y^+_{l - 1} \wedge W^+_{l - 1}) - (Y_{l - 1}^- - W_{l - 1}^-)^+ - (Y^+_{k} \wedge W^+_{k}) + (Y_{k}^- - W_{k}^-)^+.
	\end{equation}
\end{lemma}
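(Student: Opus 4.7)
The identity is a straightforward telescoping book-keeping of signed material in the tandem, directly generalising the derivation of \cref{lem:AMass}. The plan is to decompose each $Y'_i$ from \eqref{eq:RansRec} into two pieces: the \emph{retained} contribution $(Y_i^+ - W_i^+)^+ - (Y_i^- \wedge W_i^-)$ coming from bin $i$'s own previous contents, and the \emph{arrival} $(Y_{i-1}^+ \wedge W_{i-1}^+) - (Y_{i-1}^- - W_{i-1}^-)^+$ coming from bin $i-1$. The elementary identity $(a - b)^+ + (a \wedge b) = a$, valid for $a, b \ge 0$ and applied separately to $Y_i^{\pm}$ against $W_i^{\pm}$, shows that the retention in bin $i$ plus the amount sent forward from bin $i$ reconstructs $Y_i^+ - Y_i^- = Y_i$.

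Summing over $i = l, \ldots, k$, the arrival contributions re-index to bins $l-1$ through $k-1$, while the retained contributions span $l$ through $k$. For every interior index $i \in \{l, \ldots, k-1\}$ both pieces appear and, by the identity above, reassemble as $Y_i$, giving a bulk $\sum Y_i$ portion. The only unpaired contributions are the arrival into bin $l$ from bin $l-1$, which directly yields the boundary terms $(Y^+_{l-1}\wedge W^+_{l-1}) - (Y^-_{l-1} - W^-_{l-1})^+$, and the retention in bin $k$; expanding the latter via $(Y_k^+ - W_k^+)^+ = Y_k^+ - (Y_k^+ \wedge W_k^+)$ and $-(Y_k^- \wedge W_k^-) = (Y_k^- - W_k^-)^+ - Y_k^-$ produces the stated boundary terms at index $k$.

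I do not foresee a genuine obstacle. The proof is entirely algebraic; the only real care required is in tracking signs, since the negative material behaves dually to the positive in that bin $i$ \emph{keeps} the minimum $Y_i^- \wedge W_i^-$ rather than sending it forward, and in checking that the telescope indeed plays out once the signed decomposition is laid out. A useful sanity check is that the derivation collapses exactly to the proof of \cref{lem:AMass} upon setting $Y, W \ge 0$, in which case the negative-part terms vanish and the positive-part terms give the familiar $(Y_{l-1} \wedge W_{l-1}) - (Y_k \wedge W_k)$ boundary contribution.
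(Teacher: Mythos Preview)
Your approach is correct and is exactly what the paper intends: it presents the lemma as the direct analogue of \cref{lem:AMass}, whose one-line proof uses $(a-b)^+ + (a\wedge b)=a$ and telescopes, and your argument carries this out in detail for the signed case. Note that your telescoping also (correctly) produces the bulk term $\sum_{i=l}^{k}Y_i$ on the right-hand side, which is missing from the displayed equation in the paper but is clearly intended given the analogy to \cref{lem:AMass}.
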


More importantly, it is apparent from the description that positive material and negative material are separately conserved, unless they meet in a single bin. In that case, the amount of each material decreases. This suggests that only one type of material may exist in stationarity.

We adjust the notation for the next proof. Consider a double-indexed collection of weights $\set{W_{i, t}}_{i,\, t \in \bZ^2}$, all i.i.d with distribution $\lambda$. At the time $t$ update we use $\set{W_{i, t}}_{i \in \bZ}$ as the weights. We write $Y_{i, 0}$ for the initial amount in bin $i$, and $Y_{i, t}$ for the amount after $t$ iterations.

\begin{proposition}
	\label{prop:RansStat}
	Let $(Y_{i, 0}) \sim \mu$, with $\mu$ a stationary, invariant distribution for $A$ which is suitably ergodic in the sense of \cref{thm:UniqueFixedPoints} and has $\Exabs{Y_{0, 0}} < \infty$. Then either $Y_{0, 0} \ge 0 \as$ or $Y_{0, 0} \le 0 \as$.
	
	\begin{proof}
		Consider $O_{i, t} = Y_{i, t}^- \wedge W_{i, t}^-$, the negative material output by the $i$-th bin at time $t$. The $O_{i, t}$ form a stationary sequence ergodic in both indices. Moreover, $O_{i, t} \le \abs{Y_{i, t}}$, and so $\Ex{O_{0, 0}} < \infty$. Thus by stationarity and ergodicity in time, we have almost surely that for all $i$,
		\begin{equation}
			\lim_{T \to \infty}\frac{1}{T}\sum_{t = 0}^{T - 1}O_{i, t} = \Ex{O_{0, 0}}.
		\end{equation}
		This is to say that the asymptotic rate of negative material leaving each bin, and hence entering each bin, is the same. This implies that only a vanishing proportion of negative material is annihilated at each time step.
		
		Toward a contradiction, suppose that under $\mu$, both $\Prob{Y_{0, 0} > 0}$ and $\Prob{Y_{0, 0} < 0}$ are positive. Now ergodicity and stationarity in the first index imply that there is $k \ge 1$, $c > 0$ such that the event 
		\begin{equation}
			A = \set{Y_{0, 0} < -c,\, Y_{1, 0} = \cdots Y_{k - 1, 0} = 0,\, Y_{k, 0} > c}
		\end{equation}  
		occurs with positive probability. For simplicity assume again that $\Prob{W_{0, 0} = 0} > 0$ and consider the event
		\begin{equation}
			B = \set{W_{i, t} = 0 \text{ for } -1 \le i \le k,\, 0 \le t \le k - 1}.
		\end{equation}
		Then $\Prob{A \cap B} > 0$, and on this event the negative material from bin $0$ is brought to bin $k - 1$ over $k$ time steps. It follows, making use of stationarity and invariance, that the event 
		\begin{equation}
			A' = \set{Y_{0, 0} < -c,\, Y_{1, 0} > c,\, W_{0, 0} = W_{0, 1} = 0}
		\end{equation} 
		happens with positive probability. On this event we will get $Y_{1, 1} = Y_{0, 0} + Y_{1, 0}$, and $\abs{Y_{1, 1}} < \abs{Y_{0, 0}} \vee \abs{Y_{1, 0}} - c$. Write $\alpha = \Prob{A'}$. We use now the assumption of joint ergodicity, which allows us to conclude that negative material is annihilated in bin $1$ at a rate of at least $c \alpha$. This contradicts the condition we derived in the first paragraph. 
		
		The case when $\Prob{W = 0} = 0$ can be handled as in $\cref{prop:AContract}$.
	\end{proof}
\end{proposition}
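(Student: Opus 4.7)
The plan is to exploit the observation that in \eqref{eq:RansRec} the positive and negative material are separately conserved up to their mutual annihilation, and to combine this with the stationarity of $\mu$ under $A$ to force the annihilation rate to vanish---yielding a contradiction if both signs are present with positive probability.

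Concretely, writing $P_{i,t} = Y_{i,t}^+$ and $N_{i,t} = Y_{i,t}^-$, from \eqref{eq:RansRec} one extracts a refinement of \cref{lem:RansMass} of the form
\begin{equation}
    \sum_{i = l}^{k} P_{i,t+1} = \sum_{i = l}^{k} P_{i,t} + F^+_{l-1,t} - F^+_{k,t} - A^+_{[l,k],t},
\end{equation}
together with a symmetric identity for $N$. Here $F^+_{i,t}$ denotes the positive mass crossing from bin $i$ to bin $i+1$ at step $t$, and $A^+_{[l,k],t}$ is the positive mass annihilated in $[l,k]$ during the step. The stationarity of $\mu$ under $A$, combined with shift-stationarity in $i$, then forces $\mathbb{E}[P_{i,t}]$ to be constant in both $i$ and $t$ (an argument analogous to the mean-stability of \cref{lem:AStab}, applied to $P$ separately). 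Taking expectations in the identity above, summing over $l \le i \le l+n$, and using that the flux terms telescope and are translation invariant, one obtains $\mathbb{E}[A^+_{\{0\},0}] = 0$, so no annihilation occurs in stationarity.

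Finally, assume toward contradiction that both $\mathbb{P}(Y_{0,0} > 0)$ and $\mathbb{P}(Y_{0,0} < 0)$ are positive. Arguing as in \cref{prop:AContract}, the spatial ergodicity of $\mu$ produces $c > 0$ and $k \ge 1$ such that
\begin{equation}
    E = \set{Y_{0,0} \ge c,\ Y_{1,0} = \cdots = Y_{k-1,0} = 0,\ Y_{k,0} \le -c}
\end{equation}
has positive probability. On $E$, together with an independent favourable weight event over $O(k)$ time steps---specifically, services $W^+_{j,j}$ large enough to carry the positive mass from bin $0$ forward one bin per step, and services $W^-_{k,t}$ large enough to keep the negative mass stationary at bin $k$---the positive mass at bin $0$ collides with the negative mass at bin $k$, producing at least $c$ annihilation. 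Joint ergodicity in space and time then gives a positive expected annihilation rate per bin per step, contradicting the previous paragraph. The main obstacle is handling the case $\mathbb{P}(W=0)=0$, where the passage of positive material through the empty middle bins must be effected by small-but-nonzero service values, requiring quantitative control of the crossing event of the form already worked out in the second half of the proof of \cref{prop:AContract}.
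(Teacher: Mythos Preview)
Your proposal is correct and follows the same two-step strategy as the paper: first show that the annihilation rate vanishes in stationarity, then exhibit a crossing event on which annihilation is forced, yielding a contradiction. Your argument for the first step---taking expectations directly in the positive-mass flux balance to get $\mathbb{E}[A^+_{\{0\},0}]=0$---is actually more direct than the paper's, which instead applies the ergodic theorem to the time-averaged negative output $O_{i,t}$ to conclude that the flux into and out of each bin match asymptotically. One minor point on the second step: the paper orients the crossing event with \emph{negative} material at bin $0$ and \emph{positive} at bin $k$, so that taking all relevant weights equal to zero cleanly pushes the negative mass forward while fixing the positive mass in place (and any negative contamination arriving from bin $-1$ is harmless). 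Your reversed orientation requires large $W^+$ to push positive mass forward and large $W^-$ at bin $k$ to pin the negative mass, which works but needs slightly more bookkeeping to bound the mass being carried and to control contamination from outside the window; swapping the orientation would align with the paper and simplify that step.
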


\begin{corollary}
	Suppose $\mu,\, \nu$ are invariant for $A(\cdot, W)$ and have equal means. Then $\mu = \nu$.
	\begin{proof}
		Suppose the common mean is non-negative. Then \cref{prop:RansStat} shows that they are supported on non-negative sequences. The remark following \eqref{eq:RansRec} shows that they are then invariant for $A(\cdot, W^+)$. By our uniqueness statement for SWFPP, $\mu = \nu$.
	\end{proof}
\end{corollary}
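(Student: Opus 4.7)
The plan is to reduce uniqueness for the SJR update to the already established uniqueness for SWFPP (\cref{thm:UniqueFixedPointsSWFPP}). The key structural input is \cref{prop:RansStat}, which shows that any suitably ergodic stationary fixed point of $A(\cdot, W)$ in the SJR setting must be supported entirely on non-negative sequences or entirely on non-positive sequences --- positive and negative material cannot coexist in stationarity without one sign being asymptotically annihilated.

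Without loss of generality assume the common mean is non-negative; if it is non-positive the same argument goes through after relabelling signs, since the SJR dynamics in \eqref{eq:RansRec} are symmetric under the interchange of the ``move'' role of $W^+$ and the ``keep'' role of $W^-$. Applying \cref{prop:RansStat} separately to $\mu$ and to $\nu$ then forces both to be concentrated on sequences of non-negative entries, since a distribution with non-negative mean cannot be supported on non-positive sequences unless it is a point mass at zero (in which case there is nothing to prove).

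Once we restrict to non-negative sequences, the SJR recursion \eqref{eq:RansRec} collapses: every term involving $Y_i^-$ or $Y_{i-1}^-$ vanishes, and the update reduces to $A(\cdot, W^+)$, which is exactly the SWFPP antidiagonal update driven by the non-negative weights $W^+$. Hence $\mu$ and $\nu$ become two stationary, ergodic fixed points of the SWFPP map $A(\cdot, W^+)$ with equal means, and \cref{thm:UniqueFixedPointsSWFPP} yields $\mu = \nu$.

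The one point requiring care is that the hypotheses of \cref{prop:RansStat} include a joint ergodicity assumption, but this is built into the definition of $M^A$ preceding \cref{thm:UniqueFixedPoints}, so it is supplied by the statement of the corollary itself. I do not anticipate a genuine obstacle beyond bookkeeping the two sign cases symmetrically.
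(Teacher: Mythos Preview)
Your proposal is correct and follows essentially the same route as the paper: invoke \cref{prop:RansStat} to force support on sequences of a single sign, observe that on non-negative sequences the SJR update \eqref{eq:RansRec} reduces to the SWFPP map $A(\cdot,W^+)$, and then conclude via the SWFPP uniqueness of \cref{thm:UniqueFixedPointsSWFPP}. Your additional remarks on the sign symmetry and the ergodicity hypothesis are accurate bookkeeping that the paper leaves implicit.
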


This is essentially \Cref{thm:UniqueFixedPointsSJR}.

\begin{remark}
	Nothing stops us from defining SWFPP and SJR in higher dimensions. For the former, we fix a direction and the edges in all other directions are constrained to have zero weight. For the latter we choose independently at each vertex the incoming edge to hold a nonzero weight. We should expect the invariant measures of the SJR to be those of an appropriate SWFPP. One direction is easy - an invariant measure for SWFPP is also invariant for SJR. Our proof in the other direction for two dimensions relied on a representation in terms of positive and negative material moving through stores, which doesn't appear to be available in general.
\end{remark}

\section{The multi-line distribution as a fixed point}
\label{sec:Multiline}
Throughout this section, the maps $H$, $V$, and $A$ are those corresponding to the SWFPP model (that is, the store model) discussed in \cref{ssec:UpdateMaps}.
\subsection{Single-class fixed points}
\label{ssec:FPMarginals}
Here we make a note of the single-class fixed points of our maps under the integrable weights. These all turn out to be product measures, and so we need only describe the marginals. We collect the results of \cite{mart-bergeom} and take various limits suggested there, and arrive at \cref{tab:SWFPPInvariantDists}.

First, we say that the distributions $\Ber(p)\Exp(\alpha)$ and $\Ber(q)\Exp(\beta)$ are \emph{compatible} if the parameters satisfy
\begin{equation}
	\label{eq:ExpCompat}
	\frac{\alpha p}{1 - p} = \frac{\beta q}{1 - q}.
\end{equation}
One consequence of compatibility is that $q \le p$ implies $\beta \ge \alpha$, and that for a fixed $p,\, \alpha$, there is only one choice of $q,\, \beta$ producing a distribution with a given mean. Their discrete counterparts $\Ber(p)\Geom_+(a)$ and $\Ber(q)\Geom_+(b)$ are compatible if 
\begin{equation}
	\label{eq:GeomCompat}
	\frac{a}{1 - a}\frac{p}{1 - p} = \frac{b}{1 - b}\frac{q}{1 - q}.
\end{equation}
For a fixed weight distribution, we can check that the one parameter families of distributions appearing in \cref{tab:SWFPPInvariantDists} are compatible for different values of $\rho$ and $c$. From this it is clear that the measures defined in \cref{ssec:ExactDists} by iterated application of the update maps do indeed have the claimed marginals.

\begin{table}[ht]
	\centering
	\small
	\renewcommand{\arraystretch}{1.4}
	\begin{tabular}{
			>{\arraybackslash}p{2.5cm}
			>{\arraybackslash}p{4.8cm}
			>{\arraybackslash}p{3.5cm}
			>{\arraybackslash}p{3.5cm}
		}
		\toprule
		$W$ & $H(\cdot, W)$ & $V(\cdot, W)$ & $A(\cdot, W)$ \\
		\midrule
		$\Ber(p)\Exp(\alpha)$ & $\Ber\brac[\big]{\frac{\alpha p}{\alpha + (1 - p)\rho}}\Exp(\alpha + \rho)$  & $\Ber\brac[\big]{\frac{\alpha}{\alpha + \rho}}\Exp(\rho)$   & $\Ber\brac[\big]{\frac{\alpha}{\alpha + (1 - p)\rho}}\Exp(\rho)$   \\
		$\Ber(p)\Geom_+(a)$ & $\Ber(q(p, a, c))\Geom_+(a(1 - c) + c)$  & $\Ber(r(p, a, c))\Geom_+(c)$   & $\Ber(s(p, a, c))\Geom_+(c)$   \\
		$\Ber(p)$ & $\Ber\brac[\big]{\frac{(1 - c) p}{1 - c p}}$  & $\Geom_0(c)$   & $\Ber(1 - c + c p)\Geom_+(c)$\\
		\bottomrule
	\end{tabular}
	\caption{When  $W$ is a sequence of i.i.d Bernoulli-exponential, Bernoulli-geometric, or Bernoulli variables, the invariant measures for the update maps are product measures with marginals given in the table. We get a valid distribution for any choice of $\rho \ge 0$ or $c \in [0, 1]$. In the second row, we take $q(p, a, c) = \frac{a (1 - c) p}{a(1 - c) + c(1 - p)}$, $r(p, a, c) = \frac{a(1 - c)}{a(1 - c) + c}$, and $s(p, a, c) = \frac{a(1 - c) + c p}{a(1 - c) + c}$.}
	\label{tab:SWFPPInvariantDists}
\end{table}

The following is part of Theorem 4.1 of \cite{mart-bergeom} and will be useful in making calculations.
\begin{lemma}
	Consider an i.i.d sequence of service $W$, whose marginals appear in the left column of \cref{tab:SWFPPInvariantDists}, and suppose we have inputs $I$ which follow the corresponding distribution in the second column. Then if we consider the store with inputs $I$ and service $W$ and look at the store size at time $0$ before and after input, but before service, then these quantities will follow the distribution in the third and fourth columns, respectively. 
	
	Similarly, if we have a sequences $X$ and $Y$ of store quantities for a tandem, before and after input, following the respective distributions in \cref{tab:SWFPPInvariantDists}, then the output from store $0$ will follow the distribution in the second column of the table.
\end{lemma}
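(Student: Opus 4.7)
The lemma's two paragraphs are essentially a single statement read in two directions, and both follow from a direct verification of the fixed-point equation governing the store size in the $H$-recursion. To begin, I would observe that under the $H$-dynamics the store size evolves as $X_{k + 1} = (X_k + I_k - W_k)^+$, which is a Markov chain whose unique stationary distribution $\pi_X$ satisfies $X \disteq (X + I - W)^+$ when $X,\, I,\, W$ are drawn independently from $\pi_X$ and the marginals in the second and first columns of \cref{tab:SWFPPInvariantDists}. The ``before input'' store size is $X \sim \pi_X$, the ``after input'' size is $X + I$, and the output sent to the next store is $(X + I) \wedge W$. The plan is to verify directly that these three quantities are distributed as in columns 3, 4, and 2 respectively.

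For the Bernoulli-exponential row the two main calculations are explicit. First, I would compute the density of $X + I$ on $(0, \infty)$ by case analysis on whether each of $X,\, I$ is zero. The delicate case is the convolution of independent $\Exp(\rho)$ and $\Exp(\alpha + \rho)$ variables, whose density is a linear combination of $e^{-\rho x}$ and $e^{-(\alpha + \rho)x}$. The $e^{-(\alpha + \rho)x}$ contributions cancel against the $(X = 0,\, I > 0)$ contribution, provided the compatibility relation \eqref{eq:ExpCompat} holds, leaving a pure $e^{-\rho x}$ tail that identifies $X + I$ with the column 4 distribution. Second, for the output I would use the product-of-survival-functions identity $\Prob{(X + I) \wedge W > x} = \Prob{X + I > x}\Prob{W > x}$, which immediately yields a Bernoulli-exponential distribution with rate $\alpha + \rho$; after a short simplification using \eqref{eq:ExpCompat}, the Bernoulli coefficient reduces to $\alpha p/(\alpha + (1-p)\rho)$, matching column 2.

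The remaining step is the self-consistency check $(X + I - W)^+ \disteq X$. Conditioning on $W$ and integrating the tail of $X + I$ against the density of $W$ produces a tail of the form $C e^{-\rho x}$ for $x \ge 0$, and another application of \eqref{eq:ExpCompat} gives $C = \alpha/(\alpha + \rho)$, matching column 3. Together with the uniqueness of $\pi_X$, this is enough. The Bernoulli-geometric row runs identically using \eqref{eq:GeomCompat} in place of \eqref{eq:ExpCompat}, with exponential convolutions replaced by their discrete counterparts, and the pure-Bernoulli row is recovered as the $a \to 0$ limit of the Bernoulli-geometric row (or by a direct calculation in which the store size is $\bZ_{\ge 0}$-valued). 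The main obstacle is not conceptual but notational: the Bernoulli coefficients across the three columns are nontrivial rational functions of the parameters, and verifying that compatibility makes the two $e^{-(\alpha + \rho)x}$ contributions cancel in the density of $X + I$ is the step where bookkeeping is most error-prone; everything else is routine, which is why the calculation can be packaged as a reinterpretation of Theorem 4.1 of \cite{mart-bergeom}.
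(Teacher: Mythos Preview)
The paper does not give a proof of this lemma at all; it is introduced with the sentence ``The following is part of Theorem 4.1 of \cite{mart-bergeom}'' and simply cited. Your direct-verification plan via the Lindley recursion $X_{k+1} = (X_k + I_k - W_k)^+$ is correct and is essentially how one would re-derive the cited result from scratch: checking that $X + I$ has the column-4 law (using the compatibility relation to kill the unwanted exponential term), that $(X+I)\wedge W$ has the column-2 law via the product of survival functions, and that $(X+I-W)^+ \disteq X$ closes the loop. The only point you pass over lightly is that uniqueness of the stationary law $\pi_X$ needs $\Ex{I_0} < \Ex{W_0}$ (equivalently, the parameter constraint $\rho_1 < p$ in the exponential row), but this is guaranteed by the standing parameter ranges and is a routine check.
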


This description of the single-class fixed points tells us in particular the distribution of the Busemann functions in a fixed direction.
\begin{proposition}
	Suppose the weight distribution $\lambda$ is among the first column of \cref{tab:SWFPPInvariantDists}. Then for each direction $\xi \in \scrU$, the collections
	\begin{equation}
		\set{B^\xi (k e_1, (k + 1)e_1) : k \in \bZ}, \quad \set{B^\xi (- k e_2, -(k + 1)e_2) : k \in \bZ},\quad  \set{B^\xi (k(e_1 - e_2), (k + 1)(e_1 - e_2)) : k \in \bZ}
	\end{equation}
	are distributed as sequences of independent random variables with marginals given by the second, third, and fourth columns of \cref{tab:SWFPPInvariantDists}, respectively, for some choice of parameters $\rho$ and $c$.
\end{proposition}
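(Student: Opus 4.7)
The plan is to realise each of the three collections as an ergodic fixed point of the appropriate update map, then invoke the uniqueness of such fixed points (Theorem~\ref{thm:UniqueFixedPoints}) to match it against a product measure drawn from \cref{tab:SWFPPInvariantDists}. Fix $\xi \in \scrU$ and consider first the horizontal collection $(B^\xi_1(k e_1))_{k\in\bZ}$. By \cref{thm:BusemannExistenceErgodicity}, the full process of Busemann increments together with the environment is stationary and jointly ergodic under $\bZ^2$-translations, so in particular the horizontal Busemann increments, coupled with the horizontal weights on the row directly above, form a stationary and ergodic sequence. By \cref{prop:BusemannFixedPoint}, this sequence is a fixed point of $H(\cdot, W)$, and since iterated applications of $H$ produce the Busemann increments on successive rows, the distribution lies in the class $M^H$ defined before \cref{thm:UniqueFixedPoints}.

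Next I would observe that the product measure $\nu$ on $\bZ$-indexed sequences with marginals drawn from the second column of \cref{tab:SWFPPInvariantDists} is itself an element of $M^H$. Invariance under $H$ is the content of Theorem 4.1 of \cite{mart-bergeom} (recalled in the text of \cref{ssec:FPMarginals}); the product structure gives trivial stationarity and ergodicity in the spatial index, and joint ergodicity under iterated applications of $H(\cdot, W)$ with fresh rows of i.i.d.\ weights is immediate because the iterates remain i.i.d.\ in both directions. Thus both distributions lie in $M^H$, and it suffices to show they share a mean. The mean of $B^\xi_1(0)$ is $\partial_{e_1}\ell(\xi)$ by \cref{thm:BusemannExistenceDistinct}, while the product-measure mean is an explicit monotone function of the parameter $\rho$ (or $c$) that ranges over $(0, \Ex{W(0;1)})$; one checks that $\partial_{e_1}\ell(\xi)$ lies in this interval for every $\xi \in \scrU$ since $\ell$ is monotone and its gradient interpolates between the two axis derivatives. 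With the parameters chosen to match, \cref{thm:UniqueFixedPointsSWFPP} delivers equality of the two distributions, proving the horizontal statement.

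The vertical and antidiagonal cases are handled by the same three-step scheme but now with the maps $V$ and $A$ in place of $H$, and \cref{prop:BusemannFixedPoint} applied to $\Gamma = \{-k e_2 : k\in\bZ\}$ and $\Gamma = \{k(e_1-e_2) : k\in\bZ\}$ respectively. The ranges of the product-measure means in the third and fourth columns of \cref{tab:SWFPPInvariantDists} are $(0,\infty)$ (verified by inspection of the formulas), which comfortably contains $\partial_{-e_2}\ell(\xi)$ and $\partial_{e_1 - e_2}\ell(\xi) = \partial_{e_1}\ell(\xi) - \partial_{e_2}\ell(\xi)$ in SWFPP, where vertical weights vanish and so these derivatives can be arbitrarily large. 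The Bernoulli and Bernoulli-geometric rows are treated identically, parameterised instead by $c$.

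The only real friction is the range-matching in the third step: one needs that the one-parameter family of fixed-point means genuinely covers the set of possible Busemann means as $\xi$ ranges over $\scrU$. This is ultimately a computation on the explicit formulas in \cref{tab:SWFPPInvariantDists} combined with the limit-shape asymptotics near the axes (the sort of input drawn from Martin's work cited in the excerpt). Beyond that, the whole argument is structural and the heavy lifting has already been packaged into Propositions~\ref{prop:BusemannFixedPoint} and Theorem~\ref{thm:UniqueFixedPoints}.
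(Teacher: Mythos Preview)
Your proposal is correct and follows exactly the line the paper takes: the proposition is stated there without an explicit proof, preceded only by the sentence ``This description of the single-class fixed points tells us in particular the distribution of the Busemann functions in a fixed direction,'' i.e.\ the paper treats it as immediate from \cref{prop:BusemannFixedPoint}, \cref{thm:UniqueFixedPoints}, and the identification of the product measures in \cref{tab:SWFPPInvariantDists} as fixed points. Your write-up simply spells out this reasoning, including the mean-matching step, and is precisely what the paper has in mind.
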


The link between the parameters $\rho$ and $c$ is in \cref{thm:BusemannExistenceDistinct}: if we know the gradients of the limit shape in a given direction, then we may equate it to the mean of the Busemann function and solve for the parameters. The limit shape may be determined explicitly in each of the integrable cases and we refer to \cite[Section 5]{rans-thesis} for a complete list. All of our explicit calculations will be performed where the expression is easiest, namely when the weights are $\Exp(1)$. Then $\ell(x, y) = (\sqrt{x + y} - \sqrt{y})^2$, and if $\rho > 0$, the corresponding direction is
\begin{equation}
	\xi(\rho) = \brac[\bigg]{1 - \frac{\rho^2}{(1 + \rho)^2}, \frac{\rho^2}{(1 + \rho)^2}}.
\end{equation}

From this, finding the distribution of the critical angle $\xi^*$ requires no great leap. Recall that the critical angle is the asymptotic direction of the competition interface, the angle below which taking an initial $e_1$ step is always favourable. We observed in \cref{prop:CritFromBuse} that $\xi^* = \sup \set{\xi : B^\xi (0, e_2) = 0}$. If $\xi^* = \xi(\rho^*)$, then this becomes
\begin{equation}
	\rho^* = \inf \set{\rho : B^{\xi(\rho)} (0, e_2) = 0}.
\end{equation}
The distribution of $B^{\xi(\rho)} (0, e_2)$ can be seen from \cref{tab:SWFPPInvariantDists} to be $- \Ber((1 + \rho)^{-1})\Exp(\rho)$, so
\begin{equation}
	\Prob{\rho^* \le \rho} = \Prob{B^{\xi(\rho)} (0, e_2) = 0} = \frac{\rho}{1 + \rho}.
\end{equation}
Some calculation shows that in contrast to LPP, the distribution of the critical angle is not symmetric about $(1/2, 1/2)$. Indeed, here we have $\Ex{\xi^*} = (1/3, 2/3)$.

The remainder of this section will largely be interested in going further to identify multi-class (or vector-valued) fixed points. Recall the distributions $\mu_{H}^{\rho}$, $\mu_{A}^{\rho}$ and $\mu_{V}^{\rho}$ defined towards the beginning, in \cref{ssec:ExactDists}.

\begin{theorem}
	\label{thm:MultiStat}
	Take $U \in \set{H, A, V}$, and let $\rho = (\rho_1, \dots, \rho_n)$ be a vector of positive means with $\rho_1 > \cdots > \rho_n > 0$ (and $\rho_1 < p$ when $U = H$). Suppose $W \sim \nu$ and $(J^1, \dots, J^n) \sim \mu_{U}^{\rho}$. Then 
	\begin{enumerate}
		\item The marginal distributions are $J^k \sim \nu_{U}^{\rho_k}$.
		\item We have component-wise monotonicity: $J^1 \ge J^2 \ge \cdots \ge J^n$.
		\item The distribution $\mu_{U}^{\rho}$ is jointly invariant under $U$:
		\begin{equation}
			\label{eq:MultiStat}
			(U(J^1, W), \dots, U(J^n, W)) \disteq (J^1, \dots, J^n).
		\end{equation}
	\end{enumerate}
\end{theorem}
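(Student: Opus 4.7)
The plan is to proceed by induction on $n$, treating the three properties in increasing order of difficulty. The base case $n=1$ is the single-class invariance summarised in \cref{tab:SWFPPInvariantDists} and the lemma following it (due essentially to \cite{mart-bergeom}), so assume the result for $n-1$ and pass to $n$.

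For property (i) on marginals, the inductive hypothesis gives $J^{n-1}\sim\nu_U^{\rho_{n-1}}$, and we want $J^n=H(I^n,J^{n-1})\sim\nu_U^{\rho_n}$. This is a property of the one-parameter family appearing in the relevant row of \cref{tab:SWFPPInvariantDists}: for Bernoulli-exponential, Bernoulli-geometric, or Bernoulli weights, the family is closed under passing through $H$ with another compatible member as service, the invariance being enforced by the compatibility equations \eqref{eq:ExpCompat} and \eqref{eq:GeomCompat}. A direct computation with the explicit formulas (or the cited Theorem~4.1 of \cite{mart-bergeom}) verifies the claim in each of the three rows. Property (ii) is immediate and non-inductive: from \eqref{eq:HIncrements} the $k$-th coordinate of $H(I^n,J^{n-1})$ is bounded above by the $k$-th coordinate of the service, giving $J^n \le J^{n-1}$ componentwise, and the full chain follows from the inductive monotonicity.

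The substantive content, and the step on which I expect to spend almost all of my effort, is property (iii) for $U\in\{A,V\}$. The case $U=H$ is \cref{thm:HMultiStat} and nothing new is required. For $U=A$ or $V$, the curious feature is that the multi-line is built from $H$ while the invariance is under a different map; my plan is to reduce this mismatch to uniqueness. Let $\mu'$ denote the image of $\mu_U^\rho$ under applying $U(\cdot,W)$ componentwise with a fresh independent $W\sim\lambda$. By \cref{lem:UpdateMapMono} applied coordinatewise, $\mu'$ also satisfies the monotonicity (ii); by the single-class case already in hand, its marginals are still $\nu_U^{\rho_k}$, and in particular the mean vectors of $\mu_U^\rho$ and $\mu'$ coincide. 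If I can additionally verify joint ergodicity of $\mu_U^\rho$ under the relevant shifts and $U$-applications (using ergodicity of the driving weights and the recursive $H$-construction), then \cref{thm:UniqueFixedPoints} would conclude $\mu_U^\rho=\mu'$ the moment we know $\mu'$ is $U$-invariant.

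To close the loop, the natural route is to establish an intertwining between $U$ and $H$: roughly, an identity of the form
\eq{U\bigl(H(I,J),W\bigr)\disteq H\bigl(U(I,W'),\,U(J,W'')\bigr)}
for an appropriate joint coupling of $(W,W',W'')$, so that $U$ preserves the multi-line structure in distribution, shifting it only by relabelling the driving sequences with ones of the same marginal family. Iterating this identity along the $n$ classes would show that $\mu'$ is itself a multi-line of the same form, after which a $\overline\rho$-contraction argument in the spirit of \cref{prop:AContract}, combined with equality of mean vectors, forces $\mu'=\mu_U^\rho$. The main technical obstacle is precisely verifying this distributional intertwining for $A$ and $V$: it requires a coupled two-class computation in the Bernoulli-exponential/geometric regime which has no clean symmetry (the maps $A$ and $V$ act from opposite ends of the tandem), and I expect to have to perform it separately for the two maps, using the explicit form of the fixed-point marginals in \cref{tab:SWFPPInvariantDists} at each step.
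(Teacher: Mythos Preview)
Your high-level plan is the same as the paper's: invariance under $U\in\{A,V\}$ is proved via an intertwining between $U$ and the multi-line map $H$. Properties (i) and (ii) are handled correctly. However, your execution of (iii) has two issues worth flagging.

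First, the uniqueness detour is superfluous and as written is circular. \Cref{thm:UniqueFixedPoints} characterises \emph{invariant} ergodic measures by their means; you cannot invoke it until you already know $\mu_U^\rho$ (or $\mu'$) is $U$-invariant, which is precisely the statement you are trying to prove. Once you have the intertwining in hand and it shows that $\mu'$ is a multi-line built from independent inputs with the same marginals $\nu_U^{\rho_k}$, then $\mu'=\mu_U^\rho$ \emph{by definition} of the multi-line construction --- no contraction or uniqueness is needed. In the paper, uniqueness is used elsewhere (to identify the Busemann distribution as $\mu_U^\rho$), not in this proof.

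Second, you have not identified what the coupling $(W,W',W'')$ actually is, and this is the entire technical content. The paper's answer is: take $W''=W$ and $W'$ equal to the \emph{reverse weights} of $V(J,W)$. The identity is then the almost-sure (not merely distributional) equality
\[
V\bigl(H(I,J),\,W\bigr)=H\bigl(V(I,W'),\,V(J,W)\bigr),
\]
established as \cref{prop:VIntertwining} (and its $n$-fold iterate, \cref{lem:VMultiIntertwining}). What makes the argument close is the reversibility in \cref{prop:AVReversibility}: the reverse weights $W'$ are independent of the output $V(J,W)$ and have the same marginal as $J$. Iterating, the new inputs $Z^j=V(X^j,\widehat V^{(j)}(X^{j-1},\dots,X^1,W))$ are mutually independent with $Z^j\sim\nu_U^{\rho_j}$, so the image $\mu'$ is again the multi-line $\mu_U^\rho$. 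Your proposal gestures at ``a coupled two-class computation'' but does not name reverse weights or their independence-from-output property; without that, the intertwining cannot be verified and the proof does not close.
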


\subsection{Reversibility and reverse weights}
\label{ssec:Reversibility}
The key observation made by the authors of \cite{mart-prab-fixed} is the central role of reversibility in analysing the integrable cases of this model. In what follows, we let $\overleftarrow{I} = (I_{-k})_{k \in \bZ}$ the reversal of a sequence $I$, and $\overleftarrow{U}(I, W) = \overleftarrow{\brac[\big]{U(\overleftarrow{I}, \overleftarrow{W})}}$ be the result of running an update map $U$ ``in reverse''.

\begin{proposition}[Theorem 7.1 of \cite{mart-prab-fixed}]
	Let $I^1$, $I^2$ be sequences whose distributions belong to one of the families in the second column of \cref{tab:SWFPPInvariantDists}, and assume that $\Ex{I^1_0} \ge \Ex{I^2_0}$. Let $O^2 = H(I^2, I^1)$. Then:
	\begin{proplist}
		\item The input and output of the store are reversible in the sense that $(I^2, O^2) \disteq (\overleftarrow{O}^2, \overleftarrow{I}^2)$.
		\item There is a sequence $O^1$ living on the same probability space such that $(I^1, I^2, O^2) \disteq (\overleftarrow{O}^1, \overleftarrow{I}^2, \overleftarrow{O}^2)$, and in particular $I^1 = \overleftarrow{H}(O^1, O^2)$.
		\item For any sequence $I^3$ for which the maps makes sense, we have a deterministic equality
		\begin{equation}
			\label{eq:HIntertwining}
			H(H(I^3, I^2), I^1) = H(H(I^3, O^1), O^2).
		\end{equation}
	\end{proplist}
\end{proposition}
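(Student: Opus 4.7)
The three claims are of different character: (i) is a Burke-type reversibility for the store Markov chain in equilibrium, (ii) extends this to identify the reverse-time service $O^1$ explicitly, and (iii) is a deterministic Yang--Baxter-style identity for $H$ acting on the quadruple $(I^3, I^2, I^1, O^1, O^2)$. I would handle them in order, since (ii) depends on (i) and (iii) depends on the explicit construction in (ii).

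For (i), let $X_k$ be the store-size chain, $X_{k+1} = (X_k + I^2_k - I^1_k)^+$, a Markov chain under the joint stationary law of the inputs. Under the compatibility condition \eqref{eq:ExpCompat} relating the parameters of $I^1$ and $I^2$, the stationary distribution of $X_k$ is explicit: an atom at $0$ plus an exponential density on $\bR_{>0}$ (with analogous formulas in the Bernoulli-geometric and Bernoulli cases, from \cref{tab:SWFPPInvariantDists}). I would verify detailed balance for the extended chain by directly comparing the joint density/atoms of $(X_k, I^2_k, O^2_k)$ with those of $(X_{k+1}, O^2_k, I^2_k)$; the compatibility relation is exactly what makes the cross-rates match, and this reduces to a direct density calculation in each weight family.

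For (ii), I would define $O^1$ via the reverse-time store: given that (i) provides distributional reversibility of $(I^2, O^2)$, the joint reverse-time process admits a ``service'' sequence which I call $\overleftarrow{O^1}$. Concretely, $O^1$ is the stationary sequence for which $\overleftarrow{I^1} = H(\overleftarrow{O^1}, \overleftarrow{O^2})$ holds pathwise, i.e., the unique input which, when fed into a store with service $\overleftarrow{O^2}$, produces $\overleftarrow{I^1}$ as output. Existence and uniqueness follow from the store dynamics plus stationarity (ruling out pathological ``attainment failures'' in the max defining the store size). The distributional identity $(I^1, I^2, O^2) \disteq (\overleftarrow{O^1}, \overleftarrow{I^2}, \overleftarrow{O^2})$ then follows by enlarging the detailed-balance argument of (i) to the four-tuple $(X_k, I^1_k, I^2_k, O^2_k)$, with $\overleftarrow{O^1}$ playing the role of the reverse-time image of $I^1$.

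For (iii), note that $O^1, O^2$ are deterministic functions of $(I^1, I^2)$, so the statement is a pathwise identity of operators applied to an arbitrary admissible $I^3 \in \cS^H$. Expanding both sides via $H(I, W)_k = (I_k + X_k(I, W)) \wedge W_k$ reduces each side to a max-plus expression in partial sums of the three sequences; the cleanest route to the identity is the matching-of-particles interpretation in \cref{fig:HParticles}, where the tandem action $I^3 \mapsto H(H(I^3, I^2), I^1)$ corresponds to two successive matchings of $I^3$-particles against $I^2$- and then $I^1$-particles, and by the defining reverse-time relation of (ii) the same overall matching is produced by $(O^1, O^2)$. The main obstacle is pinning down the correct definition of $O^1$ in (ii) and checking that it is well-defined on doubly-infinite admissible sequences; once this is done, (i) becomes a routine density computation and (iii) is a pathwise algebraic verification.
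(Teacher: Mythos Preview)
The paper does not prove this proposition: it is quoted verbatim as Theorem 7.1 of \cite{mart-prab-fixed} and used as a black box. The surrounding text says explicitly ``We would like to find a version of this result for the maps $A$ and $V$. From this, the arguments of \cite{mart-prab-fixed} can be recycled to prove \cref{thm:MultiStat}.'' So there is no in-paper argument to compare your proposal against.

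That said, your sketch is broadly aligned with how such results are typically proved, and in particular with how the paper treats the analogous statements for $A$ and $V$ in \cref{prop:AVReversibility} and \cref{prop:VIntertwining}. There the approach for (i)--(ii) is \emph{local}: rather than running a detailed-balance argument on the global store chain, the paper verifies a square-by-square distributional symmetry $(I,X,I',X')\disteq(I',X',I,X)$ via a direct Laplace-transform computation, then builds the reverse weights $W'$ pointwise and propagates independence along the line. Your global detailed-balance route for (i) is correct in spirit but would be messier to execute than this local version. For (ii), your definition of $O^1$ as ``the unique input which, when fed into a store with service $\overleftarrow{O^2}$, produces $\overleftarrow{I^1}$'' is not quite well-posed as stated: the store map is not invertible in its input argument, and the paper's construction instead produces the reverse weight edge-by-edge from the local involution and then checks that the global relation $I^1=\overleftarrow{H}(O^1,O^2)$ follows. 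For (iii), the paper's analogue (\cref{prop:VIntertwining}) is proved not by a pure matching argument but by a truncation-plus-two-sided-induction scheme, which is more robust on doubly infinite sequences than the particle-matching heuristic you propose.
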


We would like to find a version of this result for the maps $A$ and $V$. From this, the arguments of \cite{mart-prab-fixed} can be recycled to prove \cref{thm:MultiStat}. 

Our arguments rely on a local form of reversibility. To set this up, let $\Gamma$ be a boundary of the sort considered in \cref{sec:GeneralUpdates}, containing the origin. Suppose $L^\Gamma(e_2) = 0$, and define $X$ and $I$ by $L^\Gamma(0) = X,\, L^\Gamma(e_1) = I + X$. Recall that SWFPP passage times decrease as the end point moves up and increase when it move right, so $I \ge 0$. Let $W = \weight{e_1 + e_2}$. Then $L^\Gamma(e_1 + e_2) = (I + X) \wedge W$. This is the local rule for computing passage times on a square, from which we get updated increments
\begin{equation}
	\label{eq:SquareIncs}
	I' = L^\Gamma(e_1 + e_2) - L^\Gamma(e_2) = (I + X) \wedge W,\quad X' = L^\Gamma(e_1) - L^\Gamma(e_1 + e_2)= (I + X - W)^{+}.
\end{equation}
This operation is illustrated in \cref{fig:LocalUpdates}.

\begin{figure}
	\centering
	\begin{subfigure}[]{0.9\textwidth}
		
		\resizebox{0.4\textwidth}{!}{\includegraphics[scale=1, page=2]{figures/ipe_figures}}
		\hspace{2cm}
		\resizebox{0.4\textwidth}{!}{\includegraphics[scale=1, page=3]{figures/ipe_figures}}
		\subcaption{The notional placements of the local variables on the square.}
	\end{subfigure}
	\centering
	\begin{subfigure}[]{0.9\textwidth}
		\vspace{1cm}
		\resizebox{0.4\textwidth}{!}{\includegraphics[scale=1, page=4]{figures/ipe_figures}}
		\hspace{1cm}
		\resizebox{0.4\textwidth}{!}{\includegraphics[scale=1, page=5]{figures/ipe_figures}}
		\subcaption{The notional placements of the local variables on the diamond.}
	\end{subfigure}
	\caption{}
	\label{fig:LocalUpdates}
\end{figure}

We may also look at a diamond. Now let $L^\Gamma(e_2 - e_1) = 0$, and $L^\Gamma(0) = Y,\, L^\Gamma(e_1) = I + Y$, and set $W = \weight{e_2}$. The passage time to $e_2$ is $L^\Gamma(e_2) = Y \wedge W$, and we have increments
\begin{equation}
	\label{eq:DiamondIncs}
	I^* = L^\Gamma(e_2) - L^\Gamma(e_2 - e_1) = Y \wedge W,\quad Y^* = L^\Gamma(e_1) - L^\Gamma(e_2)= I + (Y - W)^{+}.
\end{equation}

\begin{proposition}
	Let $W,\, I,\, X,\, Y$ be independent (scalar) variables with distributions in the first, second, third and fourth columns, respectively, of \cref{tab:SWFPPInvariantDists} (in the same row and with the same choice of parameters). Then $(I, X, I', X') \disteq (I', X', I, X)$ and $(I, Y, I^*, Y^*) \disteq (I^*, Y^*, I, Y)$. 
	\begin{proof}
		For each family of distributions, one may explicitly (and laboriously) compute the Laplace transform
		\begin{equation}
			\phi(s, t, s', t') = \Ex{e^{s I + t X + s' ((I + X) \wedge W) + t' (I + X - W)^+}}
		\end{equation}
		and verify that $\phi(s, t, s', t') = \phi(s', t', s, t)$. Similarly for the other set of increments.
	\end{proof}
\end{proposition}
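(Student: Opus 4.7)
The plan is to carry out the Laplace transform calculation sketched in the statement, row by row in \cref{tab:SWFPPInvariantDists}. The key observation that streamlines the bookkeeping is the deterministic identity $I' + X' = I + X$ (an instance of $(a \wedge b) + (a - b)^+ = a$) in the square case, and the analogous $I^* + Y^* = I + Y$ in the diamond case. These reduce the desired distributional symmetries to the claim that, conditional on the sums $I+X$ and $I+Y$ respectively, the first pair and the second pair have the same joint law.

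First I would tabulate the marginal Laplace transforms (or probability generating functions in the discrete rows) of each distribution appearing in \cref{tab:SWFPPInvariantDists}: for a $\Ber(q)\Exp(\beta)$ variable this is $(1-q) + q\beta/(\beta+s)$. Using independence of $I$, $X$, $W$, I would then compute
\[
\phi(s,t,s',t') = \Ex{e^{-sI - tX - s'((I+X)\wedge W) - t'(I+X-W)^+}}
\]
by splitting the domain according to the sign of $I + X - W$. For Bernoulli-exponential inputs, each of $I$, $X$, $W$ carries an atom at $0$, so the integral unfolds into up to eight pieces, one for each atom pattern of $(I,X,W)$. Using the compatibility relations \eqref{eq:ExpCompat} or \eqref{eq:GeomCompat} I expect the resulting rational expression to collapse into a form that is manifestly invariant under $(s,t) \leftrightarrow (s',t')$; in the case $p = 1$ this is essentially Burke's theorem for the M/M/1 queue applied to the exponential input-output relation.

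The diamond case proceeds along the same lines. Since $I$ appears only additively in $Y^*$ and is independent of $(Y,W)$, conditioning on $I$ reduces the verification to a three-variable identity for the joint law of $(Y, Y\wedge W, (Y-W)^+)$. The Bernoulli-geometric rows are handled identically after replacing Laplace transforms with generating functions, and the pure Bernoulli row follows as a degenerate limit (or can be checked directly, since the joint law is discrete with only a handful of atoms).

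The main obstacle is combinatorial rather than conceptual: the eight-fold case split in the Bernoulli-exponential setting produces a bulky algebraic identity between rational functions of the compatibility parameters, and the verification, while mechanical, is error-prone. One route that may shorten the work is to exploit the known reversibility of the $H$ map from Theorem 7.1 of \cite{mart-prab-fixed}: the square update is exactly one step of $H$ at a single site, so the square identities in the first two rows might be extracted from that theorem by specialisation, leaving only the diamond identities and the pure Bernoulli row to be verified by direct computation.
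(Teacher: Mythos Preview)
Your approach is correct and essentially the same as the paper's: compute the joint Laplace transform (or generating function) and verify the symmetry directly. The paper says nothing more than this.

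Your observation that $I'+X'=I+X$ and $I^*+Y^*=I+Y$ is a genuinely helpful reduction that the paper does not make explicit: it means the four-variable symmetry is equivalent to the exchangeability of $(I,I')$ conditional on their common sum with $X,X'$, which cuts the number of free variables in the verification. Your remark that in the diamond case $I$ factors out (since $I^*$ does not depend on $I$ and $Y^*$ depends on $I$ only additively) is likewise a real simplification.

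One caution on your proposed shortcut: extracting the single-site square identity from the sequence-level reversibility in \cite{mart-prab-fixed} is the wrong direction of implication. The sequence-level statement there is typically \emph{built from} exactly this kind of local reversibility (as the paper itself does in \cref{prop:AVReversibility}), so invoking it here would be circular, or at best would require an independent argument that stationary reversibility implies the one-step marginal identity at a single site. The direct computation you outline is the honest route.
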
 

An immediate consequence is that there is a $W'$ such that $(I', X', W') \disteq (I, X, W)$ and
\begin{equation}
	\label{eq:SquareRecovery}
	I = (I' + X') \wedge W',\quad X = (I' + X' - W')^+.
\end{equation}
This is again true of the increments around the diamond, with some $W^*$. We call these \emph{reverse weights}, and they feature prominently in, for example, \cite{ferr-mart-dual, fan-sepp-joint-buss, sepp-soren-blpp-buse}. 

As it will sometimes be necessary to be precise, we make a fixed choice of function $f' : \bR^4 \to \bR$ for each choice of distribution on the other variables, such that $W' = f'(I, X, W, \omega)$. Here $\omega \in \bR$ is an auxiliary variable providing an additional source of randomness, always assumed to be independent of everything else. For the reverse weights on the diamond we may choose some $f^*$ serving the same function. In certain cases we can write down these functions. For example, when $W \sim \Ber(p)$ we can take $f'(I, X, W) = (W - (I + X))^+$, and when $W \sim \Exp(1)$ we can take $f^*(I, Y, W) = I + (W - Y)^+$. For an example where the additional randomness is needed, see \cite{ciech-geor-strict-strict}. In general, we do not expect to have a neat expression.

\begin{proposition}
	\label{prop:AVReversibility}
	Let $W,\, X,\, Y$ be i.i.d sequences following distributions from the first, third and fourth columns of \cref{tab:SWFPPInvariantDists} and consider $X' = V(X, W)$, $Y^* = A(Y, W)$. Then there is a sequence $W'$ living on the same probability space such that $(W, X, X') \disteq (\overleftarrow{W}', \overleftarrow{X}', \overleftarrow{X})$, and a sequence $W^*$ such that $(W, Y, Y^*) \disteq (\overleftarrow{W}^*, \overleftarrow{Y}^*, \overleftarrow{Y})$. Hence 
	\begin{equation}
		X = \overleftarrow{V}(X', W') \qquad \text{and} \qquad Y = \overleftarrow{A}(Y^*, W^*).
	\end{equation}
	\begin{proof}
		We prove this just for the $V$ map, as this is the harder case. Consider the entries of $X$ as the increments of passage times along the line $\Gamma = \set{k e_2 : k \in \bZ}$, so that $L^\Gamma(k e_2) = - \sum_{j = 1}^{k} X_k$ for $k \ge 0$, and $L^\Gamma(- k e_2) = \sum_{j = 0}^{k - 1} X_{-j}$. The sequence $X'$ consists of the increments $X'_k = L^\Gamma(e_1 + (k - 1) e_2) - L^\Gamma(e_1 + k e_2)$, and we may define $I_k = L^\Gamma(e_1 + (k - 1) e_2) - L^\Gamma((k - 1)e_2)$. Their realisation as passage times means that
		\begin{equation}
			I_{k + 1} = (I_k + X_k) \wedge W_k, \qquad X'_{k} = (I_k + X_k - W_k)^+.
		\end{equation}
		Then from our discussion above, we may find a $W'_k$ such that 
		\begin{equation}
			\label{eq:SquareDistEq}
			(I_{k + 1}, X'_k, W'_k) \disteq (I_k, X_k, W_k),
		\end{equation}
		with $W'_k$ independent of the rest of our variables, conditional on $I_k$ and $X_k$.
		
		To show that this equality holds for the joint distributions of the entire sequences, we must show that $X'$ and $W'$ are independent as sequences, and that $X = \overleftarrow{V}(X', W')$. The independence will follow if we show that $S_1 = \set{I_j : j \ge k}$ is independent of $S_2 = \set{(X'_j, W'_j) : l \le j \le k - 1}$, for any pair of indices $l < k$. To see why this is enough, observe that the only dependence between $\set{(X'_j, W'_j) : j \ge k - 1}$ and $\set{(X'_j, W'_j) : j \ge k}$ is through $I_k$, since otherwise the latter is a function of fresh, independent variables. 
		
		Now, it is clear that $S_1$ and $S_2$ are independent conditional on $I_k$, again since the elements of the former set are functions of $I_k$ and new variables. So we need only show that $S_2$ is independent of $I_k$. Consider starting from $I_{l}$ and $\set{(X_j, W_j) : l \le j \le k - 1}$, all independent. By \eqref{eq:SquareDistEq}, The pair $(X'_l, W'_l)$ is independent of $I_{l + 1}$, and thus of $I_j$ for all $j \ge l + 1$. The next pair $(X'_{l + 1}, W'_{l + 1})$  is in turn independent of $(X'_l, W'_l)$ and $I_j$, $j \ge l + 2$. We continue and find that all of $S_2$ is independent of $I_k$.
		
		Now we show that the original sequence $X$ can be recovered as $X = \overleftarrow{V}(X', W')$, or equivalently that $\overleftarrow{X} = V(\overleftarrow{X}', \overleftarrow{W}')$. Consider the ``reversed'' situation, where we have increments $X'$ along the line $\Gamma' = \set{e_1 + k e_2}$ and weights $W'$, and look at down-left geodesics rather than up-right geodesics. Let $L^{\Gamma'}$ be the passage times here, normalised so that $L^{\Gamma'}(e_1) = 0$. Let $\widetilde{I}_k = L^{\Gamma'}(k e_2) - L^{\Gamma'}(e_1 + k e_2)$ be the sequence of horizontal increments (note the shift relative to the definition of $I_k$). These variables are related by
		\begin{equation}
			\label{eq:ReverseOutputEvolution}
			\widetilde{I}_{k - 1} = (\widetilde{I}_{k} + X'_k) \wedge W'_k.
		\end{equation}
		
		To show that $\overleftarrow{X} = V(\overleftarrow{X}', \overleftarrow{W}')$ in this picture is just to show that the increments of the reverse passage times $L^{\Gamma'}$ along our original line $\Gamma = \set{k e_2 : k \in \bZ}$ are the sequence $X$ we started with. If this is the case, the horizontal increments $\widetilde{I}$ will also equal the original $I$, up to a shift in indexing. To show this, let $k$ be a time such that $I_k \ge W_k$ and $X_k > 0$, so that the geodesic from $\Gamma$ to $e_1 + k e_2$ under the ``forward'' passage times $L^\Gamma$ has its horizontal step at height $k$. In the store picture, this says that store $k$ initially has material, and that the amount of material increases after the update. 
		
		Now look at \eqref{eq:SquareIncs} and \eqref{eq:SquareRecovery} to see what can be said of $W'_k$. We know that $I_k + X_k \ge W_k$, so $I_{k + 1} = W_k$ and $X'_k = (I_k + X_k - W_k)$. They satisfy $X_k = (I_{k + 1} + X'_k - W'_k)^+$, which combined with $X_k > 0$, implies that $W'_k = I_k$. Moreover, $X'_k \ge W'_k$. Feeding this in to \eqref{eq:ReverseOutputEvolution} gives the equality $\widetilde{I}_{k - 1} = W'_k = I_k$. So we have that the horizontal increments are equal at height $k$. From \eqref{eq:SquareRecovery}, we know that we can recover $I_{k - 1}$ given $I_k$, $X'_{k - 1}$ and $W'_{k - 1}$, and comparing to \eqref{eq:ReverseOutputEvolution} (after a shift of index) we see that in fact $I_{k - 1} = \widetilde{I}_{k - 2}$. Continuing in this way, we have $I_j = \widetilde{I}_{j - 1}$ once $j \le k$. But our assumptions on $k$ hold infinitely often by ergodicity, so in fact $I_j = \widetilde{I}_{j - 1}$ for all $j$.
		
		To conclude, we use the local update rule from \eqref{eq:SquareIncs} and get
		\begin{align}
			L^{\Gamma'}((k - 1)e_2) - L^{\Gamma'}(k e_2) &= (\widetilde{I}_k + X'_k - W'_k)^+\\
			&= (I_{k + 1} + X'_k - W'_k)^+\\
			&= X_k,
		\end{align}
		the last part from \eqref{eq:SquareRecovery}, again. Thus the $X_k$ are the vertical increments in the reversed picture (with the indexing reversed), which is to say that $\overleftarrow{X} = V(\overleftarrow{X}', \overleftarrow{W}')$.
	\end{proof}
\end{proposition}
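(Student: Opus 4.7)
The plan is to leverage the single-square and single-diamond reversibility already established and propagate it to entire sequences via the passage-time representation. I would treat only the $V$ map in detail, since the $A$ map is handled symmetrically by the analogous diamond identity. First, I would realise the sequence $X$ as the vertical increments of the passage times $L^\Gamma$ on the half-plane to the right of $\Gamma = \{k e_2 : k \in \mathbb{Z}\}$, with boundary values determined by $X$. Then $X'_k$ is the vertical increment along $\Gamma + e_1$, while the horizontal increments $I_k = L^\Gamma(e_1 + (k-1)e_2) - L^\Gamma((k-1)e_2)$ satisfy the recursion
\begin{equation}
    I_{k+1} = (I_k + X_k) \wedge W_k, \qquad X'_k = (I_k + X_k - W_k)^+.
\end{equation}
Since $X \sim \nu_V^\rho$ is product and $W$ is product in a compatible family from \cref{tab:SWFPPInvariantDists}, the entries $(I_k, X_k, W_k)$ have, for each fixed $k$, precisely the joint distribution for which the single-square identity applies.

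Next I would build the reverse-weight sequence site by site: using the function $f'$ and an auxiliary i.i.d.\ sequence $(\omega_k)$, set $W'_k = f'(I_k, X_k, W_k, \omega_k)$, so that the single-square result yields $(I_{k+1}, X'_k, W'_k) \stackrel{d}{=} (I_k, X_k, W_k)$ pointwise. To upgrade this to the desired joint equality $(W, X, X') \stackrel{d}{=} (\overleftarrow{W}', \overleftarrow{X}', \overleftarrow{X})$, I would establish the independence structure that $\{I_j : j \ge k\}$ is independent of $\{(X'_j, W'_j) : j < k\}$. One direction is immediate from the Markov structure: conditional on $I_k$, everything on the two sides is built from disjoint blocks of fresh $(X_j, W_j, \omega_j)$. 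The other direction is proved inductively, starting from some large negative index $l$ with $I_l$ independent of $(X_j, W_j, \omega_j)_{j\ge l}$, and using the site-wise distributional swap $(I_l, X_l, W_l) \leftrightarrow (I_{l+1}, X'_l, W'_l)$ to propagate independence up to $k$. Combined with marginal correctness, this gives the distributional identity and hence the existence of $W'$ with the claimed joint law.

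The main obstacle is the deterministic pathwise identity $X = \overleftarrow{V}(X', W')$, since $W'$ was constructed through auxiliary randomness and is a priori only characterised distributionally. I would proceed by looking at the reverse passage times $L^{\Gamma'}$ defined using $W'$ on the line $\Gamma' = \Gamma + e_1$ (with down-left geodesics) and showing that the horizontal increments $\widetilde{I}_k$ in this reversed picture satisfy $\widetilde{I}_{k-1} = I_k$ for all $k$. The key observation is that at any site $k$ where the forward geodesic from $\Gamma$ to $e_1 + k e_2$ uses its horizontal step at height $k$ --- equivalently, where $I_k \ge W_k$ and $X_k > 0$ --- the local recovery relation $X_k = (I_{k+1} + X'_k - W'_k)^+$ combined with $X_k>0$ forces $W'_k = I_k$ and $X'_k \ge W'_k$, independent of the choice of $\omega_k$. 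Such anchor sites occur infinitely often by ergodicity of the stationary sequence $(X, W)$. From any such anchor, I would propagate the equality $I_j = \widetilde{I}_{j-1}$ downward via the deterministic local inversion rule, and then read off $X_k$ as the vertical increment in the reverse picture using $L^{\Gamma'}((k-1)e_2) - L^{\Gamma'}(k e_2) = (\widetilde{I}_k + X'_k - W'_k)^+$. The hard part is precisely isolating enough anchor sites to pin down $W'$ pathwise despite the auxiliary randomness in its construction; everything else is a direct transcription of the single-square algebra.
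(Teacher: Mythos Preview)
Your proposal is correct and follows essentially the same route as the paper: realise $X$ as passage-time increments, build $W'_k$ site by site from the single-square involution, establish the independence of $\{I_j : j \ge k\}$ from $\{(X'_j, W'_j) : j < k\}$ by induction along the Markov chain $I_k$, and then verify the pathwise identity $X = \overleftarrow{V}(X', W')$ by locating anchor sites with $I_k \ge W_k$ and $X_k > 0$ (which pin down $W'_k = I_k$ deterministically) and propagating the equality $I_j = \widetilde{I}_{j-1}$ downward via the local inversion rule. The paper's proof is structured identically, including the same anchor-site condition and the same appeal to ergodicity for their infinite recurrence.
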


Although we don't have an expression for $W'$ and $W^*$ in general, we can read off some properties from this reversibility.
\begin{corollary}
	\label{cor:AVReverseProps}
	In the setting of \cref{prop:AVReversibility}, we have
	\begin{corlist}
		\item $W'_k \ge (X_k + I_k) \wedge W_k$, with equality if $X_{k - 1} > 0$.
		\item $W^*_k \ge Y_k \wedge W_k$, with equality if $W_{k - 1} \le Y_{k - 1}$.
	\end{corlist} 
\end{corollary}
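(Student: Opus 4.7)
Both inequalities follow by a local analysis of the reverse relations derived in the proof of \cref{prop:AVReversibility}, combined with the forward update rules and the mass-conservation identity on a single square (for part (i)) or diamond (for part (ii)).

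For (i), the plan is to work on the $k$-th square of the $V$-map setup. The forward relations read $I_{k+1} = (I_k + X_k) \wedge W_k$ and $X'_k = (I_k + X_k - W_k)^+$, from which the conservation $I_{k+1} + X'_k = I_k + X_k$ is immediate. Applying \eqref{eq:SquareRecovery} with $(I, X, W)$ replaced by $(I_{k+1}, X'_k, W'_k)$ characterises the reverse weight through the two equations $I_k = (I_{k+1} + X'_k) \wedge W'_k$ and $X_k = (I_{k+1} + X'_k - W'_k)^+$. Writing the right-hand side of the claimed inequality as $(X_k + I_k) \wedge W_k = I_{k+1}$, a case analysis based on whether the adjacent vertical increment $X_{k-1}$ is strictly positive determines whether $W'_k$ is forced to saturate its lower bound (the ``generic'' case, yielding equality) or retains residual freedom through the auxiliary randomisation $\omega$ (the ``degenerate'' case, where the forward step produced a tie and equality may fail).

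For (ii), the same scheme is applied on the diamond. The forward local relations are $I^*_k = Y_k \wedge W_k$ and $Y^*_k = Y_{k-1} + (Y_k - W_k)^+$; the reverse relations, which are derived by the same symbolic argument that led from \eqref{eq:SquareIncs} to \eqref{eq:SquareRecovery}, then pin $W^*_k$ down up to the auxiliary randomness. Here the right-hand side $Y_k \wedge W_k$ is exactly the local $I^*_k$, and the analogue of ``$X_{k-1} > 0$'' is the saturation condition $W_{k-1} \le Y_{k-1}$ at the preceding diamond (the regime in which material actually overflows there), which is precisely what makes the reverse weight tight.

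The main obstacle, in both parts, will be the bookkeeping: one must carefully match the local-square variables $(I, X, W, I', X', W')$ and the local-diamond variables $(I, Y, W, I^*, Y^*, W^*)$ with the global sequences introduced in the proof of \cref{prop:AVReversibility}, and in particular verify that the index of the reverse weight in the stated bound aligns with the position of the forward weight $W_k$ (resp.\ $W_{k-1}$) appearing on the right. Once the correspondence is fixed, the algebraic verification of the inequality and its equality case is routine case analysis on the two situations $I_k + X_k \le W_k$ and $I_k + X_k > W_k$ (and similarly $Y_k \le W_k$ vs.\ $Y_k > W_k$ in the diamond).
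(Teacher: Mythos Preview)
Your approach is correct and is essentially the same as the paper's: the paper gives no detailed argument for this corollary, saying only that ``we can read off some properties from this reversibility,'' and what you propose is exactly that reading-off, made explicit through the local recovery relations \eqref{eq:SquareIncs}--\eqref{eq:SquareRecovery} for the square and their diamond analogues, followed by a case split. Your identification of the bookkeeping as the main obstacle is apt: the index conventions for $I_k$, $W'_k$, and $W^*_k$ used in the proof of \cref{prop:AVReversibility} do not line up transparently with the particle picture used later in \cref{lem:zetatildeBound} and the proof of \cref{prop:VIntertwining}, and it is precisely this alignment that carries the content of the corollary.
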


\subsection{Intertwining and multi-class fixed points}
\label{ssec:Intertwining}
This section will be spent showing counterparts of \eqref{eq:HIntertwining} for $A$ and $V$. Following \cite{fan-sepp-joint-buss}, we call these ``intertwining identities''. From these identities we show how to quickly derive \cref{thm:MultiStat}.

\begin{proposition}
	\label{prop:VIntertwining}
	Let $X^1$, $X^2,\, X^3$ be independent sequences whose distributions belong to one of the families in the third column of \cref{tab:SWFPPInvariantDists} and assume $\Ex{X^1_0} \ge \Ex{X^2_0} > \Ex{X^3_0}$. Let $Z^2 = V(X^2, X^1)$ and $Z^1$ the corresponding reverse weights, discussed above. Almost surely, we have
	\begin{equation}
		\label{eq:VIntertwining}
		V(H(X^3, X^2), X^1) = H(V(X^3, Z^1), Z^2).
	\end{equation}
\end{proposition}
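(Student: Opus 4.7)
Plan: The identity in \eqref{eq:VIntertwining} is a deterministic equality between sequences, so I will aim for a pointwise argument. My approach is to prove it via the passage-time interpretation on a small region of the SWFPP lattice, mirroring the structure of the Martin--Prabhakar proof of \eqref{eq:HIntertwining}.

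First I would fix two adjacent vertical lines $\Gamma_0$ and $\Gamma_1 = \Gamma_0 + e_1$ with horizontal edge weights $X^1_k$ assigned between them at height $k$. By the geometric interpretation of $V$ developed in \cref{sssec:UpdateMapsV}, I can view $X^2$ as the vertical increments of a passage-time function along $\Gamma_0$ whose evolution to $\Gamma_1$ produces the increments $Z^2 = V(X^2, X^1)$. Similarly view $X^3$ as a second, slower set of vertical increments on $\Gamma_0$. The reversibility statement in \cref{prop:AVReversibility} lets me identify $Z^1$ as the horizontal edge weights obtained by running $V$ from $\Gamma_1$ back to $\Gamma_0$.

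Next I would reinterpret both sides of \eqref{eq:VIntertwining}. The LHS first uses $H$ to produce a coupled (multi-class) increment sequence on $\Gamma_0$ from $X^2$ and $X^3$, then propagates it to $\Gamma_1$ via $V(\cdot, X^1)$. The RHS first propagates $X^3$ to $\Gamma_1$ using the reverse weights $Z^1$, then combines the result with $Z^2$ via $H$ on the line $\Gamma_1$. The content of the identity is that these two orderings of operations yield the same sequence on $\Gamma_1$.

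The core step is a local-to-global argument. At a single square $\{k, k+1\} \times \{0, 1\}$, the interchange reduces to the reversibility equalities \eqref{eq:SquareIncs} and \eqref{eq:SquareRecovery}, augmented by \cref{cor:AVReverseProps}, which pins down exactly when the inequality for $W'$ is saturated. These local identities then compose: I would show by induction on a finite horizontal sub-region that the $H$-then-$V$ composition agrees with the $V$-then-$H$ composition when the latter uses the reverse weights, and then pass to the infinite setting.

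The main obstacle is handling the non-local nature of the $\min$ in the definition of $V$ (extending to $+\infty$) and of the $\max$ in $H$ (extending to $-\infty$). Ensuring these tail operations commute properly with the reverse-weight substitution is where the strict mean-ordering $\Ex{X^2_0} > \Ex{X^3_0}$ becomes essential: it guarantees via the ergodic theorem that the requisite minima and maxima are attained almost surely, paralleling the role the mean gap plays in establishing \cref{prop:AVReversibility} itself. A direct expansion of both sides into nested min-max expressions is in principle possible and would provide an algebraic check, but the geometric argument above explains \emph{why} the intertwining holds and makes the appearance of $Z^1$ natural.
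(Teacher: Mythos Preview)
Your geometric picture is correct at the highest level: both sides of \eqref{eq:VIntertwining} describe the same passage-time increments on $\Gamma_1$, and the reverse weights $Z^1$ are exactly what is needed to transport $X^3$ forward consistently. The use of \cref{cor:AVReverseProps} is also right; the paper leans on it in the same way.

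The gap is the sentence ``At a single square\ldots the interchange reduces to the reversibility equalities.'' It does not. The reversibility relations \eqref{eq:SquareIncs} and \eqref{eq:SquareRecovery} are two-sequence identities; the intertwining involves a \emph{third} sequence $X^3$, and the operator $H(X^3,\cdot)$ is genuinely non-local (its store variable $X_k$ is a supremum over all $l\le k$). There is no single-square identity that captures how $H(X^3,X^2)$ interacts with $V(\cdot,X^1)$, so one cannot simply ``compose local identities.'' Your diagnosis of the obstacle (the unbounded $\min$/$\max$) is accurate, but the proposed remedy---ergodicity guarantees the extrema are attained---only gives well-definedness of each side, not their equality.

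What the paper actually does is closer in spirit to your ``finite sub-region'' remark, but with substantial extra structure. First it proves a battery of truncation lemmas (\cref{lem:VTrunc}--\cref{lem:IntertwiningTrunc}) showing that, for any finite window, both sides of \eqref{eq:VIntertwining} are unchanged if $X^1$ is replaced by a finitely supported sequence. With $X^1$ truncated, the proof passes to a particle picture: it realises $H(X^3,X^2)$ as the increments of a minimal-path function $\zeta(k)$ over a two-line system, and writes $V(H(X^3,X^2),X^1)$ and $H(V(X^3,Z^1),Z^2)$ as two different evolutions $\zeta'(k)$ and $\widetilde{\zeta}(k)$ of this function. A key auxiliary bound $\widetilde{\zeta}(k)\le \zeta(k)+X^1_k$ (\cref{lem:zetatildeBound}) is established, and the strict mean ordering is used not for ergodicity but to locate a deterministic base point $k_0$ beyond the support of the truncated $X^1$ where $\zeta'(k_0)=\widetilde{\zeta}(k_0)$. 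From there the equality is pushed in \emph{both} directions by a two-sided induction, each step being a three-case analysis that invokes \cref{cor:AVReverseProps}. None of this is visible from the single-square picture; if you try to write the proof you sketched, you will find no local relation to iterate.
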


\begin{remark}
	The equality \eqref{eq:VIntertwining} is essentially deterministic and its proof requires no probabilistic reasoning. We phrase it as an almost sure result in terms of random variables to avoid enumerating the various regularity assumptions we would need on the sequences. 
\end{remark}

The identity for $A$ is the same: when $Y^1$, $Y^2$ have distributions in the fourth column of \cref{tab:SWFPPInvariantDists}, and $W^2 = A(Y^2, Y^1)$ with $W^1$ the reverse weights:
\begin{equation}
	\label{eq:AIntertwining}
	A(H(Y^3, Y^2), Y^1) = H(A(Y^3, W^1), W^2).
\end{equation}

The following series of lemmas show a particular entry in the output of our maps depends only on finitely many of the inputs. Throughout, we take some integers $a \le b$ and let $S^{(a, b)}$ be the truncation of a sequence $S$ defined by $S^{(a, b)}_k = \bone (-a \le k \le b) S_k$. 
\begin{lemma}
	\label{lem:VTrunc}
	Let $X,\, W$ be i.i.d sequences of non-negative random variables, such that the support of the $W_k$ contains $0$, and set $Z = V(X, W)$. For each $n$, there is almost surely $a$ and $b$ large enough such that the sequence $\widetilde{Z} = V(X, W^{(a, b)})$ has $\widetilde{Z}_k = Z_k$ for $-n \le k \le n$.
	\begin{proof}
		Here it is helpful to imagine the corresponding particle system, as described in \cref{ssec:UpdateMaps}. The $X_k$ represent the distance between particles $k$ and $k + 1$, and the $W_k$ are the distances the particles intend to jump. Let $b = \min\set{k \ge n : W_k \le X_k} - 1$, which is finite by our assumptions. Regardless of the jumps performed ahead of it, there is no way for particle $b + 1$ to interact with particle $b + 2$, as its jump is too short. So we may replace the jumps ahead of particle $b$ by $0$ without affecting the dynamics of particle $b + 1$ and all those behind it, and thus the inter-particle distances are unaffected from particle $b$, downward. Also, the trajectory of a particle is not affected by those behind it, so we may take $a = n$.
	\end{proof}
\end{lemma}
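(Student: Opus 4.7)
The plan is to work directly with the explicit formula $Z_k = (X_k + I_k - W_k)^+$, where $I_k = \min_{l\ge k+1}\brac[\big]{W_l + \sum_{j=k}^{l-1} X_j}$, and to find a random truncation of $W$ that leaves this formula unchanged on the window $[-n,n]$. The ``past'' direction is immediate from the formula: $I_k$ involves only $W_l$ for $l \ge k+1$ and $X_j$ for $j \ge k$, so $Z_k$ depends only on $W_l$ with $l \ge k$. Hence zeroing $W_l$ for $l < -n$ cannot affect any $Z_k$ with $k \ge -n$, and we may take $a = n$.

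For the cutoff from above I would set $b = \min\set{k > n : W_k \le X_k}$. Because the pairs $(X_k, W_k)$ are independent across $k$, because $X_0 > 0$ with positive probability (forced for $V(X,W)$ to be well-defined), and because $0 \in \supp(W_0)$, each event $\set{W_k \le X_k}$ has positive probability; independence then makes $b$ almost surely finite.

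The central calculation is to compare the minimisation defining $I_k$ with the one defining $\widetilde I_k$ under $\widetilde W = W^{(n,b)}$. For $k \le b-1$, split both minima into the interior range $l \in [k+1,b]$ and the tail $l \ge b+1$. On the interior $\widetilde W_l = W_l$, so the interior minimum, call it $A_k$, is the same in both versions. The defining condition $W_b \le X_b$ for $b$ yields $A_k \le W_b + \sum_{j=k}^{b-1} X_j \le \sum_{j=k}^{b} X_j$. On the tail, every term $W_l + \sum_{j=k}^{l-1} X_j$ with $l \ge b+1$ is at least $\sum_{j=k}^{b} X_j$, whether or not $W$ has been truncated; and under $\widetilde W$ equality is attained at $l = b+1$. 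Thus in both versions the overall minimum equals $A_k$, so $\widetilde I_k = I_k$ and hence $\widetilde Z_k = Z_k$ on $[-n,n] \subseteq [-n,b-1]$, as required.

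The only real obstacle is the bookkeeping around the cutoff: the inequality must be imposed at the truncation index $b$ itself, so that the option $l = b$ can beat the entire $l \ge b+1$ tail in both the original and truncated minimisations. This is precisely the content of the particle-system heuristic the paper alludes to: when a particle's intended jump is no larger than the gap in front of it, its dynamics (and hence the inter-particle gaps behind) are shielded from whatever happens further along the sequence.
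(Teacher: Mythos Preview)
Your proof is correct and follows essentially the same idea as the paper's: both choose the cutoff $b$ at (or just beyond) the first index past $n$ where $W_k \le X_k$, take $a=n$, and argue that this shields $Z_k$ on $[-n,n]$ from the tail. The only difference is presentational---the paper phrases the shielding via the particle picture (particle $b+1$'s jump is too short to feel particle $b+2$), while you carry it out directly on the minimisation formula for $I_k$, showing the interior candidate at $l=b$ already beats every tail term. Your version is a little more explicit, but the content is the same.
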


\begin{lemma}
	\label{lem:VReplace}
	Let $W$ and $X$ be as before, and $W'$ be an arbitrary sequence. Set $\widetilde{W} = W' - (W')^{(a, b)} + W^{(a, b)}$, which is $W'$ with an interval of values replaced by those of $W$, and $Z = V(X, W)$, $\widetilde{Z} = V(X, \widetilde{W})$. For each $n$, there is almost surely $a$ and $b$ large enough such that $\widetilde{Z}_k = Z_k$ for all $-n \le k \le n$.
	\begin{proof}
		This is a direct consequence of the previous lemma. The truncations of $W$ and $\widetilde{W}$ agree, and we have seen that a sufficiently large truncation determines the values of $Z_k$, $\widetilde{Z}_k$, for $-n \le k \le n$.
	\end{proof}
\end{lemma}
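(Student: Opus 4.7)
The plan is to leverage \cref{lem:VTrunc} together with the tautological observation that by construction the truncations satisfy $\widetilde{W}^{(a,b)} = W^{(a,b)}$: inside $[-a,b]$ the two sequences coincide by definition of $\widetilde{W}$, and outside both are set to zero. Thus it suffices to establish, for all sufficiently large $a$ and $b$, the chain
\[
V(X,W)_k \;=\; V(X, W^{(a,b)})_k \;=\; V(X, \widetilde{W}^{(a,b)})_k \;=\; V(X, \widetilde{W})_k \quad \text{for } -n \le k \le n,
\]
whose first step is \cref{lem:VTrunc} applied to $W$, whose middle step is bookkeeping, and whose last step is what requires attention.

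For the last equality I would invoke \cref{lem:VTrunc} with $\widetilde{W}$ in place of $W$. The subtlety is that $\widetilde{W}$ is not i.i.d., so the lemma does not apply verbatim; however, inspection of its proof shows that the argument is purely deterministic once a ``barrier index'' $B$ with $W_B \le X_B$ is given (the lower-side truncation can already be taken to be $-n$, since the trajectory of a particle is unaffected by those behind it). I would therefore first extract the deterministic skeleton: for any weight sequence $W^{\sharp}$ and any $B \ge n$ with $W^{\sharp}_B \le X_B$, the output $V(X, W^{\sharp})_k$ for $-n \le k \le B - 1$ depends only on $W^{\sharp}$ restricted to $[-n, B]$.

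With this sub-lemma in hand, I would use the i.i.d.\ structure and the support hypothesis on $W$ to pick $B := \min\{k \ge n : W_k \le X_k\}$, which is almost surely finite, and then take $a \ge n$ and $b \ge B$. The barrier then lies inside the agreement region, so $\widetilde{W}_B = W_B \le X_B$ serves as a barrier for $\widetilde{W}$ as well. Applying the deterministic sub-lemma once to $W$ and once to $\widetilde{W}$ yields the chain above. The only obstacle is the minor repackaging of \cref{lem:VTrunc}'s proof into deterministic form, ensuring that the barrier mechanism does not care about the distribution of the weights outside the barrier---exactly what makes the argument transferable from $W$ to the a priori arbitrary $\widetilde{W}$.
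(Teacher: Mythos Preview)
Your proposal is correct and follows essentially the same route as the paper's proof, which is simply the one-line observation that $W^{(a,b)} = \widetilde{W}^{(a,b)}$ together with \cref{lem:VTrunc}. You are right to flag that \cref{lem:VTrunc} is stated for i.i.d.\ weights and so does not literally apply to $\widetilde{W}$; the paper's proof elides this and implicitly uses exactly the deterministic barrier sub-lemma you extract. Your repackaging---find the barrier $B$ in $W$, take $b \ge B$ so that the barrier lies in the agreement region and hence serves for $\widetilde{W}$ as well---is the honest way to make the paper's argument precise, and once you have the deterministic sub-lemma you can in fact conclude $V(X,W)_k = V(X,\widetilde{W})_k$ directly without passing through the truncated intermediates. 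The only cosmetic issue is a possible off-by-one: to cover $k=n$ you want $B \ge n+1$, so take $B = \min\{k \ge n+1 : W_k \le X_k\}$.
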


\begin{lemma}
	\label{lem:VReverseTrunc}
	Let $W$ follow one of the distributions in the first column of \cref{tab:SWFPPInvariantDists}\footnote{Or indeed, from any of the others. The distributions of the other columns also appear in the first, with rescaling and the correct choice of parameters.} and let $X$ follow a corresponding distribution from the third. Let $W'$ be reverse weights for $V(X, W)$, and $\widetilde{W}$ reverse weights for $V(X, W^{(a, b)})$\footnote{We use the same function $f'$ to produce $\widetilde{W}$}. There is a coupling of the two, such that for each $n$, there is almost surely $a$ and $b$ large enough such that $\widetilde{W}_k = W'_k$ for all $-n \le k \le n$.
	\begin{proof}
		Recall that for each valid choice of the distribution of $(X, W)$, we had a function $f'$ giving the reverse weights which may take some independent source of randomness $\omega$ as an independent argument. We may write $W'_k = f'(X_k, W_k, I_k, \omega_k)$, where $I_k$ are the horizontal increments we discussed in the proof of \cref{prop:AVReversibility}. In the store picture, they are the inputs to each queue before service. Coupling them by using the same external randomness, we may take $\widetilde{W}_k = f'(X_k, \bone(-a \le k \le b)W_k, \widetilde{I}_k, \omega_k)$. (Here $\widetilde{I}_k$ are the inputs of the stores when we use the truncated service.) Clearly these will be equal if $-a \le k \le b$ and $\widetilde{I}_k = I_k$. For the first condition, we should just make sure that $a,\, b \ge n$.
		
		To show $\widetilde{I}_k = I_k$, we go back to our particle picture. Each $I_k$ is just the distance moved by particle $k + 1$. As we argued in \cref{lem:VTrunc}, taking our $a$ and $b$ large enough leaves the dynamics of particle $-(n + 1), \dots, n + 1$ unchanged after replacing the jumps for the other particles by $0$. In particular, the distances moved are unchanged, and thus $\widetilde{I}_k = I_k$.
	\end{proof}
\end{lemma}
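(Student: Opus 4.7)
The plan is to reduce the question to one about the store dynamics, where \cref{lem:VTrunc} already handles the hard part, and then exploit the fact that once the relevant input quantities to the store agree, the coupled reverse weights must agree as well.

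First I would recall the explicit form of the reverse weights developed in the discussion following \cref{prop:AVReversibility}: there is a deterministic function $f'$ and an auxiliary independent uniform variable $\omega_k$ such that
\eq{W'_k = f'(X_k, W_k, I_k, \omega_k),}
where $I_k$ is the horizontal increment, i.e.\ the amount of material passing through store $k$ before service (equivalently, the distance moved by particle $k+1$ in the particle picture of \cref{sssec:UpdateMapsV}). I would then couple $\widetilde{W}$ by using the same $\omega_k$ and taking
\eq{\widetilde{W}_k = f'\brac{X_k, \bone(-a \le k \le b)W_k, \widetilde{I}_k, \omega_k},}
where $\widetilde{I}_k$ are the horizontal increments arising from $V(X, W^{(a,b)})$.

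The next step is to reduce to checking $\widetilde{I}_k = I_k$ on the range $-n \le k \le n$. Indeed, if I insist that $a, b \ge n$ then $\bone(-a \le k \le b)W_k = W_k$ on this range, so equality of the reverse weights would follow from equality of the inputs. This is where \cref{lem:VTrunc} does the work: the $I_k$ are exactly the forward displacements of the particles under the (truncated or untruncated) weights, and by the argument in \cref{lem:VTrunc} we can pick $a, b$ sufficiently large (almost surely finite, using non-negativity and $0 \in \supp \lambda$ to guarantee the stopping condition $W_k \le X_k$ occurs on both sides) so that every particle in the window $-(n+1) \le k \le n+1$ has identical dynamics in the two systems. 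Consequently $\widetilde{I}_k = I_k$ for $-n \le k \le n$, and hence $\widetilde{W}_k = W'_k$ on this range.

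The only genuine subtlety is making sure the coupling via $\omega_k$ is legitimate: $\omega_k$ must remain independent of $X_k, W_k$ and the history that determines $I_k$ and $\widetilde{I}_k$. Because $\omega_k$ is supplied externally and independently of all the environment, this is immediate. I do not expect any obstacle beyond bookkeeping of indices, since the hard combinatorial/probabilistic content — the finite range of dependence of $V$ — has already been extracted in \cref{lem:VTrunc}.
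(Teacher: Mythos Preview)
Your proposal is correct and matches the paper's proof essentially line for line: you use the same representation $W'_k = f'(X_k, W_k, I_k, \omega_k)$, couple via the shared auxiliary randomness $\omega_k$, reduce to showing $\widetilde{I}_k = I_k$ on $[-n,n]$, and invoke the particle-dynamics argument of \cref{lem:VTrunc} for that. Your added remark about the legitimacy of the $\omega_k$ coupling is a minor elaboration the paper leaves implicit, but otherwise there is no difference in approach.
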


We collect similar statements to the above, but now for the $H$ map.
\begin{lemma}
	\label{lem:HInputTrunc}
	Let $I,\, W$ be i.i.d sequences of non-negative random variables such that $\Ex{I_0} < \Ex{W_0} < \infty$, or such that $\Ex{I_0} = \Ex{W_0}$ and both have finite variance. Let $O = H(I, W)$ and $\widetilde{O} = H(I^{(a, b)}, W)$. For each $n \ge 0$, there is almost surely $a$ and $b$ large enough so that $O_k = \widetilde{O}_k$ for all $-n \le k \le n$.
	\begin{proof}
		Consider the store with services $W$ and inputs $I$. Our assumptions on the means and variance say that the store empties infinitely often (this is equivalent to the random walk with steps $I_k - W_k$ having infinitely many running minimums). Choose $a \ge n$ such that $-(a + 1)$ is such a time. Set $b = n$. Both the store with input $I$ and input $I^{(a, b)}$ are initially empty at time $-a$ (the latter for trivial reasons). Thereafter, the stores receive the same input and service, up until time $b$. They will thus have the same output, which is what we claimed.
	\end{proof}
\end{lemma}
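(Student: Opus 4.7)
The plan is to identify a time $-a$ (with $a \ge n$) at which the store in the original system is empty. Then both stores will be empty at time $-a$---the truncated one for trivial reasons---and since their inputs and services agree on $[-a, b]$, the two systems will produce identical outputs on that interval.

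Write $S_k = \sum_{j=0}^{k-1}(I_j - W_j)$, so that $X_k = S_k - \min_{l \le k} S_l$ and the store empties at time $k$ precisely when $S_k$ attains this running minimum. For the truncated input $\widetilde{I} = I^{(a,b)}$ all inputs prior to $-a$ are zero, so trivially $\widetilde{X}_k = 0$ for every $k \le -a$; in particular $\widetilde{X}_{-a} = 0$. From the recursion $X_{k+1} = (X_k + I_k - W_k)^+$, which agrees between the two systems on $[-a, b]$, an induction from the common value $X_{-a} = \widetilde{X}_{-a} = 0$ yields $X_k = \widetilde{X}_k$ for all $-a \le k \le b+1$. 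Hence $O_k = (I_k + X_k) \wedge W_k = \widetilde{O}_k$ on $[-a, b]$, and setting $b = n$ gives the claim on $[-n, n]$.

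The remaining task is to produce, almost surely, $a \ge n$ with $X_{-a} = 0$ in the original system. Under $\Ex{I_0} < \Ex{W_0}$ the law of large numbers gives $S_{-m} \to +\infty$ almost surely, so $\min_{l \le k} S_l$ is attained at some finite $l$ and $(X_k)_{k \in \bZ}$ is a bona fide stationary ergodic process. The event $\{X_0 = 0\}$ amounts to the reversed walk $(S_{-m})_{m \ge 0}$ remaining nonnegative for all time, which has strictly positive probability since that walk has positive drift. Stationarity and ergodicity of $(X_k)$ then force $\{X_{-a} = 0\}$ to occur for infinitely many $a$, and in particular for some $a \ge n$. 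In the critical case $\Ex{I_0} = \Ex{W_0}$ with finite variance, the two-sided stationary extension of $(X_k)$ degenerates; here I would instead initialize the store empty at a deterministic time $-A$ in the past and invoke Chung--Fuchs recurrence of $(S_k)$ to obtain infinitely many returns to zero, taking $A$ large enough that such a return falls in $[-A, -n]$.

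The main obstacle is precisely this critical case: the natural bi-infinite stationary process is identically $+\infty$, so one must pick an initialization consistent for both systems and verify that the dependence on the initialization is washed out by the first empty time. Once that is handled---by empty-time regeneration coming from the recurrence of the driving walk---the coupling of the two store trajectories beyond this time is automatic from the recursion, and the rest of the argument is the same as in the subcritical case.
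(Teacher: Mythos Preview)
Your approach is essentially the paper's: locate a time $-a$ at which the original store is empty (equivalently, a running minimum of the walk $S_k$), note that the truncated store is trivially empty there, and then propagate the equality $X_k = \widetilde X_k$ forward via the common recursion on $[-a,b]$. The paper's proof is a one-paragraph version of exactly this.

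Your concern about the critical case is well placed, and the paper is no more careful there: it simply asserts that ``the store empties infinitely often,'' invoking the running-minimum characterisation without separating the regimes. Note, however, that your proposed fix of initialising the store empty at a deterministic time $-A$ does not quite compute $H(I,W)$ as defined. In the critical case the bi-infinite store satisfies $X_k \equiv +\infty$, so $O_k = W_k$ identically; what you actually need is that for $a$ large enough the \emph{truncated} store satisfies $I_k + \widetilde X_k \ge W_k$ on $[-n,n]$, i.e.\ $S_{k+1} \ge \min_{-a \le l \le k} S_l$. This follows directly from $\liminf_{l \to -\infty} S_l = -\infty$ (Chung--Fuchs), so your recurrence intuition is the right ingredient, just deployed slightly differently.
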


The statement below is then immediate from \cref{lem:HInputTrunc}:
\begin{lemma}
	\label{lem:HInputReplace}
	Let $I$ and $W$ be as before, and $I'$ another i.i.d sequence satisfying the same assumptions as $I$. Set $\widetilde{I} = I' - (I')^{(a, b)} + I^{(a, b)}$, and $O = H(I, W)$, $\widetilde{O} = H(\widetilde{I}, W)$. For each $n$, there is almost surely $a$ and $b$ large enough such that $\widetilde{O}_k = O_k$ for all $-n \le k \le n$.
\end{lemma}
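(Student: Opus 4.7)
The construction yields $\widetilde{I}_k = I_k$ for $-a \le k \le b$ and $\widetilde{I}_k = I'_k$ otherwise. The plan is to find $a, b \ge n$ such that the $I$-store and the $\widetilde{I}$-store are \emph{both} in state zero at time $-a$; from $-a$ to $b$ they then receive identical inputs $I_j$ and evolve identically under the same services, so their outputs coincide on $[-a, b] \supseteq [-n, n]$.

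The $I$-store has state $X^I_{-a} = 0$ precisely when $-a$ lies in the (random) set $E_I$ of its emptying times. The state $X^{\widetilde{I}}_{-a}$ depends only on the inputs $\widetilde{I}_j$ for $j \le -a-1$, each of which equals $I'_j$, so $X^{\widetilde{I}}_{-a} = X^{I'}_{-a}$; this vanishes iff $-a \in E_{I'}$. The problem thus reduces to showing that $E_I \cap E_{I'}$ is almost surely unbounded below, after which I pick any $a \ge n$ with $-a \in E_I \cap E_{I'}$ and set $b = n$.

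Since $(I_t, I'_t, W_t)_{t \in \bZ}$ is an i.i.d.\ sequence of triples, the joint process $(X^I_t, X^{I'}_t)$ is stationary and ergodic under shifts, and by Birkhoff the density of $E_I \cap E_{I'}$ equals $\Prob{X^I_0 = 0, X^{I'}_0 = 0}$. It therefore suffices to show that this joint probability is strictly positive. Conditioning on $W$, the two events become independent (the first is determined by $(I, W)$, the second by $(I', W)$), and each conditional probability is an increasing function of $W$ in the coordinate-wise product order: enlarging any $W_j$ pushes the partial sums $\sum (I_k - W_k)$ downward and makes the emptying event more likely. Harris' inequality for the product measure on $W$ then gives
\begin{equation*}
\Prob{X^I_0 = 0, X^{I'}_0 = 0} \ge \Prob{X^I_0 = 0} \cdot \Prob{X^{I'}_0 = 0},
\end{equation*}
and both marginals are positive under the stability hypothesis $\Ex{I_0} < \Ex{W_0}$, each equalling the mass at zero of the stationary queue-length distribution.

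The main obstacle is justifying the Harris monotonicity cleanly (a truncation-and-telescoping argument enlarging one coordinate of $W$ at a time) and handling the null-recurrent boundary case $\Ex{I_0} = \Ex{W_0}$ allowed by \cref{lem:HInputTrunc}, where both single-store emptying probabilities vanish. In that boundary case I would replace the Harris argument by a regenerative one: each individual store returns to zero at arbitrarily negative times, and the inter-regeneration cycles of the two stores are conditionally independent given $W$, so a Borel--Cantelli-type argument on overlapping regeneration windows still produces infinitely many joint emptying times.
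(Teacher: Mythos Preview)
Your argument is correct in the strict-inequality case, but you are working harder than necessary, and the extra work is precisely what creates the obstacle you flag in the null-recurrent case.

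The paper regards this lemma as immediate from \cref{lem:HInputTrunc}, and the reason is that you do not need a \emph{simultaneous} emptying time for the $I$-store and the $I'$-store. Recall that \cref{lem:HInputTrunc} (together with monotonicity of the store state in the inputs) gives a threshold $a_1$ such that for \emph{every} $a \ge a_1$ and $b \ge n$, the outputs $H(I,W)$ and $H(I^{(a,b)},W)$ agree on $[-n,n]$. On the other side, observe that $\widetilde{I}^{(a,b)} = I^{(a,b)}$, so it suffices to get $H(\widetilde{I},W)$ to agree with $H(I^{(a,b)},W)$ on $[-n,n]$. For this it is enough that the $\widetilde{I}$-store be empty at time $-a$, i.e.\ that $X^{I'}_{-a}=0$, since then both it and the truncated store start from zero at $-a$ and receive the same inputs thereafter. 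The $I'$-store empties infinitely often under either hypothesis of \cref{lem:HInputTrunc}, so simply choose any $a \ge a_1$ with $-a \in E_{I'}$. No condition on $X^I_{-a}$ is required at that specific $a$.

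This decoupling through the truncated intermediate is what you are missing. It removes the need for the Harris/FKG step entirely, and it works verbatim in the boundary case $\Ex{I_0}=\Ex{W_0}$, since a mean-zero finite-variance walk still has infinitely many running minima. Your Harris argument is valid where it applies, but it is machinery you can dispense with.
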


\begin{lemma}
	\label{lem:HServiceReplace}
	Let $I,\, W,\, \widehat{W}$ be i.i.d sequences of non-negative random variables, all with finite mean, such that either $\Ex{I_0} < \Ex{W_0} \wedge \Ex{\widehat{W}_0}$, or such that $\Ex{I_0} = \Ex{W_0} \wedge \Ex{\widehat{W}_0}$ and all have finite variance. Set $\widetilde{W} = \widehat{W} - \widehat{W}^{(a, b)} + W^{(a, b)}$, and Let $O = H(I, W)$ and $\widetilde{O} = H(I, \widetilde{W})$. For each $n \ge 0$, there is almost surely $a$ and $b$ large enough so that $O_k = \widetilde{O}_k$ for all $-n \le k \le n$.
	\begin{proof}
		Set $b = n$. Consider the stores corresponding to $H(I, W)$ and $H(I, \widehat{W})$, which we call the first and second store, respectively. These stores will be empty infinitely often, so find $a$ and $a_0$ such that $a \ge a_0 \ge -n$, and such that the first store is initially empty at time $-a_0$, and the second store is initially empty at time $-a$.  A third store, with service $\widetilde{W}$, is also initially empty at time $-a$, while at this time the first store may not be. Thereafter, the first and third stores receive the same input and service, and so the first dominates the third. At time $-a_0$, both stores are empty. From this point on, both stores are in the same state and produce the same output.
	\end{proof}
\end{lemma}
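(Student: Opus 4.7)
The plan is to run a three-store coupling in the spirit of \cref{lem:HInputTrunc}: set $b = n$ and consider simultaneously three stores driven by the common input $I$ with services $W$, $\widehat W$, and $\widetilde W$ respectively, and show that the first and third produce identical output on $[-n, n]$ once $a$ is chosen suitably.

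The key preliminary is that each of the stores fed by $I$ with service $W$ or $\widehat W$ is empty at infinitely many times in backward time. This reduces to a standard random walk fact: the recentred walk with increments $I_j - W_j$ (resp. $I_j - \widehat W_j$) has either strictly negative drift in the first case, or mean zero with finite variance (hence is recurrent) in the second, so almost surely the event that the walk attains a new running minimum at time $k$ --- equivalently that the store level $X_k = 0$ --- occurs for infinitely many $k \to -\infty$. Using this for the store with service $\widehat W$, choose $a \ge n$ so that this store is empty at time $-a$. Since $\widetilde W_j = \widehat W_j$ for all $j \le -a - 1$, the second and third stores are driven by identical data up to time $-a$, so the third store is also empty at $-a$.

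From time $-a$ onward to $b = n$ the services agree, $\widetilde W_j = W_j$, and the first and third stores receive the same input and service on this window. Since the third store is zero at $-a$ while the first is non-negative, the monotonicity of \cref{lem:UpdateMapMono} forces the first to dominate the third thereafter. Appealing once more to the backward recurrence of emptying times, select $a_0$ with $a \ge a_0 \ge n$ at which the first store is empty; domination then forces the third to also be empty at $-a_0$, after which both stores evolve in lock-step up to time $n$. In particular $O_k = \widetilde O_k$ for $-a_0 \le k \le n$, which covers $[-n, n]$ as required. The only delicate point is the equal-mean subcase, where the argument for infinitely many backward minima invokes recurrence of a mean-zero finite-variance random walk rather than the simpler negative-drift fact; beyond this the remainder is deterministic coupling bookkeeping.
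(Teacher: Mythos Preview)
Your argument is essentially the same coupling as the paper's: set $b=n$, use emptiness of the $\widehat W$-store to synchronise the third store at time $-a$, then use emptiness of the $W$-store at some intermediate time $-a_0$ together with domination to lock the first and third stores together on $[-a_0,n]$.

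One small point of order: as written you fix $a$ first (from the $\widehat W$-store) and only then look for $a_0 \in [n,a]$ at which the $W$-store empties, but backward recurrence does not guarantee an emptying time in a prescribed bounded window. The paper avoids this by choosing $a_0$ first and then taking $a \ge a_0$; you should do the same. Also, the domination you need is monotonicity of the store level in its initial value under identical input and service (the Lindley recursion $X_{k+1}=(X_k+I_k-W_k)^+$ is monotone in $X_k$), which is more elementary than \cref{lem:UpdateMapMono} and is what the paper implicitly uses.
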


We put these lemmas together into the form which we will actually need.
\begin{lemma}
	\label{lem:IntertwiningTrunc}
	Let $X^1,\, X^2,\, X^3$ be as in \cref{prop:AVReversibility}, with $Z^2 = V(X^2, X^1)$ and $Z^1$ the corresponding reverse weights. Let $\widetilde{X}^1$ be the truncation $(X^1)^{(a, b)}$. Set $\widetilde{Z}^2 = V(X^2, \widetilde{X}^1)$ and let $\widetilde{Z}^1$ be the reverse weights for this update with truncated service. For all $n \ge 0$, almost surely we may choose $a,\, b$ large enough so that 
	\begin{equation}
		\label{eq:VCutoff}
		(V(H(X^3, X^2), X^1))_k = (V(H(X^3, X^2), \widetilde{X}^1))_k
	\end{equation}
	and 
	\begin{equation}
		\label{eq:HCutoff}
		(H(V(X^3, Z^1), Z^2))_k = (H(V(X^3, \widetilde{Z}^1), \widetilde{Z}^2))_k
	\end{equation}
	for all $-n \le k \le n$.
	\begin{proof}
		The equality in \eqref{eq:VCutoff} follows just from \cref{lem:VTrunc}. For \eqref{eq:HCutoff}, we first use \cref{lem:HServiceReplace} to replace $Z^2$ by $\widetilde{Z}^2$. Then \cref{lem:VReverseTrunc} means that $Z^1$ and $\widetilde{Z}^1$ agree on an arbitrarily long interval, and so \cref{lem:VReplace} says that also $V(X^3, Z^1)$ and $V(X^3, \widetilde{Z}^1)$ agree on an arbitrarily long interval. Now \cref{lem:HInputReplace} gives the result.
	\end{proof}
\end{lemma}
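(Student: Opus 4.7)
My plan is to reduce both equalities to the truncation and replacement lemmas just established, with \eqref{eq:VCutoff} being essentially one application of \cref{lem:VTrunc} and \eqref{eq:HCutoff} requiring a short telescope through several of them. I would begin by noting that all of the intermediate sequences involved (namely $H(X^3, X^2)$, $Z^1$, $Z^2$, $V(X^3, Z^1)$, and their tilded counterparts) are i.i.d with distributions read off from \cref{tab:SWFPPInvariantDists}, so the hypotheses of each lemma are met. For \eqref{eq:VCutoff}, setting $Y = H(X^3, X^2)$ reduces the claim to $V(Y, X^1)_k = V(Y, \widetilde{X}^1)_k$ for $-n \le k \le n$. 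Since $\widetilde{X}^1 = (X^1)^{(a,b)}$ and $X^1$ has a distribution from the third column of \cref{tab:SWFPPInvariantDists} whose support contains $0$, this is exactly \cref{lem:VTrunc} with $Y$ in the role of $X$ and $X^1$ in the role of $W$.

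For \eqref{eq:HCutoff}, the idea is to change the inputs and services of the outer $H$ one at a time, chaining replacement lemmas. First, since $Z^2 = V(X^2, X^1)$ and $\widetilde{Z}^2 = V(X^2, \widetilde{X}^1)$, the pair agrees on an arbitrarily long window by \cref{lem:VReplace}. Next, I would couple $Z^1$ and $\widetilde{Z}^1$ via the shared auxiliary randomness in the function $f'$ introduced after \cref{prop:AVReversibility}; then \cref{lem:VReverseTrunc} gives that they agree on a long window as well. Applying \cref{lem:VReplace} once more, with $X^3$ as the fixed increments and $Z^1$ versus $\widetilde{Z}^1$ as the services, yields agreement of $V(X^3, Z^1)$ and $V(X^3, \widetilde{Z}^1)$ on a long window. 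Finally, applying \cref{lem:HInputReplace} and \cref{lem:HServiceReplace} in succession to the outer $H$, whose inputs and services each now agree on sufficiently long windows, delivers the required equality on $[-n, n]$.

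The main obstacle I anticipate is the bookkeeping of nested truncation windows: each replacement lemma consumes some margin, in that agreement of its arguments on $[-N, N]$ only forces agreement of the outputs on a smaller interval. I would therefore work outward from the innermost window $[-n, n]$, determining the required window at each intermediate step, and then choose $a, b$ to exceed all of the resulting thresholds simultaneously. A smaller but essential subtlety is that $Z^1$ and $\widetilde{Z}^1$ do not agree automatically, since the reverse weights in \cref{prop:AVReversibility} are not uniquely determined by $(X, W)$ and depend on an extra randomisation $\omega_k$. One must explicitly couple the two reverse-weight sequences by feeding the same $\omega_k$ into both instances of $f'$, so that wherever the underlying inputs to $f'$ coincide, the reverse weights coincide pointwise; this is what makes \cref{lem:VReverseTrunc} applicable in the argument above.
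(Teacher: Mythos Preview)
Your proposal is correct and follows essentially the same route as the paper: \eqref{eq:VCutoff} is a single application of \cref{lem:VTrunc}, and \eqref{eq:HCutoff} is obtained by first using \cref{lem:VReverseTrunc} (with the shared-$\omega$ coupling you describe) together with \cref{lem:VReplace} to get agreement of $V(X^3,Z^1)$ and $V(X^3,\widetilde{Z}^1)$, then invoking \cref{lem:HInputReplace} and \cref{lem:HServiceReplace} on the outer $H$. Your remarks on nesting the windows and on coupling the reverse weights through the same auxiliary randomness are exactly the points the paper relies on (the latter appears explicitly in the proof of \cref{lem:VReverseTrunc}).
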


The takeaway from \cref{lem:IntertwiningTrunc} is that to verify \cref{prop:AVReversibility}, it suffices to check finite truncations of the sequence $X^1$. Before carrying this out, let us introduce some language which will help visualise what \eqref{eq:VIntertwining} is saying and better organise the argument. Switch to the particle system picture and let $\set{(\eta^2(k), 1)}_{k \in \bZ}$, $\set{(\eta^3(k), 0)}_{k \in \bZ}$ be the positions of particles on vertically shifted copies of the real line, with $\eta^i(k + 1) - \eta^i(k) = X^i_k$ and translated such that $\eta^2(0) = \eta^3(0) = 0$.  Consider paths starting at $(-\infty, 0)$ on the lower line and ending at $(\eta^i(k), 3 - i)$. We may jump rightwards along the lower line from one particle  to the next. If our destination is on the upper line, we have the option while at $\eta^3(j)$ to jump for free to $\eta^2(j)$ on the upper line, and then continue to jump on the upper line towards our destination. 

If $\gamma$, $\gamma'$ are two such paths, possibly with different endpoints, it makes sense to talk about the difference $\abs{\gamma} - \abs{\gamma'}$ as
\begin{equation}
	\abs{\gamma} - \abs{\gamma'} = \abs{\gamma \setminus \gamma'} - \abs{\gamma' \setminus \gamma}.
\end{equation}
Here $\gamma \setminus \gamma'$ is the segment of $\gamma$ not shared with $\gamma'$. 

Let $\gamma_0$ be the unique path to $(\eta^3(0), 0)$, and define the ``length'' of $\gamma$ as $L(\gamma) = \abs{\gamma} - \abs{\gamma_0}$. Let $\gamma(k)$ be the path to $(\eta^2(k), 1)$ with minimal length and define $\zeta(k) = L(\gamma(k))$. In light of the fact that $H$ represents the horizontal update in SWFPP, it is not difficult to see that in fact $(\zeta(k + 1) - \zeta(k)) = H(X^3, X^2)$. Observe that 
\begin{equation}
	\label{eq:zetaRecur}
	\zeta(k) = (\zeta(k - 1) + X^2_{k - 1}) \wedge \eta^3(k).
\end{equation}
Also, $\zeta(k - 1) \le \eta^3(k - 1) \le \eta^3(k)$, and so if $X^2_{k - 1} = 0$, we must have $\zeta(k) = \zeta(k - 1)$.

The purpose of the long-winded interpretation of $H$ in the preceding paragraph is to provide some meaning to the expressions $V(H(X^3, X^2), X^1)$ and $H(V(X^3, Z^1), Z^2)$. The former represents computing the positions of these $\zeta(k)$, then jumping them forward by $X^1$ in the sense described in \cref{sssec:UpdateMapsV}. Let $\zeta'(k)$ be the positions of the particles in this instance. The latter expression represents first jumping the $\eta^2(k)$ forward by $X^1$ to produce $\tilde{\eta}^2(k)$, then jumping the $\eta^3(k)$ with some compensatory jumps to produce $\tilde{\eta}^3(k)$, and finally using these moved particles with the scheme described above to compute positions $\widetilde{\zeta}(k)$. We would like to show that these are equivalent and that $\zeta'(k) = \widetilde{\zeta}(k)$.

The following simple bound will be useful in proving the intertwining identity.
\begin{lemma}
	\label{lem:zetatildeBound}
	For all $k$,
	\begin{equation}
		\widetilde{\zeta}(k) \le \zeta(k) + X^1_k.
	\end{equation}
	\begin{proof}
		Consider a path $\gamma$ going from $(-\infty, 0)$ to $(\eta^3(k), 1)$ along the original particles $\eta^2$ and $\eta^3$. Suppose this path moves to the upper line at particle $l$, and let $\widetilde{\gamma}$ be the path along the moved particles $\widetilde{\eta}^2$ and $\widetilde{\eta}^3$ which itself moves to the upper line at particle $l$. The difference in lengths is 
		\begin{equation}
			\label{eq:ShiftedPathDifferences}
			\abs{\widetilde{\gamma}} - \abs{\gamma} = (\widetilde{\eta}^2(k) - \eta^2(k)) + (\widetilde{\eta}^3(l) - \eta^3(l)) - (\widetilde{\eta}^2(l) - \eta^2(l)).
		\end{equation}
		Suppose $X^2_{l - 1} > 0$, so that $Z^1_l = X^2_l \wedge X^1_l$ by \cref{cor:AVReverseProps}. The differences in the expression above have
		\begin{align}
			\widetilde{\eta}^2(k) - \eta^2(k) &= X^2_k \wedge X^1_k \le X^1_k,\\
			\widetilde{\eta}^3(l) - \eta^3(l) &= X^3_k \wedge Z^1_l \le X^2_l \wedge X^1_l,\\
			\widetilde{\eta}^2(l) - \eta^2(l) &= X^2_l \wedge X^1_l.
		\end{align} 
		Plugging into \eqref{eq:ShiftedPathDifferences}, one gets $\abs{\widetilde{\gamma}} - \abs{\gamma} \le X^1_k$ and
		\begin{equation}
			\label{eq:ShiftedPathBound}
			\abs{\widetilde{\gamma}} - \abs{\widetilde{\gamma_0}} \le \abs{\gamma} - \abs{\gamma_0} + X^1_k.
		\end{equation} 
		
		In the other case, when $X^2_{l - 1} = 0$, we see that the path which moves to the upper line at particle $l - 1$ is no longer than $\gamma$, so we may discard $\gamma$ from the collection of paths we consider. Taking the minimum in \eqref{eq:ShiftedPathBound} over paths $\gamma$, we arrive at $\widetilde{\zeta}(k) \le \zeta(k) + X^1_k$.
	\end{proof}
\end{lemma}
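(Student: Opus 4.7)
The plan is to bound $\widetilde\zeta(k)$ by exhibiting, for each admissible switch index $l$ in the original problem, a candidate path in the shifted problem with the same switch index, and then controlling the difference in lengths. Writing out the path lengths explicitly (using the normalizations $\eta^2(0) = \eta^3(0) = 0$), the path that switches to the upper line at index $l$ has length $\eta^3(l) + \eta^2(k) - \eta^2(l)$ in the original setup, and the analogous tilde-expression in the shifted setup. Subtracting,
\begin{equation}
L(\widetilde\gamma_l) - L(\gamma_l) = (\widetilde\eta^3(l) - \eta^3(l)) + (\widetilde\eta^2(k) - \eta^2(k)) - (\widetilde\eta^2(l) - \eta^2(l)).
\end{equation}

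The next step is to identify each displacement by going back to the $V$-dynamics particle picture from \cref{sssec:UpdateMapsV}: each particle tries to jump by its assigned service but is blocked by the particle immediately ahead, updated from the lowest index upward. This yields $\widetilde\eta^2(k) - \eta^2(k) = X^1_k \wedge X^2_k$ on the upper line and $\widetilde\eta^3(l) - \eta^3(l) = Z^1_l \wedge X^3_l$ on the lower line. The reverse-weight inequality from \cref{cor:AVReverseProps} gives $Z^1_l \ge X^1_l \wedge X^2_l$, with equality whenever $X^2_{l-1} > 0$. In the equality regime, $Z^1_l \wedge X^3_l \le Z^1_l = X^1_l \wedge X^2_l$, so the three-term sum telescopes to at most $X^1_k \wedge X^2_k \le X^1_k$ — precisely the bound sought.

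The main obstacle is disposing of the case $X^2_{l-1} = 0$, where the reverse-weight bound may be strict and the cancellation fails. The key observation is that in this case the path switching at index $l-1$ is no worse than the one switching at $l$: the upper-line contribution is unchanged because $\eta^2(l-1) = \eta^2(l)$, while the lower-line term $\eta^3(l-1) \le \eta^3(l)$ (as $X^3_{l-1} \ge 0$). Consequently, when computing $\zeta(k)$ we may restrict the minimum to indices $l$ with $X^2_{l-1} > 0$, and apply the previous paragraph's estimate there. Taking the infimum over such $l$ then gives $\widetilde\zeta(k) \le \zeta(k) + X^1_k$, completing the plan.
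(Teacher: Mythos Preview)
Your proposal is correct and follows essentially the same approach as the paper: you compute the same path-length difference formula, bound each displacement term via the $V$-dynamics and the reverse-weight equality from \cref{cor:AVReverseProps} in the case $X^2_{l-1} > 0$, and discard the case $X^2_{l-1} = 0$ by observing that switching at index $l-1$ gives a path no longer than switching at $l$. The structure, key lemma, and case analysis are identical to the paper's proof.
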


At last, we are ready to prove \cref{prop:VIntertwining}
\begin{proof}[Proof of \cref{prop:VIntertwining}]
	Suppose $X^1_k = 0$ unless $-a \le k \le b$. This suffices, by \cref{lem:IntertwiningTrunc}. We use two-sided induction: if we can show the equality for \emph{some} $k \ge b + 1$, then we may use induction in either direction to get the equality for all $k$. Let $m = \max_{-a \le j \le b} X^1_j$ and let $\pi_{j, k}$ be the path $(-\infty, 0) \to (\eta^3(j), 0) \to (\eta^2(j), 1) \to (\eta^2(k), 1)$. Find $k_0 \ge b + 1$ large enough such that $\zeta(k_0) \le \min \set{\abs{\pi_{j, k_0}} : j \le b} - 2m$. Such a $k_0$ must exist almost surely, since $\Ex{X^3_0} < \Ex{X^2_0}$, and so the optimal path will tend to stay on the lower line for most of its journey. Let $l$ be minimal such that $\pi_{l, k_0}$ is an optimal path to $(\eta^2(k_0), 1)$, and call this path $\gamma$. Observe that $l \ge b$.
	
	If $X^2_{l - 1} = 0$, then $l$ would not be minimal, so we must have $X^2_{l - 1} > 0$. Also, $l \ge b + 1$ and so $X^1_l = Z^1_l = 0$. Thus $\widetilde{\eta}^3(l) = \eta^3(l)$ and $\widetilde{\eta}^3(k_0) = \eta^3(k_0)$. Then $\gamma$ is a valid path to $(\widetilde{\eta}^2(k), 1)$ under the shifted particles, hence $\widetilde{\zeta}(k_0) \le \abs{\gamma} = \zeta(k_0)$. Trivially, $\widetilde{\zeta}(k_0) \ge \zeta(k_0)$ (the particles only move forward) and $\zeta'(k_0) = \zeta(k_0)$, so $\widetilde{\zeta}(k_0) = \zeta'(k_0)$.
		
	With $k_0$ as our base case, suppose we know $\widetilde{\zeta}(k + 1) = \zeta'(k + 1)$. Since we get $\zeta'(k)$ by shifting the $\zeta$ forward, we have the recurrence
	\begin{equation}
		\zeta'(k) = (\zeta(k) + X^1_k) \wedge \zeta'(k + 1).
	\end{equation}
	The $\widetilde{\zeta}$ represent the lengths of optimal paths, and have
	\begin{equation}
		\widetilde{\zeta}(k + 1) = (\widetilde{\zeta}(k) + Z^2_k) \wedge \widetilde{\eta}^3(k + 1),
	\end{equation}
	Therefore, by the induction hypothesis
	\begin{equation}
		\zeta'(k) = (\zeta(k) + X^1_k) \wedge (\widetilde{\zeta}(k) + Z^2_k) \wedge \widetilde{\eta}^3(k + 1).
	\end{equation}
	Our task becomes showing that the expression on the right also equals $\widetilde{\zeta}(k)$. Note that each of the terms dominates $\widetilde{\zeta}(k)$: the first by \cref{lem:zetatildeBound}, the second trivially, and the third by $\widetilde{\zeta}(k) \le \widetilde{\eta}^3(k) \le \widetilde{\eta}^3(k + 1)$. So it suffices to show that any of these is equal to $\widetilde{\zeta}(k)$.
	
	The equality is immediate if we suppose $Z^2_k = 0$ or that $\widetilde{\zeta}(k) = \widetilde{\eta}^3(k) = \widetilde{\eta}^3(k + 1)$, so suppose $Z^2_k > 0$. This implies $\widetilde{\eta}^2(k) = \eta^2(k) + X^1_k$. We consider three cases:
	\begin{enumerate}
		\item Suppose $\widetilde{\zeta}(k) < \widetilde{\eta}^3(k)$, so that $\widetilde{\zeta}(k) = \widetilde{\zeta}(k - 1) + Z^2_{k - 1}$. Let $\widetilde{\gamma}$ be the minimising path with jump point $l$ maximal. Then $l \le k - 1$. We must have $\widetilde{\eta}^3(l + 1) - \widetilde{\eta}^3(l) > Z^2_l \ge 0$, as otherwise $l$ would fail to be maximal. So $\widetilde{\eta}^3(l) = \eta^3(l) + Z^1_l$. Also, the reversibility property in \cref{prop:AVReversibility}, satisfied by $Z^1$, gives $Z^1_l \ge \widetilde{\eta}^2(l) - \eta^2(l)$. So the length of $\gamma = \pi_{k, l}$ along the original particles is
		\begin{equation}
			L(\gamma) = L(\widetilde{\gamma}) - X^1_k + (\widetilde{\eta}^2(l) - \eta^2(l)) - Z^1_l \le L(\widetilde{\gamma}) - X^1_k.
		\end{equation}
		Hence $\zeta(k) \le L(\widetilde{\gamma}) - X^1_k = \widetilde{\zeta}(k) - X^1_k$. It follows from \cref{lem:zetatildeBound} that $\widetilde{\zeta}(k) = \zeta(k) + X^1_k$.
		\item Suppose $\widetilde{\zeta}(k) = \widetilde{\eta}^3(k) < \widetilde{\eta}^3(k + 1)$. Then we can repeat the argument in the previous point with $l = k$ to find that $Z^1_k \ge \widetilde{\eta}^2(k) - \eta^2(k)$. Once again, this implies $\widetilde{\zeta}(k) = \zeta(k) + X^1_k$.
		\item Suppose $\widetilde{\zeta}(k) = \widetilde{\eta}^3(k) = \widetilde{\eta}^3(k + 1)$. Then in particular, $\widetilde{\zeta}(k) = \widetilde{\eta}^3(k + 1)$.
	\end{enumerate} 
	
	With the backward induction complete, suppose instead that we know $\widetilde{\zeta}(k - 1) = \zeta'(k - 1)$. We may assume $k \ge b + 2$. From the definition, $\zeta'(k - 1) = (\zeta(k - 1) + X^1_{k - 1}) \wedge \zeta(k) = \zeta(k - 1)$ (since $X^1_{k - 1} = 0$ after the truncation), and our induction hypothesis means that
	\begin{align}
		\widetilde{\zeta}(k) &= (\widetilde{\zeta}(k - 1) + Z^2_{k - 1}) \wedge \widetilde{\eta}^3(k)\\
		&= (\zeta(k - 1) + X^2_{k - 1}) \wedge (\eta^3(k) + Z^1_k) \wedge \widetilde{\eta}^3(k + 1).\label{eq:tildezetaPostTruncDecomp}
	\end{align}
	Because $\zeta(k)$ is majorised by each of the three right hand terms, we just need to show equality with one. If $X^2_{k - 1} = 0$, then $\zeta(k) = \zeta(k - 1)$ and we're done. If $X^2_{k - 1} > 0$, then $Z^1_k \le X^1_k = 0$, and the first two terms of \eqref{eq:tildezetaPostTruncDecomp} are $(\zeta(k - 1) + X^2_{k - 1}) \wedge \eta^3(k)$. This is the recursive form of $\zeta(k)$.
\end{proof}

Having worked to prove the intertwining identities, we may now prove the main theorem rather easily. Define $H^{(1)}(X^1) = X^1$ and
\begin{equation}
	H^{(n + 1)}(X^{n + 1}, \dots, X^1) = H(H^{(n)}(X^{n + 1}, \dots, X^2), X^1).
\end{equation}
For example, $H^{(2)}(X^2, X^1) = H(X^2, X^1)$. The map $H^{(n)}$ takes $n$ sequences and applies $H$ iteratively, the output from one store becoming the input of the next. Define also $\widehat{V}(X^2, X^1)$ for the reverse weights coming from $H(X^2, X^1)$, and $\widehat{V}^{(n)}(X^n, \dots, X^1)$ recursively by $\widehat{V}^{(1)}(X^1) = X^1$ and 
\begin{equation}
	\widehat{V}^{(n + 1)}(X^{n + 1}, \dots, X^1) = \widehat{V}(X^{n + 1}, H^{(n)}(X^{n}, \dots, X^1)).
\end{equation}

A simple induction gives an intertwining identity involving the $H^{(n)}$.
\begin{lemma}
	\label{lem:VMultiIntertwining}
	Let $X^1, \dots, X^n$ be sequences such that each triple $(X^{k}, X^{k + 1}, X^{k + 2})$ satisfies the assumptions of \cref{prop:VIntertwining}. Then
	\begin{multline}
		\label{eq:VMultiIntertwining}
		V\brac[\big]{H^{(n - 1)}(X^n, \dots, X^2), X^1} = \\H^{(n)}\brac[\Big]{V\brac[\big]{X^n, \widehat{V}^{(n - 1)}(X^{n - 1}, \dots, X^1)}, V\brac[\big]{X^{n - 1}, \widehat{V}^{(n - 2)}(X^{n - 2}, \dots, X^1)}, \dots, V(X^2, X^1)}.
	\end{multline}
	\begin{proof}
		The $n = 2$ case is \cref{prop:VIntertwining}, so suppose it holds for $n - 1$. Then using \eqref{eq:VIntertwining},
		\begin{align}
			V\brac[\big]{H^{(n - 1)}(X^n, \dots, X^2), X^1} &= V\brac[\Big]{H\brac[\big]{H^{(n - 2)}(X^n, \dots, X^3), X^2}, X^1}\\
			&= H\brac[\Big]{V\brac[\big]{H^{(n - 2)}(X^n, \dots, X^3), \widehat{V}(X^2, X^1)}, V(X^2, X^1)}\\
			&= H\bigg( 
				H^{(n - 1)} \Big ( 
					V\brac[\big]{
						X^n, \widehat{V}^{(n - 2)}\brac[\big]{
							X^{n - 1}, \dots, X^3, \widehat{V}(X^2, X^1)}}, \dots \notag \\
			&\qquad \qquad \dots, V^{(2)}(X^3, \widehat{V}(X^2, X^1) \Big ), V(X^2, X^1) \bigg)\\
			&= H^{(n)}\bigg( V\brac[\big]{X^n, \widehat{V}^{(n - 1)}(X^{n - 1}, \dots, X^1)},\notag\\
			&\qquad \qquad \qquad V\brac[\big]{X^{n - 1}, \widehat{V}^{(n - 2)}(X^{n - 2}, \dots, X^1)}, \dots, V(X^2, X^1) \bigg),
		\end{align}
		where on the last line we used the recursive definition of $\widehat{V}^{(n)}$.
	\end{proof}
\end{lemma}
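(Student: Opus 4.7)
The plan is to proceed by induction on $n$, the base case $n = 2$ being precisely \cref{prop:VIntertwining}. Assume the identity holds for all indices strictly less than $n$. Unpacking the outermost $H^{(n-1)}$ on the left-hand side of \eqref{eq:VMultiIntertwining} via its recursive definition gives
\[
V\bigl(H^{(n-1)}(X^n, \dots, X^2),\, X^1\bigr) \;=\; V\bigl(H(H^{(n-2)}(X^n, \dots, X^3),\, X^2),\, X^1\bigr),
\]
and invoking \cref{prop:VIntertwining} with the three sequences $H^{(n-2)}(X^n, \dots, X^3)$, $X^2$, $X^1$ playing the roles of $X^3$, $X^2$, $X^1$ pushes the outer $V$ inside, producing
\[
H\Bigl(V\bigl(H^{(n-2)}(X^n, \dots, X^3),\, \widehat V(X^2, X^1)\bigr),\ V(X^2, X^1)\Bigr).
\]

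Next I would apply the inductive hypothesis to the inner factor $V(H^{(n-2)}(X^n, \dots, X^3),\, \widehat V(X^2, X^1))$, with $\widehat V(X^2, X^1)$ in the role of the base sequence and $X^3, \dots, X^n$ in the roles of the remaining sequences. This distributes $V$ across the inner $H^{(n-2)}$ and turns the inner factor into a term headed by $H^{(n-1)}$ applied to arguments of the form $V\bigl(X^k,\, \widehat V^{(k-2)}(X^{k-1}, \dots, X^3,\, \widehat V(X^2, X^1))\bigr)$. Combining with the surviving outer $H$ through the recursive relation $H(H^{(n-1)}(\cdot),\,\cdot) = H^{(n)}(\cdot)$ should reassemble the right-hand side of \eqref{eq:VMultiIntertwining}.

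The principal obstacle is not probabilistic but purely notational bookkeeping: one must verify that the nested expression $\widehat V^{(k-2)}(X^{k-1}, \dots, X^3,\, \widehat V(X^2, X^1))$ produced by the induction agrees with the form $\widehat V^{(k-1)}(X^{k-1}, \dots, X^1)$ appearing in the statement. Via the recursive definitions of $\widehat V^{(\cdot)}$ and $H^{(\cdot)}$, this reduces to checking that substituting $\widehat V(X^2, X^1)$ for the tail pair $X^2, X^1$ inside an $H^{(\cdot)}$ is consistent with the recursive unfolding of $H^{(\cdot)}$, which I expect to follow by a short secondary induction on the length of the argument tuple using the defining property of the reverse weights. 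Once this compatibility is verified, the main induction closes without further input.
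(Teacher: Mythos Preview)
Your proposal is correct and follows essentially the same route as the paper: induct on $n$, unfold the outer $H^{(n-1)}$, apply \cref{prop:VIntertwining} to push $V$ inside, invoke the inductive hypothesis on $V\bigl(H^{(n-2)}(X^n,\dots,X^3),\widehat V(X^2,X^1)\bigr)$, and reassemble via $H(H^{(n-1)}(\cdot),\cdot)=H^{(n)}(\cdot)$. The paper handles your final bookkeeping concern in a single line, citing ``the recursive definition of $\widehat V^{(n)}$'' rather than writing out the secondary induction you anticipate.
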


\begin{proof}[Proof of \cref{thm:MultiStat}]
	Let us prove this only for $V$. The proof for the map $A$ is essentially unchanged, and the statement for $H$ is just \cref{thm:HMultiStat}. Let $\rho = (\rho^1, \dots, \rho^n)$ be the vector of means in the statement of the theorem, let $W \sim \nu$ be a sequence of weights, and let $(X^1, \dots, X^n) \sim \nu_{V}^{\rho}$ (that is, they are all independent, have means $\rho$, and have marginals belonging to the family in the third column of \cref{tab:SWFPPInvariantDists} corresponding to the weight distribution). Let $\widetilde{X}^k = H^{(k)}(X^k, \dots, X^1)$, so that $(\widetilde{X}^1, \dots, \widetilde{X}^n) \sim \mu_V^\rho$. Then:
	\begin{align}
		\label{eq:MultiStatApply}
		V(\widetilde{X}^k, W) &= V(H^{(k)}(X^k, \dots, X^1), W)\\
		&= H^{(k)}(V(X^k, \widehat{V}^{(k)}(X^{k - 1}, \dots, X^1, W)), V(X^{k - 1}, \widehat{V}^{(k - 1)}(X^{k - 2}, \dots, X^1, W)), \dots, V(X^1, W)).
	\end{align}
	Observe that each entry $Z^j = V(X^j, \widehat{V}^{(j)}(X^{j - 1}, \dots, X^1, W))$ has marginal $\nu_{V}^{\rho^j}$. This is because $X^j \sim \nu_{V}^{\rho^j}$ and by induction $\widehat{V}^{(j)}(X^{j - 1}, \dots, X^1, W) \sim \nu_{V}^{\rho^{j - 1}}$ (recall the distribution of the reverse weights in \cref{prop:AVReversibility}), and since the distributions are compatible, the distribution of the input is preserved. Moreover, observe that $Z^j$ is independent of $Z^i$ for $i \ne j$. To see this, suppose $i < j$ and recall again from \cref{prop:AVReversibility} that the reverse weights of an operation are independent of the output. Our $Z^i$ is the output of $V(X^j, \widehat{V}^{(i)}(X^{i - 1}, \dots, X^1, W))$, while $Z^j$ depends on these variables only through the reverse weights of this operation. Rewrite \eqref{eq:MultiStatApply} as
	\begin{equation}
		V(\widetilde{X}^k, W) = H^{(k)}(Z^k, \dots, Z^1).
	\end{equation}
	This is the same form as in the definition of $\widetilde{X}^k$, except with $X^k$ replaced by $Z^k$. But our argument above says $(Z^1, \dots, Z^n) \disteq (X^1, \dots, X^n)$, so 
	\begin{equation}
		(V(\widetilde{X}^1, W), \dots, V(\widetilde{X}^n, W)) \disteq (\widetilde{X}^1, \dots, \widetilde{X}^n).
	\end{equation} 
	This is what it means for the distribution $\mu_{V}^{\rho}$ to be a fixed point.
\end{proof}

\subsection{The $H$ map as reverse weights in LPP} 
\label{ssec:LPPConnection}
Let us now observe a connection between our $H$ update and the update map in last passage percolation (LPP). The usual form of LPP considered on the integer lattice differs from our model in that weights live on \emph{vertices}, rather than edges, and our passage time is the maximal weight of a directed path, rather than the minimum. 

Consider $\Gamma = \set{k e_1 : k \in \bZ}$ and to each vertex $x$ associate a weight $W(x)$. Let $\overline{G} : \Gamma \to \bR$ give passage times on the boundary $\Gamma$. Then we define passage times on $\Gamma + e_2$ by
\begin{equation}
	\overline{L}^{\Gamma}(k e_1 + e_2) = \max_{l \le k} G(l e_1) + \sum_{j = l}^{k}W_{j e_1 + e_2}.
\end{equation}
It is important that the weight of the endpoint is included in the sum. Consider increments
\begin{equation}
	\label{eq:LPPIncrements}
	\overline{I}_k = \overline{L}^\Gamma((k + 1)e_1) - \overline{L}^\Gamma(k e_1),\quad \overline{J}_k = \overline{L}^\Gamma(k e_1 + e_2) - \overline{L}^\Gamma(k e_1),\quad \overline{I}'_k = \overline{L}^\Gamma((k + 1) e_1 + e_2) - \overline{L}^\Gamma(k e_1 + e_2)
\end{equation}
and write $\overline{W}_k = W_{(k + 1)e_1 + e_2}$. Then the increments obey the local update rules
\begin{equation}
	\label{eq:LPPLocalUpdate}
	\overline{I}'_k = \overline{W}_k + (\overline{I}_k - \overline{J}_k)^+,\quad \overline{J}_{k + 1} = \overline{W}_k + (\overline{J}_k - \overline{I}_k)^+.
\end{equation}

The map $(\overline{I}_k, \overline{J}_k, \overline{W}_k) \mapsto (\overline{I}'_k, \overline{J}_{k + 1})$ may be augmented by $\overline{W}'_k = \overline{I}_k \wedge \overline{J}_k$ to form an involution $(\overline{I}_k, \overline{J}_k, \overline{W}_k) \mapsto (\overline{I}'_k, \overline{J}_{k + 1}, \overline{W}')$. There is a vast literature on exponential-weight LPP (and its discrete geometric-weight sibling), and in particular it is well known that if $I_k \sim \Exp(\rho)$ for $0 < \rho < 1$, then this involution fixes the distribution of $(\overline{I}_k, \overline{J}_k, \overline{W}_k)$. Thus the $\overline{W}'_k$ play exactly the role of the reverse weights we have discussed above. 

Analogously to our map $H$ from SWFPP, given a sequence $(\overline{I}_k)$ which form the increments of boundary values $\overline{G} : \Gamma \to \bR$, and weights $\overline{W}_k$, let $D(\overline{I}, \overline{W})$ be defined as the sequence of increments of the passage times along $\Gamma + e_2$, namely the $(\overline{I}'_k)$ from \eqref{eq:LPPIncrements}. This $D$ can also be seen as the interdeparture times of an M/M/1 queue with interarrival times $\overline{I}$ and service $\overline{W}$. Let $R(\overline{I}, \overline{W})$ be the sequence of $\overline{W}'_k$ which give the reverse weights in the exponential setting. See \cite{sepp-cgm-18, fan-sepp-joint-buss} for further discussion of these relations.

\begin{lemma}
	\label{lem:HEqualsR}
	Let $I,\, W$ be sequences for which $H(I, W)$ and $D(W, I)$ make sense. Then
	\begin{equation}
		\label{eq:HEqualsR}
		H(I, W) = \sigma_{-1} R(\sigma_1 W, I).
	\end{equation}
	Here $\sigma_n$ is the operator shifting a sequence forward by $n$, so that index $(\sigma_n I)_k = I_{k - n}$.
	\begin{proof}
		Let $\Gamma = \set{k e_1 : k \in \bZ}$ be our boundary and let $G(k e_1) = \sum_{j = 0}^{k - 1}I_j - \sum_{j = k}^{-1}I_k$ be the passage times along the boundary. Let $L^\Gamma$ be the SWFPP passage times on $\Gamma + e_2$ determined by $G$ and $W$, given by
		\begin{equation}
			L^\Gamma(k e_1 + e_2) = \min_{l \le k} G(l e_1) + \sum_{j = l + 1}^{k}W_j,
		\end{equation}
		and let $I'_k$ and $J_k$ be the horizontal and vertical increments:
		\begin{equation}
			I'_k = L^\Gamma((k + 1) e_1 + e_2) - L^\Gamma(k e_1 + e_2), \quad J_k = L^\Gamma(k e_1) - L^\Gamma(k e_1 + e_2).
		\end{equation}
		Further, let $\overline{I}_k = W_{k - 1} = (\sigma_1 W)_k$ and $\overline{W}_k = I_{k}$, and $\overline{G}(k e_1) = \sum_{j = 0}^{k - 1}\overline{I}_j - \sum_{j = -1}^{k}\overline{I}_k$. We let $\overline{L}^\Gamma$ the now the LPP passage times, defined as
		\begin{equation}
			\overline{L}^{\Gamma}(k e_1 + e_2) = \max_{l \le k} \overline{G}(l e_1) + \sum_{j = l}^{k}\overline{W}_{j}.
		\end{equation}
		Finally, set
		\begin{equation}
			\overline{I}'_k = \overline{L}^\Gamma((k + 1) e_1 + e_2) - \overline{L}^\Gamma(k e_1 + e_2),\quad \overline{J}_k = \overline{L}^\Gamma(k e_1 + e_2) - \overline{L}^\Gamma(k e_1),
		\end{equation}
		noting that the order of $\overline{J}_k$ is swapped relative to $J_k$.
		
		We first claim that $\overline{J}_{k + 1} = I_k + J_k$. This is clear once we expand the vertical increments:
		\begin{equation}
			J_k = \max_{l \le k}\sum_{j = l}^{k - 1}I_j - W_j, \quad \text{and} \quad \overline{J}_{k + 1} = \overline{W}_k + \max_{l \le k}\sum_{j = l}^{k - 1}\overline{W}_j - \overline{I}_{j + 1}.
		\end{equation}
		Now plugging in the definitions of $\overline{I}_k$ and $\overline{W}_k$ gives the identity. Then we find, using \eqref{eq:SquareIncs} and \eqref{eq:LPPIncrements}, that
		\begin{align}
			I'_{k} &= (I_k + J_k) \wedge W_k\\ 
			&= \overline{J}_{k + 1} \wedge \overline{I}_{k + 1}\\
			&= \overline{W}'_{k + 1}.
		\end{align}
		This is what we wanted to show.
	\end{proof}
\end{lemma}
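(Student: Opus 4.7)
My plan is to realize both sides of \eqref{eq:HEqualsR} as sequences of increments of passage times on a horizontal strip, one computed via the SWFPP (min-path) rule with boundary increments $I$ and edge weights $W$, and the other via the LPP (max-path) rule with boundary increments $\sigma_1 W$ and vertex weights $I$. Once both sides are interpreted this way, matching them reduces to comparing the local update rules on a single unit square.

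First I would set $\Gamma = \set{k e_1 : k \in \bZ}$, choose boundary values $G$ with $\Delta G = I$, and compute the SWFPP passage times $L^\Gamma$ on $\Gamma \cup (\Gamma + e_2)$ using $W$; by \eqref{eq:HPassageTimes} the horizontal increments one row up are precisely the entries of $H(I, W)$. On the LPP side, with $\overline{I}_k = W_{k-1}$ and $\overline{W}_k = I_k$, I would similarly propagate $\overline{L}^\Gamma$ upward and record the vertical increments $\overline{J}_k$. The crucial intermediate identity is $\overline{J}_{k+1} = I_k + J_k$, where $J_k = L^\Gamma(k e_1) - L^\Gamma(k e_1 + e_2)$ is the SWFPP vertical drop at column $k$. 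This follows because, after the substitution $\overline{W}_j = I_j$ and $\overline{I}_{j+1} = W_j$, the max-plus expansion of $\overline{J}_{k+1} - \overline{W}_k$ coincides term by term with the max-plus expansion of $J_k$.

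Given that identity, the SWFPP local rule $I'_k = (I_k + J_k) \wedge W_k$ from \eqref{eq:SquareIncs} combines with $\overline{I}_{k+1} = W_k$ to give $I'_k = \overline{J}_{k+1} \wedge \overline{I}_{k+1}$, which by definition of the LPP reverse weight is $\overline{W}'_{k+1}$. Collecting over $k \in \bZ$ then yields the shift-by-one relation in the statement.

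The main obstacle I anticipate is the off-by-one bookkeeping: the SWFPP edge weight $W_k$ living on the edge into $(k+1) e_1$ corresponds to an LPP vertex weight at a vertex shifted by one, and the LPP reverse weight naturally indexed by $k+1$ corresponds to the SWFPP output at column $k$. This mismatch is precisely what produces the two external shifts $\sigma_{-1}$ and $\sigma_1$ in \eqref{eq:HEqualsR}, and it is the only place where a careless computation would go wrong; the rest is a direct term-by-term matching of minima in SWFPP against maxima in LPP after the role reversal $(\overline{I}, \overline{W}) \leftrightarrow (\sigma_1 W, I)$.
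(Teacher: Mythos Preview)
Your proposal is correct and follows essentially the same approach as the paper: the same role reversal $(\overline{I}_k, \overline{W}_k) = (W_{k-1}, I_k)$, the same key identity $\overline{J}_{k+1} = I_k + J_k$, and the same concluding computation $I'_k = (I_k + J_k) \wedge W_k = \overline{J}_{k+1} \wedge \overline{I}_{k+1} = \overline{W}'_{k+1}$. Your remarks about the off-by-one bookkeeping accurately capture the only delicate point.
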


Consider the reversed operators $\overleftarrow{D}$ and $\overleftarrow{R}$ . These are useful because we can succinctly write the time reversal property satisfied by the reverse weights:
\begin{equation}
	\label{eq:BackwardsD}
	\overline{I} = \overleftarrow{D}(D(\overline{I}, \overline{W}), R(\overline{I}, \overline{W})) \quad \text{and} \quad \overline{W} = \overleftarrow{R}(D(\overline{I}, \overline{W}), R(\overline{I}, \overline{W})).
\end{equation} 
From this and \cref{lem:HEqualsR} we may derive a number of interesting distributional inequalities, ultimately leading to a distributional inequality between exponential SWFPP Busemann functions and those in LPP.
\begin{lemma}
	\label{lem:IDIWEqualsRIWW}
	Take $\rho_1 > \rho_2 > 0$ and let $I^1$, $I^2$ be independent i.i.d sequences with marginals $\Exp(\rho_1)$ and $\Exp(\rho_2)$. Then
	\begin{equation}
		\label{eq:IDIWEqualsRIWW1}
		(I^1, D(I^2, I^1)) \disteq (\overleftarrow{R}(I^2, I^1), I^2)
	\end{equation}
	and
	\begin{equation}
		\label{eq:IDIWEqualsRIWW2}
		(I^1, \overleftarrow{D}(I^2, I^1)) \disteq (R(I^2, I^1), I^2)
	\end{equation}
	\begin{proof}
		Let $\widetilde{I}^1 = R(I^2, I^1)$ and $\widetilde{I}^2 = D(I^2, I^1)$. Burke's theorem (for example \cite[Section 4]{drai-mair-ocon}) tells us that $(\widetilde{I}^1, \widetilde{I}^2) \disteq (I^1, I^2)$. Using the time reversal of \eqref{eq:BackwardsD},
		\begin{equation}
			(I^1, D(I^2, I^1)) \disteq (\widetilde{I}^1, D(\widetilde{I}^2, \widetilde{I}^1)) = (R(I^2, I^1), \overleftarrow{I^2}) \disteq (\overleftarrow{R}(I^2, I^1), I^2).
		\end{equation}
		For the last equality we replaced $(I^1, I^2)$ by $(\overleftarrow{I}^1, \overleftarrow{I}^2)$. This is \eqref{eq:IDIWEqualsRIWW1}, and \eqref{eq:IDIWEqualsRIWW2} follows by reversing the directions of all the sequences.
	\end{proof}
\end{lemma}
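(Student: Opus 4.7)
The plan is to leverage the well-known Burke-type reversibility for the M/M/1 queue, together with the time reversal identity \eqref{eq:BackwardsD}, which together essentially package all of the needed structure. Recall that $D$ is the M/M/1 departure map and $R$ produces the corresponding reverse weights. The crucial input, usually attributed to Burke and also recorded for this exact setting in \cite{drai-mair-ocon}, says that when $I^1 \sim \Exp(\rho_1)^{\otimes \bZ}$ and $I^2 \sim \Exp(\rho_2)^{\otimes \bZ}$ are independent with $\rho_1 > \rho_2$, the pair $(\widetilde I^1, \widetilde I^2) = (R(I^2, I^1), D(I^2, I^1))$ is again a pair of independent i.i.d.\ sequences with the same marginals as $(I^1, I^2)$. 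This is the only place where exponentiality (and the constraint $\rho_1 > \rho_2$) enters the argument.

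To prove \eqref{eq:IDIWEqualsRIWW1}, I would start on the right-hand side and rewrite in terms of $\widetilde I^1, \widetilde I^2$. Substituting into \eqref{eq:BackwardsD} with $\overline I = I^1$ and $\overline W = I^2$, we get the deterministic identity $I^2 = \overleftarrow R(\widetilde I^2, \widetilde I^1)$, and of course $I^2$ itself equals $\overleftarrow{(\overleftarrow{I^2})}$. Combining these, the pair $(\overleftarrow{R}(I^2, I^1), I^2)$ on the right-hand side of \eqref{eq:IDIWEqualsRIWW1} has the same joint distribution as $(\overleftarrow{\widetilde I^1}, \overleftarrow{\overleftarrow{\widetilde I^2}}) = (\overleftarrow{\widetilde I^1}, \widetilde I^2)$. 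Now I apply Burke: $(\widetilde I^1, \widetilde I^2) \disteq (I^1, I^2)$, and since each is i.i.d., reversing the first coordinate preserves the joint law. This gives $(\overleftarrow{\widetilde I^1}, \widetilde I^2) \disteq (I^1, I^2)$, and re-expressing $I^2$ on the left-hand side of the desired identity as $D(I^2, I^1)$ via the deterministic identity coming from the other half of \eqref{eq:BackwardsD} completes the argument. (Equivalently, one can chase the equalities in the opposite direction: start from $(I^1, D(I^2, I^1))$, use Burke to replace it in distribution by $(\widetilde I^1, D(\widetilde I^2, \widetilde I^1))$, apply \eqref{eq:BackwardsD} to collapse the latter to $(R(I^2, I^1), \overleftarrow{I^2})$, and finally reverse both coordinates.)

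The second identity \eqref{eq:IDIWEqualsRIWW2} follows from \eqref{eq:IDIWEqualsRIWW1} by reversing all sequences throughout: the laws of $I^1, I^2$ are invariant under reversal, while reversal turns $D$ into $\overleftarrow D$ and $R$ into $\overleftarrow R$ by definition. So applying $\overleftarrow{\phantom{X}}$ coordinatewise to \eqref{eq:IDIWEqualsRIWW1} yields $(I^1, \overleftarrow{D}(I^2, I^1)) \disteq (R(I^2, I^1), I^2)$, as required.

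The only subtlety is making sure the book-keeping on which coordinate is reversed is correct; this is where the $\overleftarrow R$ versus $R$ distinction matters and why the statement forces us to reverse on the right-hand side of \eqref{eq:IDIWEqualsRIWW1} rather than the left. Nothing here requires any estimation or new probabilistic input beyond Burke's theorem and the deterministic involution \eqref{eq:BackwardsD}; the work is purely in threading the reversal symbols through correctly.
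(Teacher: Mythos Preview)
Your proposal is correct and takes essentially the same approach as the paper: invoke Burke's theorem to get $(\widetilde I^1,\widetilde I^2)\disteq(I^1,I^2)$, then use the time-reversal identity \eqref{eq:BackwardsD} and a final replacement of $(I^1,I^2)$ by its reversal; your parenthetical ``equivalently'' chain is verbatim the paper's proof. One minor bookkeeping slip: when substituting into \eqref{eq:BackwardsD} you should take $\overline I=I^2$, $\overline W=I^1$ (since $\widetilde I^2=D(I^2,I^1)$), which gives $I^1=\overleftarrow R(\widetilde I^2,\widetilde I^1)$ rather than the identity you wrote for $I^2$, but this does not affect the argument.
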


\begin{lemma}
	\label{lem:WHIWEqualsDWII}
	Take $\rho_1 > \rho_2 > 0$ and let $I^1$, $I^2$ be independent i.i.d sequences with marginals $\Exp(\rho_1)$ and $\Exp(\rho_2)$. Then
	\begin{equation}
		\label{eq:WHIWEqualsDWII}
		(I^2, H(I^1, I^2)) \disteq (\overleftarrow{D}(I^2, I^1), I^1).
	\end{equation}
	\begin{proof}
		From \cref{lem:HEqualsR} and using translation invariance of the distributions, one gets
		\begin{equation}
			(I^2, H(I^1, I^2)) = (I^2, \sigma_{-1} R(\sigma_1 I^2, I^1)) \disteq (\sigma_1 I^2, R(\sigma_1 I^2, I^1)) \disteq (I^2, R(I^2, I^1)).
		\end{equation}
		We have just shown in \cref{lem:IDIWEqualsRIWW} that this last pair is equal in distribution to $(\overleftarrow{D}(I^2, I^1), I^1)$.
	\end{proof}
\end{lemma}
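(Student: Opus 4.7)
My plan is to chain together the identity relating $H$ and $R$ from \cref{lem:HEqualsR} with the second distributional identity from \cref{lem:IDIWEqualsRIWW}, inserting a translation invariance step to absorb the shift operators.

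First I would recall that \cref{lem:HEqualsR} gives the deterministic equality $H(I^1, I^2) = \sigma_{-1} R(\sigma_1 I^2, I^1)$. This converts the $H$ operation (which is the object of study) into the LPP reverse-weight map $R$, at the cost of introducing shifts. The next step is to observe that because $I^1$ and $I^2$ are i.i.d.\ sequences with distributions invariant under shifts, we have
\begin{equation}
(I^2, H(I^1, I^2)) = (I^2, \sigma_{-1}R(\sigma_1 I^2, I^1)) \disteq (\sigma_1 I^2, R(\sigma_1 I^2, I^1)) \disteq (I^2, R(I^2, I^1)),
\end{equation}
where the first distributional equality comes from applying $\sigma_1$ to both coordinates and the second from replacing $\sigma_1 I^1$ by $I^1$ using shift-invariance of the product distribution.

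Having reduced the left-hand side to $(I^2, R(I^2, I^1))$, the final step is to apply equation \eqref{eq:IDIWEqualsRIWW2} of \cref{lem:IDIWEqualsRIWW}, which states
\begin{equation}
(I^1, \overleftarrow{D}(I^2, I^1)) \disteq (R(I^2, I^1), I^2).
\end{equation}
Swapping the two coordinates of each tuple (a deterministic operation on $\bR^{\bZ} \times \bR^{\bZ}$) yields $(I^2, R(I^2, I^1)) \disteq (\overleftarrow{D}(I^2, I^1), I^1)$, which together with the reduction above gives the desired identity.

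The proof is essentially bookkeeping, so there is no genuine obstacle once \cref{lem:HEqualsR} and \cref{lem:IDIWEqualsRIWW} are in hand; the only place that requires care is tracking the shift operator $\sigma_1$ so as to correctly align the two sequences after applying \cref{lem:HEqualsR}, and ensuring that we invoke the $\overleftarrow{D}$ version \eqref{eq:IDIWEqualsRIWW2} rather than \eqref{eq:IDIWEqualsRIWW1}. Notably, no separate appeal to Burke's theorem is needed at this stage, since it has already been absorbed into the statement of \cref{lem:IDIWEqualsRIWW}.
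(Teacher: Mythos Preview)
Your proof is correct and follows exactly the same route as the paper: apply \cref{lem:HEqualsR}, use shift-invariance to absorb the $\sigma_{\pm 1}$ operators, and then invoke \eqref{eq:IDIWEqualsRIWW2} (with coordinates swapped). There is a small slip in your commentary --- you wrote ``replacing $\sigma_1 I^1$ by $I^1$'' when you meant $\sigma_1 I^2$ by $I^2$ --- but your displayed chain of equalities is correct and matches the paper verbatim.
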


Now we extend \cref{lem:WHIWEqualsDWII} to $n$-tuples. With $H^{(n)}$ as before, define $\overleftarrow{D}^{(2)}(I^2, I^1) = \overleftarrow{D}(I^2, I^1)$, and $\overleftarrow{D}^{(n)}(I^n, \dots, I^1) = \overleftarrow{D}(I^n, \overleftarrow{D}^{(n - 1)}(I^{n - 1}, \dots, I^1))$.
\begin{proposition}
	\label{prop:NestedHEqualsNestedD}
	Let $\rho_1 > \cdots > \rho_n > 0$ and let $I^1, \dots, I^n$ be independent i.i.d sequences where $I^k$ has marginals $\Exp(\rho_k)$. Then
	\begin{equation}
		(I^n, H^{(2)}(I^{n - 1}, I^n), \dots, H^{(n)}(I^1, \dots, I^n)) \disteq (\overleftarrow{D}^{(n)}(I^n, \dots, I^1), \dots, \overleftarrow{D}^{(2)}(I^2, I^1), I^1).
	\end{equation}
	\begin{proof}
		The $n = 2$ case is just \cref{lem:WHIWEqualsDWII}. Suppose we have the equality for $n - 1$. Then since $\overleftarrow{D}^{(k)}(I^k, \dots, I^1) \disteq \overleftarrow{D}^{(k - 1)}(I^k, \dots, I^3, D(I^2, I^1))$, we have
		\begin{equation}
			(\overleftarrow{D}^{(n)}(I^n, \dots, I^1), \dots, \overleftarrow{D}^{(2)}(I^2, I^1)) \disteq (\overleftarrow{D}^{(n - 1)}(I^n, \dots, I^2), \dots, \overleftarrow{D}^{(2)}(I^3, I^2), I^2).
		\end{equation}
		To the latter we may apply the induction hypothesis, and end up with
		\begin{equation}
			\label{eq:NestedHEqualsNestedDInductionStep}
			(\overleftarrow{D}^{(n)}(I^n, \dots, I^1), \dots, \overleftarrow{D}^{(2)}(I^2, I^1)) \disteq (I^n, H^{(2)}(I^{n - 1}, I^n), \dots, H^{(n - 1)}(I^2, \dots, I^n)).
		\end{equation}
		Now suppose we have sampled the right side of \eqref{eq:NestedHEqualsNestedDInductionStep} and wish to sample $I^1$. Since the only dependence of our conditioning with $I^1$ is through $\overleftarrow{D}(I^2, I^1)$, this is equivalent to sampling $I^1$ conditional on $\overleftarrow{D}(I^2, I^1)$. We may do this by sampling independent reverse weights $O^1$ as a sequence of i.i.d $\Exp(\rho_1)$ variables, and setting $I^1 = R(\overleftarrow{D}(I^2, I^1), O^1)$\footnote{There is some complication given the various reversals of indexing, but we can check that this does indeed sample from $I^1 \given \overleftarrow{D}(I^2, I^1)$.}.
		
		Suppose on the other hand we have sampled from the right side of \eqref{eq:NestedHEqualsNestedDInductionStep}. Sampling $H^{(n)}(I^1, \dots, I^n)$ is easy --- we just draw $I^1$ independently and take $H(I^1, H^{(n - 1)}(I^2, \dots, I^n))$. Our equality between $H$ and $R$ in \cref{lem:HEqualsR}, and the manipulations we did in the proof of \cref{lem:WHIWEqualsDWII} showing that the shifts do not affect the distributions, means that our two conditional sampling procedures are equivalent. Thus the full joint distributions are equal, and
		\begin{equation}
			(I^n, H^{(2)}(I^{n - 1}, I^n), \dots, H^{(n)}(I^1, \dots, I^n)) \disteq (\overleftarrow{D}^{(n)}(I^n, \dots, I^1), \dots, \overleftarrow{D}^{(2)}(I^2, I^1), I^1).
		\end{equation}
	\end{proof}
\end{proposition}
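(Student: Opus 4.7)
The plan is induction on $n$, with the base case $n = 2$ supplied by \cref{lem:WHIWEqualsDWII}. For the inductive step, I would reduce the right-hand side to an $(n-1)$-sized object by two moves. The first is purely deterministic: unwinding the recursive definition of $\overleftarrow{D}^{(k)}$ shows that the innermost application of $\overleftarrow{D}$ is always $\overleftarrow{D}(I^2, I^1)$, so setting $J := \overleftarrow{D}(I^2, I^1)$ one gets
\[
	\overleftarrow{D}^{(k)}(I^k, \ldots, I^1) = \overleftarrow{D}^{(k - 1)}(I^k, \ldots, I^3, J), \qquad k \ge 2.
\]
The second is distributional: Burke's theorem for the M/M/1 queue (applied to the time-reversed pair $(I^2, I^1)$, with $\rho_1 > \rho_2$ ensuring stability) yields $J \sim \Exp(\rho_2)$, jointly independent of $I^3, \ldots, I^n$. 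Consequently, the right-hand side of the proposition, with its trailing $I^1$ stripped off, is the $(n-1)$-analogue built from the independent, correctly-parameterised sequences $(J, I^3, \ldots, I^n)$, and the induction hypothesis identifies it in law with the corresponding $(n-1)$-version of the $H^{(\cdot)}$-side.

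To close the induction one must add back the remaining components: the trailing $I^1$ on the right-hand side, and the ``missing'' sequence $I^1$ which enters the left-hand side through $H^{(n)}(I^1, \ldots, I^n)$. On the right-hand side, the conditional law of $I^1$ given $J$ may be realised via Burke reversibility by sampling an independent sequence $O^1 \sim \Exp(\rho_1)$ and setting $I^1 = R(J, O^1)$, where $R$ is the LPP reverse-weights map. On the left-hand side one simply samples $I^1$ independently and performs one further $H$-update against the previously matched $(n-1)$-tuple. The identity $H(I, W) = \sigma_{-1} R(\sigma_1 W, I)$ from \cref{lem:HEqualsR}, together with the translation invariance of i.i.d.\ exponential laws, then translates the two sampling schemes into one another.

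The main obstacle I expect is precisely this final coupling step. The sequence $I^1$ enters the two sides in quite different guises --- as the deepest argument of every $\overleftarrow{D}^{(k)}$ on one side, and as a single outer input to the chain of $H$-updates on the other --- so it is not obvious that the Burke description of the conditional law on the right matches what \cref{lem:HEqualsR} gives on the left. The index reversals inherent in $\overleftarrow{D}$ versus $D$, and the one-index shifts $\sigma_{\pm 1}$ appearing in the $H$--$R$ translation, have to be tracked carefully so that they cancel against the stationarity of the product measure. Once this bookkeeping is under control, Burke's theorem and the induction hypothesis supply the rest for free.
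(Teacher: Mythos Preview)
Your proposal is correct and follows essentially the same route as the paper: induction with base case \cref{lem:WHIWEqualsDWII}, peel off the innermost $\overleftarrow{D}$ and use Burke to reduce to the $(n-1)$-case, then reinstate $I^1$ on each side via the $H = \sigma_{-1}R\sigma_1$ identity and stationarity. The only cosmetic difference is that you state the peeling step as a deterministic identity with $J = \overleftarrow{D}(I^2,I^1)$ while the paper phrases it distributionally with $D(I^2,I^1)$, and the obstacle you flag --- the index-reversal and shift bookkeeping in the final coupling --- is exactly the point the paper also acknowledges as needing care.
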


All that remains is to quote the distribution of the exponential LPP distribution and we will have proved \cref{prop:EqualDists}.
\begin{theorem}[Theorem 3.2 of \cite{fan-sepp-joint-buss}]
	Take $1 > \rho_1 > \cdots > \rho_n > 0$ and let $\overline{B}^{k}(x, y)$ be the north-east Busemann function in exponential LPP such that $\Ex{\overline{B}^k(0, e_1)} = \rho_k^{-1}$. Let $I^1, \dots, I^n$ be independent sequences of i.i.d exponential variables, with the marginal of $I^k$ being $\Exp(\rho_k)$. Then
	\begin{equation}
		\set{(\overline{B}^1((k + 1)e_1, k e_1), \dots, \overline{B}^n((k + 1)e_1, k e_1)) : k \in \bZ} \disteq (\overleftarrow{D}^{(1)}(I^1), \dots \overleftarrow{D}^{(n)}(I^n, \dots, I^1).
	\end{equation}
\end{theorem}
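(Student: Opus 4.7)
The plan is to follow precisely the strategy this paper develops for SWFPP, transferred to the LPP setting where the relevant update map is the M/M/1 departure map $D$ in place of the store map $H$. First, I would establish that the $\bZ$-indexed sequence of Busemann increments $(\overline{B}^k((j+1)e_1, j e_1))_{j \in \bZ}$ along a horizontal line is a jointly stationary, ergodic fixed point of $D$. As in \cref{prop:BusemannFixedPoint}, this follows from the cocycle property together with the observation that the LPP local rule turning boundary increments on one horizontal line into those on the next is precisely the local form of $D$ recorded in \eqref{eq:LPPLocalUpdate}. Stationarity and ergodicity in both horizontal and vertical shifts are inherited from the Busemann process itself (the LPP analogue of \cref{thm:BusemannExistenceErgodicity}). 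The single-direction marginal is i.i.d.\ $\Exp(\rho_k)$ by Burke's theorem for the M/M/1 queue.

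Second, I would verify that the law of $(\overleftarrow{D}^{(1)}(I^1), \dots, \overleftarrow{D}^{(n)}(I^n, \dots, I^1))$ is also a jointly invariant fixed point of $D$ with exactly those marginals. This is the LPP analogue of what is carried out in \cref{ssec:Intertwining} for $H$: one derives an intertwining identity from Burke's theorem and the time-reversal \eqref{eq:BackwardsD}, and then checks by induction on the number of classes that applying $D(\cdot, W)$ componentwise preserves the joint distribution. Indeed, \cref{lem:HEqualsR} and \cref{prop:NestedHEqualsNestedD} already give a rather direct bridge here: the SWFPP multi-line construction with $H$ and the LPP construction with $\overleftarrow{D}$ produce the same distributional object up to a deterministic shift of indices, so invariance under $D$ can be transcribed from the $H$-invariance in \cref{thm:HMultiStat}. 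Monotonicity between classes, needed to ensure the construction is well-defined, follows from \cref{lem:UpdateMapMono}.

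Third, I would invoke a uniqueness theorem: a jointly shift- and $D$-ergodic fixed point with a prescribed mean vector is unique. This is the LPP analogue of \cref{thm:UniqueFixedPoints}, proved along the lines of \cite{chang-unique-queue}. Since the service times in the M/M/1 setting are unbounded exponentials, the strict contraction in the $\overline{\rho}$-metric goes through cleanly without any of the workarounds required in \cref{prop:AContract} for bounded or atomic weights.

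The hard part is the second step: bookkeeping which sequences play the roles of inputs, services, and reverse weights across $n$ levels, and verifying that the appropriate tuples remain independent after each application of $D$, is essentially the same delicate induction the paper performs for the $V$ and $A$ maps. For this reason it is natural to simply cite \cite{fan-sepp-joint-buss}, where exactly this argument was carried out for the present statement and Burke-type reversibility was exploited to sidestep a direct computation.
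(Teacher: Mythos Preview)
The paper does not prove this statement: it is quoted verbatim from \cite{fan-sepp-joint-buss} as external input, used immediately afterwards to deduce \cref{prop:EqualDists} from \cref{prop:NestedHEqualsNestedD}. Your closing remark, that one should simply cite \cite{fan-sepp-joint-buss}, is exactly what the paper does, and the three-step sketch you give (Busemann increments as an ergodic fixed point of $D$; the multi-line law as another fixed point with the same means; uniqueness via Chang's contraction argument) is an accurate summary of the strategy carried out there. So your proposal is correct, and in substance agrees with the paper: the ``proof'' is the citation.
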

The use of the reversed $\overleftarrow{D}$ map rather than the forward $D$ reflects the use of north-east rather than south-west Busemann functions.

\begin{remark}
	All of the properties proved in \cite{fan-sepp-joint-buss} for the multi-line distribution carry over to the Busemann process on an antidiagonal in exponential SWFPP. In particular, we have the distribution on a single edge as that of a certain inhomogeneous Markov process. Convergence to the stationary horizon for these measures was proved in \cite{bus-buse-scaling}. There is a similar statement for the horizontal edge Busemann process.
\end{remark}

\section{Semi-infinite geodesics near the axis}
\label{sec:NearAxisComps}
\subsection{Branch points along the vertical axis}
\label{ssec:Branches}
Recall from \cref{ssec:HighwaysResults} that we defined $\Gamma_\infty$ to be the tree of semi-infinite geodesics rooted at the origin. From this, define
\begin{equation}
	\label{eq:BranchPoints}
	\cB = \set{k : (0,0) \to (0,k) \to (1, k) \subseteq \Gamma_\infty}
\end{equation}
to be the set of those heights where the tree $\Gamma_\infty$ branches from the $y$-axis. In light of the procedure for producing Busemann geodesics, the set 
\begin{equation}
	\label{eq:BuseBranchPoints}
	\cB' = \set{0} \cup \set{k \ge 1 : \text{for some } \xi \in U \text{ we have } B^\xi((k - 1) e_2, k e_2) = 0,\, B^\xi(k e_2, (k + 1)e_2) \ne 0}
\end{equation}
is the set of heights where Busemann geodesics branch from the $y$-axis. Clearly $\cB' \subseteq \cB$.  In the general case, this inclusion allows us to say at least one thing which is not immediately obvious:

\begin{proposition}
	In SWFPP, the set $\cB$ is almost surely infinite.
	\begin{proof}
		If the weight distribution has an atom at $0$, then every $k$ such that $W(k e_2) = 0$ may be seen to belong to $\cB$. Suppose then that there is no atom at $0$. The arguments of \cite{hass-directed-fpp}\footnote{Or, indeed, easier arguments given our simpler setting.} may be used to show that the limit shape has a sequence of extreme points converging to the $e_2$-axis. It follows that there is a sequence of directions $\set{\xi_k} \subseteq \scrU$ such that $\xi_k \to e_2$. Our more general existence theorem, \cref{thm:BusemannExistenceGeneral}, ensures the existence of Busemann functions associated to these directions. 
		
		An elementary argument shows that almost surely, the critical angle $\xi^*$ is not $e_2$. This is to say that for angles sufficiently close to the vertical axis, the first step of the semi-infinite geodesic will be up. If $\alpha_k = \Prob{B^{\xi_k} (0, e_2) \ne 0}$, then in light of the rule for producing semi-infinite geodesics in \cref{rem:GeodesicConstruction}, we have $\alpha_k \to 0$.
		
		Let $A_N$ be the event that $\max \cB = N$. Then on this event, for all $k$ large enough,  $B^{\xi_k} (0, e_2) \ne 0$. So $\Prob{A_N} \le \liminf \Prob{B^{\xi_k} (0, e_2) \ne 0} = \liminf \alpha_k = 0$. Hence $\Prob{\bigcup_{N \ge 0} A_N} = 0$, and the branch points are infinite almost surely.
	\end{proof}
\end{proposition}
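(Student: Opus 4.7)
The argument splits according to whether the horizontal weight distribution has an atom at $0$. In the \emph{atom case} --- $\Prob{W(0;1) = 0} > 0$ --- every height $k$ at which the horizontal edge leaving $(0, k)$ rightward carries zero weight automatically lies in $\cB$: the $y$-axis segment up to $(0, k)$ is free, that horizontal edge is free, and the further rightward continuation is cost-minimising, so $(0,0) \to (0, k) \to (1, k) \to \cdots$ is an infinite branch of $\Gamma_\infty$. These events are i.i.d.\ Bernoulli trials of positive probability, and Borel--Cantelli produces infinitely many such $k$ almost surely.

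In the \emph{non-atom case}, I would fix $N \ge 0$, set $A_N := \set{\max \cB = N}$, and show $\Prob{A_N} = 0$. The strategy is to produce a sequence $\xi_k \in \scrU$ with $\xi_k \to e_2$ at which Busemann functions exist, furnished by exposed points of $\ell$ accumulating at $e_2$ (in the spirit of \cite{hass-directed-fpp}) together with \cref{thm:BusemannExistenceGeneral}. An elementary argument rules out $\xi^* = e_2$ with probability one in the non-atom regime; invoking \cref{prop:CritFromBuse} and dominated convergence then yields
\begin{equation*}
\alpha_k := \Prob{B^{\xi_k}(0, e_2) \ne 0} = \Prob{\xi^* < \xi_k} \longrightarrow 0 \quad \text{as } k \to \infty.
\end{equation*}

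The heart of the matter is the set-theoretic inclusion
\begin{equation*}
A_N \subseteq \bigcup_{j = 0}^{N} \set{B^{\xi_k}(j e_2, (j+1) e_2) \ne 0},
\end{equation*}
valid for every $k$. To see this, work on $A_N$: if $B^{\xi_k}(0, e_2) = 0$, the construction of \cref{rem:GeodesicConstruction} sends the Busemann geodesic in direction $\xi_k$ up the $y$-axis; since $\xi_k \ne e_2$, it must eventually turn right at some height $h$, and by construction $h \in \cB' \subseteq \cB$, whence $h \le N$ and $B^{\xi_k}(h e_2, (h+1) e_2) \ne 0$. Otherwise $j = 0$ works directly. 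By the translation invariance in \cref{thm:BusemannExistenceErgodicity}, each term in the union has probability $\alpha_k$; a union bound then gives $\Prob{A_N} \le (N+1)\alpha_k \to 0$. Taking the union over $N$ completes the argument.

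The main obstacle will be the structural inputs on $\ell$ --- the accumulation of exposed directions at $e_2$ and the almost sure bound $\xi^* \ne e_2$ --- both of which rely on fine behaviour of $\ell$ near its boundary in the non-atomic regime (cf.\ \cite{hass-directed-fpp, mart-limit-shape}). Once those are in hand, the remainder is a formal manipulation of the Busemann process already developed.
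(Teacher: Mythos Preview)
Your proof is correct and follows the paper's line exactly: the atom case is dispatched directly, and in the non-atom case you invoke Busemann functions in directions $\xi_k \to e_2$, show $\alpha_k \to 0$ via $\xi^* \ne e_2$, and bound $\Prob{A_N}$ in terms of $\alpha_k$. Your explicit union bound $\Prob{A_N} \le (N+1)\alpha_k$ over all possible branch heights $j \le N$ is, if anything, slightly cleaner than the paper's final step, which asserts that $B^{\xi_k}(0,e_2) \ne 0$ on $A_N$ for large $k$ without making the role of the (random) branch height explicit.
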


Having $\cB' = \cB$ is equivalent to the statement that in a given realisation of $\Gamma_\infty$, there are no three semi-infinite geodesics with the same direction. The corresponding statement has been shown to hold for exponential LPP \cite{coup-n3g} and in the directed landscape \cite{bus-n3g}, and is expected to hold in wide generality. This is the so-called N3G problem.

\begin{theorem}
	\label{thm:SWFPPN3G}
	Under exponential weights, almost surely we have 
	\begin{equation}
		\cB' = \cB.
	\end{equation}
\end{theorem}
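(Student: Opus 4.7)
The reverse containment $\cB' \subseteq \cB$ is immediate from the Busemann construction of $\cref{rem:GeodesicConstruction}$, so the task is to establish $\cB \subseteq \cB'$, which (as noted above) is equivalent to the no-three-geodesics (N3G) property. The plan is to adapt Coupier's strategy \cite{coup-n3g} for exponential LPP, transferring the required geometric conclusion to exponential SWFPP through $\cref{prop:EqualDists}$.

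First, I would argue that any $k \in \cB \setminus \cB'$ produces three disjoint semi-infinite geodesics sharing a common asymptotic direction. Let $\gamma$ be a semi-infinite geodesic through $(0, k)$ and $(1, k)$, and let $\xi$ be its asymptotic direction, which exists by $\cref{thm:BuseGeodesics}$. Since $k \notin \cB'$, neither of the Busemann geodesics $\gamma^{\xi \pm}$ from the origin (built via $\cref{rem:GeodesicConstruction}$) turns right at height $k$, so both are distinct from $\gamma$ at a bounded distance from the origin. The one-sided directional continuity in $\cref{thm:BusemannExistenceCont}$, together with $\cref{thm:BusemannExistenceDistinct}$, ensures that $\gamma^{\xi \pm}$ have asymptotic direction $\xi$, yielding three distinct semi-infinite geodesics in direction $\xi$ rooted near the origin.

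Second, I would transfer this N3G-failure configuration to exponential LPP. The global tree of semi-infinite geodesics is a deterministic functional of the Busemann process, which by $\cref{thm:BusemannExistenceErgodicity}$ is jointly ergodic under shifts. Combined with $\cref{prop:EqualDists}$, which identifies the south-west Busemann process of exponential SWFPP with the north-east Busemann process of exponential LPP, the event $\cB \ne \cB'$ in exponential SWFPP corresponds to an N3G-failure event in exponential LPP, which has zero probability by \cite{coup-n3g}.

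The main obstacle will lie in the second step: $\cref{prop:EqualDists}$ is only a distributional identity for the Busemann values along a single antidiagonal, whereas the N3G-failure event is global in nature. Making this transfer rigorous will require either extending $\cref{prop:EqualDists}$ to a joint identification of the full spatial Busemann field, or alternatively showing that the three-geodesic configuration produced in the first step is measurable with respect to the Busemann data on a single antidiagonal. The construction in $\cref{rem:GeodesicConstruction}$, showing that each individual semi-infinite geodesic is Busemann-measurable, suggests such a reduction is feasible but will require careful handling of the coalescence points and asymptotic directions.
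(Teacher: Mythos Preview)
Your first step---Coupier's reduction to the no-three-geodesics property---is correct and matches the paper's starting point. The divergence is in how the N3G property is established.

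The paper does \emph{not} transfer to exponential LPP via \cref{prop:EqualDists}. Instead it applies Coupier's reduction directly within SWFPP: three coalescing semi-infinite geodesics in a common direction correspond, under the particle-system dictionary of \cref{sssec:UpdateMapsV}, to two particles in the associated discrete-time TASEP speed process having the same asymptotic speed but never colliding. The paper then invokes the result claimed in \cite{mart-schi-discrete} that such particles \emph{do} eventually collide, ruling this out. So the external input is a speed-process collision statement for the discrete-time model, not Coupier's LPP theorem.

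Your proposed second step has the gap you yourself flag, and it is not easily closed. \cref{prop:EqualDists} identifies the Busemann process along a \emph{single} line (antidiagonal in SWFPP, vertical in LPP), across all directions $\xi$. But the N3G event is not measurable with respect to this data alone: reconstructing the geodesic tree from the Busemann values on one line requires propagating via the update maps, which brings in the bulk weights. The joint law of (Busemann on a line, weights) is \emph{not} what \cref{prop:EqualDists} matches between the two models---the update maps $A$ (SWFPP) and $D$ (LPP) are genuinely different, and the weights enter differently. Extending \cref{prop:EqualDists} to a full spatial identification would amount to building a measure-preserving bijection between the two geodesic trees, which is much stronger than anything established here. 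The alternative you mention---showing the three-geodesic configuration is measurable with respect to a single line of Busemann data---would require knowing that the three geodesics are distinguishable already at their last intersection with that line, which again needs control of coalescence in the bulk.

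In short: the paper sidesteps your obstacle by staying in the SWFPP world and quoting the discrete-time speed-process result of \cite{mart-schi-discrete} directly, rather than attempting a transfer to LPP.
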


To prove this for exponential LPP, Coupier reduced the problem to the statement that in the TASEP speed process, two particles with the same asymptotic speed eventually collide. This in turn had been shown in \cite{amir-angel-valko-tasep-speed}. In our case, the speed process for the associated discrete-time particle system has been studied in \cite{mart-schi-discrete}, wherein the authors claim (without proof) that two particles having the same speed will eventually collide. This statement, combined with Coupier's observation, gives the theorem. 

Being able to identify $\cB$ with $\cB'$ in the exponential case and studying the joint distributions of the Busemann functions here gives a precise, albeit difficult-to-work-with, description of the branch points. For a direction $\xi \in \scrU$, let $\alpha(\xi) = \Prob{B^\xi (0, e_2) \ne 0}$. Then due to the independence of the Busemann increments in the solvable case, we know that the variable
\begin{equation}
	\label{eq:BranchPointDef}
	b(\xi) = \min \set{k : B^{\xi-} (0, (k + 1)e_2) \ne 0}
\end{equation}
is geometrically distributed with parameter $\alpha(\xi)$. Moreover, $b(\xi_1) \le b(\xi_2)$ whenever $\xi_1 \le \xi_2$. Hence
\begin{equation}
	\label{eq:BranchSetAsValues}
	\cB = \set{b(\xi) : \xi \in \scrU}	
\end{equation}
may be regarded as the values realised by a particular monotonic coupling of geometric variables.

The most natural such coupling of geometric variables can be found as a function of a sequence $(U_k : k \ge 0)$ of i.i.d $\Unif [0, 1]$ variables, by setting $X(t) = \min \set{k \ge 0 : U_k \ge 1 - t}$. One may verify that the process $\set{X(t) : 0 \le t < 1}$ is a cadlag pure jump process with  $X(t) \sim \Geom_0(1 - t)$. The jump points are given by a Poisson process of intensity $(1 - t)^{-1}dt$ and a jump at $t$ follows a $\Geom_+(1 - t)$ distribution, independent of everything else. Up to a time change, the continuous space analogue of this coupling appears in \cite{fan-sepp-joint-buss} as the distribution of the Busemann process along an edge. We may also verify that the Poisson process giving the jump points $\set{t_k}$ is equivalent to choosing $t_0 = 0$ and choosing each $t_{k + 1}$ uniformly on $[t_k, 1)$.

As an abuse of notation, write $b(t) = b(\xi)$ when $t = 1 - \alpha(\xi)$. The process $\set{b(t) : 0 \le t < 1}$ is once again a cadlag pure jump process with positive jumps, but in contrast to the above is neither a Markov process nor has its jump points placed independently. We do however have a sort of Markov property for the jump points. Define $\cD(b) = \set{t : b(t-) \ne b(t)}$, the random set of jump points (discontinuities) of $\set{b(t)}$, and write $\cD(b) = (d_0, d_1, \dots)$, where $0 = d_0 < d_1 < \cdots$\footnote{It will be convenient to have $0 \in \cD(b)$, even if it isn't strictly a jump.}. Suppose we know $d_0, \dots, d_k$ and wish to sample $d_{k + 1}$. Then 
\begin{equation}
	\label{eq:BranchJumpDist}
	\Prob{d_{k + 1} \le t} = t^{b(d_k)} \brac[\big]{1 - \frac{1 - t}{1 - d_k}}.	
\end{equation}
Note that the dependence on $\set{b(t) : t \le d_k}$ is solely through $b(d_k)$. In particular, since $b(d_k) \ge 0$ and positive values skew the distribution towards $1$, we see that a time $d_{k + 1}$ sampled in this way is stochastically larger than under the uniform sampling procedure for $\set{X(t)}$. Thus our $b(t)$ will tend to take fewer jumps than the more natural coupling, and must compensate for this by taking larger jumps. We return to this point in \cref{lem:XCoupling}.

The jumps are somewhat more complicated and do depend on the entire trajectory. Let $H_0$ be a map from $(\bZ^\bN \times \bZ^\bN) \to \bZ^\bN$ representing a one-sided version of our store map $H$, where initially the store is empty. That is, if we have a sequence inputs $I = (I_0, \dots)$ and services $W = (W_0, \dots)$, we produce the sequence $I' = H_0(I, W)$ according to
\begin{align}
	X_k &= X_{k - 1} + I_{k - 1} - I'_{k - 1}\\
	I'_k &= (I_k + X_k) \wedge W_k,	
\end{align}
where we take $X_{0} = 0$ as our initial value. 

Given the collection of jumps $d_k$, sample $I^k \sim \nu^k$, independently, where $\nu^k$ is the marginal distribution of the vertical increment Busemann functions in the direction corresponding to $t_k$, namely $\alpha^{-1}(1 - t_k)$. Let $\sigma_n I$ be the sequence where $n$ copies of $0$ are appended to the start of $I$. We set $b(0) = 0$ (there is always a branch at height $0$) and $\widetilde{I}^1 = \sigma_1 I_1$. Then we let $b(t_1) = \min \set{n : \widetilde{I}^1 \ne 0}$, and recursively define
\begin{align}
	\label{eq:BranchRecc}
	\widetilde{I}^{k + 1} &= H_0(\sigma_{b(t_k) + 1} I^{k + 1}, \widetilde{I}^k),\\
	b(t_{k + 1}) &= \min \set{n : \widetilde{I}^{k + 1} \ne 0}.
\end{align}
Remarkably, the jump process $b(t)$ defined this way has geometric marginals.

\begin{proposition}
	\label{prop:BranchSample}
	The procedure described above does indeed sample from the process $\set{b(t) : 0 \le t < 1}$.
	\begin{proof}
		Consider sampling at finitely many times $0 = t_0 < t_1 < \cdots < t_n$. We have a representation of the Busemann functions $\set{B^{\xi_k}((j + 1) e_2, j e_2)}$ in terms of the multi-line process. The branch points can be recovered from \eqref{eq:BranchPointDef}, giving us the joint distribution of $(b(t_1), \dots, b(t_n))$.  
		
		The multi-line process takes as input independent sequences $I^k \sim \nu^k$ and recursively applies the $H$ map. The first sequence, $I^0$, may be considered a constant sequence of $\infty$, and $b(t_0) = 0$ as expected. Then $\widetilde{I}^1 = H(I^1, I^0) = I^1$, and $b(t_1)$ is just a geometric random variable, as expected. Now consider $\widetilde{I}^2 = H(I^2, \widetilde{I}^1)$. The value of $b(t_2)$ will be the first non-negative time when the store with input $I^2$ and service $\widetilde{I}^1$ has non-zero output. The store has its first non-zero service at time $b(t_1)$. There will be an output at this time if there is material in the store. This fails to happen if there was both no material in the store already at time $0$, and if the input from $I^2$ was zero between time $0$ and time $b(t_1)$. Thus the probability of no output at time $b(t_1)$ may be computed as
		\begin{equation}
			t^{b(t_1)} \brac[\big]{1 - \frac{1 - t_2}{1 - t_1}}.
		\end{equation}
		Here we use the explicit distributions for the $I^k$, and also for the corresponding store quantities in \cref{tab:SWFPPInvariantDists}. Hence $b(t_2) = b(t_1)$ with probability $1 - t_2^{b(t_1)} (1 - (1 - t_2)/ (1 - t_1))$, and in the same way
		\begin{equation}
			\label{eq:BranchSampleDist}
			\Prob{b(t_k) = b(t_1)} = 1 - t_k^{b(t_1)}\brac[\big]{1 - \frac{1 - t_k}{1 - t_1}}.
		\end{equation}
		This naturally leads to the distribution of the next jump point given in \eqref{eq:BranchJumpDist}.
		
		Let $t_{k_1}$ be the first time at which the value is different from $b(t_1)$. The Busemann functions $(B^{\xi_1}, \, B^{\xi_{k_1}})$ are jointly distributed as $H(I^{k_1}, \widetilde{I}^1)$, conditioned on the store being empty at time $0$, and $I^{k_1}$ having zero entries up to time $b(t_1)$. This is the relation we described by \eqref{eq:BranchRecc}. Now we look for the first subsequent time $t_{k_2}$ such that $b(t_{k_2}) \ne b(t_{k_1})$, whose distribution we may read from \eqref{eq:BranchSampleDist}, and repeat the procedure until we reach $t_n$. Going to continuous time by taking $n \to \infty$ and the spacing of the times to $0$ yields the claimed sampling procedure.
	\end{proof}	
\end{proposition}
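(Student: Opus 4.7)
The plan is to work at the level of finite-dimensional distributions and appeal to the explicit multi-line description from \cref{thm:BuseDists}. Since $\set{b(t)}$ is a cadlag pure jump process, its law is determined by sampling at an arbitrary dense grid $0 = t_0 < t_1 < \cdots < t_n$ and passing to the limit; the grid-level sampling procedure is what we will match.

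Fix the grid and let $\xi_k \in \scrU$ be the direction with $1 - \alpha(\xi_k) = t_k$. By \cref{thm:BuseDists}, the vertical-edge Busemann increments $\set{B^{\xi_k}((j+1)e_2, je_2) : j \ge 0, 1 \le k \le n}$ are jointly distributed as $\mu_V^{\rho}$, where $\rho = (\rho_1, \dots, \rho_n)$ is the induced vector of means. By definition of $\mu_V^{\rho}$, this distribution is obtained by drawing independent sequences $I^k \sim \nu_V^{\rho_k}$ and iteratively applying $H$; writing $\widetilde{I}^k = H^{(k)}(I^k, \ldots, I^1)$, we have $b(t_k) = \min\set{n \ge 0 : \widetilde{I}^k_n \ne 0}$. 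So the question becomes: does the recursive store construction in \eqref{eq:BranchRecc}, which strips off the deterministic zero prefix at each level, produce the same joint law as the one inherited from $\mu_V^{\rho}$?

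Proceeding inductively, the first jump time $t_{k_1}$ after $t_0 = 0$ is the smallest $k$ with $b(t_k) \ne 0$. Conditional on $b(t_1), \dots, b(t_{k-1})$ all equaling $b(0) = 0$, the event $\set{b(t_k) = 0}$ is exactly the event that the store with service $\widetilde{I}^{k-1}$ and input $I^k$ produces an output of $0$ at time $b(t_{k-1}) = 0$. For a Bernoulli-exponential store in stationarity, using \cref{tab:SWFPPInvariantDists} and the explicit parameters in \eqref{eq:HParamConst}, this probability is a direct computation and matches \eqref{eq:BranchJumpDist} (with $d_{k} = 0$), after converting between the $\xi$ and $t$ parametrizations via $1 - t = \alpha(\xi)$. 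More generally, once the first $k$ with $b(t_k) > 0$ is located, the $\widetilde{I}^{k_1}$ obtained from the conditioned multi-line construction agrees in law with $H_0(\sigma_{b(t_{k_1}) + 1}I^{k_1}, \widetilde{I}^{0})$, because conditioning $I^{k_1}$ on producing zero output up to that time is equivalent to conditioning $I^{k_1}$ on vanishing on $\set{0, \ldots, b(t_{k_1})}$ and the store on being empty there --- precisely the initial conditions baked into $H_0$. Iterating this argument along the sequence of jump points gives both the conditional jump distribution \eqref{eq:BranchJumpDist} and the recursive update \eqref{eq:BranchRecc}.

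The main obstacle is this conditioning-and-stripping step: one must carefully verify that after conditioning on a fixed value of $b(t_{k})$, the ``post-$b(t_k)$'' portions of $I^{k+1}$ and of the store quantities remain distributed as a fresh copy of the corresponding stationary ensemble, independent of everything previous. This is not automatic because the store dynamics propagate information, but it reduces to checking the standard fact that an empty initial store at time $b(t_k)$ decouples the future from the past, combined with the memorylessness of the Bernoulli-exponential input. Once this is in place, letting the grid become dense in $[0,1)$ and using right-continuity of $b$ and the Poissonian-with-bias description \eqref{eq:BranchJumpDist} of successive jumps transfers the discrete-grid identity to the continuous-time process.
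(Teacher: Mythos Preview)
Your proposal is correct and follows essentially the same route as the paper's proof: discretize to a finite grid, invoke the multi-line description of the vertical Busemann increments, read off the jump probabilities from the store dynamics, identify the post-jump conditional law with the one-sided map $H_0$ applied to a shifted input, and then refine the grid. The paper's argument is organized in the same way, with the same key observation that conditioning on ``no output up to time $b(t_{k-1})$'' forces the store to be empty and the input to vanish on that interval, which is exactly the $\sigma_{b(t_{k-1})+1}$ shift together with the $H_0$ initial condition. One small slip: where you write $H_0(\sigma_{b(t_{k_1})+1} I^{k_1}, \widetilde{I}^{0})$, the service should be the previous level's output $\widetilde{I}^{k_1-1}$ (or the output at the preceding jump), not $\widetilde{I}^0$.
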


Our \cref{thm:HighwaysBranch} estimates the number of heights in $\cB$ below a threshold $N$. This is equivalent to estimating the number of jumps in our process $\set{b(t)}$ before it hits $N$. Using \eqref{eq:BranchJumpDist} to analyse the distribution of the jump points directly seems rather difficult, so our strategy of proof is more indirect. We discretise the process and produce a dominating process $\overline{b}(t)$ with independent jumps for which the hitting time is more easily understood. We then show that a positive proportion of the jumps of $\set{\overline{b}(t)}$ are shared with $\set{b(t)}$, yielding a lower bound. For the upper bound, we use a comparison with the $\set{X(t)}$ process to show that the jumps in the branch process don't cluster ``too much'', so that the number in the discretised process is a good proxy. One last comparison with a stochastically smaller process gives an upper bound for the discretised process, completing the argument. For ease of presentation, we prove the lower  and upper bounds separately, in \cref{prop:BranchLowerBound} and \cref{prop:BranchUpperBound}.

\subsubsection{A lower bound on jumps}
\begin{proposition}
	\label{prop:BranchLowerBound}
	There is $C_1 > 0$ such that
	\begin{equation}
		\abs{\cB \cap [0, N]} \ge C_1 \log N	
	\end{equation}
	for all $N$ large enough, almost surely.	
\end{proposition}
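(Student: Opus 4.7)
The quantity $\abs{\mathcal{B} \cap [0,N]}$ equals, up to $\pm 1$, the number of jumps of the monotone c\`adl\`ag process $\{b(t) : 0 \leq t < 1\}$ before the hitting time $\tau_N = \inf\{t : b(t) > N\}$, since the values of $b$ at its jump points enumerate $\mathcal{B}$. The plan is to work on the dyadic grid $t_k = 1 - 2^{-k}$ and show, for $K \asymp \log_2 N$, that simultaneously $b(t_K) \leq N$ and the number of jumps of $b$ in $[0, t_K]$ is at least a constant multiple of $K$.

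The first of these is easy: $b(t_K) \sim \Geom_0(2^{-K})$ by the marginal calculation already noted, so Markov's inequality gives $\Prob{b(t_K) > N} \leq (2^K - 1)/N$. Taking $K = \lfloor c \log_2 N \rfloor$ with any $c < 1$ makes this summable along $N = 2^n$, and a Borel--Cantelli argument promotes the bound to $b(t_K) \leq N$ eventually almost surely. On this event, every jump of $b$ in $[0, t_K]$ contributes a distinct element of $\mathcal{B} \cap [0, N]$.

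For the lower bound on the number of jumps, I would exploit the recursion \eqref{eq:BranchRecc}: at each scale $t_{k+1}$, a fresh independent Bernoulli-exponential sequence $I^{k+1}$ is introduced. I would construct a dominating process $\overline{b}(t_k) \geq b(t_k)$ by running a coarsened form of the recursion in which the stored state $\widetilde{I}^{k}$ carried from earlier scales is replaced by a constant $+\infty$-service, making $\overline{b}$ pointwise larger by monotonicity of $H_0$. The jumps of $\overline{b}$ across dyadic scales are then governed by independent Bernoulli trials with success probability bounded below by some $p_0 > 0$ depending only on the weight parameters. Crucially, whenever such a fresh Bernoulli success in $I^{k+1}$ appears within a bounded window above $b(t_k)$, the same success forces a new branch point of $b$ too, so a positive fraction of the $\overline{b}$-jumps are shared with $b$. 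A strong law of large numbers applied to these independent Bernoulli scale events then delivers that the number of jumps of $b$ in $[0, t_K]$ is at least $(p_0/2) K$ almost surely for $K$ large, which combined with the previous paragraph yields $\abs{\mathcal{B} \cap [0, N]} \geq C_1 \log N$ almost surely, with $C_1 = c p_0/(2 \log 2)$.

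The main obstacle is engineering the dominating process so that pointwise domination of $b$ by $\overline{b}$ and the jump-sharing between them hold simultaneously. The $H_0$ couplings in \eqref{eq:BranchRecc} carry store state between scales, so I must localise the argument to a deterministic bounded window above the current level $b(t_k)$ in which a fresh Bernoulli success in $I^{k+1}$ is guaranteed to create a genuine new branch of $b$, rather than merely rearranging mass already present from earlier scales. Making this window precise while preserving independence across $k$ is the technical heart of the argument.
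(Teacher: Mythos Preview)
Your high-level architecture matches the paper's: discretise to dyadic times $t_k = 1 - 2^{-k}$, dominate $b$ by a more tractable process $\overline{b}$, and show that $b$ jumps in a positive proportion of the dyadic intervals up to the hitting time. The hitting-time control via the geometric marginals is also the same. But the concrete mechanism you propose for the last step has two genuine errors.

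First, the monotonicity for your $\overline{b}$ points the wrong way. In $H_0(I,W)$ the output is $(I_k + X_k)\wedge W_k$, so \emph{larger} service $W$ produces \emph{larger} output, hence an \emph{earlier} first nonzero. Replacing the service $\widetilde{I}^k$ by $+\infty$ therefore yields a process that is $\le b$, not $\ge b$. The paper's $\overline{b}$ is obtained differently: it decomposes the jump $b(t_k)-b(t_{k-1})$ as a nested sum of waiting times through the tandem of stores (equation \eqref{eq:bJumpDecomp}) and then \emph{overcounts} by dropping the ``store-empty'' indicators $E_j$, yielding $\overline{b}(t_k)=\sum_{l\le k}\bigl(1+\sum_{j\le l} z_{l,j}\bigr)$ with independent $z_{l,j}\sim\Geom_0(2^{-j})$.

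Second, and more fundamental, your ``jump-sharing'' mechanism is backwards. From \eqref{eq:pkDef}, the probability that $b$ jumps in $(t_{k-1},t_k]$ is $p_k=\tfrac{1}{2}(1-2^{-k})^{b(t_{k-1})}$: a jump requires \emph{absence} of inputs from $I^k$ up to height $b(t_{k-1})$ together with an empty-store event, not a ``fresh Bernoulli success within a bounded window''. In particular, a bounded window is useless here because the probability of a nonzero of $I^k$ in a window of length $L$ is $\sim L\cdot 2^{-k}\to 0$. The actual obstruction is that $p_k$ collapses when $b(t_{k-1})\gg 2^k$, so the heart of the argument is to show $b(t_{k-1})\lesssim 2^k$ for a positive density of $k$. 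The paper does this by proving (Lemma~\ref{lem:bOverUpper}) that the dominating process satisfies $\overline{b}(t_k)\le 2^k$ on a set of $k$ with deterministic positive density, via a coupling of $2^{-k}\overline{b}(t_k)$ with a stationary ergodic process. On those $k$ one gets $p_{k}\ge \tfrac{1}{2}e^{-(1+\epsilon)}$ uniformly, and since whether $b$ jumps at scale $k$ depends on the past only through $b(t_{k-1})$, a law-of-large-numbers argument finishes. Your proposal does not supply any replacement for this step.
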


The argument goes through our first coupled process, which is just the sum of certain independent increments. Here and in the remainder of the section, we set $t_k = 1 - 2^{-k}$.

\begin{lemma}
	\label{lem:bOver}
	There is a coupling of our process $\set{b(t)}$ and a process $\set{\overline{b}(t)}$ such that $b(t_k) \le \overline{b}(t_k)$ for all $k \ge 0$, where $\overline{b}(t)$ may be represented as
	\begin{equation}
		\overline{b}(t) = \sum_{k = 1}^{\floor{- \log_2 (1 - t)}} \sqrbrac[\bigg]{1 + \sum^{k}_{j = 1} z_{k, j}},
	\end{equation}
	with $z_{k, j} \sim \Geom_0 (2^{-j})$ and the $z_{k, j}$ all mutually independent.
	
	\begin{proof}
		Consider $b(t_1), b(t_2), \dots$, whose joint distribution we may access from the multi-line process. Take $I^k \sim \nu^k$ to be the independent inputs into the multi-line process and $\widetilde{I}^k$ the associated outputs. As in the proof of \cref{prop:BranchSample}, we describe how the values of $b(t_k)$ depend on the inputs $I^k$. 
		
		Observe that for us to have $b(t_{k}) \ne b(t_{k - 1})$, the store with inputs $I^{k}$ and service $\widetilde{I}^{k - 1}$ must have no output at time $b(t_{k - 1})$. Let $E_{k}(b(t_{k - 1}))$ be the event that the store with input $I^k$ is empty immediately before the input at time $b(t_{k - 1})$. Since $\widetilde{I}^{k - 1}_j = 0$ for $0 \le j < b(t_{k - 1})$, it must be that the store is empty before the input at time $0$, and subsequently $I^{k}_j = 0$ for $0 \le j \le b(t_{k - 1})$. This probability may be computed as 
		\begin{equation}
			\Prob{E_{k}(b(t_{k - 1}))} = \frac{1 - t_{k}}{1 - t_{k - 1}} t_{k}^{b(t_{k - 1})} = \frac{1}{2} (1 - 2^{-k})^{b(t_{k - 1})}.	
		\end{equation}
		Again we use the distributions given in \cref{tab:SWFPPInvariantDists}. We will have $b(t_{k}) \ne b(t_{k - 1})$ if $E_{k}(b(t_{k - 1}))$ occurs along with having $I^k_{b(t_{k - 1})} = 0$, for a total probability of
		\begin{equation}
			\label{eq:pkDef}
			p_k = \frac{1}{2} (1 - 2^{-k})^{b(t_{k - 1})}.	
		\end{equation}
		though we do not need this expression in proving this lemma, it will be useful later on.
		
		Now, let us look at the value of $b(t_{k})$. If $E_{k}(b(t_{k - 1}))$ does not occur, then $b(t_k) = b(t_{k - 1})$. If it does, then the new value will be the time of the first output for the store with input $I^k$ and service $\widetilde{I}^{k - 1}$. This must be after the next nonzero input to the store, say at time $y_{k, k} = b(t_{k - 1}) + n_k(b(t_{k - 1}))$, and thereafter occurs at the first nonzero service. Here $n_k(b) = \min \set{n \ge 0 : I^k_{n + b} \ne 0}$, and $n_k(b) \sim \Geom_0(1 - t_k)$.
		
		The sequence of services, $\widetilde{I}^{k - 1}$, is itself the output of a store, with input $I^{k - 1}$ and service $\widetilde{I}^{k - 2}$. Let $E_{k - 1}(y_{k, k})$ be the event that this store is empty before input at time $y_{k, k}$. On this event, we must now wait for the first subsequent nonzero input from $I^{k - 1}$, and then for the first nonzero service. If the store is not empty, then we need only wait for the first nonzero service. We then wait time
		\begin{equation}
			y_{k, k - 1} = \bone_{E_{k - 1}(z_{k, k})} n_{k - 1}(y_{k, k})	
		\end{equation}
		for the input, and for the service we look at the store with input $I^{k - 2}$ and service $\widetilde{I}^{k - 3}$, and so on. Ultimately,
		\begin{align}
			\label{eq:bJumpDecomp}
			b(t_k) &= b(t_{k - 1}) + \sum_{j = 1}^{k}y_{k, j},\\
			&= b(t_{k - 1}) + \sum_{j = 1}^{k}\bone_{E_{j}(y_{k, j + 1})} n_{j}(y_{k, j + 1}).	
		\end{align}
		For this to work at $j = 1$ we should define $y_{k, k + 1} = b(t_{k - 1})$. Observe that the quantities $n_{j}(y_{k, j + 1})$ are all independent of each other.
		
		The only troublesome point in \eqref{eq:bJumpDecomp} is the dependency on the event $E_{j}(y_{k, j + 1})$. We define the process $\overline{b}(t)$ to ignore this dependence. Define $\overline{b}(t_0) = b(t_0) = 0$, and thereafter define
		\begin{equation}
			\overline{b}(t_k) = \overline{b}(t_{k - 1}) + 1 + \sum_{j = 1}^{k}z_{k, j},	
		\end{equation}
		where $z_{k, j}$ is defined recursively by $z_{k, k + 1} = \overline{b}(t_{k - 1})$ and 
		\begin{equation}
			z_{k, j} = n_{j}(z_{k, j + 1}).
		\end{equation}
		The additional $1$ in the sum is merely to ensure that $\overline{b}(t_k) \ne \overline{b}(t_{k - 1})$. The collection $\set{z_{k, j} : k \ge 1,\, 1 \le j \le k}$ consists of independent geometric variables. Since both $\set{b(t_k)}$ and $\set{\overline{b}(t_k)}$ are defined through the same input variables $I^k$, and since for $\overline{b}(t_k)$ we always overestimate the time we wait to see output in a queue, we ensure that $b(t_k) \le \overline{b}(t_k)$.
	\end{proof}
\end{lemma}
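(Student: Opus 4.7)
The plan is to reuse the input sequences $I^k \sim \nu^k$ of the multi-line construction of $\{b(t_k)\}$ and, running on the same probability space, define $\overline{b}(t_k)$ recursively by the formula sketched in the lemma statement: start from $\overline{b}(t_0) = 0$ and set
\begin{equation*}
    \overline{b}(t_k) = \overline{b}(t_{k-1}) + 1 + \sum_{j=1}^{k} z_{k,j},\qquad z_{k,k+1} := \overline{b}(t_{k-1}),\qquad z_{k,j} := n_j(z_{k,j+1}),
\end{equation*}
where $n_j(b) = \min\{n \ge 0 : I^j_{n+b} \ne 0\}$ is the functional already used to unwind $b(t_k) - b(t_{k-1}) = \sum_j \bone_{E_j(y_{k,j+1})}\, n_j(y_{k,j+1})$ in the discussion preceding the lemma. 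In this way $\overline{b}$ is built from exactly the same primitives as $b$, but with two conservative modifications: the indicators $\bone_{E_j}$ have been dropped and a $+1$ has been added at each dyadic step.

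The first claim, $b(t_k) \le \overline{b}(t_k)$, I would prove by induction on $k$, with the inductive hypothesis strengthened to $y_{k,j+1} \le z_{k,j+1}$ for all $j \le k$. The base case $k = 0$ is trivial. For the step, note that $n_j$ is monotone decreasing in its shift argument for a fixed realisation of $I^j$, since a larger shift can only bring us closer to the next nonzero coordinate; combining this with the indicator being at most $1$ gives $\bone_{E_j(y_{k,j+1})}\, n_j(y_{k,j+1}) \le n_j(z_{k,j+1}) = z_{k,j}$. Summing from $j = 1$ to $k$ and comparing with the $1 + \sum_j z_{k,j}$ increment of $\overline{b}$ (the $+1$ absorbs the pathological case in which $b$ takes a stationary step while $\overline{b}$ is forced to advance) closes the induction.

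The second and more substantial claim is the mutual independence of $\{z_{k,j} : 1 \le j \le k\}$ together with the marginal $z_{k,j} \sim \Geom_0(2^{-j})$. Independence across $j$ is inherited from the independence of the sequences $I^j$, so the work is within a single column $j$. Here I would record a ``high-water mark'' $M_k^{(j)} = \max_{k' \le k} z_{k',j+1}$ and show by induction in $k$ that (i) $z_{k,j+1} > M_{k-1}^{(j)}$ almost surely, and (ii) conditional on all previously revealed randomness, $z_{k,j}$ is the waiting time to the first nonzero entry of $I^j$ strictly beyond the high-water mark, hence a fresh $\Geom_0(2^{-j})$ variable by the strong Markov property for i.i.d.\ sequences. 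The strictly increasing shifts in (i) are a consequence of the $+1$ in the recursion, which forces $\overline{b}(t_{k-1})$ — and hence, by unwinding, each $z_{k,j+1}$ — to be strictly larger than at the previous step. The marginal distribution then follows from the standard fact that the spacing between consecutive nonzero coordinates of an i.i.d.\ $\Ber(2^{-j})$ support process is $\Geom_0(2^{-j})$.

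The main obstacle is bookkeeping for the joint independence: while $z_{k,j}$ ``only'' reads a fresh stretch of $I^j$, the index $z_{k,j+1}$ at which that reading begins is itself a nontrivial functional of $I^{j+1}, \dots, I^k$, so one must be careful that the revealed information from the ``lower'' columns is genuinely independent of the tails of $I^j$ beyond the high-water mark. The cleanest way I see to handle this is to process the $(k,j)$ pairs in the lexicographic order in which they appear naturally in the recursion (outer loop over $k$, inner loop over $j$ from $k$ downwards), maintaining the invariant that at each step the next geometric draw is taken from a portion of the relevant input sequence that is independent of everything revealed so far. The monotonicity of the shifts in part (i) is what makes this invariant self-sustaining, and it is the step that simultaneously underwrites both the independence assertion and the inductive monotone comparison proving $b(t_k) \le \overline{b}(t_k)$.
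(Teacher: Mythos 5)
Your construction of $\overline{b}$ and the overall plan (reuse the same inputs $I^k$, drop the indicators, add $1$ at each dyadic step, then argue the $z_{k,j}$ are fresh geometrics) is exactly the paper's. Your independence argument via a strictly increasing high-water mark forced by the $+1$ is a legitimate, in fact more explicit, version of what the paper merely asserts — with the caveat that the portion of $I^j$ revealed at a visit extends up to the position of the nonzero entry that was found (starting position plus the wait), so the mark you carry forward must be that position, not just the maximum of the previous starting indices; the $+1$ in the recursion still makes each new start strictly exceed it.

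The gap is in the comparison $b(t_k)\le\overline{b}(t_k)$. The claim that $n_j(b)=\min\set{n\ge 0 : I^j_{n+b}\ne 0}$ is monotone decreasing in $b$ is false: increasing $b$ past the next nonzero coordinate changes which coordinate is ``next'', and the wait can jump up. Consequently the termwise inequality $\bone_{E_j(y_{k,j+1})}\, n_j(y_{k,j+1})\le n_j(z_{k,j+1})$ that your induction relies on can fail with positive probability: if $b$'s current position $y$ is strictly below $\overline{b}$'s position $z$ and the first nonzero entry of $I^j$ at or after $y$ lies at or after $z$, then $n_j(y)>n_j(z)$ even though the indicator equals $1$. (Note also that even granting your monotonicity claim, $y\le z$ would give $n_j(y)\ge n_j(z)$, which points the wrong way for the inequality you write.) The comparison must be run at the level of accumulated positions rather than individual increments: the map $b\mapsto b+n_j(b)$, the position of the first nonzero entry of $I^j$ at or after $b$, is non-decreasing, so if $\overline{b}$'s accumulated position dominates $b$'s before processing column $j$, it still dominates afterwards — when the indicator is $0$ the $b$-position does not move, and when it is $1$ both processes jump to the first nonzero entry of $I^j$ at or after their respective positions. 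Inducting over columns $j=k,\dots,1$ inside each stage and over $k$ across stages (the $+1$ keeping $\overline{b}(t_{k-1})\ge b(t_{k-1})$) gives $b(t_k)\le\overline{b}(t_k)$; this positional argument is what the paper's remark that $\overline{b}$ ``always overestimates the time we wait'' is standing in for.
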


We are interested in the number of values taken by $\set{b(t)}$ before it hits a level $N$. Let $T_N = \inf\set{t : b(t) \ge N}$, and let $V_N = \abs{\set{b(t) : 0 \le t < T_N}}$. Similarly, let $\overline{T}_N$ and $\overline{V}_N$ be the corresponding quantities for $\set{\overline{b}(t)}$.

\begin{lemma}
	\label{lem:HittingTimeAsympts}	
	Almost surely, we have the asymptotics
	\begin{equation}
		\label{eq:HittingTimeAsympt}
		\lim_{N \to \infty} \frac{-\log (1 - T_N)}{\log N} = 1
	\end{equation}
	and 
	\begin{equation}
		\lim_{N \to \infty} \frac{\overline{V}_N}{\log_2 N} = 1.	
	\end{equation}
	\begin{proof}
		Find $K$ such that $2^K \le N < 2^{K + 1}$. The first limit is a very coarse estimate which only uses the marginals of $b(t)$, namely that they are geometric with mean $(1 - t)^{-1}$. We again use the dyadic times $t_k = 1 - 2^{-k}$. We know 
		\begin{equation}
			\Prob{T_N \ge t_k} = \Prob{b(t_k) \le N} = 1 - (1 - 2^{-k})^N \le 1 - (1 - 2^{-k})^{2^{K + 1}}.	
		\end{equation}
		So 
		\begin{equation}
			\Prob{T_N \ge t_{K(1 + \epsilon)}} \le 1 - (1 - 2^{-K(1 + \epsilon)})^{2^K} \le C (1 - e^{-2^{-K \epsilon}}).	
		\end{equation}
		This upper bound decays very quickly and in particular is summable. We may thus have $T_N \ge t_{K(1 + \epsilon)}$ only finitely often. Thus
		\begin{equation}
			\limsup_{N \to \infty} \frac{-\log_2(1 - T_N)}{\log_2 N} \le 1 + \epsilon	
		\end{equation}
		almost surely. Taking $\epsilon$ to zero and finding the matching lower bound (similar to what we do below), gives the limit.
		
		For the second part, note that since $\overline{b}(t)$ is constant between its jumps at times $t_k$, we may write $\overline{V}_N = - \log_2 (1 - \overline{T}_N)$. We must show that 
		\begin{equation}
			\lim_{N \to \infty} \frac{- \log_2(1 - \overline{T}_N)}{\log_2 N} = 1.	
		\end{equation}
		A short calculation gives $\Prob{z_{k, j} \ge 2^{j(1 + \epsilon)}} \le e^{-2^{j \epsilon}}$, and therefore there will be some (random) $C > 0$ such that $z_{k, j} \le C 2^{j(1 + \epsilon)}$, for all $k,\, j$. Hence
		\begin{align}
			\overline{b}(t_k) &\le k + \sum_{l = 1}^{k}\sum_{j = 1}^{l}C 2^{j(1 + \epsilon)}\\
			&\le k + C \sum_{l = 1}^{k} 2^{(l + 1)(1 + \epsilon)}\\
			&\le k + C 2^{(k + 2)(1 + \epsilon)}\\
			&\le C 2^{k(1 + \epsilon)},	
		\end{align}
		where in the last line we enlarge $C$ to absorb the other terms. Some manipulation gives a bound on the hitting time: if $\overline{b}(t_k) \ge 2^K$, then 
		\begin{equation}
			k \ge (1 + \epsilon)^{-1}(K - \log_2 C).	
		\end{equation}
		So since $\epsilon > 0$ may be chosen arbitrarily,
		\begin{equation}
			\liminf_{N \to \infty} \frac{- \log_2(1 - \overline{T}_N)}{\log_2 N} \ge 1.	
		\end{equation}
		The same procedure, with a lower bound of $c 2^{j(1 - \epsilon)}$ on the $z_{k, j}$, gives the matching upper bound, and we conclude that in fact
		\begin{equation}
			\lim_{N \to \infty} \frac{- \log_2(1 - \overline{T}_N)}{\log_2 N} = 1,	
		\end{equation}
		which is equivalent to what we want.
	\end{proof}
\end{lemma}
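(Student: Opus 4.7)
The two limits can be treated separately. The first uses only the marginal $b(t)\sim\Geom_0(1-t)$, while the second exploits the independent decomposition of $\overline b$ in \cref{lem:bOver}. Both are Borel--Cantelli arguments along the dyadic times $t_k=1-2^{-k}$.

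For the first limit, set $N\in[2^K,2^{K+1})$ and use the marginal identity
\begin{equation*}
\Prob{T_N\ge t_k}=\Prob{b(t_k)\le N}=1-(1-2^{-k})^{N+1}.
\end{equation*}
At $k=\lceil K(1+\varepsilon)\rceil$ this is bounded by $2\cdot 2^{-K\varepsilon}$ (via $1-(1-x)^n\le nx$), while $\Prob{T_N<t_k}=(1-2^{-k})^{N+1}$ at $k=\lfloor K(1-\varepsilon)\rfloor$ is at most $\exp(-2^{K\varepsilon-1})$. Both are summable along $N=2^K$; monotonicity of $T_N$ in $N$ then extends the control between dyadic levels. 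Borel--Cantelli traps $-\log_2(1-T_N)$ between $K(1\pm\varepsilon)$ eventually, and sending $\varepsilon\downarrow 0$ gives the limit.

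For the second limit, the dominant summand in $\overline b(t_k)-\overline b(t_{k-1})=1+\sum_{j=1}^k z_{k,j}$ is $z_{k,k}\sim\Geom_0(2^{-k})$, with mean of order $2^k$. For the upper bound on $\overline V_N$ (giving $\limsup\le 1$), I would observe that $\Prob{z_{k,k}<2^{k(1-\varepsilon)}}$ is of order $2^{-k\varepsilon}$ and hence summable, so Borel--Cantelli yields $\overline b(t_k)\ge z_{k,k}\ge 2^{k(1-\varepsilon)}$ eventually. For the lower bound (giving $\liminf\ge 1$) I would apply Markov to $\overline b(t_k)$, whose mean is $O(2^k)$: $\Prob{\overline b(t_k)>2^{k(1+\varepsilon)}}\le C\cdot 2^{-k\varepsilon}$, again summable. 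The $+1$ in the recursion ensures $\overline b$ jumps at every $t_k$, so $\overline V_N$ is exactly the first $k$ with $\overline b(t_k)\ge N$, and the two bounds sandwich $\overline V_N$ between $(1+\varepsilon)^{-1}\log_2 N-O(1)$ and $(1-\varepsilon)^{-1}\log_2 N+O(1)$.

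The main thing to watch is that one should not try to union-bound $\{z_{k,j}\le C\cdot 2^{j(1+\varepsilon)}\}$ jointly over all pairs $(k,j)$ --- summability fails along the $k$ direction. Working at the level of $\overline b(t_k)$ directly via a moment bound is cleaner and exploits the fact that the last term dominates. Everything else is routine tail estimation.
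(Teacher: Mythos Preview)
Your argument is correct. For the first limit you proceed essentially as the paper does: both use the geometric marginal of $b(t_k)$ and Borel--Cantelli along dyadic scales to trap $-\log_2(1-T_N)$ between $K(1\pm\varepsilon)$.

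For the second limit you take a genuinely different route. The paper attempts to bound each $z_{k,j}$ individually, asserting a random constant $C$ with $z_{k,j}\le C\,2^{j(1+\varepsilon)}$ for \emph{all} pairs $(k,j)$, and then sums. Your caution about this is exactly on point: for fixed $j$ the variables $(z_{k,j})_{k\ge j}$ are i.i.d.\ and unbounded, so $\sup_k z_{k,j}=\infty$ almost surely and no such uniform $C$ can exist. Your alternative---Markov's inequality applied directly to $\overline b(t_k)$ (whose mean is $O(2^k)$) for the upper estimate, and the single dominant summand $z_{k,k}$ for the lower estimate---sidesteps this entirely and delivers the needed sandwich $c\,2^{k(1-\varepsilon)}\le \overline b(t_k)\le 2^{k(1+\varepsilon)}$ for all large $k$. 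The paper's sketch of the matching bound (``a lower bound of $c\,2^{j(1-\varepsilon)}$ on the $z_{k,j}$'') suffers from the same defect, whereas your observation that only $z_{k,k}$ is needed is the right fix. Your approach is both correct and cleaner than what the paper wrote.
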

%

We now know how many jumps to expect before $\set{\overline{b}(t)}$ reaches a given level. We will get a lower bound on the jumps of $\set{b(t)}$ by showing that the two processes jump ``together'' at least a positive proportion of the time. This will follow easily after a technical lemma.

\begin{lemma}
	\label{lem:bOverUpper}
	Let $A = \set{k \ge 0  : \overline{b}(t_k) \le 2^k}$. There is a deterministic $p > 0$ such that 
	\begin{equation}
		\lim_{K \to \infty}\frac{\abs{A \cap [0, K]}}{K} = p\as.
	\end{equation}
	\begin{proof}
		We relate $\overline{b}(t)$ to an $\bR$-valued ergodic process, for which the statement is immediate. Let $Y_k = \sum_{j = -\infty}^k 2^{j - k}y_{k, j}$, where $y_{k, j} \sim \Exp(1)$ and these are mutually independent across all $k,\, j \in \bZ$. Consider the process $R_k = \sum_{l = -\infty}^k 2^{l - k}Y_l$, which is quite clearly stationary and ergodic under shifts of the index. The support of $R_k$ is $[0, \infty)$, so for each $\alpha$ we can say 
		\begin{equation}
			\lim_{K \to \infty}\frac{\sum_{k = 1}^K \bone\set{R_k \le \alpha}}{K} = \Prob{R_k \le \alpha} \as.	
		\end{equation}
		
		Now observe that $\floor{2^j y_{k, j}} \sim \Geom_0(2^{-j})$. We may couple our $R_k$ with $\set{\overline{b}(t)}$ such that 
		\begin{equation}
			\overline{b}(t_k) = \sum_{l = 1}^k \sqrbrac[\bigg]{1 + \sum_{j = 1}^l 2^{j} \floor{y_{l, j}}}.	
		\end{equation}
		Under this coupling, the typical difference between $2^k R_k$ and $\overline{b}(t_k)$ is quite small:
		\begin{align}
			\label{eq:bOverDiff}
			2^k R_k - \overline{b}(t_k) &= \sum_{l = -\infty}^k \sum_{j = -\infty}^l 2^{j}y_{l, j}  - \sum_{l = 1}^k \sqrbrac[\big]{1 + \sum_{j = 1}^l 2^{j} \floor{y_{l, j}}}\\
			&= \sum_{l = -\infty}^{0} \sum_{j = -\infty}^l 2^{j} y_{l, j} + \sum_{l = 0}^{k} \sum_{j = -\infty}^{0} 2^{j} y_{l, j} + \sum_{l = 1}^k \sum_{j = 1}^l(2^j y_{l, j} - \floor{2^j y_{l, j}}) - k.	
		\end{align}
		Write $S^1_k$, $S^2_k$, $S^3_k$ for the three sums. Among these, $S^1_k$ doesn't actually depend on $k$, so let us write $H^1 = S^1_k$. For $S^2_k$, an application of the Borel-Cantelli lemma to the $y_{k, j}$ shows there is a random $H^2$ such that $S^2_k \le H^2 k$ for all $k \ge 0$. Lastly, the terms of $S^3_k$ are individually bounded by $1$, so trivially $S^3_k \le k^2$. Using the triangle inequality in \eqref{eq:bOverDiff} and plugging in these bounds,
		\begin{equation}
			\abs{2^k R_k - \overline{b}(t_k)} \le H^1 + (H^2 + 1)k + k^2 \le H k^2,	
		\end{equation}
		for all $k \ge 1$ (and $H$ chosen large enough).
		
		Now to prove the lemma,
		\begin{align}
			\set{k \ge 0 : \overline{b}(t_k) \le 2^k} &\subseteq \set{k \ge 0 : 2^k R_k - H k^2 \le 2^k}\\
			&= \set{k \ge 0 : R_k \le 1 + H 2^{-k}k^2}.	
		\end{align}
		With $\epsilon > 0$ arbitrary, let $k_\epsilon$ be large enough that $H 2^{-k_\epsilon} k_\epsilon^2 < \epsilon$. Then 
		\begin{align}
			\lim_{K \to \infty} \frac{\sum_{k = 0}^K \bone\set{\overline{b}(t_k) \le 2^k}}{K} &\le \lim_{K \to \infty} \frac{\sum_{k = 0}^K \bone\set{R_k \le 1 + H 2^{-k}k^2}}{K}\\
			&= \lim_{K \to \infty} \frac{\sum_{k = k_\epsilon}^K \bone\set{R_k \le 1 + H 2^{-k}k^2}}{K}\\
			&\le \lim_{K \to \infty} \frac{\sum_{k = 0}^K \bone\set{R_k \le 1 + \epsilon}}{K}\\
			&= \Prob{R_0 \le 1 + \epsilon}.	
		\end{align}
		Taking $\epsilon \to 0$, we can replace the last bound by $\Prob{R_0 \le 1}$. Repeating this calculation with a matching lower bound leaves us with 
		\begin{equation}
			\lim_{K \to \infty} \frac{\sum_{k = 0}^K \bone\set{\overline{b}(t_k) \le 2^k}}{K} = \Prob{R_0 \le 1} > 0.
		\end{equation}
	\end{proof}
\end{lemma}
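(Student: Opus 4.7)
The plan is to compare $2^{-k}\overline{b}(t_k)$ to a stationary ergodic process on $\bZ$ and then read off the density via Birkhoff's theorem. Note that $\Ex{\overline{b}(t_k)} = \Theta(2^k)$, so this normalisation is the natural one, and the lemma is essentially a density statement for the normalised process around the value $1$.

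First I would construct the stationary approximant. Take an i.i.d.\ $\Exp(1)$ field $\set{e_{l, j}}_{(l, j) \in \bZ^2}$ and set
\[
R_k = \sum_{l \le k} \sum_{j \le l} 2^{j + l - 2k} e_{l, j}.
\]
A short calculation gives $\Ex{R_0} = 8/3 < \infty$, so the sum converges almost surely. The sequence $(R_k)_{k \in \bZ}$ is measurable with respect to the i.i.d.\ field and intertwines $k \mapsto k+1$ with the measure-preserving shift $(l, j) \mapsto (l - 1, j - 1)$, so it is stationary and ergodic under unit shifts.

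Next I would couple $\overline{b}(t_k)$ to $R_k$ pathwise using the same $e_{l, j}$. Since $\floor{\alpha e} \sim \Geom_0(1 - e^{-1/\alpha})$ for $e \sim \Exp(1)$, we can choose $\alpha_j$ with $1 - e^{-1/\alpha_j} = 2^{-j}$ and take $z_{l, j} = \floor{\alpha_j e_{l, j}}$ for $l \ge 1$, $1 \le j \le l$. The expansion $\alpha_j = 2^j + O(1)$ gives the pointwise bound $\abs{2^{-j} z_{l, j} - e_{l, j}} = O\brac{2^{-j}(1 + e_{l, j})}$. Summing these errors against the weights appearing in $R_k$ and in $2^{-k}\overline{b}(t_k)$, and combining with the boundary terms coming from the sums over $l \le 0$ in $R_k$, I expect a bound of the form $\abs{2^{-k}\overline{b}(t_k) - R_k} \le H(\omega) \cdot k^2 \cdot 2^{-k}$, where $H$ is almost surely finite. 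The $H$ here is controlled by a Borel--Cantelli argument applied to $\max_{1 \le j \le l \le k} e_{l,j}$ and to the tail sum of $e_{l, j}$ over $l \le 0$. In particular, $\abs{2^{-k}\overline{b}(t_k) - R_k} \to 0$ almost surely.

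To conclude, for each fixed $\epsilon > 0$ Birkhoff's ergodic theorem yields
\[
\frac{1}{K} \abs{\set{1 \le k \le K : R_k \le 1 \pm \epsilon}} \to \Prob{R_0 \le 1 \pm \epsilon} \quad \as,
\]
and the coupling sandwiches $\set{\overline{b}(t_k) \le 2^k}$ between $\set{R_k \le 1 - \epsilon}$ and $\set{R_k \le 1 + \epsilon}$ for all sufficiently large $k$. Letting $\epsilon \downarrow 0$ and using continuity of the distribution of $R_0$ (a convergent sum of independent exponentials) gives the density $p = \Prob{R_0 \le 1}$. Positivity $p > 0$ follows from $R_0 \ge 0$ having $0$ as an accumulation point of its support: truncate the sum at finitely many indices and note that with positive probability all those $e_{l, j}$ are small and the deterministic tail bound $\sum_{l < -N}\sum_{j \le l} 2^{j+l}$ is below $1/2$. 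The main obstacle is the uniformity of the coupling estimate: the rounding errors must be controlled $o(2^k)$ simultaneously for all large $k$ rather than merely at fixed $k$ in distribution, which is where the Borel--Cantelli control on the two-dimensional array of $e_{l, j}$ is essential.
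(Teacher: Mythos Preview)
Your proposal is correct and follows essentially the same approach as the paper: you build the identical stationary ergodic process $R_k$ from an i.i.d.\ exponential array, couple $\overline{b}(t_k)$ to $2^k R_k$ via the floor-of-scaled-exponential construction of geometrics, obtain an $O(k^2)$ error, and conclude via Birkhoff's theorem with an $\epsilon$-sandwich. Your version is in fact slightly more careful in two places---you use the exact scaling $\alpha_j$ rather than the paper's approximate $2^j$ (the paper's claim $\floor{2^j y} \sim \Geom_0(2^{-j})$ is only asymptotically correct), and you invoke continuity of the law of $R_0$ to close the sandwich---but neither of these changes the argument.
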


\begin{proof}[Proof of \cref{prop:BranchLowerBound}]
	Recall the expression for $p_k$ from \eqref{eq:pkDef}, which gives the probability that $b(t_k) \ne b(t_{k - 1})$ and hence there is a jump point in $(t_{k - 1}, t_k]$. Moreover, our description of the discretised process in the proof of \cref{lem:bOver} makes clear that whether this happens is a function of fresh, independent inputs whose only dependence on the trajectory thus far is through the value of $b(t_{k - 1})$. Now note that 
	\begin{equation}
		p_k = \frac{1}{2} (1 - 2^{-k})^{b(t_{k - 1})} \ge \frac{1}{2} (1 - 2^{-k})^{\overline{b}(t_{k - 1})}.	
	\end{equation}
	If $k \in A$, the set from \cref{lem:bOverUpper}, then we have a further estimate 
	\begin{equation}
		p_k \ge \frac{1}{2}e^{-(1 + \epsilon)} > 0,
	\end{equation}
	uniformly for all $k$ large enough. Then \cref{lem:bOverUpper} shows that this estimate holds a positive proportion of the $k$, and thus there must be a jump for $b$, within $(t_{k - 1}, t_k]$ for a positive proportion of the $k$. \cref{lem:HittingTimeAsympts} says that $T_N$ will typically be around $1 - N^{-1}$, so there will be of order $\log N$ such dyadic intervals before we hit $N$. The lower bound on the number of jumps follows.
\end{proof}

\subsubsection{An upper bound on jumps}
For the upper bound, we wish to show that the number of jumps of the discretised process is a good proxy for the total number in the original process, and then give an upper bound on the former. The dyadic spacing may no longer be sufficient, so we now choose $\alpha > 1$ and look at times $t_k = 1 - \alpha^{-k}$. Taking $\alpha \searrow 1$ improves the accuracy of our discretisation.

\begin{proposition}
	\label{prop:BranchUpperBound}
	There is $C_2 < 1$ such that
	\begin{equation}
		\label{eq:BranchUpperBound}
		\abs{\cB \cap [0, N]} \le C_2 \log N	
	\end{equation}
	for all $N$ large enough, almost surely.	
\end{proposition}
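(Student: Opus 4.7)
The plan is to control $V_N := \abs{\cD(b) \cap [0, T_N)}$, the number of jumps of the process $\set{b(t)}$ from \cref{prop:BranchSample} before it first exceeds $N$. Under exponential weights, \cref{thm:SWFPPN3G} gives $\cB = \cB'$ almost surely, and since $\abs{\cB \cap [0, N]} = V_N + O(1)$, it suffices to bound $V_N$.

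Fix $\alpha > 1$ (to be chosen close to $1$) and discretise with $t_k = 1 - \alpha^{-k}$. An adaptation of \cref{lem:HittingTimeAsympts} to base $\alpha$ yields $K_N := \min\set{k : b(t_k) \ge N} = (1 + o(1))\log_\alpha N$ almost surely. Decompose $V_N = S_N + E_N$, where
\begin{equation}
S_N = \sum_{k=1}^{K_N}\bone\set{b(t_k) \ne b(t_{k-1})}
\end{equation}
counts the dyadic intervals in which $b$ jumps, and $E_N$ accounts for the excess multiplicity. From \eqref{eq:BranchJumpDist} and $t^B \le 1$, $S_N$ is stochastically dominated by $\mathrm{Bin}(K_N, 1-\alpha^{-1})$. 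The strong law together with the elementary inequality $1 - 1/\alpha < \log\alpha$ for $\alpha > 1$ gives
\begin{equation}
\limsup_{N \to \infty}\frac{S_N}{\log N} \le \frac{1 - \alpha^{-1}}{\log\alpha} < 1
\end{equation}
almost surely.

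For $E_N$, I would establish a \emph{no-clustering} lemma: conditional on $b(t_{k-1})$, the number of additional jumps of $b$ within $(t_{k-1}, t_k]$ is stochastically dominated by the number of jumps of the natural coupling $\set{X(t)}$ on the same interval, a Poisson variable of mean $\log\alpha$. The proof would iterate the pointwise comparison of successive jump laws --- the factor $t^{b(d_k)}$ in \eqref{eq:BranchJumpDist} delays $b$'s next jump relative to $X$'s, and this damping persists for subsequent jumps within an interval since $b(d_k)$ only increases during an excursion. A careful tracking of the $t^{B}$ factor through the excess shows $E_N \le \varepsilon(\alpha)\log N$ with $\varepsilon(\alpha) \searrow 0$ as $\alpha \searrow 1$, so combined with the $S_N$ bound we retain $\limsup V_N/\log N < 1$ for $\alpha$ sufficiently close to $1$.

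The main obstacle is the quantitative form of the no-clustering lemma: the process $b$ depends on its past through the full sample path of the multi-line inputs, so the stochastic comparison with $X$ must be built carefully, and a naive Poisson domination only yields $V_N/\log N \le 1$. Obtaining the strict inequality relies on the identification $\cB = \cB'$ from \cref{thm:SWFPPN3G}, without which one recovers only $C_2 \le 1$, as noted in the remark following the theorem statement.
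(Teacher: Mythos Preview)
Your framework (discretise at $t_k=1-\alpha^{-k}$, compare with the process $X$) matches the paper, but the way you deploy the two ingredients does not close the gap to $C_2<1$.

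The problem is the decomposition $V_N=S_N+E_N$ with the naive Binomial bound on $S_N$. That bound gives $S_N/\log N\lesssim (1-\alpha^{-1})/\log\alpha$, which is $1-x/2+O(x^2)$ for $\alpha=1+x$. Your ``no-clustering'' comparison with $X$ (Poisson of mean $\log\alpha$ per interval) gives the excess $E_N/K_N\lesssim \log\alpha-(1-\alpha^{-1})\approx x^2/2$, hence $E_N/\log N\lesssim x/2$. The two contributions add to $1+O(x^2)$: the leading-order margin is exactly zero, and the Poisson comparison alone cannot beat $C_2=1$. You acknowledge this, but the claim that ``careful tracking of the $t^B$ factor through the excess'' rescues it is where the actual work lies, and it is not clear how to do this on the $E_N$ side.

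The paper reverses the roles of the two tools. The $X$-coupling (your no-clustering idea, made precise as a coupling of consecutive gap ratios) is used to show $V_N\le \alpha S_N$ asymptotically, so it suffices to improve the bound on $S_N$. The $t^B$ factor is then exploited on $S_N$ directly: conditional on a jump at step $k$, one has $b(t_k)\ge \alpha^k$ with probability bounded below (since the jump overshoots by at least a fresh $\Geom_+(\alpha^{-k})$ variable), and on this event $p_{k+Z}\le (1-\alpha^{-1})t_{k+Z}^{\alpha^k}$ is bounded away from $1-\alpha^{-1}$ uniformly in $\alpha$, for $Z$ the next element of an independent $\Ber(1-\alpha^{-1})$ set. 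This yields $S_N/K_N\le (1-\alpha^{-1})/(1+p_\epsilon)$ with $p_\epsilon>0$ uniform in $\alpha$, and after multiplying by $\alpha$ and sending $\alpha\searrow 1$ one gets $V_N/\log N\le 1/(1+p_\epsilon)<1$.

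A minor point: your last paragraph misplaces the role of \cref{thm:SWFPPN3G}. It is used only at the outset to identify $\cB$ with $\cB'$ so that $\abs{\cB\cap[0,N]}$ can be read off the Busemann jump process; the strict constant $C_2<1$ comes entirely from the $t^{b(t_k)}$ suppression, not from N3G. The weaker $C_2\le 1$ in the remark is a separate elementary argument via record minima of the column-$0$ weights, which bypasses Busemann functions altogether.
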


\begin{remark}
	We can prove \eqref{eq:BranchUpperBound} for general continuous weights if we allow $C_2 = 1$, and in particular we don't need the power of \cref{thm:SWFPPN3G}. Observe that the geodesic $\gamma_{(1, k)}$ will branch off the vertical axis at the height $0 \le j \le k$ such that $\weight{(0, j)}$ is minimised, and so $k$ will fail to be in $\cB$ unless $\weight{(0, k)}$ sets a new minimum for the weights along the vertical axis. The behaviour of such minima is discussed in \cite[Example 2.3.10]{durr}, where it is shown that up to height $N$, there will asymptotically be $\log N$ minima.
\end{remark}

\begin{lemma}
	\label{lem:XCoupling}
	There is a coupling of $\set{b(t)}$ and the process $\set{X(t)}$ defined above, such that if $\cD(b) = (d_0, d_1, \dots)$ and $\cD(X) = (d'_0, d'_1,  \dots)$ are the jump points for the processes, we have 
	\begin{equation}
		\label{eq:XCouplingGaps}
		\frac{1 - d_{k + 1}}{1 - d_k} \le \frac{1 - d'_{k + 1}}{1 - d'_k}
	\end{equation}
	for all $k \ge 0$. That is, on a logarithmic scale, the times $\set{1 - d_k}$ are farther separated than are $\set{1 - d'_k}$.
	\begin{proof}
		Recall that we define $d_0 = d'_0 = 0$, and have $b(0) = X(0) = 0$. For both processes, the first jump is chosen uniformly on $[0, 1]$, so we may let $d_1 = d'_1$. Subsequently, suppose we have sampled the first $k$ points such that \eqref{eq:XCouplingGaps} holds and wish to do the same with the $k + 1$-th. We have 
		\begin{align}
			\Prob*{\frac{1 - d_{k + 1}}{1 - d_k} \ge r} &= \Prob*{d_{k + 1} \le 1 - (1 - d_k)r}\\
			&= (1 - (1 - d_k)r)^{b(d_k)}(1 - r)\\
			&\le 1 - r\\
			&= \Prob*{\frac{1 - d'_{k + 1}}{1 - d'_k} \ge r},	
		\end{align}
		so we may sample $d_{k + 1}$ and $d'_{k + 1}$ such the relation \cref{eq:XCouplingGaps} continues to hold. 	
	\end{proof}
\end{lemma}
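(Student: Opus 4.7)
The plan is to construct the coupling iteratively and verify \eqref{eq:XCouplingGaps} one jump at a time, through a direct stochastic comparison of the conditional distributions of the consecutive jump ratios. First I would dispatch the base case: since $b(0) = 0$, substituting $b(d_0) = 0$ into \eqref{eq:BranchJumpDist} collapses the law of $d_1$ to the uniform distribution on $[0, 1]$, matching the law of $d'_1$. So I can take $d_1 = d'_1$ outright.

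For the inductive step, assume $(d_i, d'_i)_{i \le k}$ have been sampled with the marginals of $\set{b(t)}$ and $\set{X(t)}$, respectively, and with the inequality \eqref{eq:XCouplingGaps} holding. Conditional on the past through $d_k$, I want to sample the next pair consistently with these marginals while enforcing the required inequality almost surely. A direct substitution in \eqref{eq:BranchJumpDist}, using that $d_{k+1} \le 1 - (1 - d_k) r$ is equivalent to $(1 - d_{k+1})/(1 - d_k) \ge r$, yields for $r \in [0, 1]$
\begin{equation}
	\Prob*{\frac{1 - d_{k+1}}{1 - d_k} \ge r \given d_0, \dots, d_k} = (1 - (1 - d_k) r)^{b(d_k)} (1 - r).
\end{equation}
On the other hand, $d'_{k+1}$ is conditionally uniform on $[d'_k, 1)$, so a one-line computation gives $\Prob*{(1 - d'_{k+1})/(1 - d'_k) \ge r} = 1 - r$.

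Since $b(d_k) \ge 0$ and $1 - (1 - d_k) r \in [0, 1]$, the $b$-side probability is bounded above by the $X$-side probability. Equivalently, $(1 - d_{k+1})/(1 - d_k)$ is stochastically dominated by $(1 - d'_{k+1})/(1 - d'_k)$. A standard monotone coupling --- e.g.\ feeding both quantile functions a single auxiliary $\Unif[0, 1]$ --- then lets me realise the pair with the a.s.\ inequality in place, and iterating gives the coupling on the whole sequence of jump points. I don't see a substantive obstacle here: the argument reduces to a one-line stochastic comparison, with the only care needed being to keep straight which normalised-gap distribution is which.
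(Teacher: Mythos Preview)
Your argument is correct and matches the paper's proof essentially line for line: both establish the base case by noting $b(0)=0$ forces $d_1$ uniform, then compare the conditional tail $\Prob{(1-d_{k+1})/(1-d_k)\ge r}=(1-(1-d_k)r)^{b(d_k)}(1-r)\le 1-r$ to invoke a monotone coupling. Your write-up is slightly more explicit about the quantile-coupling mechanism, but there is no substantive difference.
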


\begin{lemma}
	\label{lem:DiscreteResolution}
	With $t_k = 1 - \alpha^{-k}$ as above, let $D_k = \set{\cD(b) \cap [t_k, t_{k + 1}] \ne \emptyset} = \set{b(t_k) \ne b(t_{k + 1})}$. Then we have 
	\begin{equation}
		\label{eq:DiscreteResolution}
		\liminf_{k \to \infty}\frac{\sum_{l = 0}^k \bone_{D_l}}{\abs{\cD(b) \cap [0, 1 - t_k]}} \ge \frac{1}{\alpha}.
	\end{equation}
\end{lemma}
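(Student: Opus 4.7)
My plan is to transfer a separation estimate from the reference process $\set{X(t)}$ to the branch process $\set{b(t)}$ via the coupling of \cref{lem:XCoupling}, and then apply the strong law of large numbers. The key observation is that on the logarithmic scale $s = -\log(1 - t)$, the discretisation points $t_l = 1 - \alpha^{-l}$ are evenly spaced with gap $\log \alpha$, while on this same scale the jump times of the reference process $X$ form a rate-$1$ Poisson process.

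First I would import the coupling of \cref{lem:XCoupling} and define, for each $j \ge 1$, the log-gap variables
\eq{
G_j = \log\frac{1 - d_{j-1}}{1 - d_j}, \qquad G'_j = \log\frac{1 - d'_{j-1}}{1 - d'_j}.
}
The coupling gives $G_j \ge G'_j$ for all $j$, and the $G'_j$ are i.i.d.\ $\Exp(1)$ (this is the log-scale representation of $\set{X(t)}$ as a unit-rate Poisson process). The next step is the deterministic observation that if two consecutive $b$-jumps $d_{j-1}, d_j$ fall in the same discretisation interval $[t_l, t_{l+1}]$, then necessarily $G_j \le \log \alpha$, and hence by the coupling $G'_j \le \log \alpha$ as well. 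Writing $N_k$ for the denominator (the number of $b$-jumps in $[0, t_{k+1}]$, which I will treat as the intended reading of the index range) and $M_k = \sum_{l=0}^k \bone_{D_l}$, each non-empty discretisation interval containing $m \ge 1$ jumps contributes $m - 1$ "redundant" jumps to $N_k - M_k$, each of which carries its own $G_j \le \log \alpha$. Therefore
\eq{
N_k - M_k \;\le\; \#\set{1 \le j \le N_k : G'_j \le \log \alpha}.
}

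To finish, I need $N_k \to \infty$ almost surely: since $b$ is non-decreasing and $b(t) \sim \Geom_0(1 - t)$ has $\Ex{b(t)} = t/(1 - t) \to \infty$, monotone convergence gives $b(t) \to \infty$ a.s., which forces infinitely many jumps on $[0, 1)$. The strong law of large numbers applied to the i.i.d.\ $\Exp(1)$ sequence $\set{G'_j}$ gives
\eq{
\frac{1}{n}\sum_{j = 1}^n \bone\set{G'_j \le \log \alpha} \longrightarrow 1 - \alpha^{-1} \as,
}
and combining this with $N_k \to \infty$ yields $\limsup_k (N_k - M_k)/N_k \le 1 - \alpha^{-1}$, which is the desired conclusion. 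The only mild obstacle is checking that the sequential nature of the coupling in \cref{lem:XCoupling} indeed produces the pointwise bound $G_j \ge G'_j$ simultaneously for all $j$ — this is immediate from the inductive construction in that lemma, since at each stage one conditions on the common past and shows that the ratio for $b$ is stochastically smaller. Everything else is an off-the-shelf application of the SLLN.
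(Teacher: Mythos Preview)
Your proposal is correct and follows essentially the same route as the paper: both arguments pass to the logarithmic scale, invoke the coupling of \cref{lem:XCoupling} to compare the log-gaps $G_j \ge G'_j$, observe that the $G'_j$ are i.i.d.\ $\Exp(1)$, and then apply the strong law of large numbers. The only cosmetic difference is that the paper phrases the counting as ``a jump lands in a fresh interval whenever the ratio $(1-d'_{k+1})/(1-d'_k) \le \alpha^{-1}$, which happens with probability $\alpha^{-1}$ independently,'' whereas you take the complementary view that a redundant jump forces $G'_j \le \log\alpha$; these are contrapositives of one another. Your write-up is in fact slightly more careful than the paper's (you spell out why $N_k \to \infty$ and why each redundant jump $j$ must have $d_{j-1}$ in the same interval), and you correctly flag the apparent misprint in the denominator $[0,\,1-t_k]$, which should read $[0,\,t_{k+1}]$ for the argument to make sense.
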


That is, in lieu of counting the actual number of zeroes up to $t_{k + 1}$, we may count only the number of times that $b(t_l) \ne b(t_{l + 1})$, $0 \le l \le k$, and in doing so we miss at most a portion of $\alpha^{-1}$ of the total.

\begin{proof}
	Consider the jumps of the process $\set{X(t)}$, which we again write as $\cD(X) = (d'_0, d'_1,  \dots)$. Suppose $d'_k$ lies in $[t_l, t_{l + 1}]$. Conditional on $d'_k$, $1 - d'_{k + 1}$ is distributed uniformly in $[0, 1 - d'_k]$. It will certainly lie beyond $t_{l + 1}$ if 
	\begin{equation}
		\label{eq:DiscreteResolutionRatio}
		\frac{1 - d'_{k + 1}}{1 - d'_k} \le \frac{1}{\alpha}. 
	\end{equation}
	This happens with probability $\alpha^{-1}$, and the ratios in \eqref{eq:DiscreteResolutionRatio} are independent across different values of $k$. This implies the statement of the lemma for the points $d'_k$.
	
	As before, write $\cD(b) = (d_0, d_1, \dots)$ for the jumps of $\set{b(t)}$. The coupling from \cref{lem:XCoupling} gives us  
	\begin{equation}
		\frac{1 - d'_{k + 1}}{1 - d'_k} \ge \frac{1}{\alpha} \Rightarrow \frac{1 - d_{k + 1}}{1 - d_k} \ge \frac{1}{\alpha}.	
	\end{equation}
	We see then that the statement must also hold for the $d_k$.
\end{proof}

At this point, we may notice that since $\Prob{b(t_{k + 1}) \ne b(t_k)} = (1 - \alpha^{-1})t_{k + 1}^{b(t_k)} \le (1 - \alpha^{-1})$, and since these jumps capture at least a portion of $\alpha^{-1}$ of the total, we have found the following asymptotic bound for the number of jumps up until time $t$:
\begin{equation}
	\alpha \cdot (1 - \alpha^{-1}) \cdot \log_\alpha (1 - t) = \frac{(\alpha - 1)\log (1 - t)}{\log \alpha}
\end{equation}
Taking $\alpha \searrow 1$ brings this upper bound to simply $\log (1 - t)$. Along with \cref{lem:HittingTimeAsympts}, would give the desired upper bound in \cref{prop:BranchUpperBound} with constant $C_2 = 1$. That we can take $C < 1$ means that the factor of $t_{k + 1}^{b(t_k)}$ which we discarded does play a role.

\begin{lemma}
	\label{lem:bUnder}
	For $\epsilon > 0$ arbitrary and $k$ large enough, we have that 
	\begin{equation}
		\label{eq:bUnder}
		\Prob{b(t_k) \ge \alpha^{k} \given b(t_k) \ne b(t_{k - 1})} \ge e^{-(1 + \epsilon)}.	
	\end{equation}
	\begin{proof}
		Recall our description of the jump from $b(t_{k - 1})$ to $b(t_k)$ from the proof of \cref{lem:bOver}. While waiting for the first output of the relevant store, we must at least wait for the first non-zero input from the sequence $I^k$. This is a $\Geom_+(\alpha^k)$ random variable which lower bounds $b(t_k)$, from which \eqref{eq:bUnder} is immediate.
	\end{proof}
\end{lemma}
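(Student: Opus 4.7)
The plan is to exploit the explicit waiting-time description of a jump of $b$ already set up in the proof of \cref{lem:bOver}. There, conditional on $b(t_k) \ne b(t_{k - 1})$, the new value is the time of the first output of the store with input $I^k$ and service $\widetilde{I}^{k - 1}$; this output certainly cannot occur before the first subsequent non-zero entry of $I^k$, so
\[
b(t_k) \ge y_{k,k} = b(t_{k - 1}) + n_k\brac[\big]{b(t_{k - 1})} \ge n_k\brac[\big]{b(t_{k - 1})},
\]
where $n_k(b) = \min\set{n \ge 0 : I^k_{n + b} \ne 0}$. Since the entries of $I^k$ are i.i.d.\ Bernoulli-Exponential with non-zero probability $\alpha(\xi_k) = 1 - t_k = \alpha^{-k}$, the variable $n_k(b)$ is geometric with success probability $\alpha^{-k}$, so that $\Prob{n_k(b) \ge m} = (1 - \alpha^{-k})^m$ deterministically in $b$.

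Next I would pass from the unconditional to the conditional statement. The jump event $\set{b(t_k) \ne b(t_{k - 1})}$ is measurable with respect to $b(t_{k - 1})$, the state of the $I^k$-store up to and including time $b(t_{k - 1})$, and the multi-line construction below level $k$; in particular it depends on the sequence $I^k$ only through $(I^k_j)_{j \le b(t_{k - 1})}$. On the other hand, the event $\set{n_k(b(t_{k - 1})) \ge \ceil{\alpha^k}}$ depends on $(I^k_j)_{j > b(t_{k - 1})}$. Thus, conditionally on $b(t_{k - 1})$, the jump event and $\set{n_k(b(t_{k - 1})) \ge \ceil{\alpha^k}}$ are independent, and we may just drop the conditioning:
\[
\Prob{b(t_k) \ge \alpha^k \given b(t_k) \ne b(t_{k - 1})} \ge (1 - \alpha^{-k})^{\ceil{\alpha^k} - 1}.
\]
The right-hand side converges to $e^{-1}$ as $k \to \infty$, which exceeds $e^{-(1 + \epsilon)}$ for all $k$ sufficiently large, proving the claim.

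The only real subtlety is the independence step in the second paragraph: one has to verify that the conditioning only restricts the initial segment $(I^k_j)_{j \le b(t_{k - 1})}$ of the input sequence (forcing, in particular, $I^k_{b(t_{k - 1})} = 0$) and does not bias the subsequent i.i.d.\ tail $(I^k_j)_{j > b(t_{k - 1})}$ used to realise $n_k(b(t_{k - 1}))$. This is essentially a bookkeeping exercise within the construction of $\widetilde{I}^k$, after which the rest is just the classical limit $(1 - 1/n)^n \to e^{-1}$.
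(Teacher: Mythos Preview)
Your argument is correct and is essentially the same as the paper's, just with the independence step spelled out explicitly where the paper merely says the bound is ``immediate.'' One small point of precision: the event $\set{n_k(b(t_{k-1})) \ge \ceil{\alpha^k}}$ does involve $I^k_{b(t_{k-1})}$ (not only indices strictly above $b(t_{k-1})$), but since both this event and the jump event force $I^k_{b(t_{k-1})} = 0$, the overlap is harmless and your conditional computation with the exponent $\ceil{\alpha^k}-1$ is exactly right.
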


\begin{proof}[Proof of \cref{prop:BranchUpperBound}]
	Let $D = \set{k \ge 0 : b(t_{k}) \ne b(t_{k - 1})}$ be those indices where we see jumps in $\set{b(t)}$. Consider also $D_{\mathrm{ind}}$, where each $k \ge 0$ is a member independently with probability $1 - \alpha^{-1}$. We couple $D$ and $D_{\mathrm{ind}}$ in an obvious way. Take an i.i.d collection $\set{U_k : k \ge 1}$ of $\Unif [0, 1]$ random variables and put $0$ in both sets. Assume that we have determined the membership for all $j \le k$, and moreover we have a sample of $\set{b(t) : t \le t_k}$ such that this process has jumps in the intervals indicated by $D$.  We then add $k + 1$ to the sets according to the following rule:
	\begin{align}
		\label{eq:DSamplingRule}
		U_{k + 1} \le 1 - \alpha^{-1} &\Rightarrow k + 1 \in D_{\mathrm{ind}},\\
		U_{k + 1} \le (1 - \alpha^{-1})t_{k + 1}^{b(t_k)} &\Rightarrow k + 1 \in D.
	\end{align}
	
	We will try to show that for the correct choice of $\alpha > 1$, we have
	\begin{equation}
		\label{eq:DUpperDensity}
		\limsup_{k \to \infty} \frac{\abs{D \cap [0, k]}}{k} \le C_2 \log \alpha,
	\end{equation}
	with $C_2 < 1$. We will compare it to $D_{\mathrm{ind}}$, for which we have trivially that 
	\begin{equation}
		\label{eq:DIndepDensity}
		\lim_{k \to \infty} \frac{\abs{D_{\mathrm{ind}} \cap [0, k]}}{k} = 1 - \alpha^{-1} = \log \alpha + o(\alpha - 1).	
	\end{equation}
	Thus if we can show that a positive proportion of elements of $D_{\mathrm{ind}}$ fail to lie in $D$, then \eqref{eq:DUpperDensity} will follow from taking $\alpha \approx 1$ in  \eqref{eq:DIndepDensity}.
	
	Rearranging \eqref{eq:DSamplingRule} gives
	\begin{equation}
		\Prob{k + 1 \in D \given k + 1 \in D_{\mathrm{ind}}} = t_{k + 1}^{b(t_{k})},
	\end{equation} 
	and also that conditional on the value of $b(t_{k})$, this event is independent of $\set{b(t): t < t_k}$. Suppose $k \in D$. By \cref{lem:bUnder},
	\begin{equation}
		\label{eq:ProbbBig}
		\Prob{b(t_k) \ge \alpha^{k}} \ge e^{-(1 + \epsilon)},	
	\end{equation}
	for $\epsilon > 0$ arbitrary and $k$ large enough. Let $Z = \min\set{l \ge 1 : k + l \in D_{\mathrm{ind}}}$, the gap until the next element of $D_{\mathrm{ind}}$. Then $Z \sim \Geom_+ (1 - \alpha^{-1})$, independent of everything else. For $\alpha$ close enough to $1$, $\Prob{Z \le \alpha / (\alpha - 1)} \ge 1 - e^{-(1 - \epsilon)}$. Using \eqref{eq:ProbbBig}, 
	\begin{align}
		\Prob{k + Z \not\in D \given k \in D} &\ge \Prob{b(t_k) \ge \alpha^k}\Prob{Z \le \alpha / (\alpha - 1)}(1 - t_{k + (\alpha / (\alpha - 1))}^{\alpha^k})\\
		&\ge e^{-(1 + \epsilon)}(1 - e^{-(1 - \epsilon)})(1 - e^{-(1 - \epsilon)\alpha^{-\alpha / (\alpha - 1)}}).	
	\end{align}
	With some calculus we may get the lower bound
	\begin{equation}
		\Prob{k + Z \not\in D \given k \in D} \ge e^{-(1 + \epsilon)}(1 - e^{-(1 - \epsilon)})(1 - e^{-(1 - \epsilon)/ 2 e}).	
	\end{equation}
	Write $p_\epsilon$ for the quantity on the right hand side. In sampling $D$, we can begin from $D_{\mathrm{ind}}$ and go through the points in increasing order, performing the test in \eqref{eq:DSamplingRule} to determine membership of $D$. If we have a point in $D$, then the subsequent point in $D_{\mathrm{ind}}$ is discarded from $D$ with probability at least $p_\epsilon$. Thus
	\begin{equation}
		\limsup_{k \to \infty}\frac{\abs{D \cap [0, k]}}{\abs{D_{\mathrm{ind}} \cap [0, k]}} \le \frac{1}{1 + p_\epsilon}.	
	\end{equation}
	Hence 
	\begin{equation}
		\label{eq:DDensityUpper}
		\limsup_{k \to \infty}\frac{\abs{D \cap [0, k]}}{k} \le \frac{1}{1 + p_\epsilon} \lim_{k \to \infty}\frac{\abs{D_{\mathrm{ind}} \cap [0, k]}}{k} = \frac{1 - \alpha^{-1}}{1 + p_\epsilon}.	
	\end{equation}
	Since $p_\epsilon > 0$ was uniform in $\alpha$, we can make $\alpha$ small enough so 
	\begin{equation}
		\label{eq:DLogApprox}
		\frac{1 - \alpha^{-1}}{1 + p_\epsilon} \le C_2 \alpha^{-1} \log \alpha
	\end{equation} holds with $C_2 < 1$.
	
	We are ready to conclude. Suppose $t_{K_N} \le T_N < t_{K_N + 1}$, where $T_N$ is the hitting time from \cref{lem:HittingTimeAsympts}. From the various definitions and \cref{lem:DiscreteResolution}, one has
	\begin{equation}
		\frac{\abs{\cB \cap [0, N]}}{\log N} = \frac{\abs[\big]{\cD(b) \cap [0, T_N]}}{-\log(1 - T_N)} \cdot \frac{-\log(1 - T_N)}{\log N} \le \frac{\alpha \abs{D \cap [0, K_N + 1]}}{K_N \log \alpha} \cdot  \frac{-\log(1 - T_N)}{\log N},
	\end{equation}
	and so
	\begin{equation}
		\limsup_{N \to \infty} \frac{\abs[\big]{\cB \cap [0, N]}}{\log N} \le \limsup_{N \to \infty} \frac{\alpha \abs[\big]{D \cap [0, K_N + 1]}}{K_N \log \alpha} \cdot \frac{-\log(1 - T_N)}{\log N} \le C_2 \cdot 1.
	\end{equation}
	Here we used \eqref{eq:DUpperDensity}, \eqref{eq:HittingTimeAsympt}, \eqref{eq:DDensityUpper} and \eqref{eq:DLogApprox}.
\end{proof}

\subsection{Highways passing through a column}
\label{ssec:HighwaysThroughBox}
The infimum in \eqref{eq:HighwaysRect} can be done directly without knowledge of the Busemann process, and indeed for any continuously distributed weights.

\begin{proposition}
	Under continuously distributed weights and for any $y \in \bZ^2_{\ge 0}$,
	\begin{equation}
		\label{eq:HighwaysLower}
		\liminf_{k \to \infty}\frac{\abs{[y, y + k e_2] \cap \Gamma_\infty}}{k} = 0 \as. 	
	\end{equation}
	\begin{proof}
		Without loss of generality we may assume $y = l e_1$. Fix $m \ge 1$. We describe a sequence of events, happening infinitely often almost surely, on which the ratio in \eqref{eq:HighwaysLower} is bounded by $1 / m$. Taking $m \to \infty$ gives the result. 
		
		Let $A_n$ be the event on which
		\begin{enumerate}
			\item $\argmin_{0 \le j \le l m n} W_{i, j} \in [i n, (i + 1) n)$ for all $0 \le i \le l - 1$,
			\item $\argmin_{0 \le j \le 2 l m n} W_{i, j} \in [(l + j) m n, (l + j + 1) m n)$ for all $0 \le i \le l - 1$,
			\item $\argmin_{0 \le j \le 2 l m n} W_{l, j} \in [l n, (l + 1) n)$.
		\end{enumerate}
		The first condition ensures that $\gamma{(l, lmn)}$ reaches column $l$ at height below $l n$. The second means that $\gamma{(l, 2 l m n)}$ initially follows the $e_2$-axis up to height at least $l m n$. The third says that $\gamma{(l + 1, 2 l m n)}$ cannot overlap with $\gamma{(l, lmn)}$ in column $l$ beyond height $(l + 1)n$. Taken together, these imply that $\gamma{(l, l m n)}$ is ``boxed in'' by  $\gamma{(l, 2 l m n)}$ and $\gamma{(l + 1, 2 l m n)}$. In particular, the points $(l, k)$ which have $(l + 1) n \le k \le 2 l m n$ belong to $\gamma(l, l m n)$ but do not form part of a semi-infinite geodesic. The situation is illustrated in \cref{fig:BoxedInEvent}. On the event $A_n$, we have 
		\begin{equation}
			\label{eq:HighwayLowerEvent}
			\frac{\abs{[y, y + 2 l m n e_2] \cap \Gamma_\infty}}{2 l m n} \le \frac{(l + 1) n}{2 l m n} \le \frac{1}{m}. 	
		\end{equation}
		
		\begin{figure}
			\centering
			\resizebox{0.35\textwidth}{!}{\includegraphics[scale=1, page=6]{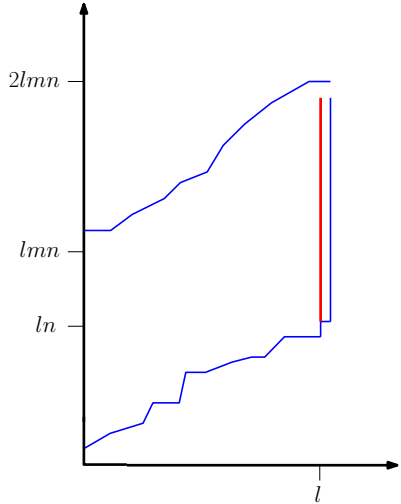}}
			\caption{An event guaranteeing a small proportion of highways in column $l$. All of the paths represent geodesics. Here the vertices along the red path are hemmed in above and below by more attractive vertices, and hence cannot be part of a semi-infinite geodesic.}
			\label{fig:BoxedInEvent}
		\end{figure}
		
		The probability of $A_n$ can be calculated easily when we recall that the position of the minimum among a finite collection of independent continuous random variables is distributed uniformly among the possible indices. So 
		\begin{equation}
			\label{eq:HighwaysLowerProb}
			\Prob{A_n} = \brac*{\frac{n}{1 + l m n}}^{l}\brac*{\frac{m n}{1 + 2 l m n}}^{l} \frac{n}{1 + 2 l m n} \ge (2 l m)^{-l} (3 l)^{-l} (3 l m)^{-1}.
		\end{equation}
		The lower bound is uniform in $n$.
		
		Another useful property of the positions of minima is that when $J \subseteq I$ are sets of indices and $\set{X_i : i \in I}$ are i.i.d, the event $\set{\argmin_{i \in I} X_i \in J}$ is independent of $\argmin_{i \in J} X_i$. This implies that $A_{n_1}$ is independent of $A_{n_2}$ whenever $2 l m n_1 < n_2$, or vice versa. Now we note that $\set{A_{(4 l m)^n} : n \ge 1}$ are mutually independent. Thus through the second Borel-Cantelli lemma and the lower bound in \eqref{eq:HighwaysLowerProb}, we have \eqref{eq:HighwayLowerEvent} infinitely often, almost surely. Then 
		\begin{equation}
			\liminf_{k \to \infty}\frac{\abs{[y, y + k e_2] \cap \Gamma_\infty}}{k} \le \frac{1}{m},	
		\end{equation}
		and we conclude by sending $m \to \infty$.
	\end{proof}
\end{proposition}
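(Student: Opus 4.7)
The plan is to show that for every integer $m \ge 1$, the density in \eqref{eq:HighwaysLower} falls below $1/m$ along some infinite subsequence of $k$ almost surely; sending $m \to \infty$ then drives the liminf to zero. By translation invariance we may assume $y = l\, e_1$.

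Fix $m$ and construct, for each $n$, an event $A_n$ that prescribes the positions of argmins of the horizontal edge weights in order to force a trapping configuration of geodesics. Specifically, I would require: (i) for each column $i \in \set{0, \ldots, l-1}$, the argmin of $W_{i,\cdot}$ over heights $[0, lmn]$ lies in the low band $[in, (i+1)n)$; (ii) for each such $i$, the argmin of $W_{i,\cdot}$ over the larger window $[0, 2lmn]$ lies in the much higher band $[(l+i)mn, (l+i+1)mn)$; and (iii) in column $l$, the argmin of $W_{l,\cdot}$ over $[0, 2lmn]$ lies in $[ln, (l+1)n)$. Since the argmin of $N+1$ i.i.d.\ continuous samples is uniform on $\set{0,\ldots, N}$ and positions in different columns are independent, $\Prob{A_n}$ factors into $2l+1$ uniform probabilities and is bounded below by a constant $c(l,m) > 0$ uniformly in $n$. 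Along a geometric subsequence $n_k = (4lm)^k$ the windows separate sufficiently that positions of argmins on disjoint index sets make $A_{n_k}$ mutually independent, so the second Borel--Cantelli lemma will deliver the conclusion once we verify the density bound on $A_n$.

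On $A_n$ the three finite geodesics $\gamma_{(l, lmn)}$, $\gamma_{(l, 2lmn)}$, and $\gamma_{(l+1, 2lmn)}$ are forced into a boxing configuration. Condition (i) makes $\gamma_{(l, lmn)}$ take each of its horizontal steps at the column argmin (the prescribed heights are strictly increasing in column index, so the ordering constraint is automatically satisfied), so it enters column $l$ below height $ln$ and climbs to $(l, lmn)$. Condition (ii) forces $\gamma_{(l, 2lmn)}$ to hug the $e_2$-axis up to height $\ge lmn$ before making any right move, so it enters column $l$ only above height $(2l-1)mn$. Condition (iii) forces $\gamma_{(l+1, 2lmn)}$ to exit column $l$ by some height below $(l+1)n$. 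Consequently every vertex $(l, k)$ with $(l+1)n \le k \le 2lmn$ lies on $\gamma_{(l, lmn)}$ but cannot be the root of any infinite ray in $\Gamma$: a rightward continuation from $(l, k)$ at any height $\ge k$ would, by planarity and the a.s.\ uniqueness of right-most geodesics, force a crossing with $\gamma_{(l+1, 2lmn)}$; and remaining in column $l$ indefinitely would have to coalesce with the incoming segment of $\gamma_{(l, 2lmn)}$ above height $(2l-1)mn$, contradicting that $\gamma_{(l, 2lmn)}$ enters column $l$ from column $l-1$. This gives $\abs{[y, y + 2lmn\, e_2] \cap \Gamma_\infty} \le (l+1)n$, hence density at most $1/m$.

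The main obstacle will be making the planar trapping argument of the previous paragraph fully rigorous. That $\Gamma$ is a genuine tree (uniqueness of geodesics under continuous weights plus non-remerging of branches) is standard, but excluding every possible infinite continuation through a trapped vertex requires a careful case split --- rightward exits at each height $h \ge k$, and purely vertical continuations --- each handled by comparison with one of the three flanking geodesics. Once the trapping is in place, the remaining ingredients are elementary: computing $\Prob{A_n}$ from the uniform distribution of argmin positions, verifying independence of $A_{n_k}$ across well-separated scales, and concluding via Borel--Cantelli and the arbitrariness of $m$.
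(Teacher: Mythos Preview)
Your proposal is correct and follows essentially the same approach as the paper's proof: the same reduction to $y = l e_1$, the same three argmin conditions defining $A_n$, the same uniform-argmin probability estimate, the same geometric subsequence $(4lm)^k$ for independence, and the same Borel--Cantelli conclusion. The paper handles the trapping argument you flag as the main obstacle at roughly the same level of informality, relying on a figure and the phrase ``boxed in'' rather than a detailed case analysis.
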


The proof in the other direction is of a similar flavour, but the events won't be so nice as to be independent. Here it is useful to have knowledge of the joint distributions of the Busemann functions to control the correlations. 

\begin{proposition}
	\label{prop:HighwaysUpper}
	Under exponential weights and for any $y \in \bZ^2_{\ge 0}$,
	\begin{equation}
		\label{eq:HighwaysUpper}
		\limsup_{k \to \infty}\frac{\abs{[y, y + k e_2] \cap \Gamma_\infty}}{k} = 1 \as. 
	\end{equation}
	\begin{proof}
		Recall our procedure for producing semi-infinite geodesics from the Busemann functions $\set{B^\xi (x, y)}$, described in \cref{rem:GeodesicConstruction}. Starting at $0$, the Busemann geodesic in direction $\xi \in U$ will leave the $e_2$-axis at height $k_1 = \min \set{k \ge 0 : B^\xi (k e_2, (k + 1)e_2) \ne 0}$. Subsequently, it will leave the $l$ column at height
		\begin{equation}
			k_l = \min \set{k \ge k_{l - 1} : B^\xi (l e_1 + k e_2, l e_1 + (k + 1)e_2) \ne 0}.    
		\end{equation}
		
		Once again we may assume that $y = l e_1$ and fix $m \ge 1$. We look at events $A'_n$, under which a semi-infinite geodesic spends a relatively long time in the $l$ column, and where the ratio in \eqref{eq:HighwaysUpper} is at least $(m - 1) / m$. Namely, let $\xi_n = \xi(l m n)$ (that is, the direction in which $\Prob{B^{\xi_n}(0, e_2) \ne 0} = (1 + l m n)^{-1}$), and let $A'_n$ be the event that 
		\begin{enumerate}
			\item There is $k'_l \in (l m n, 2 l m n]$ such that $B_2^{\xi_n} (l e_1 + k'_l e_2) \ge (l m n)^{-1}$, and $B_2^{\xi_n} (l e_1 + k e_2) = 0$ for all other $k \in [0, 2 l m n)$, $k \ne k'_l$.
			\item For $0 \le j \le l - 1$, there is $k'_j \in [j n, (j + 1) n)$ such that $W_{j, k'_j} \le (2 l^2 m n)^{-1}$, and $W_{j, k} \ge 2 (l m n)^{-1}$ for all other $k \in [0, 2 l m n)$, $k \ne k'_j$.
		\end{enumerate}
		Recall that the Busemann functions along the $j$ column may be found through the vertical update map $V$ by
		\begin{equation}
			\set{B_2^{\xi}(j e_1 - k e_2) : k \in \bZ}_{k \in \bZ} = V \brac*{\set{B_2^{\xi}((j + 1) e_1 - k e_2)}_{k \in \bZ}, (W_{j e_1 - k e_2})_{k \in \bZ}}.	
		\end{equation} 
		
		On event $A'_n$, the stores represented by $\set{B_2^{\xi_n}(j e_1 - k e_2)}_{k \in \bZ}$ are initially empty between $k = 0$ and $k = k'_l - 1$, with a mass at $k = k'_l$. In the first update, which produces the Busemann functions on the $l - 1$ column, this mass is flushed through the tandem by large services $W_{j, k}$, except for a bottleneck at $k = k'_{l - 1}$, where at least $(2 l - 1) / (2 l^2 m n)$ is left in the store here. The stores for $0 \le k \le k'_{l - 1} - 1$ end up empty. This process continues, with the stores in column $j$ empty for $0 \le k \le k'_{j} - 1$, and a mass of at least $(l + j) / (2 l^2 m n) > 0$ at $k = k'_j$. If we then carry out the procedure for finding the semi-infinite geodesic from the origin in direction $\xi_n$, we see that this geodesic will leave column $j$ at height $k'_j$, $0 \le j \le l$. Thus we have
		\begin{equation}
			\label{eq:HighwaysUpperEvent}
			\frac{\abs[\big]{[y, y + (l m n) e_2] \cap \Gamma_\infty}}{l m n} \ge \frac{l (m - 1) n}{l m n} = \frac{m - 1}{m}.	
		\end{equation}
		
		The remainder of the proof is to compute the probabilities and pairwise correlations of the $A'_n$, to verify that they are large enough and small enough, respectively, to ensure that we see \eqref{eq:HighwaysUpperEvent} infinitely often. We have chosen $\xi_n$ so that $B_2^{\xi_n}(0) \sim \Ber((1 + l m n)^{-1})\Exp(l m n)$, so 
		\begin{equation}
			\Prob{A'_n} = (l m n) \frac{e^{- l m n / (l^2 m n)}}{1 + l m n} \brac*{1 - \frac{1}{1 + l m n}}^{2 l m n - 1} \brac*{n(1 - e^{- 1 / (2 l^2 m n)})}^{l} (e^{- 2 / (l m n)})^{(2 l m n - 1) l}.	
		\end{equation}
		We can approximate each factor when $n$ is large and we arrive at 
		\begin{equation}
			\label{eq:AprimeApprox}
			\Prob{A'_n} = (2 l^2 m)^{-l}e^{-2 -4 l - 1 / l} + O(n^{-1}),	
		\end{equation}
		
		Now take $n_1,\, n_2$, with $n_2 \gg n_1$ (it will be enough that $n_2 > 4 l m n_1$). From \eqref{eq:AprimeApprox} we can estimate
		\begin{equation}
			\Prob{A'_{n_1}} \Prob{A'_{n_2}} = (2 l^2 m)^{-2 l}e^{-4 - 8l - 2 / l} + O(n_1^{-1} + n_2^{-1}).
		\end{equation}
		We would like to estimate $\Prob{A'_{n_1} \cap A'_{n_2}}$ and show that it cannot be much larger. Decompose $A'_n$ into independent events $A^{(1)}_n \cap A^{(2)}_n$, where $A^{(1)}_n$ is just condition 1 defining $A'_n$, and $A^{(2)}_n$ is just condition 2. Now 
		\begin{equation}
			\Prob{A'_{n_1} \cap A'_{n_2}} = \Prob{A^{(1)}_{n_1} \cap A^{(1)}_{n_2}} \Prob{A^{(2)}_{n_1} \cap A^{(2)}_{n_2}}.	
		\end{equation}
		In the first, we may infer from \cref{thm:MultiStat} that $\set{B^{\xi_{n_1}}_2(y + k e_2) : 0 \le k \le 2 l m n_1}$ and $\set{B^{\xi_{n_2}}(y + k e_2)_2 : k > 2 l m n_1}$ are independent. The event $A^{(1)}_{n_2}$ demands that $B^{\xi_{n_2}}_2(y + k e_2) = 0$ for all $0 \le k \le l m n_2$, which includes $2 l m n_1 + 1 \le k \le l m n_2$. Then we can use independence to bound 
		\begin{align}
			\Prob{A^{(1)}_{n_1} \cap A^{(1)}_{n_2}} &\le (l m n_1) \frac{e^{- l}}{1 + l m n_1} \brac*{1 - \frac{1}{1 + l m n_1}}^{2 l m n_1 - 1} (l m n_2) \frac{e^{- l}}{1 + l m n_2} \brac*{1 - \frac{1}{1 + l m n_2}}^{2 l m (n_2 - n_1) - 2}\\
											&= e^{-2 l}\frac{l m n_1}{1 + l m n_1}\frac{l m n_2}{1 + l m n_2}\brac*{1 - \frac{1}{1 + l m n_1}}^{2 l m n_1 - 1}\brac*{1 - \frac{1}{1 + l m n_2}}^{2 l m (n_2 - n_1) - 2}\\
											&= e^{- (2 + 2(n_2 - n_1)/n_2 +  2 l)} + O(n_1^{-1} + n_2^{-1}).	
		\end{align}
		
		The second pair is also fairly easy and the calculation may be carried out directly:
		\begin{align}
			\Prob{A^{(2)}_{n_1} \cap A^{(2)}_{n_2}} &= \brac*{n_2(1 - e^{- 1 / (2 l^2 m n_2)})}^{l} (e^{- 2 / (l m n_2)})^{2 l m (n_2 - n_1) l} \cdot\\
			&\qquad \cdot \brac*{n_1(e^{- 2 / (l m n_2)} - e^{- 1 / (2 l^2 m n_1)})}^{l} (e^{- 2 / (l m n_1)})^{(2 l m n_1) l}\\
			&= (2 l^2 m)^{-2l}e^{-4 l (1 + (n_2 - n_1) / n_2)} \brac*{\frac{1}{2 l^2 m} - \frac{n_1}{n_2}\frac{2}{2 l^2 m}}^{l} + O(n_1^{-1}n_2^{-1})\\
			&= (2 l^2 m)^{-2l} e^{-4 l (1 + (n_2 - n_1) / n_2)} \brac*{1 - \frac{2 n_1}{n_2}}^{l} + O(n_1^{-1} + n_2^{-1}).
		\end{align}
		So then our bound on the overlap is
		\begin{align}
			\Prob{A'_{n_1} \cap A'_{n_2}} \le  e^{- 2(1 + (n_2 - n_1)/n_2) - 4l (1 + (n_2 - n_1)/n_2) - 2 / l)}\brac*{1 - \frac{2 n_1}{n_2}}^{l} + O(n_1^{-1} + n_2^{-1}).
		\end{align}
		The bound on the correlation is the rather messy
		\begin{align}
			\Cov{A'_{n_1}, A'_{n_2}} &= \Prob{A'_{n_1} \cap A'_{n_2}} - \Prob{A'_{n_1}}\Prob{A'_{n_2}}\\
			&= (2 l^2 m)^{-2 l}e^{-4 - 8l - 2 / l} \brac*{1 - e^{2(1 - (n_2 - n_1)/n_2) + 4l (1 - (n_2 - n_1)/n_2)}\brac*{1 - \frac{2 n_1}{n_2}}^{l}} + \notag\\
			&\qquad + O(n_1^{-1} + n_2^{-1}). \label{eq:CovBound}
		\end{align}
				
		Choose $n_r = (4 l m)^r$. Then using our bound in \eqref{eq:CovBound}, one finds that
		\begin{equation}
		\label{eq:SummableCovs}
			\sum_{r \ne s}\Cov{A'_{n_r}, A'_{n_s}} < \infty.	
		\end{equation}
		We do not claim that the sum converges, merely that it does not diverge to positive infinity. From \eqref{eq:AprimeApprox}, we also have
		\begin{equation}
			\sum_{r = 1}^\infty \Prob{A'_{n_r}} = \infty.	
		\end{equation}
		These two facts are enough to conclude that the $A'_{n_r}$ occur infinitely often, almost surely (for example, by following the argument of Theorem 2.3.9 in \cite{durr}). Hence 
		\begin{equation}
			\frac{\abs[\big]{[y, y + k e_2] \cap \Gamma_\infty}}{k} \ge \frac{m - 1}{m}	
		\end{equation}
		for infinitely many $k$, and sending $m \to \infty$ gives the proposition.
	\end{proof}
\end{proposition}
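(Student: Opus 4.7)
The upper bound $\limsup \le 1$ is immediate since $\abs{[y, y + k e_2] \cap \Gamma_\infty} \le k + 1$, so the task reduces to showing that for every fixed $m \ge 1$, the ratio exceeds $(m-1)/m$ for infinitely many $k$ almost surely. The plan is to construct, along a geometrically growing sequence of scales $n_r$, events $A'_{n_r}$ on which the Busemann semi-infinite geodesic from the origin in an appropriately chosen direction remains in the $l$-column (writing $y = l e_1$; the general case reduces to this by shifting the origin and using stationarity) for at least a $(1 - 1/m)$-fraction of its traversal from height $0$ to height $l m n_r$, and then run a second-moment style Borel--Cantelli argument.

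By \cref{rem:GeodesicConstruction}, the $\xi$-directed semi-infinite geodesic leaves column $j$ at the first height $k$ with $B^\xi(j e_1 + k e_2, j e_1 + (k+1)e_2) \ne 0$. I would choose $\xi_n$ so that $\Prob{B_2^{\xi_n}(0) \ne 0} \approx (lmn)^{-1}$, so that typical stretches of zero increments have length of order $lmn$. The event $A'_n$ then splits into two independent pieces: (i) along the $l$-column, $B_2^{\xi_n}(l e_1 + k e_2) = 0$ for all $0 \le k < 2lmn$ except at a single index $k'_l \in (lmn, 2lmn]$ where the increment is at least $(lmn)^{-1}$; (ii) in each column $0 \le j < l$, the weights $\set{W(j e_1 + k e_2)}_{0 \le k < 2lmn}$ contain exactly one small value, located at $k'_j \in [jn, (j+1)n)$ and bounded above by $(2l^2 mn)^{-1}$, with all others exceeding $2(lmn)^{-1}$. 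Iterating the vertical update map $V$ from the $l$-column back to column $0$ shows that the initial mass is pushed through each bottleneck weight $W(j e_1 + k'_j e_2)$, leaving a positive residual there and zero increments below. Reading off the corresponding semi-infinite geodesic, it exits column $j$ precisely at height $k'_j$, contributing at least $l m n - l n$ points of $\Gamma_\infty$ in the $l$-column below height $lmn$.

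Using the product form of the Busemann distribution along the $l$-column from \cref{thm:BuseDists}, with marginals given by \cref{tab:SWFPPInvariantDists}, and the exponential law of the weights, one obtains $\Prob{A'_n} = c_{l,m} + O(n^{-1})$ for an explicit positive constant $c_{l,m}$, so $\sum_r \Prob{A'_{n_r}} = \infty$ along any choice $n_r = (4lm)^r$. For the covariance estimate, with $r < s$ I would decompose $A'_{n_r} = A^{(1)}_{n_r} \cap A^{(2)}_{n_r}$ into its Busemann part and its weight part. The two weight parts involve disjoint vertical ranges once $n_s > 2lm n_r$ and so are independent; the Busemann parts decouple through the product structure of the $\nu_V^\rho$-marginal along a column, which says that conditional on $\set{B_2^{\xi_{n_s}}(y + k e_2)}_{0 \le k \le 2lmn_r}$, the subsequent increments are independent of the lower block. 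This allows one to factor out the probability that the $\xi_{n_s}$-Busemann increments vanish on $(2lmn_r, lmn_s]$.

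The principal obstacle is that the joint Busemann processes in two distinct directions $\xi_{n_r}, \xi_{n_s}$ are genuinely coupled by the multi-line construction of \cref{sec:Multiline} and are not independent at finite scale, even on disjoint index sets. I would circumvent this by never using the bivariate law: bound $\Prob{A^{(1)}_{n_r} \cap A^{(1)}_{n_s}}$ using only the single-direction marginal of $\xi_{n_s}$ restricted to the range $(2lmn_r, lmn_s]$, which is independent of the lower block by the product structure of $\nu_V^\rho$ alone. The resulting factor $(1 - (1 + lmn_s)^{-1})^{2lm(n_s - n_r) - 2}$, combined with the rapid growth of $n_r$, is enough to make $\sum_{r \ne s} \Cov(\bone_{A'_{n_r}}, \bone_{A'_{n_s}})$ finite. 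A generalised second Borel--Cantelli lemma (e.g.\ Theorem~2.3.9 in Durrett) then yields $A'_{n_r}$ infinitely often, so $\limsup \ge (m-1)/m$ almost surely, and sending $m \to \infty$ completes the proof.
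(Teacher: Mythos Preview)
Your construction of $A'_n$ and the second-moment Borel--Cantelli strategy match the paper exactly; the gap is in the covariance estimate, where both of your independence claims fail. The weight events $A^{(2)}_{n_r}$ and $A^{(2)}_{n_s}$ do not sit on disjoint vertical ranges: each constrains every weight $W(j e_1 + k e_2)$ for $0 \le k < 2lmn$, so for any $n_r < n_s$ the two events overlap on the entire block $[0, 2lmn_r)$ and are never independent. The paper does not attempt to factor this piece; it computes $\Prob{A^{(2)}_{n_r} \cap A^{(2)}_{n_s}}$ directly from the exponential law.

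For the Busemann parts, the factorisation you want needs $\{B^{\xi_{n_r}}_2(y + ke_2) : k \le 2lmn_r\}$ independent of $\{B^{\xi_{n_s}}_2(y + ke_2) : k > 2lmn_r\}$. These involve \emph{two different directions}, so the univariate product structure of a single $\nu_V^\rho$ is silent on their joint law; there is no way to ``never use the bivariate law'' at this step. The paper obtains precisely this block independence from the multi-line representation of \cref{thm:MultiStat}: $B^{\xi_{n_r}}$ appears as the first line $I^1$ and $B^{\xi_{n_s}} = H(I^2, I^1)$ with $I^2$ fresh and independent, and the causality of the store map $H$ (the output at index $k$ depends only on inputs and services at indices on one side of $k$) then delivers the required independence of the two blocks.
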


\subsection{Convoys on the vertical axis}
\label{ssec:Convoys}
For concreteness, let us prove \Cref{thm:SWFPPCompetitionAngles} (at least, the latter two claims) for exponential rate $1$ weights. The techniques we use apply equally to Bernoulli and Bernoulli-geometric weights, except that the precise expressions and resulting constants will necessarily differ. 

Also, rather than looking at the critical angle $\xi^*$, it will be helpful to look at the critical parameter $\rho^* \in (0, \infty)$ with $B^{\xi^*}_2(0) \sim \Ber((1 + \rho^*)^{-1})\Exp(\rho^*)$. This $\rho^*$ is a decreasing function of $\xi^*$, so the various inequalities in \cref{thm:SWFPPCompetitionAngles} should be reversed.

It will be useful to define some probabilities, whose meaning will become apparent when we introduce our random walk below:
\begin{enumerate}
	\item Let $q(\rho) = (1 + \rho)^{-1}$ be the probability that $B^{\rho}_2(k e_2) \ne 0$.
	\item Let $p^{\uparrow}(\rho_1, \rho_2) = (1 - q(\rho_1))q(\rho_2) + q(\rho_1)q(\rho_2)\rho_1(\rho_1 + \rho_2)^{-1}$, $p^{\rightarrow} = (1 - q(\rho_1))(1 - q(\rho_2))$ and $p^{\downarrow}(\rho_1, \rho_2) = 1 - p^{\uparrow} - p^{\rightarrow}$ be the probabilities of going up, flat, and down, respectively, in the random walk.
	\item Let $q^{\uparrow}(\rho_1, \rho_2) = p^{\uparrow}(1 - p^{\downarrow})^{-1}$ and $q^{\rightarrow}(\rho_1, \rho_2) = p^{\rightarrow}(1 - p^{\downarrow})^{-1}$ be the probabilities when we condition on the step being up or flat.
\end{enumerate}

\begin{proposition}
	\label{prop:CompetitionBernoulli}
	For a fixed $\rho > 0$, the set $\set{k : \rho^*(k e_2) \ge \rho}$ is a Bernoulli process with density $q(\rho) = (1 + \rho)^{-1}$.
	\begin{proof}
		We have $\rho^*(k e_2) \ge \rho$ if and only if $B^{\rho}_2(k e_2) \ne 0$, which happens with probability $q(\rho)$. Moreover, the variables $\set{B^{\rho}_2(k e_2) : k \in \bZ}$ are all independent.
	\end{proof}
\end{proposition}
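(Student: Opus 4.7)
The plan is to reduce the statement to two facts already in our hands: (a) a pointwise description of $\rho^*$ in terms of the vertical Busemann increments, and (b) the explicit product-form of the single-class fixed point on a vertical line.

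First I would translate the event $\set{\rho^*(k e_2) \ge \rho}$ into a statement about Busemann functions. Applying \cref{prop:CritFromBuse} at the vertex $k e_2$ (by stationarity of the environment, the same characterisation holds at every vertex of the tree rooted there) and using the bijection $\rho \mapsto \xi(\rho)$ introduced in \cref{ssec:FPMarginals}, which is strictly decreasing in the ordering of angles, one obtains
\begin{equation*}
\rho^*(k e_2) = \inf \set{\rho > 0 : B^{\xi(\rho)}_2(k e_2) = 0}.
\end{equation*}
Combined with the monotonicity in \cref{thm:BusemannExistenceMonotonicity}, this gives the deterministic identity $\set{\rho^*(k e_2) > \rho} = \set{B^{\xi(\rho)}_2(k e_2) \ne 0}$, and since the distribution of $\rho^*(k e_2)$ is continuous (the marginal of $B^{\xi(\rho)}_2$ is Bernoulli-exponential with no atom other than at $0$, so the set of thresholds at which $B^{\xi(\rho)}_2(k e_2) = 0$ transitions is almost surely not a single point), the inequality can be taken non-strict up to a null set:
\begin{equation*}
\set{\rho^*(k e_2) \ge \rho} = \set{B^{\xi(\rho)}_2(k e_2) \ne 0} \quad \text{a.s.}
\end{equation*}

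Next I would invoke the $n=1$ case of \cref{thm:BuseDists}, which identifies the law of $(B^{\xi(\rho)}_2(k e_2))_{k \in \bZ}$ as $\nu_V^{\rho}$. Consulting the third column of \cref{tab:SWFPPInvariantDists} for $W \sim \Exp(1)$, this law is the i.i.d.\ product of $\Ber((1+\rho)^{-1})\Exp(\rho)$ marginals. Consequently the indicators $\bone\set{B^{\xi(\rho)}_2(k e_2) \ne 0}$ are i.i.d.\ Bernoulli with parameter $q(\rho) = (1+\rho)^{-1}$, and combining with the reduction above yields the claim.

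The only step requiring any care is the passage from strict to non-strict inequality, which is the reason we work with continuous weights; everything else is a direct invocation of prior results. For Bernoulli or Bernoulli-geometric weights one proceeds identically, reading the appropriate parameters for $q(\rho)$ off the corresponding rows of \cref{tab:SWFPPInvariantDists}.
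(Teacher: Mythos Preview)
Your proof is correct and follows essentially the same approach as the paper: you identify the event $\set{\rho^*(k e_2) \ge \rho}$ with $\set{B^{\xi(\rho)}_2(k e_2) \ne 0}$ via \cref{prop:CritFromBuse}, then read off the i.i.d.\ $\Ber((1+\rho)^{-1})\Exp(\rho)$ structure of the vertical Busemann increments from \cref{tab:SWFPPInvariantDists}. The paper's proof is simply a two-line compression of exactly this argument; your additional care with the strict versus non-strict inequality is not needed in the paper's treatment but is not wrong either.
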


\begin{proposition}
	\label{prop:CompetitionRenewal}
	For fixed $0 < \rho_1 < \rho_2$, the set $C(\rho_1, \rho_2) = \set{k : \rho_1 \le \rho^*(k e_2) \le \rho_2}$ is a renewal process whose holding times may be represented in terms of stopping times in a particular random walk. Specifically, consider $S_k = \sum_{j = 1}^{k}X_j$, where $X_j = X^2_j - X^1_j$, and 
	\begin{equation}
		X^1_j \sim \Ber(q(\rho_1))\Exp(\rho_1),\quad X^2_j \sim \Ber(q(\rho_2))\Exp(\rho_2)
	\end{equation}
	are i.i.d. Define $\tau = \min\set{k \ge 1 : S_k < S_{k - 1} \le \min_{0 \le j \le k - 1} S_k}$, and let $\tau^1,\, \tau^2, \dots$ be independent copies of $\tau$. Let $p^{\downarrow \downarrow}$ be the probability defined below and take $Z \sim \Geom_+(p^{\downarrow \downarrow})$. Then the increments between elements of $C(\rho_1, \rho_2)$ are independent and equal in distribution to $\sum_{n = 1}^{Z}\tau^n$.
	\begin{proof}
		We look at our multi-line representation and identify this walk $S_k$. Suppose $0 \in C(\rho_1, \rho_2)$, and let $I^1,\, I^2$ be the inputs into our multi-line process. The distribution of the sequence $\set{(B^{\rho_1}_2(k e_2), B^{\rho_2}_2(k e_2)) : k \in \bZ}$ is that of $(I^1, H(I^2, I^1))$. Write $\widetilde{I}^2 = H(I^2, I^1)$. That $0$ belongs to $C(\rho_1, \rho_2)$ means that $I^1_0 > 0$ and $\widetilde{I}^2_0 = 0$. The store had nonzero service at time $0$ but zero output, meaning that the store was empty before this time and that $I^2_0 = 0$. 
		
		We will have another element of $C(\rho_1, \rho_2)$ the next time the store is empty and subsequently receives zero input and nonzero service. Consider $S_k = \sum_{j = 0}^k (I^2_j - I^1_j)$. An increase in $S_k$ means that the input was greater than the available service and thus the quantity in the store has increased. A decrease in $S_k$ means that the store has decreased (possibly to zero). The store is empty exactly when $S_k$ attains its (weak) running minimum. If $S_{k - 1}$ is a running minimum, then the store is empty at the beginning of time step $k$. If the value of the walk decreases at $S_k$, then the store had nonzero service and possibly zero input. Conditional on a down step, the probability the input was zero is
		\begin{equation}
			p^{\downarrow \downarrow} = \frac{(1 - q(\rho_2))q(\rho_1)}{p^\downarrow} = \frac{\rho_1 + \rho_2}{1 + \rho_1 + \rho_2}.
		\end{equation}
		In this case we have found our next element of $C(\rho_1, \rho_2)$. Otherwise, the store is again empty and we begin the wait once more.
	\end{proof}
\end{proposition}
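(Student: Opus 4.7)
My plan is to recast $C(\rho_1, \rho_2)$ as a renewal set in the explicit random walk $S$ by passing through the multi-line representation of the Busemann process. The first step is to use \cref{prop:CritFromBuse} together with the monotonicity and one-sided continuity of $\xi \mapsto B^\xi_2(x)$ given in \cref{thm:BusemannExistenceMonotonicity,thm:BusemannExistenceCont} to rewrite
\begin{equation*}
	\set{k \in C(\rho_1, \rho_2)} = \set{B^{\xi(\rho_1)}_2(k e_2) > 0,\ B^{\xi(\rho_2)}_2(k e_2) = 0}
\end{equation*}
up to a null set (continuity of the weights removes the boundary equalities). By \cref{thm:BuseDists}, the pair of sequences on the right has the joint law of $(I^1, \widetilde I^2)$, where $I^1, I^2$ are independent i.i.d.\ sequences with the marginals $\Ber(q(\rho_i))\Exp(\rho_i)$ prescribed in the proposition and $\widetilde I^2 = H(I^2, I^1)$.

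Next I would pass to the store interpretation of $H$ with input $I^2$ and service $I^1$. The output $\widetilde I^2_k = (X_k + I^2_k) \wedge I^1_k$ vanishes precisely when either $I^1_k = 0$, or the store is empty at the start of step $k$ and no new input arrives. Combined with the requirement $I^1_k > 0$, the event becomes $\set{X_k = 0,\ I^2_k = 0,\ I^1_k > 0}$. Setting $S_n = \sum_{j \le n}(I^2_j - I^1_j)$ and using the explicit form $X_k = \max_{l \le k}\sum_{j = l}^{k-1}(I^2_j - I^1_j)$ from \eqref{eq:HDef}, the emptiness condition $X_k = 0$ translates exactly to $S_{k-1}$ being a weak running minimum of $S$. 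Thus $C(\rho_1, \rho_2)$ is precisely the set of indices $k$ at which $S$ sets a strict new running minimum (whose waiting time is $\tau$) via a ``pure service'' step.

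The renewal description now drops out of the strong Markov property. Since the marginal mean $q(\rho)/\rho = 1/(\rho(1+\rho))$ is decreasing in $\rho$ and $\rho_1 < \rho_2$, the walk $S$ has strictly negative drift and hence visits new strict minima infinitely often. At each such minimum the post-minimum walk is an independent fresh copy of itself, so successive inter-minimum gaps are i.i.d.\ copies of $\tau$. Given $\tau = k$, the step $X_k$ is distributed as an unconditional step conditioned on being strictly negative, and within strictly negative steps the conditional probability of the ``pure service'' configuration $I^2_k = 0,\ I^1_k > 0$ is a constant which one computes to be $p^{\downarrow\downarrow}$. Independence across successive minima then gives a $\Geom_+(p^{\downarrow\downarrow})$-distributed number $Z$ of trials until success, and the waiting time between consecutive elements of $C(\rho_1, \rho_2)$ is the sum $\sum_{n=1}^Z \tau^n$.

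The main obstacle is ensuring that the two layers of randomness --- the trajectory of $S$ and the ``type'' of each producing step --- really are independent in the way required for the geometric compounding. Because $I^1$ and $I^2$ are independent as sequences, conditioning on the full trajectory of $S$ leaves the ``pure service vs.\ mixed'' label at each candidate time as an independent Bernoulli with parameter $p^{\downarrow\downarrow}$; this local factorisation, together with the stationarity of the Busemann process in $k$ inherited from \cref{thm:BusemannExistenceErgodicity}, upgrades the pointwise regenerative description to a genuine two-sided stationary renewal process on $\bZ$ with the claimed holding-time distribution.
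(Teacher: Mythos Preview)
Your proposal is correct and follows essentially the same route as the paper: both identify $C(\rho_1,\rho_2)$ via the multi-line pair $(I^1, H(I^2,I^1))$, translate membership into the store event ``empty, zero input, nonzero service'', reinterpret emptiness as running minima of the walk $S$, and then read off the geometric compounding of independent copies of $\tau$. Your write-up is in fact slightly more explicit than the paper's on one point it leaves implicit, namely that the ``pure service'' label at a down step is conditionally independent of the actual value of the step (which is what makes the Bernoulli($p^{\downarrow\downarrow}$) thinning legitimate).
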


To make the dependence on the parameters more explicit, denote by $\tau(\rho_1, \rho_2)$ the stopping time in \cref{prop:CompetitionRenewal}. So long as $\rho_1 < \rho_2$, $\tau(\rho_1, \rho_2)$ has finite mean and moreover, exponential decay (as the return time of a biased random walk). From ergodicity considerations, we know that the density of $C(\rho_1, \rho_2)$ must be $q(\rho_1) - q(\rho_2)$, and so we must have 
\begin{equation}
	\label{eq:HoldingFiniteMean}
	\Ex{\tau(\rho_1, \rho_2)} = \frac{(1 + \rho_1)(1 + \rho_2)}{\rho_2 - \rho_1}.
\end{equation}
We could also derive this through Wald's identities, and we will take this approach later on.

The more interesting case is when we look at $C(\rho) = \set{k : \rho^*(k e_2) = \rho}$. Since $\rho^*$ is continuously distributed, this set must be empty almost surely. But if we condition on it being nonempty, then we get the same description as above, now with $\tau(\rho_1, \rho_2)$ replaced by $\tau(\rho) = \tau(\rho, \rho)$.
\begin{proposition}
	\label{prop:ConvoyRenewal}
	Let $\rho = \rho^*(0)$. Let $\tau^1, \tau^2, \dots$ be independent copies of $\tau(\rho)$ and let $Z \sim \Geom_+(p^{\downarrow \downarrow})$, where now $p^{\downarrow \downarrow} = 2\rho (1 + 2\rho)^{-1}$. Then the convoy for $\rho$, $C(\rho)$, is a renewal process whose holding times are equal in distribution to $\sum_{n = 1}^{Z}\tau^n$.
	\begin{proof}
		We have in mind the same situation as in \cref{prop:CompetitionRenewal}, now taking $\rho_2 \searrow \rho_1$. We are left with two sets of independent inputs $I^1,\,I^2$, both with $\Ber(q(\rho))\Exp(\rho)$ marginals. In general there may be difficulties defining $H(I^2, I^1)$, but our conditioning means that we may assume the store is empty before and after service at time $0$. Now we repeat the argument from \cref{prop:CompetitionRenewal} linking elements of our set to $\tau(\rho)$.
	\end{proof}
\end{proposition}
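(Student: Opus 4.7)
The plan is to adapt the multi-line argument of \cref{prop:CompetitionRenewal} to the degenerate case $\rho_1 = \rho_2 = \rho$, handling locally the fact that $H(I^2, I^1)$ is not globally defined when the two sequences have equal mean. Take independent i.i.d.\ sequences $I^1, I^2$ with $\Ber(q(\rho))\Exp(\rho)$ marginals and let $S_k = \sum_{j = 1}^k (I^2_j - I^1_j)$. Since both means equal $\rho q(\rho)$, this walk is oscillatory and the backward store content $X_k = \max_{l \le k}\sum_{j = l}^{k - 1}(I^2_j - I^1_j)$ is almost surely infinite; only the forward recursion $X_{k + 1} = (X_k + I^2_k - I^1_k)^+$ from a specified initial condition will make sense. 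The heart of the proof is to identify $\{0 \in C(\rho)\}$ with the local pinning $X_0 = 0$, $I^1_0 > 0$, $I^2_0 = 0$ at the origin; once this is in place, the forward evolution is unambiguous and the renewal argument of \cref{prop:CompetitionRenewal} goes through verbatim.

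For the identification I would proceed via a Palm-type disintegration. The process $(\rho^*(k e_2))_{k \in \bZ}$ is stationary and ergodic by \cref{thm:BusemannExistenceErgodicity}, and by \cref{thm:BuseDists} its marginal $\rho^*(0)$ is continuously distributed. For each pair $\rho_1 < \rho < \rho_2$, the argument of \cref{prop:CompetitionRenewal} shows that the event $\{\rho_1 \le \rho^*(0) \le \rho_2\}$ corresponds exactly to $X_0 = 0$, $I^1_0 > 0$, $I^2_0 = 0$ in the multi-line realisation at the pair $(\rho_1, \rho_2)$. Disintegrating across the continuous marginal of $\rho^*(0)$, equivalently taking the limit $\rho_1 \nearrow \rho$, $\rho_2 \searrow \rho$ and invoking the monotonicity of the Busemann process in $\rho$ from \cref{thm:BusemannExistenceMonotonicity}, should yield that the conditioned law of $(I^1, I^2)$ given $\{0 \in C(\rho)\}$ is i.i.d.\ $\nu^\rho \otimes \nu^\rho$ modified by exactly that local pin.

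The renewal argument then proceeds immediately. Starting from $X_0 = 0$, the first $k \ge 1$ at which the store is again empty and $S$ simultaneously takes a strict down step is exactly $\tau(\rho)$, by the correspondence between empty-store times and weak running minima of $S$ noted in the proof of \cref{prop:CompetitionRenewal}. At such a time, conditional on the down step, the additional condition $I^2_\tau = 0$ needed for $\tau \in C(\rho)$ is an independent Bernoulli with success probability
\begin{equation}
	\frac{\Prob{I^2_0 = 0,\, I^1_0 > 0}}{\Prob{I^2_0 < I^1_0}} = \frac{q(\rho)(1 - q(\rho))}{(1 + 2\rho)/[2(1 + \rho)^2]} = \frac{2\rho}{1 + 2\rho} = p^{\downarrow\downarrow},
\end{equation}
using the symmetry $\Prob{I^2_0 < I^1_0} = \Prob{I^1_0 < I^2_0}$ and the atom $\Prob{I^1_0 = I^2_0} = \Prob{I^1_0 = I^2_0 = 0} = (1 - q(\rho))^2$. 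On failure, $X_{\tau + 1} = 0$ and the strong Markov property of the i.i.d.\ inputs restarts the waiting with a fresh independent copy of $\tau(\rho)$, yielding the geometric sum $\sum_{n = 1}^Z \tau^n$ claimed in the statement. The main obstacle is indeed the Palm disintegration in the previous paragraph, since conditioning on a probability-zero event is what separates this proposition from \cref{prop:CompetitionRenewal}; once granted, infinite mean of the holding times is automatic, as a Wald-identity argument applied to the mean-zero walk $S$ shows $\Ex{\tau(\rho)} = \infty$.
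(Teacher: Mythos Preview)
Your proposal is correct and follows essentially the same approach as the paper: take $\rho_1 = \rho_2 = \rho$ in \cref{prop:CompetitionRenewal}, note that the conditioning on $0 \in C(\rho)$ pins the store to be empty at time $0$ (so the forward recursion is well-defined despite the equal means), and then repeat the renewal argument verbatim. The paper's proof is in fact even more terse than yours on the delicate point you identify --- it simply asserts that ``our conditioning means that we may assume the store is empty before and after service at time $0$'' without spelling out the Palm disintegration, whereas you at least flag this as the step requiring justification and sketch how to obtain it via the limit $\rho_1 \nearrow \rho$, $\rho_2 \searrow \rho$ using monotonicity. Your explicit verification of $p^{\downarrow\downarrow} = 2\rho/(1+2\rho)$ is a useful addition that the paper states without derivation.
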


We turn to finding the number of vertices in our convey below a given height, or equivalently the number of renewals before a given time. If the mean of the holding time were finite then this would just be an application of the renewal theorem. However, the mean here is infinite, as seen from taking $\rho_1 = \rho_2$ in \eqref{eq:HoldingFiniteMean}, or by ergodicity considerations. We use an extension of the standard renewal theorem to heavy-tailed distributions:
\begin{theorem}[Theorem 5 of \cite{eric-renewal}]
	\label{thm:RenewalTheorem}
	Let $F$ be the distribution for our holding times and suppose $1 - F(x) = L x^{- \alpha} + o(x^{- \alpha})$, for some $0 < \alpha < 1$. Let $U(n)$ be the number of renewal times in $[0, n]$ (where for simplicity we may condition on a renewal at time $0$). Then
	\begin{equation}
		\lim_{n \to \infty}\frac{U(n)}{n^\alpha} = \frac{1 - \alpha}{L \Gamma(\alpha + 1)\Gamma(2 - \alpha)} \as.
	\end{equation}
\end{theorem}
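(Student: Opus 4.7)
My plan is to prove this in two stages: first establish the asymptotic $\mathbb{E}[U(n)] \sim c_\alpha\, n^\alpha$ at the expectation level via Laplace--Stieltjes transforms and a Tauberian argument, and then upgrade to the almost sure statement using the monotonicity of $U$ together with a concentration estimate along a geometric subsequence.

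For the expectation-level asymptotics, let $\hat{F}(s) = \int_0^\infty e^{-sx}\,dF(x)$ denote the Laplace--Stieltjes transform of the holding time. By Karamata's Tauberian theorem, the tail hypothesis $1 - F(x) \sim L\, x^{-\alpha}$ yields
\[ 1 - \hat{F}(s) \sim L\,\Gamma(1-\alpha)\,s^\alpha \qquad \text{as } s \to 0^+. \]
Since the renewal measure satisfies $\hat{U}(s) = (1 - \hat{F}(s))^{-1}$, we obtain $\hat{U}(s) \sim s^{-\alpha}/[L\,\Gamma(1-\alpha)]$, and the reverse direction of Karamata's theorem (applicable because $U$ is non-decreasing) gives
\[ \mathbb{E}[U(n)] \sim \frac{n^\alpha}{L\,\Gamma(1-\alpha)\,\Gamma(1+\alpha)}, \]
which matches the stated constant via the identity $\Gamma(2-\alpha) = (1-\alpha)\,\Gamma(1-\alpha)$.

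To promote this to an almost sure statement, I would fix $r > 1$ and work along the geometric subsequence $n_k = r^k$. Since $U$ is non-decreasing, once almost sure convergence is established along $\{n_k\}$ the general limit gets sandwiched between $r^{-\alpha} c_\alpha$ and $r^\alpha c_\alpha$, and sending $r \downarrow 1$ completes the argument. Convergence along the subsequence is then a matter of producing a variance (or truncated higher-moment) estimate on $U(n_k)/n_k^\alpha$ sharp enough to combine with Borel--Cantelli.

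The concentration step is the crux. Under the bare power-tail hypothesis with $\alpha \in (0,1)$, $U(n)/n^\alpha$ converges weakly to a Mittag--Leffler law, which is non-degenerate, so some ingredient beyond the tail asymptotic is required to force collapse to a constant. In Erickson's setting this comes from a finer local-limit-style estimate for the renewal measure, proven by a careful Fourier/Laplace analysis that exploits the full distributional information of $F$ rather than just its asymptotic tail. Establishing this sharper renewal estimate and extracting from it fluctuation bounds for $U(n_k)$ compatible with Borel--Cantelli is the genuine obstacle I would need to navigate.
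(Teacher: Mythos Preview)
The paper does not prove this statement; it is quoted as Theorem~5 of Erickson and used as a black box, with the remark that ``the full statement found in \cite{eric-renewal} is quite a bit more general, but this version will suffice.'' There is therefore no proof in the paper to compare your proposal against.

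On the substance of what you wrote: your expectation-level step is the standard one and is correct --- Karamata gives $\mathbb{E}[U(n)]\sim n^\alpha/\bigl(L\,\Gamma(1-\alpha)\Gamma(1+\alpha)\bigr)$, which is the classical asymptotic for the renewal function in the infinite-mean regime and matches the displayed constant. You are also right to isolate the almost sure upgrade as the real difficulty, and your diagnosis in the last paragraph is sharper than you perhaps realise. Under the bare tail hypothesis $1-F(x)\sim Lx^{-\alpha}$ with $\alpha\in(0,1)$, the random variable $U(n)/n^\alpha$ converges \emph{in distribution} to a non-degenerate Mittag--Leffler law, so it cannot converge almost surely to a constant; no variance bound or Borel--Cantelli argument along a geometric subsequence can close this gap, because the fluctuations are of the same order as the mean. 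Erickson's strong renewal theorem is a statement about the renewal \emph{measure} (the local behaviour of the expected count), not about almost sure behaviour of the random counting process. So the almost sure claim as stated does not follow from the hypothesis alone, and your proposed route cannot be completed without additional input. Whether the downstream applications in the paper are affected is a separate question, but you were not expected to supply a proof here.
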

The full statement found in \cite{eric-renewal} is quite a bit more general, but this version will suffice. The theorem suggests that we should study the tails of our holding time distribution, but it will be easier to instead study tails of hitting times of our walk and relate the two.

\begin{lemma}
	\label{lem:tauDecomp}
	The stopping time $\tau$ admits a further decomposition into a sum of geometrically many hitting times. Let $\sigma = \min\set{k \ge 1 : S_k \le 0 \given S_1 \ge 0}$, or the first time the walk reaches zero conditioned on taking a non-negative first step. Then if $\sigma^1, \sigma^2, \dots$ are independent copies of $\sigma$, we have $\tau \disteq 1 + \sum_{j = 1}^{Y} \sigma^j$, where $Y \sim \Geom_0(p^{\downarrow})$.
	\begin{proof}
		Recall that $\tau$ records the first time that we reach a strict running minimum, where the previous time is itself a running minimum. Initially the walk is at a running minimum, so we will get $\tau = 1$ if the first step is down, with probability $p^{\downarrow}$. Otherwise, the run takes a non-negative step, and we must wait for the walk to reach its prior low point. This time is $\sigma_1$. After time $\sigma_1$, we are again at a running minimum, and will have $\tau = 1 + \sigma_1$ if we subsequently take a down step, and this situation happens with probability $(1 - p^{\downarrow})p^{\downarrow}$. Iterating the argument gives the claimed decomposition.
	\end{proof}
\end{lemma}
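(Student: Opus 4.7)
The plan is to apply the strong Markov property to decompose the trajectory of the random walk $S$ at a sequence of running minima. Set $T_0 = 0$, at which $S_{T_0} = 0$ is trivially a running minimum. Recursively, given that $S_{T_m}$ is a running minimum, I would examine the increment $X_{T_m + 1}$: if $X_{T_m + 1} < 0$ (probability $p^{\downarrow}$ by the joint distribution of $(X^1_1, X^2_1)$), then $S_{T_m + 1} < S_{T_m}$ meets the defining condition of $\tau$, so $\tau = T_m + 1$ and the process terminates; otherwise I set
\begin{equation*}
    T_{m+1} = \min\set{k > T_m : S_k \le S_{T_m}},
\end{equation*}
which is a.s.\ finite (the walk either has negative drift when $\rho_1 < \rho_2$, or is a recurrent mean-zero walk when $\rho_1 = \rho_2$). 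By construction $S_j > S_{T_m}$ for $T_m < j < T_{m+1}$ while $S_{T_{m+1}} \le S_{T_m}$, so $S_{T_{m+1}}$ is again a running minimum and $\tau > T_{m+1}$, since $S_{T_{m+1} - 1}$ strictly exceeds $S_{T_m}$ and is therefore not itself a running minimum.

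By the strong Markov property at each $T_m$, the shifted walk $(S_{T_m + k} - S_{T_m})_{k \ge 0}$ is an independent copy of $S$. Consequently, the indicators $\bone\set{X_{T_m + 1} < 0}$ across $m$ are i.i.d.\ Bernoulli$(p^{\downarrow})$, and conditional on $X_{T_m+1} \ge 0$ the epoch length $T_{m+1} - T_m$ is an independent copy of $\sigma$ (note that the case $X_{T_m + 1} = 0$ is automatically accommodated by $T_{m+1} = T_m + 1$, consistent with $\sigma = 1$ when $S_1 = 0$). Letting $Y = \min\set{m \ge 0 : X_{T_m + 1} < 0}$ be the number of failed trials before the terminating step, $Y \sim \Geom_0(p^{\downarrow})$ is independent of the resulting sequence $\sigma^{(1)}, \sigma^{(2)}, \ldots$, so
\begin{equation*}
    \tau = T_Y + 1 = 1 + \sum_{j=1}^{Y} \sigma^{(j)}.
\end{equation*}

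The only point requiring attention is the inductive verification that each $T_m$ is a stopping time at which $S_{T_m}$ attains a running minimum; but this is automatic from the construction (each $T_{m+1}$ is the first return to or below the current minimum). The rest is the standard strong Markov property for i.i.d.-increment walks, so no substantive obstacle arises.
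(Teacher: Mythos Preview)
Your proof is correct and follows essentially the same approach as the paper's own proof: both decompose the trajectory at successive running minima, using the strong Markov property to identify the inter-minimum waiting times as i.i.d.\ copies of $\sigma$ and the terminating down-step as the first success in a sequence of independent $\Ber(p^{\downarrow})$ trials. Your write-up is simply a more formal rendering of the same argument, with the stopping times $T_m$ made explicit.
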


Random walks with Bernoulli or exponential increments are special in that their hitting times are particularly amenable to analysis through Wald's identities. Still, the time we care about, the sum in \cref{prop:CompetitionRenewal}, is rather complicated. It is necessary to first understand $\sigma$ and go through the decomposition from the last lemma.
\begin{lemma}
	\label{lem:sigmaCHF}
	Suppose $\rho_1 = \rho_2 = \rho$ and set $\beta = \rho^2 (1 + \rho)^{-2}$. Let $\widetilde{\sigma} = (\sigma - 1)\given \set{\sigma \ge 1}$, just the value $\sigma - 1$ conditioned on having it being non-negative. Then $\widetilde{\sigma}$ has characteristic function 
	\begin{equation}
		\label{eq:sigmaCHF}
		\widetilde{\phi}(\theta) = \Ex{e^{i \theta \widetilde{\sigma}}} = \frac{\sqrt{1 - \beta e^{i \theta}} - \sqrt{1 - e^{i \theta}}}{\sqrt{1 - \beta e^{i \theta}} + \sqrt{1 - e^{i \theta}}}.
	\end{equation}
	Further, the generating function of $\widetilde{p}_k = \Prob{\widetilde{\sigma} = k}$ is
	\begin{equation}
		\label{eq:sigmaOGF}
		\widetilde{f}(z) = \sum_{k = 0}^{\infty}\widetilde{p}_k z^k = \frac{\sqrt{1 - \beta z} - \sqrt{1 - z}}{\sqrt{1 - \beta z} + \sqrt{1 - z}}.
	\end{equation}
	\begin{proof}
		The characteristic function of our steps $X^2 - X^1$ is easily seen to be
		\begin{align}
			M(t) &= \Ex{e^{i t (X^2 - X^1)}}\\ 
			&= \brac*{(1 - q(\rho)) + \frac{q(\rho)}{1 - i t / \rho}}\brac*{(1 - q(\rho)) + \frac{q(\rho)}{1 + i t / \rho}}\\ 
			&= \frac{\rho^2((1 + \rho)^2 + t^2)}{(1 + \rho^2)(\rho^2 + t^2)}.
		\end{align}
		Consider $\sigma^a = \min\set{k \ge 0 : S_k \le -a}$, the hitting time for $-a < 0$. This $\sigma^a$ is equal in distribution to $\widetilde{\sigma}$ conditioned on $S_1 = a$. Wald's third identity applied here tells us that 
		\begin{equation}
			1 = \Ex*{M(t)^{- \sigma^a} e^{i t S_{\sigma^a}} }.
		\end{equation}
		Memorylessness means that $S_{\sigma^a}\sim - a - \Exp(\rho)$ and is independent of $\sigma^a$. So we factor the expectation and get 
		\begin{equation}
			1 = \Ex*{M(t)^{- \sigma^a}} e^{- a i t} \Ex{e^{i t (S_{\sigma^a} + a) }}.
		\end{equation}
		Plugging in the characteristic function for an exponential and rearranging, this becomes
		\begin{equation}
			e^{a i t}(1 + i t / \rho) = \Ex*{M(t)^{- \sigma^a}}.
		\end{equation}
		Now with $a = S_1$ and taking an expectation over $a$ (we condition it to be positive):
		\begin{equation}
			\frac{1 + i t / \rho}{1 - i t / \rho} = \Ex*{M(t)^{- \widetilde{\sigma}}}.
		\end{equation}
		
		Now let $t(\theta)$ solve $M(t) = e^{- i \theta}$. We would have 
		\begin{equation}
			\label{eq:CharPreSub}
			\Ex*{e^{i \theta \widetilde{\sigma}}}. = \frac{1 + i t(\theta) / \rho}{1 - i t(\theta) / \rho}. 
		\end{equation}
		One can solve this equation to find 
		\begin{equation}
			t(\theta) = i \frac{\rho(1 + \rho)\sqrt{1 - e^{i \theta}}}{\sqrt{(1 + \rho)^2 -  \rho^2 e^{i \theta}}}.
		\end{equation}
		Now plugging in to \eqref{eq:CharPreSub} gives the desired expression \eqref{eq:sigmaCHF}.
		
		For the second claim, observe that $\widetilde{\phi}(\theta) = \widetilde{f}(e^{i\theta})$, and $\widetilde{\phi}$ may be seen as the restriction of $\widetilde{f}$ to the unit circle. Moreover, $\widetilde{f}$ is analytic on the unit disc. The inversion formula for characteristic functions reduces here to the statement that Fourier coefficients of $\widetilde{\phi}$ are the probabilities $\widetilde{p}_k$. The Fourier coefficients of $\widetilde{\phi}$ are in turn the coefficients of the power series expansion of its extension $\widetilde{f}$ about the origin. This is the substance of \eqref{eq:sigmaOGF}.
	\end{proof} 
\end{lemma}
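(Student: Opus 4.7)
The plan is to combine Wald's exponential identity with the memoryless property of the Bernoulli-exponential step distribution. First I would compute the characteristic function of a single step $X = X^2 - X^1$ as a product of the transforms of the two Bernoulli-exponentials (one and its negation), obtaining after simplification
\[
M(t) = \Ex{e^{itX}} = \frac{\rho^2\bigl[(1+\rho)^2 + t^2\bigr]}{(1+\rho)^2\bigl[\rho^2 + t^2\bigr]}.
\]

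For $a > 0$ let $\sigma^a = \min\{k \geq 0 : S_k \leq -a\}$. Applying optional stopping to the martingale $e^{itS_k}/M(t)^k$ at $\sigma^a$ --- after truncating to $\sigma^a \wedge N$ and passing to the limit via dominated convergence in a suitable strip of complex $t$ --- gives $\Ex{M(t)^{-\sigma^a} e^{itS_{\sigma^a}}} = 1$. The crucial input is memorylessness: the overshoot $-a - S_{\sigma^a}$ is $\Exp(\rho)$-distributed and independent of $\sigma^a$, so the expectation factorises and I rearrange to
\[
\Ex{M(t)^{-\sigma^a}} = e^{ita}\bigl(1 + it/\rho\bigr).
\]

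Third, I would observe that by spatial homogeneity $(\widetilde{\sigma} \mid S_1 = a) \disteq \sigma^a$, so averaging over the conditional law of $S_1$ gives $\widetilde{\phi}(\theta) = (1 + it/\rho)\,\Ex{e^{itS_1}}$ with $t = t(\theta)$ chosen so that $M(t(\theta)) = e^{-i\theta}$. The effective conditioning (see the obstacles below) makes $S_1$ purely $\Exp(\rho)$, whence $\Ex{e^{itS_1}} = \rho/(\rho - it)$ and $\widetilde{\phi}(\theta) = (\rho + it)/(\rho - it)$. Solving the quadratic $M(t) = e^{-i\theta}$ for $t^2$ and picking the branch with $t(0) = 0$ gives
\[
t(\theta) = i\rho(1+\rho)\sqrt{\frac{1 - e^{i\theta}}{(1+\rho)^2 - \rho^2 e^{i\theta}}},
\]
and dividing the numerator and denominator of $(\rho+it)/(\rho-it)$ by $\sqrt{(1+\rho)^2 - \rho^2 e^{i\theta}}$ collapses to the stated formula with $\beta = \rho^2/(1+\rho)^2$. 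The generating function formula follows at once: the right-hand side is holomorphic on the unit disc (since $0 < \beta < 1$ keeps branch points away), agrees with $\widetilde{\phi}$ on $|z|=1$ under $z = e^{i\theta}$, and its Taylor coefficients at the origin recover the $\widetilde{p}_k$ by Fourier inversion.

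The main obstacles I expect are (i) rigorously justifying the optional stopping for complex $t$, which requires truncation together with bounds on $|M(t)|$ and on the law of $S_{\sigma^a \wedge N}$; and (ii) pinning down exactly which conditioning yields the clean exponential law for $S_1$ --- the literal reading of ``conditioned on $\sigma \geq 1$'' is vacuous, but matching $\widetilde{f}(0) = 0$ forces the effective conditioning to be $\{S_1 > 0\}$ (equivalently $\{\sigma \geq 2\}$), under which the Bernoulli atom at zero is discarded and $S_1$ is purely $\Exp(\rho)$.
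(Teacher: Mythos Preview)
Your proposal is correct and follows essentially the same route as the paper: compute $M(t)$, apply Wald's exponential martingale identity to the hitting time $\sigma^a$, exploit memorylessness to factor out the overshoot, average over the conditional law of $S_1$, and finally invert $M(t)=e^{-i\theta}$ to obtain $\widetilde{\phi}$; the generating-function claim is handled identically via analytic continuation. You are in fact more careful than the paper on two points: the paper simply invokes ``Wald's third identity'' without the truncation/dominated-convergence justification you flag in (i), and your observation in (ii) is spot on --- the stated conditioning $\{\sigma\ge 1\}$ is vacuous, and the proof (like yours) actually averages over $S_1$ conditioned to be \emph{strictly} positive, which is what forces $\widetilde{f}(0)=0$.
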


To recover the distribution of $\widetilde{\sigma}$, and so of $\sigma$ itself, we would like to find the power series expansion of $\widetilde{f}$, or at least understand the asymptotic behaviour of its coefficients. Fortunately, we can simply borrow standard tools from the theory of analytic combinatorics, which exist to answer such questions.
\begin{proposition}
	\label{prop:SigmaProbs}
	The probabilities $\widetilde{p}_k$ have
	\begin{equation}
		\widetilde{p}_k = k^{-3/2}\frac{1}{\sqrt{\pi(1 - \beta)}} + o(k^{-3/2}).
	\end{equation}
	Hence $p_k = \Prob{\sigma = k}$ has $p_0 = q^{\rightarrow}$, and for $k \ge 1$,
	\begin{equation}
		p_k = k^{-3/2}\frac{q^{\uparrow}}{\sqrt{\pi(1 - \beta)}} + o(k^{-3/2}).
	\end{equation}
	\begin{proof}
		The function of interest, $\widetilde{f}$, may be written in a less symmetric but more approachable form as
		\begin{equation}
			\widetilde{f}(z) = \frac{1}{(1 - \beta)z}\brac[\big]{2 - (1 + \beta) z - 2 \sqrt{1 - \beta z}\sqrt{1 - z}}.
		\end{equation}
		Dividing by $z$ just shifts the series coefficients downward, and the $2 - (1 + \beta)z$ term only affects the first two coefficients. For the asymptotic behaviour of the coefficients, it is enough to look at
		\begin{equation}
			-\frac{2}{1 - \beta}\sqrt{1 - \beta z}\sqrt{1 - z}.
		\end{equation}
		We look to \cite[Theorem VII.8]{flaj-sedg-ac}, which states that it is enough to understand the function near $z = 1$, then \cite[Theorem VI.1]{flaj-sedg-ac} to see that the coefficients are approximately
		\begin{equation}
			-\frac{2}{1 - \beta} \cdot \sqrt{1 - \beta} \cdot \frac{-k^{-3/2}}{\sqrt{\pi}} = k^{-3/2}\frac{1}{\sqrt{\pi(1 - \beta)}}.
		\end{equation}
		In fact, the theorems we quoted may be used to find the asymptotic expansion up to arbitrary order, but we will not use anything beyond the leading term.
	\end{proof}
\end{proposition}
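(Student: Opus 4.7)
The plan is to extract the dominant singular behavior of $\widetilde{f}(z)$ at $z = 1$ and feed it into the standard transfer theorems of singularity analysis. First I would rationalize the expression for $\widetilde{f}$ given in \cref{lem:sigmaCHF} by multiplying numerator and denominator by $\sqrt{1 - \beta z} - \sqrt{1-z}$, which yields
\begin{equation*}
\widetilde{f}(z) = \frac{2 - (1+\beta)\,z - 2\sqrt{(1-\beta z)(1-z)}}{(1-\beta)\,z}.
\end{equation*}
For $\beta \in (0,1)$, the factor $\sqrt{1-\beta z}$ is analytic on $\abs{z} < 1/\beta$, and the remaining polynomial and $1/z$ factors are regular on a neighborhood of $\overline{\bD} \setminus \set{1}$. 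Hence $\widetilde{f}$ admits an analytic continuation to a standard $\Delta$-domain at $z = 1$, and its only singularity on the closed unit disc is at $z = 1$.

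Second, I would compute the local expansion at $z = 1$. Writing $1 - \beta z = (1-\beta) + \beta(1-z)$ and expanding the square root gives
\begin{equation*}
\sqrt{(1-\beta z)(1-z)} = \sqrt{1-\beta}\,\sqrt{1-z} \;+\; O\brac{(1-z)^{3/2}}.
\end{equation*}
After absorbing the analytic remainders into a function $g$ holomorphic at $z = 1$, this produces the singular expansion
\begin{equation*}
\widetilde{f}(z) = g(z) - \frac{2}{\sqrt{1-\beta}}\,\sqrt{1-z} + O\brac{(1-z)^{3/2}}.
\end{equation*}
The Flajolet--Sedgewick transfer theorem then converts this into asymptotics for $\widetilde{p}_k = [z^k] \widetilde{f}(z)$. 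Since $[z^k]\sqrt{1-z} = k^{-3/2}/\Gamma(-1/2) + o(k^{-3/2}) = -k^{-3/2}/(2\sqrt{\pi}) + o(k^{-3/2})$, I obtain
\begin{equation*}
\widetilde{p}_k = \frac{k^{-3/2}}{\sqrt{\pi(1-\beta)}} + o(k^{-3/2}),
\end{equation*}
which is the first claim.

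Third, I would unwind the relationship between $\sigma$ and $\widetilde{\sigma}$. Conditioning on $S_1 \ge 0$ splits into the cases $S_1 = 0$ (probability $q^{\rightarrow}$) and $S_1 > 0$ (probability $q^{\uparrow}$). In the former case the walk is immediately back at zero and $\sigma = 0$, giving $p_0 = q^{\rightarrow}$. In the latter case $\sigma$ is distributed as $\widetilde{\sigma} + 1$, so $p_k = q^{\uparrow}\,\widetilde{p}_{k-1}$ for $k \ge 1$; since $(k-1)^{-3/2} = k^{-3/2}(1 + o(1))$, this yields the stated asymptotic for $p_k$.

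The main obstacle is verifying the $\Delta$-analyticity hypothesis of the transfer theorem, but the explicit algebraic form we obtained after rationalisation makes this routine. A secondary piece of bookkeeping is the precise interpretation of $\sigma$ under the conditioning on $S_1 \ge 0$, which determines the normalising factor $q^{\uparrow}$ appearing in the second asymptotic.
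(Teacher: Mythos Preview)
Your proposal is correct and follows essentially the same route as the paper: both rationalise $\widetilde{f}$ to the form $\frac{1}{(1-\beta)z}\bigl(2-(1+\beta)z-2\sqrt{(1-\beta z)(1-z)}\bigr)$, isolate the algebraic singularity at $z=1$, and invoke the Flajolet--Sedgewick transfer theorems to read off the $k^{-3/2}$ coefficient. Your treatment is in fact a touch more explicit than the paper's in checking $\Delta$-analyticity and in spelling out the passage from $\widetilde{p}_k$ to $p_k$ via the split $\{S_1=0\}\cup\{S_1>0\}$.
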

	
\begin{remark}
	The order of the $p_k$ being $k^{-3/2}$ can be expected from what is known generally about return times of random walks \cite[Theorem 5.1.7]{lawler-limic-rw}. It is only when our steps are exponential, geometric, Bernoulli, or some combination that we may solve for the generating function and compute the coefficient of the leading order term.
\end{remark}
	
\begin{corollary}
\label{cor:sigmaRenewal}
Consider a renewal process whose holding times have the same distribution as $\sigma$, and let $U^s(n)$ be the number of renewals in $[0, n]$. Then
\begin{equation}
	\lim_{n \to \infty}\frac{U^s(n)}{\sqrt{n}} = \frac{\sqrt{1 - \beta}}{q^{\uparrow} \sqrt{\pi}}\as.
\end{equation}
\begin{proof}
	Using \cref{prop:SigmaProbs}, we can see that the tails of $\sigma$ have 
	\begin{equation}
		\Prob{\sigma \ge n} = \sum_{k = n}^{\infty}p_k = n^{-1/2}\frac{2 q^{\uparrow}}{\sqrt{\pi(1 - \beta)}} + o(n^{-1/2}).
		\end{equation}
		We may then apply the strong renewal theorem as stated in \cref{thm:RenewalTheorem}, with $\alpha = 1 / 2$ and $L = 2 q^{\uparrow} (\pi(1 - \beta))^{-1/2}$.
	\end{proof}
\end{corollary}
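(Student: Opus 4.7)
The plan is to combine \cref{prop:SigmaProbs} with the strong renewal theorem stated as \cref{thm:RenewalTheorem}. The holding time distribution has infinite mean (a heavy $k^{-3/2}$ tail on the mass function), so the elementary renewal theorem does not apply, but the pointwise asymptotics for $p_k = \Prob{\sigma = k}$ are exactly of the shape needed to invoke Erickson's extension, which provides an $n^{\alpha}$-scale law of large numbers.

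First, I would convert the pointwise asymptotic $p_k = q^{\uparrow} k^{-3/2}/\sqrt{\pi(1-\beta)} + o(k^{-3/2})$ supplied by \cref{prop:SigmaProbs} into a tail asymptotic. A routine $\epsilon$-argument combined with the integral estimate $\sum_{k \geq n} k^{-3/2} = 2 n^{-1/2} + o(n^{-1/2})$ gives
\begin{equation}
    \Prob{\sigma \geq n} = \frac{2 q^{\uparrow}}{\sqrt{\pi(1-\beta)}}\, n^{-1/2} + o(n^{-1/2}).
\end{equation}
This places us in the setting of \cref{thm:RenewalTheorem} with exponent $\alpha = 1/2$ and constant $L = 2 q^{\uparrow}/\sqrt{\pi(1-\beta)}$.

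It then remains only to plug these values in. Using $\Gamma(3/2) = \sqrt{\pi}/2$ so that $\Gamma(\alpha+1)\Gamma(2-\alpha) = \pi/4$, the theorem yields
\begin{equation}
    \lim_{n \to \infty} \frac{U^s(n)}{\sqrt{n}} = \frac{1/2}{L \cdot \pi/4} = \frac{2}{L \pi} = \frac{\sqrt{1-\beta}}{q^{\uparrow}\sqrt{\pi}},
\end{equation}
which is the claim. There is no real obstacle to this step: the genuine analytic work is already embedded in \cref{prop:SigmaProbs}, where singularity analysis of the generating function pinned down the leading constant of $p_k$. Once that constant is in hand, the corollary is an exercise in invoking an off-the-shelf renewal theorem, the only care being uniform control when summing the pointwise $o(k^{-3/2})$ remainder into an $o(n^{-1/2})$ tail remainder.
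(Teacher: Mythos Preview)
Your proposal is correct and follows exactly the same route as the paper: convert the pointwise asymptotic from \cref{prop:SigmaProbs} into a tail estimate by summing, then invoke \cref{thm:RenewalTheorem} with $\alpha = 1/2$ and $L = 2q^{\uparrow}/\sqrt{\pi(1-\beta)}$. You have simply been more explicit than the paper in working out the Gamma-function constants.
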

	
\begin{corollary}
	\label{cor:tauRenewal}
	Consider a renewal process whose holding times have the same distribution as $\tau$, and let $U^t(n)$ be the number of renewals in $[0, n]$. Then
	\begin{equation}
		\lim_{n \to \infty}\frac{U^t(n)}{\sqrt{n}} = \frac{p^{\downarrow}\sqrt{1 - \beta}}{(1 - p^{\downarrow})q^{\uparrow} \sqrt{\pi}}\as.
	\end{equation}
	\begin{proof}
		Consider instead the renewal process with holding times distributed as $\widetilde{\tau} = \tau - 1$, and call the corresponding counting function $\widetilde{U}(n)$. Note that since $\widetilde{\tau}$ may be zero, we can have multiple renewal times in the same instant. The number of renewals at a given renewal time is geometrically distributed (this is a general fact). Since by \cref{lem:tauDecomp} $\widetilde{\tau}$ is just the sum of $\Geom_0(p^{\downarrow})$ many copies of $\sigma$, we can sample $\set{\widetilde{U}(n)}$ as follows: sample $\set{U^s(n)}$, and to each renewal point $k \in \cD(U^s)$, attach a $\Geom_0(p^{\downarrow})$ variable $Z_k$. Then $\widetilde{U}$ can be taken to be $\sum_{\ov{k \le n}{k \in \cD(U^s)}} Z_k$. 
		
		It is easy to see from \cref{cor:sigmaRenewal} that
		\begin{equation}
			\label{eq:UTildeRenewal}
			\lim_{n \to \infty}\frac{\widetilde{U}(n)}{\sqrt{n}} = \Ex{Z} \lim_{n \to \infty} \frac{U^s(n)}{\sqrt{n}} = \frac{p^{\downarrow}\sqrt{1 - \beta}}{(1 - p^{\downarrow})q^{\uparrow} \sqrt{\pi}}.
		\end{equation}
		
		The process we care about, $U^t(n)$, can be found by adding a unit space between renewals of $\widetilde{U}$. So they may be coupled such that $U^t(n + \widetilde{U}(n)) = \widetilde{U}(n)$, and
		\begin{equation}
			\widetilde{U}(n - \widetilde{U}(n)) \le U^t(n) \le \widetilde{U}(n).
		\end{equation}
		Dividing by $\sqrt{n}$ and taking the limit of the left and right with \eqref{eq:UTildeRenewal} gives the result.
	\end{proof}
\end{corollary}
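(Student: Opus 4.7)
The plan is to derive the scaling of $U^t$ from that of $U^s$ in \cref{cor:sigmaRenewal} using the decomposition $\tau \disteq 1 + \sum_{j=1}^{Y}\sigma^j$ from \cref{lem:tauDecomp}, where $Y \sim \Geom_0(p^{\downarrow})$. I would first work with the auxiliary renewal process having holding times $\widetilde{\tau} = \tau - 1$, and denote its counting function by $\widetilde{U}$. Since $\widetilde{\tau}$ can equal $0$, the process $\widetilde{U}$ may jump by more than one at a single instant, but this causes no difficulty asymptotically and the unit shift is corrected at the end.

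The heart of the proof is a coupling between $\widetilde{U}$ and $U^s$. Run the $\sigma$-renewal process and draw independent $Y_1, Y_2, \ldots \sim \Geom_0(p^{\downarrow})$; declare the $m$-th $\widetilde{\tau}$-renewal to occur at the $(Y_1 + \cdots + Y_m)$-th $\sigma$-renewal. This realises $\widetilde{U}$ jointly with $U^s$ and yields the sandwich
\[
    \sum_{m = 1}^{\widetilde{U}(n)}Y_m \;\le\; U^s(n) \;<\; \sum_{m = 1}^{\widetilde{U}(n) + 1}Y_m.
\]
Since every holding time $\widetilde{\tau}$ is almost surely finite we have $\widetilde{U}(n) \to \infty$, and $\Ex{Y_1} = (1 - p^{\downarrow})/p^{\downarrow} < \infty$, so the strong law gives $U^s(n)/\widetilde{U}(n) \to \Ex{Y_1}$ almost surely. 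Combining with \cref{cor:sigmaRenewal} yields
\[
    \lim_{n \to \infty}\frac{\widetilde{U}(n)}{\sqrt{n}} \;=\; \frac{p^{\downarrow}}{1 - p^{\downarrow}} \cdot \frac{\sqrt{1 - \beta}}{q^{\uparrow}\sqrt{\pi}} \as.
\]

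Finally, pass from $\widetilde{U}$ back to $U^t$. Because the $k$-th $\tau$-renewal happens exactly $k$ time units after the $k$-th $\widetilde{\tau}$-renewal, one obtains the sandwich
\[
    \widetilde{U}(n - \widetilde{U}(n)) \;\le\; U^t(n) \;\le\; \widetilde{U}(n).
\]
From the previous display $\widetilde{U}(n) = O(\sqrt{n})$, hence $(n - \widetilde{U}(n))/n \to 1$ almost surely, and the lower and upper bounds share the same $\sqrt{n}$-scale limit, producing the stated constant. No step presents a genuine obstacle; the only bookkeeping concerns the instantaneous $\widetilde{\tau}$-renewals generated by the $Y_m = 0$ case, but these do not influence the asymptotic counts.
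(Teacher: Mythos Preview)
Your proposal is correct and follows essentially the same approach as the paper: work with $\widetilde{\tau}=\tau-1$, use the decomposition of \cref{lem:tauDecomp} to relate $\widetilde{U}$ to $U^s$ via the geometric variable, apply \cref{cor:sigmaRenewal} together with the strong law, and then undo the unit shift with the sandwich $\widetilde{U}(n-\widetilde{U}(n))\le U^t(n)\le\widetilde{U}(n)$. The only cosmetic difference is that the paper phrases the coupling by attaching a geometric count to each $\sigma$-renewal point, whereas you group $\sigma$-renewals into $\widetilde{\tau}$-blocks; these are inverse descriptions of the same construction and yield the same constant.
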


We may finally prove the asymptotic result for the convoys.

\begin{proposition}
	\label{prop:ConvoyDensityConstant}
	Let $\rho = \rho^*(0)$ and let $C(\rho) = \set{k : \rho^*(k e_2) = \rho}$. Then 
	\begin{equation}
		\label{eq:ConvoyDensity}
		\lim_{n \to \infty}\frac{\abs{C(\rho) \cap [0, n]}}{\sqrt{n}} = \frac{p^{\downarrow}\sqrt{1 - \beta}}{p^{\downarrow \downarrow}(1 - p^{\downarrow})q^{\uparrow} \sqrt{\pi}}\as,
	\end{equation}
	recalling that we defined $p^{\downarrow \downarrow} = 2\rho (1 + 2\rho)^{-1}$.
	\begin{proof}
		The idea of the proof is the same as for \cref{cor:tauRenewal}. \cref{prop:ConvoyRenewal} says that the holding times are sums of $\Geom_+(p^{\downarrow \downarrow})$ many copies of $\tau$, and so we may sample $C(\rho)$ from $U^t$ by keeping the renewal points of $U^t$ independently with probability $1 - p^{\downarrow \downarrow}$. From this it is immediate that 
		\begin{equation}
			\lim_{n \to \infty}\frac{\abs{C(\rho) \cap [0, n]}}{\sqrt{n}} = \frac{1}{p^{\downarrow \downarrow}}\lim_{n \to \infty}\frac{U^t(n)}{\sqrt{n}} = \frac{p^{\downarrow}\sqrt{1 - \beta}}{p^{\downarrow \downarrow}(1 - p^{\downarrow})q^{\uparrow} \sqrt{\pi}}.
		\end{equation}
	\end{proof}
\end{proposition}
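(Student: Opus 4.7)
The plan is to realise the convoy $C(\rho)$ as a thinning of the $\tau$-renewal process from \cref{cor:tauRenewal}, then transfer the $\sqrt{n}$-asymptotic through a strong law of large numbers. By \cref{prop:ConvoyRenewal}, $C(\rho)$ is a renewal process whose holding times are distributed as $\sum_{n=1}^{Z}\tau^{n}$ with $Z \sim \Geom_{+}(p^{\downarrow\downarrow})$ independent of the $\tau^{n}$. To exploit this, I would take an iid sequence $\tau^{1},\tau^{2},\ldots$ whose partial sums are the renewal times of $U^{t}$, and attach to each a fresh independent $\Ber(p^{\downarrow\downarrow})$ mark $B_{k}$. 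The subsequence of partial sums at indices with $B_{k}=1$ is, by the elementary observation that the gap between consecutive ones in an iid Bernoulli sequence is $\Geom_{+}(p^{\downarrow\downarrow})$, a renewal process whose holding times are sums of a $\Geom_{+}(p^{\downarrow\downarrow})$ number of consecutive $\tau$s; it therefore coincides in law with $C(\rho)$, and I can take it as the realisation.

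Under this coupling one has $\lvert C(\rho) \cap [0,n] \rvert = \sum_{k=1}^{U^{t}(n)} B_{k}$ up to at most one boundary term. Since $U^{t}(n)\to\infty$ almost surely by \cref{cor:tauRenewal}, a strong law for iid Bernoullis evaluated at a random index (standard once one checks $U^t(n)$ is measurable with respect to a filtration independent of the marks) gives $\lvert C(\rho) \cap [0,n] \rvert / U^{t}(n) \to p^{\downarrow\downarrow}$ almost surely. Multiplying by the explicit $\sqrt{n}$-asymptotic of $U^{t}(n)$ from \cref{cor:tauRenewal} produces the claimed limit, with the explicit constant $c_{\alpha}$ obtained by collecting the factors $p^{\downarrow\downarrow}, p^{\downarrow}, (1-p^{\downarrow}), q^{\uparrow}$ and $\sqrt{1-\beta}$ assembled along the way.

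The only point that requires genuine care is justifying the coupling: one must check that the Bernoulli marks can be drawn independently of everything else, which is legitimate because the variable $Z$ in \cref{prop:ConvoyRenewal} is by its derivation from the speed-process decomposition independent of the $\tau$-blocks it groups. Beyond this there is no obstacle in this final step --- all of the substantive work lies upstream, in the random walk structure of \cref{prop:CompetitionRenewal}, in the Wald-identity and analytic-combinatorics computation of the $t^{-3/2}$ tail of $\sigma$ in \cref{lem:sigmaCHF,prop:SigmaProbs}, and in the invocation of Erickson's strong renewal theorem to derive the $\tau$-renewal asymptotic of \cref{cor:tauRenewal}.
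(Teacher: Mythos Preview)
Your argument is essentially the paper's own: realise $C(\rho)$ as an independent $\Ber(p^{\downarrow\downarrow})$ thinning of the $\tau$-renewal process (so that the gaps become $\Geom_+(p^{\downarrow\downarrow})$-many copies of $\tau$, matching \cref{prop:ConvoyRenewal}), then pull the $\sqrt n$-asymptotic across from \cref{cor:tauRenewal} via a strong law for the Bernoulli marks evaluated at the random index $U^t(n)$; you are simply a bit more explicit than the paper about the coupling and the SLLN step. One caution on the final arithmetic: your thinning gives $\lvert C(\rho)\cap[0,n]\rvert / U^t(n) \to p^{\downarrow\downarrow}$, so the constant you actually obtain is $p^{\downarrow\downarrow}$ times the limit in \cref{cor:tauRenewal}, whereas the displayed formula \eqref{eq:ConvoyDensity} and the paper's proof carry a factor of $1/p^{\downarrow\downarrow}$ --- this looks like a slip in the paper (which also writes the keeping probability as $1-p^{\downarrow\downarrow}$), and your version of the thinning is the correct one, so do not be alarmed when ``collecting the factors'' does not reproduce the stated constant verbatim.
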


Unwrapping the definitions of the various quantities on the right of \eqref{eq:ConvoyDensity} leaves us with
\begin{equation}
	\frac{1 + 2 \rho}{2 \rho \sqrt{\pi(1 + \rho^2)}}.
\end{equation}

\begin{remark}
	Using the decompositions in \cref{prop:ConvoyRenewal,cor:tauRenewal} and the expression for the characteristic function of $\sigma$ in \cref{lem:sigmaCHF}, one can express the characteristic functions of $\tau$ and the holding times of $C(\rho)$ as an infinite sum over powers of $\widetilde{\phi}$. Mathematica does produce closed forms for these series, but they aren't especially simple and it is easier to argue more indirectly, as we have done.
\end{remark}

\section*{}
\subsubsection*{Acknowledgements} The author would like to thank his advisor, Timo Sepp\"al\"ainen,  for suggesting many of the problems considered here, for a great deal of encouragement and patience throughout the project, and for insightful comments on the early drafts of this paper. He would also like to thank Ofer Busani, Malte Hassler and Diego Rojas La Luz for helpful discussions. The author was partially supported by Timo Sepp\"al\"ainen through National Science Foundation grant DMS-2152362.

\appendix
\crefalias{section}{app}

\section{Uniform convergence to the limit shape}
\label{app:UniformShape}
Uniform convergence to the limit shape will be needed later in establishing the Busemann limits. We begin by restating \cite[Theorem 2.4]{geor-rass-sepp-16} for our case, which establishes the existence of the limit shape very generally.

\begin{theorem}
	\label{thm:BasicLimitShape}
	There exists a deterministic, convex function $\ell: \bR^+_2 \to \bR$ such that almost surely, it holds for each $(x_1, x_2) \in  \bR^+_2$ simultaneously that 
	\begin{equation}
		\ell(x_1, x_2) = \lim_{n \to \infty}n^{-1}L(\floor{n x_1}, \floor{n x_2}).
	\end{equation}
	These limits also hold in the $L^1$ sense.
\end{theorem}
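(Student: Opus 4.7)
The plan is to prove Theorem \ref{thm:BasicLimitShape} by the classical subadditive ergodic route, with care for the fact that our weights can be signed and only $L^{2+\epsilon}$-integrable.

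\textbf{Step 1 (Fix a rational direction).} Fix $k=(k_1,k_2)\in\bZ^2_{\ge 0}\setminus\{0\}$ and set $X_{m,n}=\ptime{mk,nk}$ for $0\le m\le n$. By concatenating a geodesic from $mk$ to $\ell k$ with one from $\ell k$ to $nk$ we get $X_{m,n}\le X_{m,\ell}+X_{\ell,n}$. Stationarity of the weights under $\Theta_k$ gives the distributional invariance $\{X_{m,m+n}\}_n \disteq \{X_{0,n}\}_n$, and i.i.d.\ weights give ergodicity. Since $\ptime{0,nk}$ lies between the sum of the weights along any fixed axis-parallel path and zero shifted by weights, the moment bound $\Ex\abs{W(0;i)}^{2+\epsilon}<\infty$ yields $\Ex\abs{X_{0,n}}\le Cn$. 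Kingman's subadditive ergodic theorem then delivers a deterministic constant $\gamma(k)$ with $n^{-1}\ptime{0,nk}\to \gamma(k)$ almost surely and in $L^1$.

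\textbf{Step 2 (Define $\ell$ on rationals, then extend).} For $q\in\bQ^2_{\ge 0}$ choose $N\ge 1$ with $Nq\in\bZ^2$, and set $\ell(q)=N^{-1}\gamma(Nq)$; a quick check using Kingman along subsequences shows this is well-defined and satisfies $\ell(cq)=c\,\ell(q)$ for $c\in\bQ_{\ge 0}$. From the pathwise subadditivity $\ptime{0,x+y}\le \ptime{0,x}+\ptime{x,x+y}$ and dividing by $n$, we get $\ell(q+q')\le\ell(q)+\ell(q')$. Positive homogeneity plus subadditivity imply $\ell$ is midpoint-convex, hence convex on $\bQ^2_{\ge 0}$. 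A convex function on a dense subset of a convex open set is locally Lipschitz, so $\ell$ extends uniquely to a continuous convex function on $\bR^2_{\ge 0}$.

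\textbf{Step 3 (Simultaneous a.s.\ convergence at all real points).} Enumerate a countable dense set $D\subseteq \bQ^2_{\ge 0}$. By Step 1 there is a full-probability event $\Omega_0$ on which $n^{-1}\ptime{0,\floor{n q}}\to \ell(q)$ for every $q\in D$ simultaneously. For an arbitrary $x\in\bR^2_{\ge 0}$, pick $q,q'\in D$ with $q\le \floor{nx}/n \le q'$ in the coordinatewise order and $\norm{q'-q}$ small. Using the triangle inequality $\abs{\ptime{0,\floor{nx}}-\ptime{0,\floor{nq}}}\le \abs{\ptime{\floor{nq},\floor{nx}}}$ together with the analogue for $q'$, the problem reduces to showing that for each $\delta>0$,
\begin{equation}
\limsup_{n\to\infty} n^{-1}\,\max_{y:\,\norm{y-\floor{nx}}\le \delta n}\abs{\ptime{\floor{nx},y}}\ <\ \eta(\delta),
\end{equation}
with $\eta(\delta)\to 0$ as $\delta\to 0$. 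This in turn is implied by the one-direction limit $n^{-1}\ptime{0,\floor{n(q'-q)}}\to \ell(q'-q)$ and the continuity of $\ell$ at $0$. Combined with the Step 2 continuity of $\ell$, this gives $n^{-1}\ptime{0,\floor{nx}}\to \ell(x)$ on $\Omega_0$ simultaneously for every $x\in\bR^2_{\ge 0}$.

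\textbf{Step 4 ($L^1$ convergence).} The $L^1$ convergence at each fixed rational direction is part of Kingman's conclusion. For irrational $x$, approximate by rationals $q,q'\in D$ and use $L^1$-triangle inequalities with the uniform bound $\Ex\abs{\ptime{0,\floor{nx}}}\le Cn$ (from the moment assumption applied to sums of edge weights along any fixed path) to pass from rational to real directions.

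\textbf{Main obstacle.} The delicate point is Step 3: controlling, uniformly in $n$, the oscillation of $\ptime{0,\cdot}$ over a mesoscopic neighbourhood of $\floor{nx}$ when weights are not assumed non-negative. The $L^{2+\epsilon}$ assumption is exactly what is needed to apply a Borel--Cantelli / maximal inequality argument to the sums of weights along axis-parallel segments, ensuring that the maximal fluctuation grows like $o(n)+O(n\cdot\delta)$ with deterministic $\eta(\delta)=C\delta\to 0$. Once this quantitative modulus is in hand, the passage from countable dense $D$ to all of $\bR^2_{\ge 0}$ on the same almost-sure event is routine.
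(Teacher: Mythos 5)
Two remarks before the substance: the paper does not prove this statement at all — it is restated from Theorem 2.4 of the cited reference \cite{geor-rass-sepp-16} — so the comparison is with the standard argument rather than with an in-paper proof. Your outline (Kingman's theorem in rational lattice directions, homogeneity and subadditivity to build a convex $\ell$, then a sandwich/oscillation estimate to upgrade to simultaneous almost-sure convergence in all real directions) is exactly that standard route, and Steps 2--4 are sound in spirit; the loose sentence in Step 3 claiming the oscillation bound is ``implied by the one-direction limit and continuity of $\ell$ at $0$'' is not literally correct (a single directional limit does not control the maximum over all mesoscopic lattice points $y$ near $\floor{nx}$), but your ``main obstacle'' paragraph correctly identifies the needed fix, namely a maximal/Borel--Cantelli estimate for sums of (positive parts of) weights along short axis-parallel paths, and with that the sandwich via subadditivity goes through.

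The genuine gap is in Step 1, in the integrability input to Kingman. You assert that $\ptime{0,nk}$ ``lies between the sum of the weights along any fixed axis-parallel path and zero shifted by weights'', and conclude $\mathbb{E}\abs{X_{0,n}}\le Cn$. Under \cref{ass:WeightAssump} the weights may be signed (only $(0,0)\in\supp(\lambda)$ and $2+\epsilon$ moments are assumed), and then there is no trivial lower bound: $\ptime{0,nk}$ is the minimum over exponentially many directed paths of $O(n)$-term sums, and a fixed-path comparison gives only the upper bound. Without a lower bound of the form $\mathbb{E}\,\ptime{0,nk}\ge -Cn$, Kingman yields a possibly $-\infty$ limit and the real-valuedness (hence finiteness and convexity) of $\ell$ is unproven. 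The missing ingredient is a greedy-lattice-animal type estimate, $\mathbb{E}\,\ptime{0,z}\ge -c\norm{z}\int_{-\infty}^{0}\underline{F}(s)^{1/2}\,ds$, which is precisely where the $2+\epsilon$ moment hypothesis is consumed; this is the content of \cref{lem:MartinPTBounds} in \cref{app:UniformShape} (borrowed from Martin's work), and the same bound is also what guarantees the almost-sure lower control you implicitly use when passing to irrational directions and to $L^1$ convergence in Step 4. If you either add this lemma or restrict to nonnegative weights, your argument is complete; as written, the finiteness of $\gamma(k)$ and hence of $\ell$ is unsupported.
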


The rest of the section consists largely of proving the following improved convergence, which is a small extension of Theorem 5.1 in \cite{mart-limit-shape}. 

\begin{theorem}
	\label{thm:UniformLimitShape}
	Almost surely
	\begin{equation}
		\lim_{n \to \infty}\frac{1}{n}\max_{x \in \bZ_{+}^{2}, \abs{x}_{1} = n}\abs{L(x) - \ell(x)} = 0.
	\end{equation}
\end{theorem}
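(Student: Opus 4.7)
The approach adapts Martin's strategy for vertex-weight LPP \cite[Theorem 5.1]{mart-limit-shape}. The plan has three ingredients: (i) pointwise almost-sure convergence on a dense subset of the simplex $\set{(s, 1-s) : s \in [0, 1]}$; (ii) continuity of $\ell$ up to the boundary of this simplex; and (iii) a finite-mesh sandwich that upgrades pointwise convergence to uniform convergence.

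Step (i) is immediate from \cref{thm:BasicLimitShape}: on a single full-probability event, $L(\floor{ns}, \floor{n(1-s)})/n \to \ell(s, 1-s)$ simultaneously for every rational $s \in [0, 1]$, and the SLLN along the axes adds the endpoints $s \in \set{0, 1}$ with $\ell(1, 0) = \Ex{W(0; 1)}$ and $\ell(0, 1) = \Ex{W(0; 2)}$. Step (ii) is more delicate. Convexity of $\ell$ gives continuity on the open interior and one-sided continuity at the endpoints, namely $\lim_{s \to 0^+} \ell(s, 1-s) \leq \ell(0, 1)$ (combined with the trivial axis-path upper bound $\ell(s, 1-s) \leq s \Ex{W(0;1)} + (1-s)\Ex{W(0;2)}$). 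For the matching lower bound at the boundary one needs a near-axis estimate in the spirit of \cite[Theorem 2.4]{mart-limit-shape}, extended from the vertex-weight to the edge-weight setting --- this is the ``universal near-axis asymptotics'' the main text alludes to, obtained by adapting Martin's argument edge by edge.

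For step (iii), fix $\epsilon > 0$ and pick a finite rational mesh $0 = s_0 < s_1 < \cdots < s_m = 1$ with $\abs{\ell(s_i, 1-s_i) - \ell(s_{i-1}, 1-s_{i-1})} < \epsilon$ for every $i$ (possible by the uniform continuity of $\ell$ on the compact simplex from step (ii)). For $s \in [s_{i-1}, s_i]$ write $v = (\floor{ns}, \floor{n(1-s)})$, $v^- = (\floor{ns_{i-1}}, \floor{n(1-s_i)})$, and $v^+ = (\floor{ns_i}, \floor{n(1-s_{i-1})})$, so that $v^- \leq v \leq v^+$ with $L^1$-distance from $v$ to either $v^{\pm}$ bounded by $n(s_i - s_{i-1})$. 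Subadditivity of the passage time then yields the sandwich
\begin{equation*}
    L(v^+) - L(v, v^+) \leq L(v) \leq L(v^-) + L(v^-, v),
\end{equation*}
in which the endpoint terms $L(v^{\pm})/n$ are within $O(\epsilon)$ of $\ell(s, 1-s)$ by the pointwise convergence at the rationals $s_{i-1}, s_i$ and the mesh choice.

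The main obstacle is uniform control of the corrector passage times $L(v^-, v)$ and $L(v, v^+)$. In Martin's LPP setting, the monotonicity $T(x) \leq T(y)$ for $x \leq y$ makes the analogous sandwich hold with no correctors at all; here the FPP minimum destroys this monotonicity, so the correctors must be bounded separately. I would invoke the finite-moment hypothesis of \cref{ass:WeightAssump} in a truncation and maximal-inequality argument (in the spirit of the standard proof of the shape theorem for i.i.d.\ FPP) to show that almost surely
\begin{equation*}
    \limsup_{n \to \infty} \frac{1}{n} \max_{i,\, s \in [s_{i-1}, s_i]} L(v^-, v) \leq C \max_i (s_i - s_{i-1}),
\end{equation*}
and similarly for the other corrector. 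Refining the mesh and then sending $\epsilon \to 0$ closes the argument.
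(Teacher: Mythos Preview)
Your route is genuinely different from the paper's. The paper follows Martin almost verbatim: it first proves the theorem for \emph{bounded} weights using a vector-valued Talagrand concentration inequality (\cref{lem:MartinConcentration}) --- this gives exponential tails on $|L(z)-\Ex{L(z)}|$, so Borel--Cantelli over all lattice points yields uniform convergence directly, with no mesh sandwich and no correctors. Boundary continuity of $\ell$ (\cref{lem:MartinUniformBoundary,lem:MartinContinuousShape}) is obtained from the same concentration estimate, not from near-axis asymptotics. The unbounded case is then handled by truncation: the key device is sandwiching the edge-weight model between two \emph{vertex}-weight models via $\underline{L}\le L\le\overline{L}$ (\cref{lem:VertexBound}), which lets the paper recycle Martin's lattice-animal bounds verbatim for the truncation error (\cref{lem:MartinTruncationApprox,lem:MartinTruncPassageToPassage}).

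Your mesh-sandwich-plus-correctors strategy is more elementary in that it avoids Talagrand entirely, but two of your steps are heavier than you indicate. First, for boundary continuity you invoke ``near-axis asymptotics in the spirit of \cite[Theorem~2.4]{mart-limit-shape}''; that is actually a stronger statement than needed, and Martin's own proof of continuity (his Lemma~3.2) already goes through the concentration inequality --- so you are either importing the same machinery by the back door or committing to a separate non-trivial argument you have not supplied. Second, the corrector control is not the ``standard FPP shape theorem'' situation: in undirected FPP the passage times are nonnegative and the correctors are trivially $O(n\delta)$, while here weights can be negative and the single-path bound must be made uniform over $O(n)$ starting rows that \emph{change with $n$}. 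This can be done with a direct Borel--Cantelli using the $2+\epsilon$ moments (Rosenthal plus Doob's maximal inequality gives $\Prob{\max_{j\le k}|S_j|>\eta k}\le Ck^{-1-\epsilon/2}$, and a union over the $O(n\delta)$ relevant columns is still summable), but it is a real computation, not a citation. Once those two pieces are filled in your argument is complete, and arguably cleaner for readers who do not want to unpack the isoperimetric inequality; the paper's approach, on the other hand, gives quantitative tail bounds and makes the extension from bounded to unbounded weights entirely modular.
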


The argument in \cite{mart-limit-shape} deals with vertex weights, but can be carried out in the same fashion for edge weights. It relies essentially on only two facts: that in a directed model the number of steps (in each direction) in a geodesic is deterministic; and that one has a powerful concentration inequality for passage times of bounded weights \cite[Theorem 8.1.1]{tala-concentration}. As we are only interested in the planar case we specialise the proof to dimension $d = 2$, but in principle the same steps can be carried out for any $d \ge 1$.

The proof goes through a number of lemmas, which we restate for our edge-weight FPP setup. The arguments for the most part require very little modification. The only subtlety is in Martin's Lemma 3.1, which is the statement of the concentration inequality which the proof ultimately rests on. The statement in our context is:

\begin{lemma}
	\label{lem:MartinConcentration}
	Let $X_i$, $i \in I$, be a finite collection of independent random variables taking values in $[0, L]^{d}$ and write $X_{i, k}$ for the $k$-th component of $X_i$. Let $\cC$ be a set of subsets of $I \times [d]$, such that 
	\begin{equation}
		\max_{C \in \cC} \abs{C}  \le R,
	\end{equation}
	and additionally such that if $C \in \cC$ and $(i, k_1), (i, k_2) \in C$, then $k_1 = k_2$. Set
	\begin{equation}
		Z = \max_{C \in \cC} \sum_{(i, k) \in C} X_{i, k}.
	\end{equation}
	Then for any $u > 0$, 
	\begin{equation}
		\Prob{\abs{Z - \Ex{Z}} > u} \le \exp\brac*{-\frac{u^2}{64RL^2} + 64}.
	\end{equation}
\end{lemma}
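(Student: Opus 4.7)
The plan is to reproduce Martin's scalar argument \cite{mart-limit-shape} essentially verbatim, treating each $X_i \in [0,L]^d$ as a single independent vector-valued random variable living in the product space $\prod_{i \in I} [0,L]^d$. The constraint imposed on $\cC$---that no $i \in I$ appears with two distinct second coordinates in any $C$---is tailored precisely to reduce the vector case to the scalar case without loss. For each $C \in \cC$, let $I(C) := \set{i \in I : (i,k) \in C \text{ for some } k}$; by hypothesis $|I(C)| \le R$, and there is a well-defined map $\phi_C : I(C) \to [d]$ with $f_C(X) := \sum_{(i,k) \in C} X_{i,k} = \sum_{i \in I(C)} X_{i, \phi_C(i)}$, a sum of at most $R$ scalars, one coordinate drawn from each of at most $R$ of the independent vectors $X_i$.

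From this decomposition the two structural properties needed for Talagrand's concentration are immediate: (i) $f_C$ depends only on the vectors $(X_i)_{i \in I(C)}$, and (ii) if a single $X_i$ is replaced by any other element of $[0,L]^d$, the value of $f_C$ changes by at most $L$, and only if $i \in I(C)$. Correspondingly, for any realisation $X$ and any optimiser $C^*(X)$ of $Z(X) = \max_C f_C(X)$, the pair $(C^*(X), (X_i)_{i \in I(C^*)})$ is a certificate of size at most $R$ in the sense of Talagrand: any $\widetilde{X}$ agreeing with $X$ on $I(C^*)$ satisfies $Z(\widetilde{X}) \ge f_{C^*}(\widetilde{X}) = Z(X)$, and within-coordinate modifications are $L$-Lipschitz.

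These are exactly the hypotheses under which Talagrand's convex distance inequality (\cite[Theorem 8.1.1]{tala-concentration}, as applied in \cite[Lemma 3.1]{mart-limit-shape}) yields the sub-Gaussian tail with variance proxy of order $R L^2$ rather than the cruder $|I| L^2$ one would obtain from bounded differences. Running Martin's derivation line by line, with each scalar variable replaced by its vector counterpart and each modification of a single coordinate estimated by $L$, produces the advertised bound $\exp(-u^2 / 64 R L^2 + 64)$.

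The only real issue is a bookkeeping one: verifying that the specific numerical constants ($64$ and $64 R L^2$) in Martin's statement transfer verbatim. Since the Lipschitz constant per variable and the certificate size are unchanged by the passage to vector-valued $X_i$, nothing substantive in the argument is altered, and for the subsequent use in the proof of \cref{thm:UniformLimitShape} only the qualitative exponential decay at rate $1/R$ in the variance proxy is needed.
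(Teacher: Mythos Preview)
Your proposal is correct and follows essentially the same route as the paper: both reduce to Talagrand's concentration via the certificate/Lipschitz structure, using the constraint on $\cC$ to ensure each sum picks at most one coordinate from each independent vector $X_i$. The only cosmetic difference is that the paper records an explicit vector-valued version of Talagrand's passage-time theorem (citing the isoperimetric inequality in the form given in Alon--Spencer) before invoking Martin's argument, whereas you absorb this step into the observation that the underlying convex-distance inequality already applies to arbitrary product spaces and so Martin's derivation transfers verbatim.
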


The proof in \cite[Lemma 5.1]{martin-lattice-animals} is a direct application of an inequality due to Talagrand \cite[Theorem 8.1.1]{tala-concentration} on the concentration of passage times, which itself is a quick consequence of his isoperimetric inequality, as stated in \cite[Theorem 4.1.1]{tala-concentration}. An appropriate vector-valued version of the isoperimetric inequality, such as the one found in Section 7.6 of \cite{alon-spen}, gives the corresponding vector-valued version of his passage time concentration.

\begin{theorem}
	Let $(X_{i})_{i \le N}$ be a collection of independent random variables with $X_{i} \in [0, 1]^{d_{i}}$, and for $1 \le k \le d_i$ write $X_{i, k}$ for the $k$-th component of $X_i$. Consider a family $\cF$ of $N$-tuples of pairs $(\alpha_{i}, k_{i})_{i \le N}$, where $\alpha_{i} \ge 0$ is a non-negative coefficient and $k_i$ is an index with $1 \le k_{i} \le d_{i}$. Set $\sigma = \sup_{(\alpha, k) \in \cF}\norm{\alpha}_{2}$. Define a maximum over this family
	\begin{equation}
		Z = \sup_{(\alpha, k) \in \cF}\sum_{i \le N}\alpha_{i}X_{i, k_i}.
	\end{equation}
	If $M$ is the median of $Z$, then for all $u > 0$ we have
	\begin{equation}
		\Prob{\abs{Z - M} \ge u} \le 4\exp\brac*{- \frac{u^{2}}{4 \sigma^{2}}}.
	\end{equation}
\end{theorem}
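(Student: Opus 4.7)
The plan is to derive \cref{lem:MartinConcentration} directly from the vector-valued Talagrand inequality stated immediately after it, using only elementary scaling and the standard median-to-mean conversion. No probabilistic input beyond that theorem is required.

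First I would reduce to the unit cube setting by replacing each $X_i$ with $L^{-1}X_i$, which takes values in $[0,1]^{d}$, and noting that the corresponding maximum is $L^{-1}Z$. So it suffices to prove the bound with $L=1$ and then multiply $u$ by $L$ at the end. Next, I would recast the combinatorial family $\cC$ in the language required by the theorem. For each $C\in\cC$, the constraint that no index $i$ appears with two distinct second coordinates means there is a well-defined map $i\mapsto k^{C}_{i}$ on the projection of $C$ to $I$; extend $k^{C}_{i}$ arbitrarily when $i$ is not in the projection, and set $\alpha^{C}_{i}=\bone\set{(i,k^{C}_{i})\in C}$. Then
\begin{equation}
\sum_{i}\alpha^{C}_{i}X_{i,k^{C}_{i}}=\sum_{(i,k)\in C}X_{i,k},
\qquad
\norm{\alpha^{C}}_{2}^{2}=\abs{C}\le R,
\end{equation}
so letting $\cF=\set{(\alpha^{C},k^{C}):C\in\cC}$ we have $Z=\sup_{(\alpha,k)\in\cF}\sum_{i}\alpha_{i}X_{i,k_{i}}$ with $\sigma^{2}\le R$. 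The Talagrand theorem then gives, with $M$ the median of $Z$,
\begin{equation}
\Prob{\abs{Z-M}\ge u}\le 4\exp\brac*{-\frac{u^{2}}{4R}}
\end{equation}
in the unit cube case, and $4\exp(-u^{2}/(4RL^{2}))$ after rescaling.

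The remaining step is to replace $M$ by $\Ex{Z}$. Integrating the tail bound yields
\begin{equation}
\abs{\Ex{Z}-M}\le\Ex\abs{Z-M}=\int_{0}^{\infty}\Prob{\abs{Z-M}>u}\,du\le 4L\sqrt{\pi R},
\end{equation}
and therefore for any $u>0$,
\begin{equation}
\Prob{\abs{Z-\Ex Z}\ge u}\le 4\exp\brac*{-\frac{(u-4L\sqrt{\pi R})_{+}^{2}}{4RL^{2}}}.
\end{equation}
When $u\ge 8L\sqrt{\pi R}$ the squared numerator dominates $u^{2}/4$, so the right-hand side is at most $4\exp(-u^{2}/(16RL^{2}))$; when $u<8L\sqrt{\pi R}$ the bound is trivial because the right-hand side of the target inequality $\exp(-u^{2}/(64RL^{2})+64)$ already exceeds $1$. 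Comparing constants, the factor $4$ and the slack between $16$ and $64$ in the denominator are easily absorbed by the additive $+64$ in the exponent, which is precisely what that term is there for. There is no real obstacle in this argument; the only point that requires care is verifying the combinatorial hypothesis on $\cF$ in the Talagrand theorem, which is exactly why the lemma assumes that no $(i,k_{1}),(i,k_{2})\in C$ share the index $i$ with $k_{1}\ne k_{2}$.
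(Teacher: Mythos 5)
There is a fundamental mismatch here: the statement you were asked to prove is the vector-valued Talagrand inequality itself (median concentration for $Z = \sup_{(\alpha,k)\in\cF}\sum_i \alpha_i X_{i,k_i}$ at scale $\sigma$), but your proposal takes exactly this theorem as given and uses it to derive \cref{lem:MartinConcentration}. In other words, you have assumed the thing to be shown and proved the neighbouring lemma instead. Nothing in your argument addresses why the supremum of these linear functionals, indexed by coefficient vectors of bounded $\ell^2$ norm together with a choice of coordinate $k_i$ for each $X_i$, concentrates around its median with the Gaussian-type tail $4\exp(-u^2/4\sigma^2)$. That is the actual content of the statement, and it requires a genuine probabilistic input: the paper obtains it by invoking a vector-valued version of Talagrand's convex-distance (isoperimetric) inequality, as in Section 7.6 of Alon--Spencer, and then running the same argument by which Talagrand deduces his Theorem 8.1.1 (concentration of passage times) from the scalar isoperimetric inequality --- namely, for a level set $A=\set{Z\le a}$ one shows that any point at convex distance less than $t$ from $A$ has $Z$ exceeding $a$ by at most $\sigma t$, because the relevant functional is a supremum of linear forms with coefficients of $\ell^2$ norm at most $\sigma$ in the coordinates where the configurations differ; the isoperimetric bound $\Prob{A}\Prob{d_T(\cdot,A)\ge t}\le e^{-t^2/4}$ then yields the two-sided median bound. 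Your proposal supplies none of this, so as a proof of the stated theorem it has a complete gap.

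As a secondary remark, the reduction you do carry out (recasting $\cC$ as a family $\cF$ with indicator coefficients so that $\sigma^2\le R$, rescaling by $L$, and converting median to mean by integrating the tail, absorbing the loss into the $+64$ in the exponent) is essentially correct and is indeed how the paper passes from the theorem to \cref{lem:MartinConcentration}, following Martin's Lemma 5.1. But that step is explicitly what the paper does \emph{after} the theorem; it cannot substitute for a proof of the theorem itself.
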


Now \cref{lem:MartinConcentration} can be proved in precisely the same way as in \cite[Lemma 5.1]{martin-lattice-animals}.

One can use the strong control in the bounded case to prove \cref{thm:UniformLimitShape} for these weights. We follow Martin and begin with continuity of the limit shape.

\begin{lemma}
	\label{lem:MartinUniformBoundary}
	Let $X_i$, $i \in I$, be a finite collection of independent random variables taking values in $[0, L]^{d}$ and take $\epsilon > 0$. Then there is $\delta > 0$ such that if $x \in \bR^2_+$ and $\norm{x} \le 1$, with $x_1 = 0$, then
	\begin{equation}
		\abs{\ell(x + h e_1) - \ell(x)} < \epsilon
	\end{equation}
	for all $0 \le h \le \delta$.
\end{lemma}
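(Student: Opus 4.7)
The plan is to split $\abs{\ell(x + he_1) - \ell(x)} < \epsilon$ into upper and lower bounds. By positive homogeneity of $\ell$, we may reduce to $x = (0, x_2)$ with $x_2 \in [0, 1]$; in this case the only directed path from $0$ to $(0, \lfloor n x_2 \rfloor)$ is the straight vertical one, so by the SLLN $\ell(x) = x_2 \Ex{W(0; 2)}$.

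The upper bound is immediate. Subadditivity of the passage times and the pointwise bound $W \le L$ give $\ptime{0, n(x + he_1)} \le \ptime{0, nx} + \lceil nh \rceil L$; dividing by $n$ and letting $n \to \infty$ yields $\ell(x + he_1) - \ell(x) \le hL$, so any $\delta < \epsilon/(2L)$ handles this direction.

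The delicate direction is the lower bound. Every directed path from the origin to $n(x + he_1)$ uses exactly $\lfloor nx_2 \rfloor$ vertical edges at some non-decreasing horizontal positions $0 \le j_1 \le \cdots \le j_{\lfloor nx_2 \rfloor} \le \lceil nh \rceil$; since horizontal weights are non-negative,
\begin{equation*}
\ptime{0, n(x + he_1)} \ge \min_{\vec j} \sum_{k = 1}^{\lfloor nx_2 \rfloor} W((j_k, k); 2).
\end{equation*}
The number of admissible staircases is $N = \binom{\lfloor nx_2 \rfloor + \lceil nh \rceil}{\lceil nh \rceil} = \exp(n \mathcal{H}(h, x_2) + o(n))$, where $\mathcal{H}$ denotes the binomial entropy and, crucially, $\mathcal{H}(h, x_2) \le \mathcal{H}(h, 1) \to 0$ as $h \to 0$. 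For any fixed staircase the summands are i.i.d.\ with common bounded support $[0, L]$, so Hoeffding together with a union bound gives, for any $t > 0$,
\begin{equation*}
\Prob*{\min_{\vec j} \sum_k W((j_k, k); 2) \le \lfloor nx_2 \rfloor \Ex{W(0;2)} - n t} \le N \exp\brac*{-\frac{2 n t^2}{x_2 L^2}}.
\end{equation*}
The choice $t = L\sqrt{x_2 \mathcal{H}(h, x_2)} \le L\sqrt{\mathcal{H}(h, 1)}$ makes the right-hand side summable in $n$, so Borel-Cantelli together with the existence of $\lim_n \ptime{0, n(x+he_1)}/n = \ell(x + he_1)$ gives $\ell(x + he_1) \ge \ell(x) - L\sqrt{\mathcal{H}(h, 1)}$, uniformly in $x_2 \in [0, 1]$. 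Since $\mathcal{H}(h, 1) \to 0$ as $h \to 0$, this is within $\epsilon/2$ of $\ell(x)$ once $\delta$ is chosen sufficiently small, and combined with the upper bound this proves the lemma.

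The main obstacle will be recognising that the monotonicity constraint on the staircase is exactly what prevents the minimum from dropping all the way to the much smaller independent per-row minimum value; instead, the gap is controlled by the exponentially-smaller entropy $\mathcal{H}(h, x_2)$, which vanishes uniformly on $x_2 \in [0,1]$ as $h \to 0$. Once this observation is in hand, the remaining work is a routine union-bound concentration argument of precisely the type established in \cref{lem:MartinConcentration}.
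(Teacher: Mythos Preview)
Your argument is correct. The paper itself gives no proof here, simply pointing to Martin's Lemma~3.2 and stating that it goes through word-for-word; Martin's route is built on the Talagrand-type concentration inequality (our \cref{lem:MartinConcentration}) applied to the full passage time $Z = \ptime{0, n(x + he_1)}$, after which one controls $\Ex Z$ by a path-counting bound of the same binomial/entropy flavour you use. Your approach short-circuits this: rather than concentrate the whole passage time and then estimate its mean, you drop the non-negative horizontal contributions, apply Hoeffding to each individual staircase, and union-bound over the $\binom{k+m}{m}$ staircases directly. The key quantitative input --- that the path entropy $\mathcal{H}(h,1) = (1+h)\log(1+h) - h\log h$ vanishes as $h \to 0$ uniformly over $x_2 \in [0,1]$ --- is the same in both arguments. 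What your version buys is that it avoids invoking \cref{lem:MartinConcentration} altogether for this particular lemma, at the cost of being specific to the boundary direction (Martin's concentration is reused throughout his Section~3). One small point worth making explicit in a write-up: the summability rate $\exp(-n\mathcal{H}(h,x_2))$ from your choice of $t$ degenerates as $x_2 \to 0$, but this is harmless because the conclusion is a deterministic inequality for $\ell$ and you only need Borel--Cantelli pointwise in $(h, x_2)$; the uniformity in $x_2$ enters only through the bound $x_2\mathcal{H}(h,x_2) \le \mathcal{H}(h,1)$ on $t$ itself, which you note.
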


\begin{lemma}
	\label{lem:MartinContinuousShape}
	Suppose $\abs{W_i(x)} < M$ for some $M > 0$. Then $\ell$ is continuous on $\bR_+^2$.
\end{lemma}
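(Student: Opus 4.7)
The plan is to combine convexity of $\ell$ with positive homogeneity and the boundary estimate from \cref{lem:MartinUniformBoundary}. First, \cref{thm:BasicLimitShape} already asserts that $\ell$ is convex on $\bR^2_{\ge 0}$; it is also positively homogeneous of degree $1$, directly from the definition (replacing $n$ with $\lambda n$). A convex function is automatically continuous on the interior of its (convex) domain, so continuity is automatic on $\bR^2_{>0}$. The remaining work is thus at points of the two half-axes and at the origin.

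Next I would reduce continuity along the vertical axis to continuity at a single point by homogeneity. Given $(0, t_0)$ with $t_0 > 0$, and any sequence $(s_n, t_n) \to (0, t_0)$, for $n$ large $t_n > 0$ and I would write
\begin{equation}
\ell(s_n, t_n) = t_n \, \ell\brac*{\frac{s_n}{t_n},\, 1}.
\end{equation}
Since $s_n/t_n \to 0$, \cref{lem:MartinUniformBoundary} applied at $x = (0, 1)$ gives $\ell(s_n/t_n,\, 1) \to \ell(0, 1)$, and combined with $t_n \to t_0$ and homogeneity this yields $\ell(s_n, t_n) \to t_0 \ell(0, 1) = \ell(0, t_0)$. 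The symmetric statement on the horizontal axis uses \cref{lem:MartinUniformBoundary} with the roles of $e_1$ and $e_2$ swapped (which holds by the same proof).

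For continuity at the origin, the boundedness of the weights gives a crude linear upper bound: every directed path from $0$ to $(m, k)$ uses exactly $m + k$ edges, each of weight at most $M$, so $L(m, k) \le M(m + k)$ and hence $0 \le \ell(s, t) \le M(s + t)$. Thus $\ell(s, t) \to 0 = \ell(0, 0)$ as $(s, t) \to (0,0)$.

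The only real content here is \cref{lem:MartinUniformBoundary}, which does the analytic work of controlling $\ell$ as one approaches the axis from the interior; after that, the proof is essentially bookkeeping linking convex-function continuity on the interior, homogeneity, and the bounded-weight bound at the origin. The main obstacle I would anticipate is simply making sure the one-sided convergence provided by \cref{lem:MartinUniformBoundary} is genuinely uniform in the axis parameter, so that the scaling argument at a general axis point $(0, t_0)$ goes through without needing to reprove the boundary continuity statement in each direction.
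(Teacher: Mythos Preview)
Your argument is correct and follows the standard route; the paper itself does not give a proof, simply remarking that \cite[Lemma 3.3]{mart-limit-shape} goes through word-for-word. Two small points: first, the lower bound $0 \le \ell(s,t)$ near the origin is not justified, since under \cref{ass:WeightAssump} the weights may be negative --- the correct bound is $\abs{\ell(s,t)} \le M(s+t)$, which follows identically and still gives continuity at the origin. Second, your concern about uniformity in \cref{lem:MartinUniformBoundary} is unnecessary: because homogeneity lets you rescale every approaching sequence to hit the axis at $(0,1)$ (or $(1,0)$), you only ever invoke the boundary lemma at that single point, so pointwise continuity there already suffices.
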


The proofs of \cite[Lemmas 3.2, 3.3]{mart-limit-shape} go through word-for-word\footnote{Martin uses $T$ for the passage times in place of $L$ and has weights $X(v)$ rather than $W(e)$.}. 

We now give \cref{thm:UniformLimitShape} for bounded weights. This is a combination of Lemmas 5.3, 5.4 in \cite{mart-limit-shape}. The proofs go through word-for-word\footnote{Again, with the passage times $T$ and weights $X(v)$}.

\begin{lemma}
	\label{lem:MartinBoundedPassageToShape}
	Suppose $\abs{W_i(x)} < M$ for some $M > 0$, and let $\epsilon > 0$ be given. Then almost surely, we have for all but finitely many $z \in \bZ^2_+$ that
	\begin{equation}
		\abs{L(z) - \ell(z)} \le \epsilon \norm{z}.
	\end{equation}
\end{lemma}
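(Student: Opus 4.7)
The plan is to decompose the error $\abs{L(z) - \lshape(z)}$ into a fluctuation piece $\abs{L(z) - \Ex{L(z)}}$ and a mean piece $\abs{\Ex{L(z)} - \lshape(z)}$, control each uniformly over the antidiagonal $\set{z : \abs{z}_1 = n}$, then combine via the triangle inequality and diagonalise over $\epsilon \in \set{1/k}_{k \ge 1}$ to promote the statement from fixed $\epsilon$ to all but finitely many $z$.

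For the fluctuation piece I would invoke \cref{lem:MartinConcentration}, taking the indexing set to be the vertices $v \in \bZ^2_{\ge 0}$ with $\abs{v}_1 \le n$, the random variables $X_v = (\weight{v - e_1, v}, \weight{v - e_2, v}) \in [0, M]^2$, and the family $\cC$ to collect, for each target $z$, the edges of a single up-right path from $0$ to $z$. This yields
\[
\Prob{\abs[\big]{L(z) - \Ex{L(z)}} > \epsilon n / 3} \le \exp\brac[\big]{- c(M) \epsilon^2 n + 64}.
\]
Since there are only $n + 1$ vertices with $\abs{z}_1 = n$, a union bound makes the probabilities summable, and Borel–Cantelli gives almost surely $\max_{\abs{z}_1 = n} \abs{L(z) - \Ex{L(z)}} \le \epsilon n / 3$ for all but finitely many $n$.

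For the mean piece, uniform continuity of $\lshape$ on the compact simplex $\set{\xi \in \bR^2_{\ge 0} : \abs{\xi}_1 = 1}$, furnished by \cref{lem:MartinContinuousShape}, yields a finite set of reference directions $\xi_1, \dots, \xi_K$ such that every $\xi$ in the simplex is within $L^1$-distance $\delta$ of some $\xi_j$ and $\abs{\lshape(\xi) - \lshape(\xi')} < \epsilon / 12$ whenever $\abs{\xi - \xi'}_1 \le 3\delta$. \cref{thm:BasicLimitShape} gives $\abs{\Ex{L(\floor{n\xi_j})} - n \lshape(\xi_j)} < \epsilon n / 12$ for every $j$ simultaneously once $n$ is large. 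To extend from these finitely many base points to arbitrary $z$ with $\abs{z}_1 = n$, I would sandwich $\Ex{L(z)}$ and $\Ex{L(\floor{n\xi_j})}$ between $\Ex{L(z^{\vee})}$ and $\Ex{L(z^{\wedge})}$, where $z^{\wedge} = z \wedge \floor{n\xi_j}$, $z^{\vee} = z \vee \floor{n\xi_j}$. The one-sided Lipschitz inequality $\Ex{L(x)} \le \Ex{L(y)} + M\abs{x-y}_1$ for $y \le x$ (immediate by extending a geodesic from $y$ to $x$ with bounded weights), combined with subadditivity $\Ex{L(z^{\vee})} \le \Ex{L(z^{\wedge})} + \Ex{L(z^{\vee} - z^{\wedge})} \le \Ex{L(z^{\wedge})} + 2M\delta n$ and the pointwise bound at the nearest $\xi_j$, will pin both $\Ex{L(z)}$ and $\Ex{L(\floor{n\xi_j})}$ within $O(M\delta n + \epsilon n / 12)$ of $n \lshape(\xi_j)$; choosing $\delta$ small relative to $\epsilon / M$ closes the estimate.

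The hard part is this final interpolation. Unlike LPP, where $L$ is monotone in the endpoint and uniform mean convergence follows effortlessly from the pointwise result on the finite grid, the FPP passage time is not monotone in $z$ (more edges means more candidate paths, so the minimum can decrease), and only the one-sided Lipschitz bound is available. The resolution will be to interleave this bound with the subadditivity of both $\Ex{L}$ and the limit shape $\lshape$, using the pointwise convergence at the reference directions to bootstrap uniform control on the full antidiagonal.
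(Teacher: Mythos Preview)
Your approach is correct and is precisely the strategy the paper defers to (it cites Martin's Lemmas and says the proofs go through word-for-word): decompose into a fluctuation piece handled by \cref{lem:MartinConcentration} plus a union bound and Borel--Cantelli, and a mean piece handled by continuity of $\lshape$ together with interpolation from finitely many reference directions.

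Your concern about the loss of endpoint monotonicity relative to LPP is legitimate --- Martin does use it --- but the repair is lighter than the $z^{\wedge},\,z^{\vee}$ sandwich you propose. The lower bound $\Ex{L(z)} \ge \lshape(z)$ holds automatically for every $z \in \bZ_+^2$, since the subadditive limit is realised as $\inf_n n^{-1}\Ex{L(nz)}$. Thus only the \emph{upper} bound on $\Ex{L(z)}$ requires interpolation, and for that the one-sided extension estimate $\Ex{L(z)} \le \Ex{L(y)} + M\abs{z - y}_1$ for $y \le z$ suffices directly: choose $m$ slightly below $n$ so that $\floor{m\xi_j} \le z$, and compare. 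Also remember to shift the weights into $[0, 2M]$ (cf.\ the remark after \cref{ass:WeightAssump}) before invoking \cref{lem:MartinConcentration}, which is stated for non-negative entries, and to apply the lemma to $nM - L(z) = \max_{\pi}\sum_{e \in \pi}(M - W(e))$ rather than to $L(z)$ itself.
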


Having proved these lemmas for bounded weights, Martin proceeds to generalise to unbounded distributions satisfying a certain decay assumption. Namely, we need $\int_0^{\infty} (1 - F(s))^{1 / 2}\, ds < \infty$, where $F$ is the distribution of the vertex weights in the LPP model Martin considers. After taking negatives to bring us into FPP, the condition becomes $\int_{-\infty}^{0} F(s)^{1 / 2}\, ds < \infty$. This condition is automatically implied by the existence of $2 + \epsilon$ moments.

As our inequalities do not have to be especially sharp, we bound our edge-weight model between vertex-weight models and apply Martin's results to these. Given a directed path $\pi = (x = \pi_{0}, \pi_{1}, \dots, \pi_{n} = y)$, define upper and lower vertex passage times
\begin{align}
	\overline{L}(\pi) &= \sum_{i = 0}^{n - 1} \brac[\big]{W(\pi_i; 1) \vee W(\pi_i; 2)},\\
	\underline{L}(\pi) &= \sum_{i = 0}^{n - 1} \brac[\big]{W(\pi_i; 1) \wedge W(\pi_i; 2)}.
\end{align}
We define $\overline{L}(x, y)$ and $\underline{L}(x, y)$ as minimums over admissible paths, as before.

\begin{lemma}
	\label{lem:VertexBound}
	In the notation above,
	\begin{equation}
		\underline{L}(x, y) \le L(x, y) \le \overline{L}(x, y).
	\end{equation}    
	\begin{proof}
		Let $\pi$ be a minimising path for $\overline{L}(x, y)$. Then 
		\begin{equation}
			\overline{L}(\pi) = \sum_{i = 0}^{n - 1} \brac[\big]{W(\pi_i; 1) \vee W(\pi_i; 2)} \ge \sum_{i = 0}^{n - 1} W(\pi_i, \pi_{i + 1}) = L(\pi),
		\end{equation} 
		and in turn $L(\pi) \ge L(x, y)$. Similarly for $\underline{L}(x, y)$.
	\end{proof}
\end{lemma}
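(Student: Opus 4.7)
The plan is to establish both inequalities by a path-by-path comparison, exploiting the convention $W(x; i) = W(x - e_i, x)$. Under this convention, for a directed path $\pi = (\pi_0, \dots, \pi_n)$ the edge traversed at step $i$ has weight $W(\pi_i, \pi_{i+1})$ which coincides with exactly one of $W(\pi_{i+1}; 1)$ or $W(\pi_{i+1}; 2)$, according to whether that step is horizontal or vertical.

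First I would show that for every admissible path $\pi$ the three path-level passage times satisfy $\underline{L}(\pi) \le L(\pi) \le \overline{L}(\pi)$. This is a termwise inequality: each edge weight $W(\pi_i, \pi_{i+1})$ is one of the two values $W(\pi_{i+1}; 1), W(\pi_{i+1}; 2)$, hence is sandwiched between their minimum and maximum. Summing over $i = 0, \dots, n-1$ (adjusting the indexing of the vertex sums so they range over the $n$ vertices used to label the $n$ edges) gives the desired comparison. The indexing ambiguity between using $\pi_i$ or $\pi_{i+1}$ as the ``weighted vertex'' is harmless, since both conventions yield equivalent vertex-weight FPP models with i.i.d. weights.

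Finally, I would promote the path-level inequalities to passage-time inequalities by the usual minimum-over-paths argument. For the upper bound, let $\pi^*$ minimise $\overline{L}$ among paths from $x$ to $y$; then $L(x, y) \le L(\pi^*) \le \overline{L}(\pi^*) = \overline{L}(x, y)$. For the lower bound, let $\pi$ be a minimising path for $L(x, y)$; then $\underline{L}(x, y) \le \underline{L}(\pi) \le L(\pi) = L(x, y)$. There is no real obstacle here: the lemma is simply the observation that the edge-weight model is pointwise sandwiched between the two vertex-weight models obtained by taking componentwise max and min of the two weights incident at each vertex, which is exactly what is needed to import Martin's uniform convergence (via \cref{lem:MartinBoundedPassageToShape}) from the vertex-weight setting to ours.
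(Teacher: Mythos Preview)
Your proposal is correct and follows essentially the same approach as the paper: a termwise path-level comparison followed by a minimum-over-paths argument. You are in fact slightly more careful than the paper in flagging the $\pi_i$ versus $\pi_{i+1}$ indexing convention for the vertex weights, which the paper glosses over but which is indeed harmless for the intended application.
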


Write $\overline{F}$ and $\underline{F}$ for the distributions of $W(0; 1) \vee W(0; 2)$ and $W(0; 1) \wedge W(0; 2)$, respectively. These again have finite $2 + \epsilon$ moments.

\begin{lemma}
	\label{lem:MartinPTBounds}
	There exists $c$ (independent of the weight distribution) such that:
	\begin{thmlist}
		\item for all $z \in \bZ_+^2$,
		\begin{equation}
			\Ex{L(z)} \ge -c \norm{z} \int_{-\infty}^0 \underline{F}(s)^{1 / 2}\, ds.
		\end{equation}
		\label[lem]{lem:MartinPTBounds1}
		\item with probability 1,
		\begin{equation}
			\label{eq:MartinPTBounds2}
			\liminf_{n \to \infty}\frac{1}{n}\min_{\norm{z}_1 \le n} L(z) \ge -c \int_{-\infty}^0 \underline{F}(s)^{1 / 2}\, ds.
		\end{equation}
		\label[lem]{lem:MartinPTBounds2}
		\item for all $x \in \bR_+^2$, 
		\begin{equation}
			\sum_{i = 1}^{2}\inner{x, e_i}\Ex{W(0; i)} \ge \ell(x) \ge -c \norm{x} \int_{-\infty}^0 \underline{F}(s)^{1 / 2}\, ds.
		\end{equation}
		\label[lem]{lem:MartinPTBounds3}
	\end{thmlist}
	\begin{proof}
		The lower bounds all come from taking negatives in \cite[Lemma 3.5]{mart-limit-shape} and applying the statement to the lower vertex passage times $\underline{L}$. For the upper bound in (iii) we need only follow Martin's calculation. Let $\widetilde{\pi} \in \Pi(z)$ be some path connecting $z$ to the origin. Then
		\begin{align}
			\Ex{L(z)} &= \bE \min_{\pi} \sum_{i = 0}^{\norm{z} - 1}W(\pi_i, \pi_{i + 1})\\
			&\le \bE{\sum_{i = 0}^{\norm{z} - 1}W(\widetilde{\pi}_i, \widetilde{\pi}_{i + 1})}\\
			&= \sum_{\widetilde{\pi}_{i + 1} - \widetilde{\pi}(i) = e_1}\bE{W(0; 1)} + \sum_{\widetilde{\pi}_{i + 1} - \widetilde{\pi}(i) = e_2}\bE{W(0; 2)}\\
			&= \inner{z, e_1}\bE{W(0; 1)} + \inner{z, e_2}\bE{W(0; 2)}.
		\end{align}
		Dividing by $n$ and taking limits, we get the desired expression.
	\end{proof}
\end{lemma}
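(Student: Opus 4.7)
The plan is to reduce all three claims to the sandwich bound $\underline{L}(x, y) \le L(x, y) \le \overline{L}(x, y)$ of \cref{lem:VertexBound}, so that existing vertex-weight results of \cite{mart-limit-shape} can be quoted essentially off the shelf for our edge-weight setup. The auxiliary model $\underline{L}$ is itself a vertex-weight FPP with i.i.d.\ weights of distribution $\underline{F}$, and this distribution inherits the $2 + \epsilon$ moment assumption from \cref{ass:WeightAssump}, so in particular $\int_{-\infty}^0 \underline{F}(s)^{1/2}\, ds < \infty$ and the cited bounds have finite right-hand sides.

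For \cref{lem:MartinPTBounds1} and \cref{lem:MartinPTBounds2} I would invoke Lemma 3.5 of \cite{mart-limit-shape} directly. That result is phrased for vertex-weight LPP but transfers to vertex-weight FPP by taking negatives; applied to $\underline{L}$ it simultaneously produces a lower bound on $\Ex{\underline{L}(z)}$ and an almost sure liminf bound on $n^{-1} \min_{\norm{z}_1 \le n}\underline{L}(z)$, in each case controlled by $-c\, \norm{z} \int_{-\infty}^0 \underline{F}(s)^{1/2}\, ds$ (respectively with $\norm{z}_1$ replaced by $n$). Since $L(z) \ge \underline{L}(z)$ pointwise, taking expectations or pointwise minimums transfers both bounds to $L$ with no further work.

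The lower bound in \cref{lem:MartinPTBounds3} then drops out of \cref{lem:MartinPTBounds1} by dividing by $n$ along the sequence $z_n = \floor{nx}$ and appealing to \cref{thm:BasicLimitShape}, which identifies $n^{-1}\Ex{L(z_n)} \to \ell(x)$ via the $L^1$ convergence it provides. The matching upper bound in (iii) is essentially a one-line path comparison: fix any deterministic up-right lattice path $\widetilde{\pi}$ from $0$ to $z$, which uses exactly $\inner{z, e_i}$ edges in direction $e_i$, and bound $\Ex{L(z)} \le \Ex{L(\widetilde{\pi})} = \inner{z, e_1}\Ex{W(0;1)} + \inner{z, e_2}\Ex{W(0;2)}$; dividing by $n$ with $z = \floor{nx}$ and passing to the limit finishes the claim.

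I do not anticipate a real obstacle. The main thing to be careful about is confirming integrability of $\underline{F}^{1/2}$ in the negative tail, which is where the moment assumption in \cref{ass:WeightAssump} is used. If self-containment were desired, one could instead replay Martin's proof of Lemma 3.5 directly in the edge-weight setting, using the version of the concentration inequality we have already recorded in \cref{lem:MartinConcentration}; but the reduction to the vertex-weight $\underline{L}$ seems the more efficient route.
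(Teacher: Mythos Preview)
Your proposal is correct and matches the paper's own proof essentially line for line: reduce the lower bounds to Martin's Lemma~3.5 applied to the vertex-weight model $\underline{L}$ via the sandwich $\underline{L}\le L$, and obtain the upper bound in (iii) by comparing with a fixed deterministic path and passing to the limit. Your added remark about the $2+\epsilon$ moment assumption ensuring $\int_{-\infty}^0 \underline{F}(s)^{1/2}\,ds<\infty$ is a useful clarification that the paper leaves implicit.
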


Take $M \ge 0$ and consider the environment of truncated weights $\set{W^{(M)}(x; i)}$, where $W^{(M)}(x; i) = (W(x; i) \vee (-M)) \wedge M$. Let $L^{(M)}, \ell^{(M)}$ be the passage times and limit shape under the truncated weights. The next lemma quantifies the rate at which $\ell^{(M)} \to \ell$ as $M \to \infty$.

\begin{lemma}
	\label{lem:MartinTruncationApprox}
	For any $x \in \bR^2_+$, 
	\begin{equation}
		\ell^{(M)}(x) - c \norm{x} \int_{-\infty}^{-M} \underline{F}(s)^{1 / 2}\, ds \le \ell(x) \le \ell^{(M)}(x) + \norm{x}\int_{M}^{\infty}1 - \overline{F}(s)\, ds.
	\end{equation}
	In particular, for any $R > 0$,
	\begin{equation}
		\sup_{x \in \bR^2_+, \norm{x} \le R}\abs{\ell(x) - \ell^{(M)}(x)}\underset{{M \to \infty}}{\longrightarrow} 0.
	\end{equation}
	
	\begin{proof}
		The argument is largely identical to \cite[Lemma 3.6]{mart-limit-shape}. For lower bound, take $x \in \bR^2_+$. We have
		\begin{align*}
			\ell(x) - \ell^{(M)}(x) &= \lim_{n \to \infty}n^{-1}\Ex{L(\floor{n x})} - \lim_{n \to \infty}n^{-1}\Ex{L^{(M)}(\floor{n x})}\\
			&= \lim_{n \to \infty}n^{-1}\Ex{\min_{\pi \in \Pi(0, \floor{n x})}\sum_{e \in \pi}W(e) - \min_{\pi \in \Pi(0, \floor{n x})}\sum_{e \in \pi}W^{(M)}(e)}\\
			&\ge \lim_{n \to \infty}n^{-1}\Ex {\min_{\pi \in \Pi(0, \floor{n x})} \sum_{e \in \pi} W(e) - W^{(M)}(e)}\\
			&\ge \lim_{n \to \infty}n^{-1}\Ex {\min_{\pi \in \Pi(0, \floor{n x})} \sum_{e \in \pi} (W(e) - M)_-}\\
			&\ge -c \norm{x} \int_{-\infty}^{-M} \underline{F}(s)^{1 / 2}\, ds.
		\end{align*}
		The last inequality comes from applying \cref{lem:MartinPTBounds3} to the weights $\set{(W(e) - M)_-}$.
		
		We need an auxiliary calculation before the upper bound. For the other side, take $z \in \bZ^2_+$ and let $\pi^*$ be the rightmost geodesic for $L^{(M)}(z)$. One sees that the presence of an edge $e \in E(\bZ^2_+)$ in $\pi^*$ is a nonincreasing function of $W(e)$. From Harris's inequality then, the probability that $W(e)$ exceeds the threshold $L$ can only decrease when we condition on it belonging to $\pi^*$:
		\begin{equation}
			\Prob{W(e) \ge M \given e \in \pi^*} \le \Prob{W(e) \ge M}.
		\end{equation}
		
		Conditional on $\set{W(e) \ge M}$, the event $e \in \pi^*$ is independent of $W(e)$. This is because the presence of $e \in \pi^*$ depends on $W(e)$ only through the truncation $W^{(M)}(e)$, and upon assuming $W(e) \ge M$, additional knowledge of $W(e)$ is irrelevant. So
		\begin{align}
			\Ex{(W(e) - M)_+ \given e \in \pi^*} &\le \Ex[\big]{(W(e) - M)_+\bone\set{W(e) \ge M} \given e \in \pi^*}\\
			&= \Ex[\big]{(W(e) - M)_+ \given W(e) \ge M}\Prob{W(e) \ge M \given e \in \pi^*}\\
			&\le \Ex[\big]{(W(e) - M)_+ \given W(e) \ge M}\Prob{W(e) \ge M}\\
			&= \Ex{(W(e) - M)_+}\\
			&\le \int_{M}^{\infty}1 - \overline{F}(s)\, ds.
		\end{align}
		
		Now
		\begin{align}
			\Ex{L(z)} &= \Ex[\big]{\min_{\pi} \sum_{e \in \pi}W(e)}\\
			&\le \Ex[\big]{\min_{\pi} \sum_{e \in \pi} W^{(M)}(e) + (W(e) - M)_+}\\
			&\le \Ex[\big]{\sum_{e \in \pi^*} W^{(M)}(e)} + \Ex{\sum_{e \in \pi^*} (W(e) - M)_+}\\
			&= \Ex{L^{(M)}(z)} + \sum_{e \in E(\bZ_+^2)} \Prob{e \in \pi^*}\Ex{(W(e) - M)_+ \given e \in \pi^*}\\
			&\le \Ex{L^{(M)}(z)} + \int_{M}^{\infty}1 - \overline{F}(s)\, ds \sum_{e \in E(\bZ_+^2)}\Prob{e \in \pi^*}\\
			&= \Ex{L^{(M)}(z)} + \norm{z}\int_{M}^{\infty}1 - \overline{F}(s)\, ds.
		\end{align}
		Putting $z = \floor{nx}$ and taking the limit, we arrive at the upper bound. 
	\end{proof}
\end{lemma}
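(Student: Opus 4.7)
The plan is to establish the two-sided estimate on $\ell(x) - \ell^{(M)}(x)$ at the level of expected point-to-point passage times at $z = \floor{nx}$, divide by $n$ and pass to the limit to recover the time-constant inequality. The uniform convergence on $\set{\norm{x} \le R}$ is then immediate because both error terms are linear in $\norm{x}$ and the tail integrals vanish as $M \to \infty$ under the $(2+\epsilon)$-moment hypothesis in \cref{ass:WeightAssump}. Throughout, the algebraic identity $W(e) - W^{(M)}(e) = (W(e) - M)_+ - (-W(e) - M)_+$ is the unifying tool: it vanishes when $\abs{W(e)} \le M$ and cleanly isolates the two tails of the weight distribution.

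For the lower bound, I would use the elementary inequality $\min_\pi f - \min_\pi g \ge \min_\pi (f - g)$ to reduce to
\begin{equation*}
L(z) - L^{(M)}(z) \ge \min_\pi \sum_{e \in \pi}\brac[\big]{W(e) - W^{(M)}(e)} \ge \min_\pi \sum_{e \in \pi}(W(e) + M)_-,
\end{equation*}
where the last term --- non-positive, supported on $\set{W(e) < -M}$ --- is the passage time of an auxiliary FPP model with edge weights $(W(e) + M)_-$. Its time constant is controlled from below by \cref{lem:MartinPTBounds3} applied to the shifted model, and a change of variables $u = s - M$ in the resulting integral produces the stated bound $-c \norm{x} \int_{-\infty}^{-M}\underline{F}(s)^{1/2}\, ds$ (since the minimum-of-coordinates distribution of the shifted weights is $\underline{F}(\cdot + M)$ on the negative axis).

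The upper bound is the more delicate step. Let $\pi^*$ be the geodesic for $L^{(M)}(z)$. The reverse inequality $\min f - \min g \le f(\pi^*) - g(\pi^*)$ gives
\begin{equation*}
L(z) - L^{(M)}(z) \le \sum_{e \in \pi^*}\brac[\big]{W(e) - W^{(M)}(e)} \le \sum_{e \in \pi^*}(W(e) - M)_+,
\end{equation*}
but now $\pi^*$ is random and correlated with $W$, which blocks a naive linearity-of-expectation argument. The workaround, mirroring the calculation in the proof of \cref{lem:MartinPTBounds}, is an FKG/Harris argument: the event $\set{e \in \pi^*}$ is monotone non-increasing in $W(e)$, while $(W(e) - M)_+$ is supported on $\set{W(e) \ge M}$, a regime in which $\pi^*$ sees only $W^{(M)}(e) = M$; hence $W(e)$ is independent of $\set{e \in \pi^*}$ once we condition on $\set{W(e) \ge M}$. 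Combining these gives $\Ex{(W(e) - M)_+ \given e \in \pi^*} \le \Ex{(W(e) - M)_+} \le \int_M^\infty (1 - \overline{F}(s))\, ds$. Summing over edges and using the deterministic directed path length $\sum_e \Prob{e \in \pi^*} = \norm{z}_1$ finishes the upper bound. This correlation argument is the main obstacle; the remaining manipulations are routine.
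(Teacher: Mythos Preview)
Your proposal is correct and follows essentially the same route as the paper: the lower bound via $\min f - \min g \ge \min(f-g)$ together with \cref{lem:MartinPTBounds3} applied to an auxiliary FPP on the negative tail, and the upper bound via the geodesic $\pi^*$ for $L^{(M)}$ combined with the Harris/FKG observation that $\set{e \in \pi^*}$ is nonincreasing in $W(e)$ and independent of $W(e)$ once $W(e)\ge M$. The only cosmetic difference is that you write the lower-tail weight as $(W(e)+M)_-$ whereas the paper writes $(W(e)-M)_-$; yours is the tighter of the two and leads to the same integral after the change of variables you describe.
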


There are two final lemmas establishing uniform convergence of the truncated passage times and limit shape to the untruncated counterparts.  

\begin{lemma}
	\label{lem:MartinTruncPassageToPassage}
	Let $\epsilon > 0$ be given. Then there is $M$ large enough such that almost surely, we have for all but finitely many $z \in \bZ^2_+$ that
	\begin{equation}
		\label{eq:MartinTruncPassageToPassageBound}
		\abs{L(z) - L^{(M)}(z)} \le \epsilon \norm{z}.
	\end{equation}
\end{lemma}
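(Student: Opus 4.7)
The plan is to bound $|L(z)-L^{(M)}(z)|$ pathwise by two last-passage-type functionals whose weights shrink as $M\to\infty$, and then invoke \cref{lem:MartinPTBounds2} to upgrade this to an almost sure uniform statement. Decompose
\begin{equation}
	W(e) = W^{(M)}(e) + A(e) - B(e), \qquad A(e) := (W(e)-M)_+, \quad B(e) := (-M-W(e))_+,
\end{equation}
with both summands non-negative and at most one nonzero on any given edge. For any $\pi \in \paths{0,z}$ we have $L(\pi) = L^{(M)}(\pi) + A(\pi) - B(\pi)$. Evaluating this identity at a geodesic for $L^{(M)}$ gives $L(z) - L^{(M)}(z) \le \widetilde A(z)$, and evaluating it at a geodesic for $L$ gives $L^{(M)}(z) - L(z) \le \widetilde B(z)$, so
\begin{equation}
	-\widetilde B(z) \le L(z) - L^{(M)}(z) \le \widetilde A(z),
\end{equation}
where $\widetilde A(z) := \max_{\pi \in \paths{0,z}} A(\pi)$ and $\widetilde B(z) := \max_{\pi \in \paths{0,z}} B(\pi)$.

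Next I would observe that $-\widetilde A(z) = \min_\pi \sum_{e\in\pi}(-A(e))$ is itself an FPP passage time, with edge weights $\set{-A(e)}$. These weights are i.i.d.\ across vertices, bounded in modulus by $|W(e)|$ and hence inheriting the $(2+\epsilon)$-moment hypothesis, and for $M$ large enough $\Prob{W \le M} > 0$ places the origin in their support, so \cref{ass:WeightAssump} holds for this new model. Applying \cref{lem:MartinPTBounds2} then yields, almost surely,
\begin{equation}
	\limsup_{n\to\infty}\,\frac{1}{n}\max_{\norm{z}_1 \le n}\widetilde A(z) \ \le\ c\int_{-\infty}^0 \sqrt{\underline F_{-A}(s)}\,ds,
\end{equation}
where $\underline F_{-A}$ is the law of $-A(0;1)\wedge -A(0;2) = -\max(A(0;1),A(0;2))$. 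A union bound gives $\underline F_{-A}(s) \le 2\,\Prob{W > M - s}$ for $s \le 0$, so
\begin{equation}
	\int_{-\infty}^0\sqrt{\underline F_{-A}(s)}\,ds \ \le\ \sqrt{2}\,\int_M^\infty \sqrt{\Prob{W>u}}\,du.
\end{equation}
Markov's inequality together with the $(2+\epsilon)$-moment hypothesis forces $\sqrt{\Prob{W>u}} = O(u^{-1-\epsilon/2})$, so the right-hand integral is finite for every $M$ and tends to $0$ as $M\to\infty$ by dominated convergence. An identical argument, with $B$ in place of $A$, handles $\widetilde B$.

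Combining the two one-sided bounds and choosing $M$ large enough that each of the two integrals is at most $\epsilon/(2c)$, we obtain
\begin{equation}
	\limsup_{n\to\infty}\,\frac{1}{n}\max_{\norm{z}_1 = n}\bigl|L(z)-L^{(M)}(z)\bigr|\ \le\ \epsilon \qquad \as,
\end{equation}
which is equivalent to \eqref{eq:MartinTruncPassageToPassageBound} holding for all but finitely many $z\in\bZ^2_+$. The only delicate point is the verification that the shifted weights $-A,-B$ fall within the scope of \cref{lem:MartinPTBounds2} and that the associated $\sqrt{F}$ integrals vanish as $M \to \infty$; this is where the full strength of the $(2+\epsilon)$-moment hypothesis is used, since mere $L^1$ integrability of $W$ would not render $\int_M^\infty \sqrt{\Prob{W>u}}\,du$ finite.
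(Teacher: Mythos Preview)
Your proof is correct and follows essentially the same route as the paper's. Your weights $-A(e)$ and $-B(e)$ are precisely the paper's $V^{(M)}(e)=-(W(e)-M)_+$ and $U^{(M)}(e)=-(-M-W(e))_+$, the two one-sided bounds match exactly, and both arguments conclude by applying \cref{lem:MartinPTBounds2} to these auxiliary environments; the only cosmetic difference is that the paper computes $\underline F_{-A}(s)=1-\overline F(M-s)$ directly rather than via your union bound.
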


\begin{lemma}
	\label{lem:MartinTruncShapeToShape}
	Let $\epsilon > 0$ be given. Then there is $M$ large enough such that almost surely, we have for all but finitely many $z \in \bZ^2_+$ that
	\begin{equation}
		\abs{\ell(z) - \ell^{(M)}(z)} \le \epsilon \norm{z}.
	\end{equation}
\end{lemma}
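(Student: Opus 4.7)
The plan is to reduce this to \cref{lem:MartinTruncationApprox} by exploiting the positive homogeneity of the limit shape. Recall that $\ell$ is defined via the almost sure limit $\ell(s, t) = \lim_{n \to \infty} n^{-1} L(\floor{n s}, \floor{n t})$, from which it follows immediately that $\ell(c x) = c\ell(x)$ for any $c > 0$ and $x \in \bR^2_+$; the same is true of $\ell^{(M)}$, since it is constructed in exactly the same way for the truncated environment.

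First I would note that the statement is in fact deterministic --- both $\ell$ and $\ell^{(M)}$ are deterministic functions --- so the ``almost surely, for all but finitely many'' qualifier in the statement is vacuous here and we can aim for the stronger claim that the bound $\abs{\ell(z) - \ell^{(M)}(z)} \le \epsilon \norm{z}$ holds for \emph{every} $z \in \bZ^2_+$. Apply \cref{lem:MartinTruncationApprox} with $R = 1$ to choose $M$ large enough that
\begin{equation}
	\sup_{\ov{x \in \bR^2_+}{\norm{x} \le 1}} \abs{\ell(x) - \ell^{(M)}(x)} < \epsilon.
\end{equation}
Then given any nonzero $z \in \bZ^2_+$, write $n = \norm{z}$ and apply homogeneity to both functions at the point $z/n$, which lies in $\bR^2_+$ with $\norm{z/n} = 1$:
\begin{equation}
	\abs{\ell(z) - \ell^{(M)}(z)} = n \abs{\ell(z/n) - \ell^{(M)}(z/n)} \le n \epsilon = \epsilon \norm{z}.
\end{equation}
For $z = 0$ both sides vanish and the bound is trivial.

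There is no real obstacle, since all of the work went into \cref{lem:MartinTruncationApprox}. The one point worth being careful about is that the norm $\norm{\cdot}$ used throughout the appendix must be positively homogeneous (as e.g. $\norm{\cdot}_1$ is), which is what allows the scaling argument to absorb the constant cleanly; any fixed choice of norm on $\bR^2$ works for this reduction.
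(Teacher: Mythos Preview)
Your proof is correct and matches the paper's own treatment, which simply declares the lemma ``immediate from \cref{lem:MartinTruncationApprox}.'' One small remark: the first display in \cref{lem:MartinTruncationApprox} already carries the factor $\norm{x}$, so you can read off $\abs{\ell(z)-\ell^{(M)}(z)}\le \norm{z}\max\bigl(c\int_{-\infty}^{-M}\underline{F}(s)^{1/2}\,ds,\ \int_M^\infty (1-\overline{F}(s))\,ds\bigr)$ directly without invoking homogeneity, but your scaling argument is equally valid.
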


Of these, \cref{lem:MartinTruncShapeToShape} is immediate from \cref{lem:MartinTruncationApprox}.

\begin{proof}[Proof of \cref{lem:MartinTruncPassageToPassage}]
	Except for swapping signs and the need to involve $\underline{F}, \overline{F}$ due to their appearance in \cref{lem:MartinPTBounds}, we can largely follow Martin's argument unchanged. Choose $M$ so that $c \int_{-\infty}^{-M}\underline{F}(s)^{1 / 2}\, ds < \epsilon$ and $c\int_{M}^{\infty}(1 - \overline{F}(s))^{1 / 2}\, ds < \epsilon$.
	
	Take $z \in \bZ_+^2$. Let $\pi^*$ be a geodesic for $L^{(M)}$. Then 
	\begin{align}
		L(z) - L^{(M)}(z) &= \min_{\pi \in \Pi(z)}\sum_{e \in \pi} W(e) - \sum_{e \in \pi^*}W^{(M)}(e)\\ 
		&\le \sum_{e \in \pi^*} W(e) - W^{(M)}(e)\\
		&\le \sum_{e \in \pi^*} (W(e) - M)_+\\
		&\le - \min_{\pi \in \Pi(z)}\sum_{e \in \pi} V^{(M)}(e),
	\end{align}
	where $V^{(M)} = -(W(e) - M)_+$. Similarly,
	\begin{equation}
		L^{(M)}(z) - L(z) \le - \min_{\pi \in \Pi(z)}\sum_{e \in \pi} U^{(M)}(e),
	\end{equation}
	where now $U^{(M)} = -(-M - W(e))_+$. In either case the bound is non-negative, so we combine them and get
	\begin{equation}
		\abs{L^{(M)}(z) - L(z)} \le - \min_{\pi \in \Pi(z)} \sum_{e \in \pi}V^{(M)}(e) - \min_{\pi \in \Pi(z)} \sum_{e \in \pi}U^{(M)}(e).
	\end{equation}
	
	Observe that the $\set{V^{(M)}(e)}$ and $\set{U^{(M)}(e)}$ fall under \cref{ass:WeightAssump}. Write $F^{(M)}_{V, i}$ for the distribution of $V^{(M)}(0; i)$, and $\underline{F}^{(M)}_{V}$ for the distribution of $V^{(M)}(0; 1) \wedge V^{(M)}(0; 2)$. Then $\underline{F}^{(M)}_{V}(s) = 1 - \overline{F}(M - s)$ on $s \le 0$, and $\underline{F}^{(M)}_{V}(s) = 1$ elsewhere. We apply \cref{lem:MartinPTBounds2} to find that almost surely
	\begin{align}
		\liminf_{n \to \infty}\frac{1}{n}\min_{\norm{z}_1 \le n} \min_{\pi \in \Pi(z)} \sum_{e \in \pi}V^{(M)}(e) &\ge -c \int_{-\infty}^0 \underline{F}^{(M)}_{V}(s)^{1 / 2}\, ds\\
		&= -c \int_{-\infty}^0 (1 - \overline{F}(M - s)^{1 / 2}\, ds\\
		&= -c \int_{M}^\infty (1 - \overline{F}(s))^{1 / 2}\, ds\\
		&\ge - \epsilon / 2.
	\end{align}
	Then there are only finitely many $z$ for which
	\begin{equation}
		- \min_{\pi \in \Pi(z)} \sum_{e \in \pi}V^{(M)}(e) \ge \frac{\epsilon}{2}\norm{z}.
	\end{equation}
	
	We can do the same for the $\set{U^{(M)}(e)}$ to find
	\begin{align}
		\liminf_{n \to \infty}\frac{1}{n}\min_{\norm{z}_1 \le n} \min_{\pi \in \Pi(z)} \sum_{e \in \pi}U^{(M)}(e) &\ge -c \int_{-\infty}^0 \underline{F}^{(M)}_{U}(s)^{1 / 2}\, ds\\
		&= -c \int_{-\infty}^{-M} \underline{F}(s)^{1 / 2}\, ds\\
		&\ge - \epsilon / 2,
	\end{align}
	so that there are only finitely many $z$ with
	\begin{equation}
		- \min_{\pi \in \Pi(z)} \sum_{e \in \pi}U^{(M)}(e) \ge \frac{\epsilon}{2}\norm{z}.
	\end{equation}
	
	Looking back now at \eqref{eq:MartinTruncPassageToPassageBound}, we find that only finitely many $z$ have
	\begin{equation}
		\abs{L(z) - L^{(M)}(z)} \ge \epsilon \norm{z}.
	\end{equation}
	This is what we wanted.
\end{proof}

Combining \cref{lem:MartinBoundedPassageToShape,lem:MartinTruncPassageToPassage,lem:MartinTruncShapeToShape} gives \cref{thm:UniformLimitShape}.

\section{The Busemann process in directed FPP}
\label{app:Busemann}
Under \cref{ass:WeightAssump}, the results of \cite{groa-janj-rass-gen-buse} apply to give the existence of \emph{generalised Busemann functions}. In what follows, let $\scrU = \set{(t, 1 - t) : 0 < t < 1}$ be the set of directions into the first open  quadrant, and let $\scrU_0$ be some countable subset (which can be assumed to be dense). To better match notation, we use $\omega_x = (\omega_{x, 1}, \omega_{x, 2})$ for the weights into vertex $x$. Denote by $T_x$ translations of the environment $T_x(\omega_y)$ = $\omega_{y - x}$.

Below is essentially a restatement of Theorem 4.4 of \cite{groa-janj-rass-gen-buse}, but specialised to $\beta = \infty$ and the face $\cA \in \bA$ being the entire limit shape\footnote{Such a degenerate choice of $\cA$ is explicitly allowed in their statement.}. The $m$ in their statement is a member of a superdifferential of the limit shape, and here the role is taken by $(\zeta, \pm)$.

\begin{theorem}
	\label{thm:GenBusemannExistence}
	There exists a probability space $(\widehat{\Omega}, \widehat{\cB}, \widehat{\bP})$ with a measurable projection onto $\Omega$, and real-valued measurable functions $B^{\xi}(\widehat{\omega}, x, y)$ of $(\widehat{\omega}, \xi, x, y) \in \widehat{\Omega} \times \scrU_0 \times \bZ^{2} \times \bZ^{2}$ and a translation invariant Borel probability measure $\widehat{\bP}$ on $(\widehat{\Omega}, \widehat{\cB})$, such that the following properties hold:
	\begin{thmlist}
		\item (Consistency) Under $\widehat{\bP}$, the marginal distribution of the configuration $\omega$ is i.i.d with the specified marginals. For each $\xi \in \scrU_0$, the $\bR^{4}$-valued process $\set{\psi_{x}^{ \xi}}_{x \in \bZ^{2}}$ defined by
		\begin{equation}
			\psi_{x}^{\xi}(\widehat{\omega}) = (\omega_{x, 1}, \omega{x, 2}, B^{\xi}(\widehat{\omega}, x, x + e_{1}), B^{\xi}(\widehat{\omega}, x, x + e_{2}))
		\end{equation}
		is stationary under translations $T_{x}$. For any $I \subseteq \bZ^{2}$, the variables
		\begin{equation}
			\set[\big]{(\widehat{\omega}_{x}, B^{\xi}(\widehat{\omega}, x, x + e_{i})) : x \in I, \xi \in \scrU_0, i \in \set{1, 2}}
		\end{equation}
		are independent of $\set{\omega_{x} : x \in I^{<}}$.
		\label[thm]{thm:GenBusemannExistence1}
		
		\item (Adaptedness) For a fixed $\xi \in \scrU_0$, the process $B^{\xi} = \set{B^{\xi}(x, y)}_{x, y \in \bZ^{2}}$ is a stationary $L^{1}(\widehat{\bP})$ cocycle (in the sense of \cite{geor-rass-sepp-16}) that recovers the potential:
		\begin{equation}
			\max_{i \in \set{1, 2}} B^{\xi}(x, x + e_{i}) - \omega_{x, i} = 0.
		\end{equation}
		\label[thm]{thm:GenBusemannExistence2}
		
		\item (Distinct means) The mean vectors $h(\xi) = h(B^{\xi})$ defined by
		\begin{equation}
			h(\xi) \cdot e_{i} = \Ex{B^{\xi}(0, e_{i})}
		\end{equation}
		satisfy
		\begin{equation}
			h(\xi) = \nabla \ell(\xi).
		\end{equation}
		If $h(\xi) = h(\zeta)$ then $B^{\xi}(x, y) = B^{\zeta}(x, y) \as$.\label[thm]{thm:GenBusemannExistence3}
\end{thmlist}
\end{theorem}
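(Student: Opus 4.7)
The plan is to apply Theorem 4.4 of \cite{groa-janj-rass-gen-buse} directly, treating our edge-weight setting as a mild variant of the vertex-weight setting in which their proof is written. Since the author explicitly calls this a restatement, the main job is verifying that the hypotheses required by the GJR construction are met here and that each step of their argument translates, with the recovery/adaptedness identity being the only one that needs genuine reformulation.

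First I would check the inputs. The $(2+\epsilon)$ moment bound in \cref{ass:WeightAssump} and the condition $(0,0) \in \supp(\lambda)$ together guarantee that the limit shape $\ell$ is finite and continuous on $\bR_{\ge 0}^2$ (\cref{thm:BasicLimitShape}) and that the uniform shape theorem of \cref{app:UniformShape} applies. The countable set $\scrU_0$ can be chosen as any countable dense subset of $\scrU$ on which we wish to produce simultaneous Busemann limits.

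Second, I would carry out the GJR construction, which proceeds by taking Cesàro averages of translates of the passage-time increments $L(x,v) - L(y,v)$ along $v \to \xi \cdot \infty$ for each $\xi \in \scrU_0$, and extracting weak subsequential limits on a suitably extended probability space $(\widehat{\Omega}, \widehat{\cB}, \widehat{\bP})$. Item \ref{thm:GenBusemannExistence1} follows because the averaging commutes with lattice translations and the weight distribution is i.i.d.; independence of $B^\xi(x, x+e_i)$ and $\omega_x$ from $\omega_z$ for $z \in I^<$ is preserved because each prelimit $L(x,v) - L(x+e_i,v)$ is measurable with respect to weights on edges north or east of $\set{x, x+e_i}$, hence independent of $\set{\omega_z : z \in I^<}$. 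Item \ref{thm:GenBusemannExistence2} is built in: cocycle and $L^1$-stationarity pass to the limit from the identical properties of the prelimits, and the recovery/adaptedness property inherits from the one-step dynamic programming identity
\begin{equation}
    L(x,v) = (W(x + e_1; 1) + L(x+e_1, v)) \wedge (W(x + e_2; 2) + L(x+e_2, v)),
\end{equation}
which in the edge-weight setting replaces the vertex-weight identity used in GJR. For item \ref{thm:GenBusemannExistence3}, the identification $h(\xi) = \nabla \ell(\xi)$ comes from evaluating the shape theorem along horizontal and vertical rays and using the cocycle property to telescope, while the conclusion that $h(\xi) = h(\zeta)$ forces equality of the cocycles is a consequence of the uniqueness of shift-ergodic stationary cocycles with prescribed mean recovering the weights --- exactly the content of \cref{thm:UniqueFixedPoints}.

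The main obstacle is translating between the vertex-weight and edge-weight formulations. In GJR a single weight $\omega_x$ enters the one-step identity, whereas here two edge weights $W(x; 1), W(x; 2)$ enter. This affects only the form of the recovery relation and the indexing of the filtrations used to assert adaptedness; all variational and ergodic arguments in their proof are otherwise local and transfer verbatim once one replaces $\omega_x$ by the pair $(W(x; 1), W(x; 2))$ in the relevant statements. Verifying that the filtration bookkeeping goes through cleanly is the only point of genuine care, and it does, because in both settings the optimiser of the one-step identity depends only on weights at or ahead of $x$.
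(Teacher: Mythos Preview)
Your overall approach is right and matches the paper's: this theorem is not proved in the paper but simply cited as a restatement of Theorem 4.4 of \cite{groa-janj-rass-gen-buse}, specialised to the zero-temperature case $\beta = \infty$ with the face $\cA$ taken to be the entire limit shape. Your outline of why the GJR hypotheses are met, and of how the edge-weight one-step recursion replaces the vertex-weight one, is sound and in fact more detailed than what the paper offers.

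One point in your argument is off, however. You justify the final clause of item (iii) --- that $h(\xi) = h(\zeta)$ forces $B^\xi = B^\zeta$ --- by invoking \cref{thm:UniqueFixedPoints}. This does not work here for two reasons. First, \cref{thm:UniqueFixedPoints} is stated only under the additional structural hypotheses listed there (a support condition, SWFPP, or SJR), not under the bare \cref{ass:WeightAssump}, which is all that \cref{thm:GenBusemannExistence} assumes. Second, and more fundamentally, \cref{thm:UniqueFixedPoints} concerns \emph{ergodic} fixed points, but the generalised Busemann functions coming out of the GJR weak-limit construction are at this stage only known to be stationary; their ergodicity is established only later, once they are identified with genuine Busemann limits in \cref{ssec:BusemannGradients}. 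The correct argument --- the one GJR actually use --- is structural: their construction indexes the cocycles by the supporting gradient $m = \nabla \ell(\xi)$ rather than by $\xi$ itself (the paper notes that ``the $m$ in their statement is a member of a superdifferential of the limit shape''), so directions with equal gradient are assigned the same cocycle by definition.
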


\begin{remark}
	The generalised Busemann functions are related to the underlying environment only through the adaptedness property. Thus, any proof which takes the above as its starting point and produces these objects as functions of the environment must use adaptedness in an essential way. The utility of this property was identified in \cite{geor-rass-sepp-16}, where it is connected to maximisers of variational formulas for the limit shape. It is also interesting to note that the adaptedness here is exactly the notion of max-plus harmonicity described in \cite{aki-gau-wal-max-plus}, the graph here being the integer lattice with up-right directed edges.
	
	The property has been called \emph{recovery} in the context of vertex-weight LPP \cite{sepp-cgm-18}, referring to the fact that one can completely recover the weight configuration from the collection of Busemann functions in a fixed direction. In edge-weight FPP we have only this weaker statement: if $\scrU_0$ is dense, then almost surely the weight configuration is determined by the full collection of generalised Busemann functions.
\end{remark}

We are interested in showing that these functions arise as limiting differences of the passage times, and moreover that they extend to a full-fledged \emph{Busemann process} indexed by $\scrU$. Specifically, for a direction $\xi \in \scrU$ and $x,\, y \in \bZ^2$, we look at limits
\begin{equation}
	\label{eqn:BusemannLimit}
	B^{\xi}(x, y) = \lim_{n \to \infty} L(x, v_n) - L(y, v_n),
\end{equation}
where $(v_n)_{n \in \bZ_{\ge 0}} \subset \bZ^2$ is a sequence of vertices with limiting direction $\xi$ and $\abs{v_n} \to \infty$.

This task has been carried out in \cite{janj-rass-plane-buse} for planar vertex-weight models. The extension to the remaining directions relies on the “path crossing trick'' to give monotonicity, after which limits can be taken to extend the process from $\xi \in \scrU_0$ to $\xi \in \scrU$. The relevant statement for our setup is below.

\begin{lemma}[Path-crossing trick]
	\label{lem:PathCrossing}
	Suppose $\abs{u}_{1} = \abs{v}_{1}$ and $u_{1} \le v_{1}$. Then
	\begin{equation}
		\label{eq:PathCrossing1}
		L(0, u) - L(e_{1}, u) \le L(0, v) - L(e_{1}, v)
	\end{equation}
	and
	\begin{equation}
		\label{eq:PathCrossing2}
		L(0, u) - L(e_{2}, u) \ge L(0, v) - L(e_{2}, v),
	\end{equation}
	whenever these passage times are defined.
	\begin{proof}
		We prove \eqref{eq:PathCrossing1}. This holds trivially if $u_{1} = v_{1}$, so assume $u_{1} < v_{1}$. This implies $u_{2} > v_{2}$. Fix a geodesic connecting $e_{1}$ to $u$ and $0$ to $v$. The relative positions of $u$ and $v$ ensure that the geodesic (or any other directed path) connecting $e_{1}$ to $u$ must cross the geodesic connecting $0$ to $v$. Let $x$ be the first point of intersection. Passage times are sub-additive, hence
		\begin{equation}
			L(0, x) + L(x, u) \ge L(0, u),\quad L(e_{1}, x) + L(x, v) \ge L(e_{1}, v).
		\end{equation}
		These can be combined and rearranged to give
		\begin{equation}
			L(0, u) - L(e_{1}, x) - L(x, u) \le L(0, x) + L(x, v) - L(e_{1}, v).
		\end{equation}
		That $x$ lies on both geodesics means $L(e_{1}, x) + L(x, u) = L(e_{1}, u)$ and $L(0, x) + L(x, v) = L(0, v)$. Thus we arrive at
		\begin{equation}
			L(0, u) - L(e_{1}, u) \le L(0, v) - L(e_{1}, v).
		\end{equation}
	\end{proof}
\end{lemma}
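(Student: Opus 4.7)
The plan is to prove only inequality \eqref{eq:PathCrossing1}, since \eqref{eq:PathCrossing2} follows from it by the symmetry swapping the two coordinate axes: this swap preserves $\abs{u}_1 = \abs{v}_1$ and interchanges the hypotheses $u_1 \le v_1$ and $u_2 \ge v_2$. So I will set up the argument only for the horizontal version. If $u_1 = v_1$ then $u = v$ and both sides agree, so assume $u_1 < v_1$; combined with $\abs{u}_1 = \abs{v}_1$ this forces $u_2 > v_2$.

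The key step is a geodesic crossing argument. I would fix any geodesic $\pi$ realising $L(e_1, u)$ and any geodesic $\sigma$ realising $L(0, v)$. Since both are up-right directed lattice paths, and since the endpoint ordering is ``crossed'' in the sense that $\pi$ starts to the right of $\sigma$ (at $e_1$ versus $0$) but ends to the left of $\sigma$ in the first coordinate (at $u$ versus $v$, with $u_1 < v_1$), a planarity argument forces the two paths to share a vertex $x$. Having fixed such an $x$, sub-additivity of passage times gives
\begin{equation}
L(0,u) \le L(0,x) + L(x,u), \qquad L(e_1,v) \le L(e_1,x) + L(x,v),
\end{equation}
while the fact that $x$ sits on both chosen geodesics gives the exact identities
\begin{equation}
L(e_1,u) = L(e_1,x) + L(x,u), \qquad L(0,v) = L(0,x) + L(x,v).
\end{equation}
Subtracting each identity from its corresponding inequality and then adding the two resulting inequalities, the terms $L(x,u)$ and $L(x,v)$ cancel, and what remains is precisely \eqref{eq:PathCrossing1}.

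The main obstacle — and the only place planarity is used — is the crossing step. It is a Jordan-curve style fact for monotone lattice paths: two up-right paths whose endpoints are interlaced oppositely in the first coordinate must meet at a vertex. Once this is granted, the rest is a mechanical manipulation of sub-additivity together with the restriction property of geodesics (any sub-path of a geodesic between two of its vertices is itself a geodesic). Nothing in the argument uses the distribution of the weights or any structural assumption beyond the definition of $L$, so the statement extends to any realisation for which the passage times appearing are finite.
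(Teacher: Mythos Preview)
Your proof is correct and follows essentially the same approach as the paper's own proof: fix geodesics for $L(e_1,u)$ and $L(0,v)$, use the planarity/crossing argument to find a common vertex $x$, then combine sub-additivity with the geodesic splitting identities to obtain \eqref{eq:PathCrossing1}. The paper likewise only proves \eqref{eq:PathCrossing1} and leaves \eqref{eq:PathCrossing2} to symmetry, so your treatment matches in scope as well.
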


It is then quite transparent from the construction of the generalised Busemann functions in \cite{groa-janj-rass-gen-buse} that we have the following additional property.

\begin{lemma}
	\label{lem:BusemannMonotonicity}
	There exists an event $\widehat{\Omega}_{0}$ with $\widehat{\bP}(\widehat{\Omega}_{0}) = 1$ and such that if $\xi, \zeta \in \scrU$ with $\xi \cdot e_{1} < \zeta \cdot e_{1}$, then
	\begin{equation}
		B^{\xi}(\widehat{\omega}, x, x + e_{1}) \le B^{\zeta}(\widehat{\omega}, x, x + e_{1})
	\end{equation}
	and 
	\begin{equation}
		B^{\xi}(\widehat{\omega}, x, x + e_{2}) \ge B^{\zeta}(\widehat{\omega}, x, x + e_{2}).
	\end{equation}
\end{lemma}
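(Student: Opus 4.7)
The plan is to establish monotonicity by passing through finite-difference approximations, applying the path-crossing trick (\cref{lem:PathCrossing}) at each finite stage, and preserving the resulting inequalities under the weak-limit construction of the generalized Busemann process in \cite{groa-janj-rass-gen-buse}.

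First, I would reduce to the case $x = 0$. The construction from \cite{groa-janj-rass-gen-buse} delivers the process $\set{\psi_x^{\xi}}$ as jointly stationary under the translations $T_z$. Since $\scrU_0$ and $\bZ^2$ are countable, if the monotonicity inequalities hold $\widehat{\bP}$-a.s. for $x = 0$ and each fixed pair $\xi, \zeta \in \scrU_0$ with $\xi \cdot e_1 < \zeta \cdot e_1$, then a countable intersection produces a single full-measure event $\widehat{\Omega}_0$ on which both inequalities hold simultaneously for every $x \in \bZ^2$ and every such pair.

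Next, fix directions $\xi, \zeta \in \scrU_0$ with $\xi \cdot e_1 < \zeta \cdot e_1$. For each integer $k \ge 1$ I would choose $u_k, v_k \in \bZ_{\ge 0}^2$ with $\abs{u_k}_1 = \abs{v_k}_1 = k$, $u_k / k \to \xi$, and $v_k / k \to \zeta$. Because $\xi \cdot e_1 < \zeta \cdot e_1$, we have $u_k \cdot e_1 \le v_k \cdot e_1$ for all $k$ sufficiently large. For such $k$, \cref{lem:PathCrossing} yields
\begin{equation}
L(0, u_k) - L(e_1, u_k) \le L(0, v_k) - L(e_1, v_k)
\end{equation}
together with the reversed inequality involving $e_2$. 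These are pointwise inequalities on the original probability space $(\Omega, \bP)$. The construction in \cite{groa-janj-rass-gen-buse} realizes the joint distribution of $\set{(\omega_x, B^\eta(x, x + e_i)) : x \in \bZ^2,\, \eta \in \scrU_0,\, i \in \set{1, 2}}$ under $\widehat{\bP}$ as a weak subsequential limit (after a Cesaro averaging step) of the corresponding joint laws of the finite-difference fields $(\omega_x, L(x, v_k^\eta) - L(x + e_i, v_k^\eta))$ under $\bP$. By the preceding step, for each fixed pair $\xi < \zeta$ and each large $k$ these finite joint laws are supported on the closed half-plane
\begin{equation}
\set[\big]{(a, b) \in \bR^2 : a \le b}
\end{equation}
when restricted to the horizontal components at $x = 0$, and on the reversed closed half-plane for the vertical components. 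Since this support property is preserved by Cesaro averaging and, by the portmanteau theorem, by weak limits, it is inherited by $\widehat{\bP}$, yielding the desired inequalities $\widehat{\bP}$-a.s.

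The only real obstacle is organizational: one must choose the approximating sequences $v_k^\eta$ jointly across $\eta \in \scrU_0$ using a single radius sequence $k \to \infty$, so that the path-crossing inequalities are available at the joint level and not merely marginally, and then run the Cesaro-and-weak-limit argument of \cite{groa-janj-rass-gen-buse} with this coupled choice. No probabilistic input beyond \cref{lem:PathCrossing} and the existing weak-limit construction is required.
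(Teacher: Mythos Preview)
Your proposal is correct and matches the paper's approach: the paper does not give a detailed proof, stating only that the monotonicity is ``quite transparent from the construction of the generalised Busemann functions in \cite{groa-janj-rass-gen-buse},'' and what you have written is precisely the unpacking of that transparency --- the path-crossing trick (\cref{lem:PathCrossing}) gives the inequalities at every finite stage, and the closed half-space constraint survives the Cesaro averaging and weak-limit extraction used in that construction, provided one runs the construction jointly across $\scrU_0$ along a common radius sequence as you note.
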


It now makes sense to define for $\xi \in \scrU$
\begin{gather}
	\label{eqn:BusemannLimitDef}
	B^{\xi +}(\widehat{\omega}, x, y) = \lim_{\zeta \cdot e_1 \searrow \xi \cdot e_1} B^{\zeta}(\widehat{\omega}, x, y),\\
	B^{\xi -}(\widehat{\omega}, x, y) = \lim_{\zeta \cdot e_1 \nearrow \xi \cdot e_1} B^{\zeta}(\widehat{\omega}, x, y).
\end{gather}
The limits are taken through $\zeta \in \scrU_0$. That these limits exist follows from monotonicity and the cocycle property.

Finally, we summarise the properties of the collection of these extended generalised Busemann functions.

\begin{theorem}\label{thm:BusemannExistenceGeneral}
	Let $(\widehat{\Omega}, \widehat{\cB}, \widehat{\bP})$ be as in \cref{thm:GenBusemannExistence}. There are functions $B^{\xi \pm}(\widehat{\omega}, x, y)$ of $(\widehat{\omega}, \xi, x, y) \in \widehat{\Omega} \times \scrU\times \bZ^{2} \times \bZ^{2}$, such that the following properties hold:
	\begin{thmlist}
		\item \label{thm:BusemannExistenceGeneralConsistency} (Consistency) Under $\widehat{\bP}$, the marginal distribution of the configuration $\omega$ is i.i.d with the specified marginals. For each $\xi \in \scrU$, the $\bR^{3}$-valued process $\set{\psi_{x}^{\xi \pm}}_{x \in \bZ^{2}}$ defined by
		\begin{equation}
			\psi_{x}^{\xi \pm}(\widehat{\omega}) = (\omega_{x}, B^{\xi \pm}(\widehat{\omega}, x, x + e_{1}), B^{\xi \pm}(\widehat{\omega}, x, x + e_{2}))
		\end{equation}
		is stationary under translations $T_{x}$. For any $I \subseteq \bZ^{2}$, the variables
		\begin{equation}
			\set[\big]{(\omega_{x}, B^{\xi +}(\widehat{\omega}, x, x + e_{i}), B^{\xi -}(\widehat{\omega}, x, x + e_{i})) : x \in I, \xi \in \scrU_0, i \in \set{1, 2}}
		\end{equation}
		are independent of $\set{\omega_{x} : x \in I^{<}}$.
		
		\item \label{thm:BusemannExistenceGeneralAdaptedness} (Adaptedness) For a fixed $\xi \in \scrU$, the process $B^{\xi \pm} = \set{B^{\xi \pm}(x, y)}_{x, y \in \bZ^{2}}$ is a stationary $L^{1}(\widehat{\bP})$ cocycle satisfying
		\begin{equation}
			\min_{i \in \set{1, 2}} \omega_{x, i} - B^{\xi}(x, x + e_{i}) = 0.
		\end{equation}
		
		\item \label{thm:BusemannExistenceGeneralMonotonicity} There exists an event $\widehat{\Omega}_{0}$ with $\widehat{\bP}(\widehat{\Omega}_{0}) = 1$ and such that the following hold for all $\widehat{\omega} \in \widehat{\Omega}_{0}$, $x, y \in \bZ^{2}$ and $\xi,\, \zeta \in \scrU$.
		\begin{enumerate}[label=(\alph*)]
			\item (Monotonicity) If $\xi \cdot e_{1} < \zeta \cdot e_{1}$, then
			\begin{equation}
				B^{\xi -}(\widehat{\omega}, x, x + e_{1}) \le B^{\xi +}(\widehat{\omega}, x, x + e_{1}) \le B^{\zeta -}(\widehat{\omega}, x, x + e_{1})
			\end{equation}
			and
			\begin{equation}
				B^{\xi -}(\widehat{\omega}, x, x + e_{2}) \ge B^{\xi +}(\widehat{\omega}, x, x + e_{2}) \ge B^{\zeta - }(\widehat{\omega}, x, x + e_{2}).
			\end{equation}
			\item (One-sided continuity) If $\xi_{n} \cdot e_{1} \searrow \zeta \cdot e_{1}$, then
			\begin{equation}
				\lim_{n \to \infty}B^{\xi_{n}\pm}(\widehat{\omega}, x, y) = B^{\zeta +}(\widehat{\omega}, x, y).
			\end{equation}
			Similarly, if $\xi_{n} \cdot e_{1} \nearrow \zeta \cdot e_{1}$, then
			\begin{equation}
				\lim_{n \to \infty}B^{\xi_{n}\pm}(\widehat{\omega}, x, y) = B^{\zeta - }(\widehat{\omega}, x, y).
			\end{equation}
		\end{enumerate}

		\item \label{thm:BusemannExistenceGeneralDistinct} (Distinct means) The mean vectors $h(\xi \pm) = h(B^{\xi \pm})$ defined by
		\begin{equation}
			h(\xi \pm) \cdot e_{i} = \Ex{B^{\xi \pm}(0, e_{i})}
		\end{equation}
		satisfy
		\begin{equation}
			h(\xi \pm) = \nabla \ell(\xi \pm).
		\end{equation}
		If $h(\xi +) = h(\zeta -)$ then $B^{\xi +}(x, y) = B^{\zeta -}(x, y) \as$. Similarly for $h(\xi +) = h(\zeta +)$ and $h(\xi -) = h(\zeta -)$.
\end{thmlist}\end{theorem}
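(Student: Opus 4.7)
My plan is to start from the generalized Busemann functions on $\scrU_0$ provided by \cref{thm:GenBusemannExistence} together with the path-crossing monotonicity of \cref{lem:BusemannMonotonicity}, and use monotone limits through $\scrU_0$ to define $B^{\xi\pm}$ for all $\xi \in \scrU$. First I would fix a single full-measure event $\widehat\Omega_0$ on which the monotonicity holds simultaneously for all $x \in \bZ^2$ and all pairs $\xi, \zeta \in \scrU_0$ (this is only a countable collection of a.s.\ statements). For any $\xi \in \scrU$, define $B^{\xi+}(\widehat\omega, x, x+e_i)$ as the monotone limit of $B^\zeta(\widehat\omega, x, x+e_i)$ over $\zeta \in \scrU_0$ with $\zeta \cdot e_1 \searrow \xi \cdot e_1$, and analogously $B^{\xi-}$ from the left; extend to arbitrary $(x,y)$ by a cocycle sum along any up-right/down-left lattice path. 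Existence of the edge limits is immediate from monotonicity sandwiched between $\omega_{x,i}$ and a fixed reference value $B^{\zeta_0}(x, x+e_i)$ for $\zeta_0 \in \scrU_0$ on the far side of $\xi$; the extension to $(x,y)$ is well-defined because the cocycle identity is preserved under the limit.

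Next I would verify that most of the claimed properties are inherited by the pointwise limits essentially for free. Consistency (\cref{thm:BusemannExistenceGeneralConsistency}): stationarity of $\{\psi^{\xi\pm}_x\}$ under translations follows from stationarity of each $\{\psi^\zeta_x\}$ for $\zeta \in \scrU_0$, and independence from $\{\omega_z : z \in I^<\}$ is preserved by pointwise a.s.\ limits of measurable functions. The cocycle identity passes to the limit trivially. Monotonicity among $\xi \in \scrU$ follows by inserting some $\zeta \in \scrU_0$ strictly between two given directions and applying the monotonicity on $\scrU_0$; one-sided continuity (\cref{thm:BusemannExistenceGeneralMonotonicity}) is then a standard fact about monotone functions, with the limit through $\scrU \setminus \scrU_0$ forced to agree with the limit through $\scrU_0$ by squeezing between nearby $\scrU_0$-directions. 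For adaptedness (\cref{thm:BusemannExistenceGeneralAdaptedness}), the inequality $B^{\xi\pm}(x, x+e_i) \le \omega_{x,i}$ passes through limits, and for the minimum identity I would set $f_i(\zeta) = \omega_{x,i} - B^\zeta(x, x+e_i)$ for $\zeta \in \scrU_0$: $f_1$ is non-decreasing in $\zeta \cdot e_1$, $f_2$ is non-increasing, and $f_1(\zeta) \wedge f_2(\zeta) = 0$ always, from which a case split immediately yields $f_1(\xi+) \wedge f_2(\xi+) = 0$; the argument for $\xi-$ is symmetric.

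The remaining work is \cref{thm:BusemannExistenceGeneralDistinct}, which I expect to be the main (if still mild) obstacle, since it is the only step that requires interchanging limits and expectations. For each fixed reference $\zeta_0 \in \scrU_0$ with $\zeta_0 \cdot e_1$ slightly below $\xi \cdot e_1$, the monotonicity gives $B^{\zeta_0}(x, x+e_1) \le B^\zeta(x, x+e_1) \le \omega_{x,1}$ uniformly for $\zeta \in \scrU_0$ approaching $\xi$ from the right; both bounding random variables are in $L^1$ (the lower by the $L^1$ cocycle property in \cref{thm:GenBusemannExistence2}, the upper by \cref{ass:WeightAssump}), so dominated convergence yields $\Ex{B^{\xi+}(x, x+e_1)} = \lim_{\zeta \cdot e_1 \searrow \xi \cdot e_1} \partial_1 \ell(\zeta) = \partial_1 \ell(\xi+)$, the last equality being the very definition of the one-sided derivative of the convex (and $1$-homogeneous) function $\ell$. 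The same argument handles $e_2$ and the left limits. Finally, for the coincidence statement, if $h(\xi+) = h(\zeta-)$ with $\xi \cdot e_1 \le \zeta \cdot e_1$, monotonicity gives $B^{\zeta-}(x, x+e_1) - B^{\xi+}(x, x+e_1) \ge 0$ and $B^{\xi+}(x, x+e_2) - B^{\zeta-}(x, x+e_2) \ge 0$ with both expectations equal to zero, so a non-negative integrable random variable with mean zero is a.s.\ zero, forcing the edge Busemann functions to agree almost surely; the cocycle property then propagates this equality to all $B^{\cdot}(x,y)$. The cases $h(\xi+) = h(\zeta+)$ and $h(\xi-) = h(\zeta-)$ are handled identically.
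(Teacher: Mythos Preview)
Your proposal is correct and follows essentially the same approach as the paper. The paper is extremely terse here: after stating \cref{lem:BusemannMonotonicity} and defining $B^{\xi\pm}$ via the monotone limits \eqref{eqn:BusemannLimitDef}, it simply says ``that these limits exist follows from monotonicity and the cocycle property'' and then states \cref{thm:BusemannExistenceGeneral} as a summary without further argument, so your write-up is in fact more detailed than the paper's own treatment.
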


\subsection{As gradients of passage times}
\label{ssec:BusemannGradients}
It remains to see that these objects are genuine Busemann functions given by the limits of gradients, as in \eqref{eqn:BusemannLimit}. We will go through the lemmas of Section 6 in \cite{geor-rass-sepp-17-buse}, noting where the details differ. To that end, fix a $v \in \bZ^{2}$ and for $x \le v - e_{1}$, $y \le v - e_{2}$, define increments
\begin{align}
	&I(x, v) = L(x, v) - L(x + e_{1}, v)\\
	\text{and } &J(y, v) = L(y, v) - L(y + e_{2}, v).  
\end{align}
The path crossing trick applies to give various inequalities for these increments.
\begin{lemma}
	\label{lem:IncrementIneqs}
	\begin{align}
		&I(x, v + e_{2}) \le I(x, v) \le I(x, v + e_{1})\\
		\text{and } &J(x, v + e_{2}) \ge J(x, v) \ge J(x, v + e_{1}).  
	\end{align}
\end{lemma}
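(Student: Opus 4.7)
My plan is to adapt the path-crossing trick of \cref{lem:PathCrossing}, applying it to slightly different configurations of geodesics. For the inequality $I(x, v) \le I(x, v+e_1)$, I would fix a geodesic $\gamma_1$ realising $L(x+e_1, v)$ and a geodesic $\gamma_2$ realising $L(x, v+e_1)$. Since $\gamma_1$ starts one step to the right of $\gamma_2$ and ends one step to the left, these up-right paths must share a vertex $w$. The inequality then follows by combining the two decompositions $L(x+e_1, v) = L(x+e_1, w) + L(w, v)$ and $L(x, v+e_1) = L(x, w) + L(w, v+e_1)$ with the subadditivity bounds $L(x, v) \le L(x, w) + L(w, v)$ and $L(x+e_1, v+e_1) \le L(x+e_1, w) + L(w, v+e_1)$, adding, and rearranging.

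For $I(x, v+e_2) \le I(x, v)$, I would run the same argument with $\gamma_1$ a geodesic from $x+e_1$ to $v+e_2$ and $\gamma_2$ one from $x$ to $v$, so that $\gamma_1$ starts one step to the right of $\gamma_2$ and ends one step above it. The two $J$ inequalities follow from the same scheme with the roles of $e_1$ and $e_2$ interchanged throughout.

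The only step requiring real care is verifying the crossing in the ``mixed-offset'' cases, where the horizontal offset of the start points is paired with a vertical offset of the endpoints. I would handle this by a short column-by-column tracking: writing $r_1^{\max}(c)$ for the highest row of $\gamma_1$ in column $c$ and $r_2^{\min}(c)$ for the lowest row of $\gamma_2$ there, one checks that either an immediate shared vertex appears in the first column of overlap, or else $r_2^{\min}(c) - r_1^{\max}(c)$ is strictly positive at the leftmost overlap column and strictly negative at the rightmost. Since $r_i^{\min}(c+1) = r_i^{\max}(c)$ for any up-right path, a sign change of this integer-valued quantity as $c$ increases by one forces the paths to meet at a common vertex. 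With this geometric fact in hand, the rest of the proof reduces to the subadditivity manipulation above.
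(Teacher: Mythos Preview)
Your proposal is correct and is exactly the approach the paper has in mind: the paper does not give a detailed proof of this lemma at all, merely stating that ``the path crossing trick applies to give various inequalities for these increments,'' and your argument is a faithful expansion of that remark. One small simplification: rather than tracking columns, you can argue along antidiagonals exactly as in the proof of \cref{lem:PathCrossing}, since at each level the $e_1$-coordinates of two up-right paths differ by an integer that changes by at most one per step; this handles the ``mixed-offset'' cases (e.g.\ $\gamma_1:x+e_1\to v+e_2$ versus $\gamma_2:x\to v$) uniformly without a separate column argument.
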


The next lemma links the Busemann functions to limiting directions of the FPP, but requires quite a bit of notation. Recalling that $\ell$ is the limit shape of our model, set $\gamma(s) = \ell(1, s)$. Note that $\ell$ won't in general be symmetric. The convexity of $\ell$ ensures the existence of one-sided derivatives for $\gamma$. Fix $\zeta \in \scrU$ and a cocycle $B = B^{\zeta \pm}$. Take the (inverse) slope $r = \zeta \cdot e_{1} / \zeta \cdot e_{2}$, so that $\alpha = \gamma'(r \pm)$ (the same choice of sign as for $B$) satisfies
\begin{equation}
	\alpha = \Exhat{B(0, e_{1})}.
\end{equation}
This is \cref{thm:BusemannExistenceGeneralDistinct}. Define $f(\alpha)$ by
\begin{equation}
	f(\alpha) = \Exhat{B(0, e_{2})}.
\end{equation}
Fix for the moment some $v \in \bZ^{2}$ and define, for $u \le v$,
\begin{align}
	\label{eq:EdgeRecur}
	\GNE(u, v) =
	\begin{cases}
		B(u, v), \quad v - u = ke_{i},\, k \in \bZ_{+}, i \in \set{1, 2}\\
		(\omega(u, u + e_{1}) + \GNE(u + e_{1}, v)) \wedge (\omega(u, u + e_{2}) + \GNE(u + e_{2}, v)),\quad \text{otherwise.}
	\end{cases}
\end{align}
These are the passage times with the generalised Busemann functions as initial conditions along the northeast boundary. 

Write $\GNE_{v - e_{i} \in \pi}(0, v)$ for the minimal passage time among paths which reach $v$ through the edge $\set{v - e_{i}, v}$ (where we use in the definition the corresponding weights for $\GNE$, which can be recovered uniquely from the definition above).

\begin{lemma}
	Fix $0 < s, t < \infty$. Let $v_{n} \in \bZ^{2}$ be such that $v_{n} / \abs{v_{n}}_{1} \to (s, t) / (s + t)$ as $n \to \infty$ and such that $\abs{v_{n}} \ge \eta_{0}n$ for some $\eta_{0} > 0$. Then the following limits hold almost surely:
	\begin{equation}
		\abs{v_{n}}_{1}^{-1}\GNE_{v_{n} - e_{1} \in \pi}(0, v_{n}) \underset{{n \to \infty}}{\longrightarrow} (s + t)^{-1}\max_{0 \le \tau \le s}\set{\ell(s - \tau, t) + \alpha\tau}
	\end{equation}
	and
	\begin{equation}
		\abs{v_{n}}_{1}^{-1}\GNE_{v_{n} - e_{2} \in \pi}(0, v_{n}) \underset{{n \to \infty}}{\longrightarrow} (s + t)^{-1}\min_{0 \le \tau \le s}\set{\ell(s, t - \tau) + f(\alpha)\tau}.
	\end{equation}
	\begin{proof}
		The proof can be carried out as in \cite{geor-rass-sepp-17-buse} after minor modifications to the estimates. The symmetry of the setup means that it is enough to look at the $e_{1}$-axis. Fix $\epsilon > 0$, let $M = \floor{\epsilon^{-1}}$, and define steps
		\begin{equation}
			q^{n}_{j} = j \floor*{\frac{\epsilon \abs{v_{n}}_{1} s}{s + t}}, \text{ for } 0 \le j \le M - 1, \text{ and } q^{n}_{M} = v_{n} \cdot e_{1}.
		\end{equation}
		Notice that for $n$ large we have $q^{n}_{M - 1} < v_{n} \cdot e_{1} = q^{n}_{M}$.
		
		Suppose a minimal path for $\GNE_{v - e_{1} \in \pi}(0, v)$ enters the north boundary at the point $v_{n} - (l, 0)$ and choose $j$ so that $q^{n}_{j} < l \le q^{n}_{j + 1}$. Write $m_{0} = \Ex{\omega_{1}(x)}$. We have a bound
		\begin{align}
			\GNE_{v - e_{1} \in \pi}(0, v) &= L(0, v_{n} - (l, 1)) + \omega_{2}(v_{n} - (l, 1)) +  B(v_{n} - (l, 1), v_{n})\\
			&\ge
			\begin{multlined}[t]
				L(0, v_{n} - (q^{n}_{j}, 1)) + q^{n}_{j} \alpha + \omega_{2}(v_{n} - (l, 1)) + \sum_{k = q^{n}_{j} + 1}^{l}(\omega_{1}(v_{n} - (k, 1)) - m_{0})\\ + (l  - q^{n}_{j})m_{0} + (B(v_{n} - (l, 1), v_{n}) - l \alpha) + (l - q^{n}_{j})\alpha.
			\end{multlined}
		\end{align}
		
		Proceeding in the same way, define $F(x, y) = h(B) \cdot (x - y) - B(x, y)$, which has
		\begin{equation}
			B(v_{n} - (l, 0), v_{n}) - l \alpha = F(0, v_{n} - (l, 0)) - F(0, v_{n}).
		\end{equation}
		Here the adaptedness property is $0 = (B(0, e_{1}) - \omega_{1}(0)) \vee (B(0, e_{2}) - \omega_{2}(0))$. The resulting bound on the centred cocycle is 
		\begin{equation}
			F(0, e_{i}) \le \alpha \wedge f(\alpha) - \omega_{1}(0) \vee \omega_{2}(0).
		\end{equation}
		By \cref{ass:WeightAssump}, the variables $\set{\omega_{1}(x) \vee \omega_{2}(x)}_{x \in \bZ^{2}}$ are i.i.d with $2 + \epsilon$ moments, and so \cite[Theorem A.1]{geor-rass-sepp-17-buse} applies to $F$.
		Writing $S_{j, m}^{n} = \sum_{l = q_{j}^{n} + 1}^{q_{j}^{n} + m}(\omega_{1}(v_{n} - (k, 1)) - m_{0})$ and $C$ for some constant, then maximising over $j$ in our bound above,
		\begin{equation}
			\begin{multlined}[t]
				\GNE_{v - e_{1} \in \pi}(0, v) \ge 
				\max_{0 \le j \le M - 1}\Bigg\{ L(0, v_{n} - (q^{n}_{j}, 1)) + q^{n}_{j} \alpha + \max_{q^{n}_{j} \le l \le q^{n}_{j + 1}}\abs{\omega_{2}(v_{n} - (l, 1))} + \max_{0 \le m \le q^{n}_{j + 1} - q^{n}_{j}}\abs{S^{n}_{j, m + 1}}\\ + \max_{q^{n}_{j} \le l \le q^{n}_{j + 1}}(\abs{F(0, v_{n} - (l, 0))} + \abs{F(0, v_{n})})\Bigg\}.
			\end{multlined}
		\end{equation}
		Divide through by $\abs{v_{n}}_{1}$ and let $n \to \infty$. Each of these terms converges as in \cite{geor-rass-sepp-17-buse}, except that we have an additional term
		\begin{equation}
			\abs{v_{n}}_{1}^{-1}\max_{0 \le l \le v_{n} \cdot e_{1}}\abs{\omega_{2}(v_{n} - (l, 1))}.  
		\end{equation}
		But the finite variance of the weights is enough to ensure that this term too goes to zero almost surely. This gives the lower bound
		\begin{equation}
			\liminf_{n \to \infty}\abs{v_{n}}_{1}^{-1}\GNE_{v_{n} - e_{1} \in \pi}(0, v_{n}) \ge (s + t)^{-1}\max_{0 \le \tau \le s}\set{\ell(s - \tau, t) + \alpha\tau}.
		\end{equation}
		The argument for the upper bound is completely identical to the one in \cite{geor-rass-sepp-17-buse} (after swapping signs). 
	\end{proof}
\end{lemma}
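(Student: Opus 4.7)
The plan is to decompose paths contributing to $\GNE_{v_n - e_1 \in \pi}(0, v_n)$ by the column at which they first enter the north boundary $\{v_n - k e_1 : k \ge 1\}$, then pass to the limit using the uniform convergence of the limit shape and ergodic properties of the stationary Busemann cocycle.

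A path from $0$ to $v_n$ with last edge $(v_n - e_1, v_n)$ enters the top row via an upward step into $v_n - (l, 0)$ for some $l \in \{1, \ldots, v_n \cdot e_1\}$, then walks along the boundary to $v_n$. Using the recursive definition \eqref{eq:EdgeRecur} and the cocycle property of $B$, the cost of such a path is
\[
L(0, v_n - (l, 1)) \;+\; \omega(v_n - (l, 1), v_n - (l, 0)) \;+\; B(v_n - (l, 0), v_n),
\]
and $\GNE_{v_n - e_1 \in \pi}(0, v_n)$ is the optimum of this expression over admissible $l$. I would fix a mesh $\epsilon > 0$, set $M = \floor{1/\epsilon}$, and choose reference columns $q_j^n = j\floor{\epsilon\,\abs{v_n}_1\, s/(s+t)}$ for $j = 0, \ldots, M-1$ (with $q_M^n = v_n \cdot e_1$). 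For $l$ with $q_j^n < l \le q_{j+1}^n$, I would bound the cost in terms of a reference value of the form $L(0, v_n - (q_j^n, 0)) + q_j^n \alpha$ plus error terms controlling: (a) the fluctuation of $L$ over an interval of length $O(\epsilon \abs{v_n}_1)$; (b) a single vertical edge weight from the upward step; and (c) the centred cocycle $F(x, y) = h(B)\cdot(x-y) - B(x, y)$ evaluated near the boundary. The monotonicity in \cref{lem:IncrementIneqs} gives the comparison between nearby columns that anchors the reduction to the reference grid.

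Each of these error terms is $o(\abs{v_n}_1)$ almost surely, leaving only an $O(\epsilon)$ discretisation error. The $L$-fluctuations are handled by the uniform limit shape convergence \cref{thm:UniformLimitShape}. The single vertical weight is dominated by the maximum of $O(\abs{v_n}_1)$ i.i.d.\ variables with $(2+\epsilon)$-th moment, which is $o(\abs{v_n}_1)$ a.s.\ by Borel--Cantelli. The cocycle fluctuations are handled by the maximal inequality of \cite[Theorem A.1]{geor-rass-sepp-17-buse} applied to the stationary $L^1$ cocycle $B$, whose integrability is inherited from \cref{ass:WeightAssump} via the adaptedness property \cref{thm:BusemannExistenceGeneralAdaptedness}. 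The reference terms scale as $L(0, v_n - (q_j^n, 0))/\abs{v_n}_1 \to (s+t)^{-1}\ell(s - j\epsilon s, t)$ and $q_j^n \alpha/\abs{v_n}_1 \to (s+t)^{-1}\alpha j\epsilon s$ by \cref{thm:UniformLimitShape} and by \cref{thm:BusemannExistenceGeneralDistinct} with the cocycle ergodic theorem. Optimising over $j$ produces a Riemann-sum approximation on $[0, s]$ of mesh $\epsilon s$ to the optimisation of $\tau \mapsto \ell(s - \tau, t) + \alpha\tau$, and sending $\epsilon \to 0$ yields the claimed limit.

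The argument for $\GNE_{v_n - e_2 \in \pi}$ is symmetric: columns become rows, the north boundary is replaced by the east boundary $\{v_n - k e_2\}$, and $\alpha$ is replaced by $f(\alpha)$. The main technical obstacle throughout is the \emph{uniform} control of the centred cocycle $F$ over all $O(\abs{v_n}_1)$ candidate entry points along the boundary, not just at the endpoint $v_n$; this is precisely where the maximal inequality of \cite[Theorem A.1]{geor-rass-sepp-17-buse} is essential and where the strong moment hypothesis in \cref{ass:WeightAssump} plays its decisive role. Once this uniform cocycle control is in hand, each of the remaining ingredients (limit shape, single-edge noise, discretisation) is routine.
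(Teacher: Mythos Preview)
Your proposal is correct and follows essentially the same route as the paper: both decompose by the entry column $l$ into the north boundary, discretise via the grid $q_j^n$, control the bulk via uniform limit shape convergence, control the boundary via the centred cocycle $F$ and \cite[Theorem A.1]{geor-rass-sepp-17-buse}, and identify the single vertical edge weight $\omega_2(v_n - (l,1))$ as the only new error term relative to the vertex-weight argument, handled by the moment assumption. The paper controls the passage-time fluctuation between columns $l$ and $q_j^n$ by inserting the horizontal edge weights $\omega_1$ explicitly rather than invoking \cref{lem:IncrementIneqs}, but this is a cosmetic difference.
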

\begin{lemma}
	Let $s \in (r, \infty)$. Let $v_{n} \in \bZ^{2}$ be such that $v_{n} / \abs{v_{n}}_{1} \to (s, t) / (s + t)$ as $n \to \infty$ and such that $\abs{v_{n}} \ge \eta_{0}n$ for some $\eta_{0} > 0$. Assume that $\gamma'(r+) > \gamma'(s-)$. Then $\widehat{\bP}$-$\as$ there exists a random $n_{0} < \infty$ such that for all $n \ge n_{0}$
	\begin{equation}
		\GNE(0, v_{n}) = \GNE_{v_{n} - e_{1} \in \pi}(0, v_{n}).
	\end{equation}
\end{lemma}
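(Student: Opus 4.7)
The plan is to combine the preceding lemma with a simple variational calculation to show that, under the slope hypothesis, the almost-sure rate of $\GNE_{v_n - e_1 \in \pi}(0, v_n)$ is strictly smaller than that of $\GNE_{v_n - e_2 \in \pi}(0, v_n)$. Since any monotone path from $0$ to $v_n$ takes its final step along either $e_1$ or $e_2$, we have the decomposition
\[
\GNE(0, v_n) = \GNE_{v_n - e_1 \in \pi}(0, v_n) \wedge \GNE_{v_n - e_2 \in \pi}(0, v_n),
\]
and once the two normalised limits have separated, almost-sure convergence forces $\GNE(0, v_n)$ to coincide with the $e_1$-contribution for every $n$ large enough.

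By the preceding lemma, the two normalised quantities converge almost surely to
\[
A := (s+t)^{-1}\min_{0 \le \tau \le s}\brac[\big]{\ell(s - \tau, t) + \alpha \tau}\quad\text{and}\quad B := (s+t)^{-1}\min_{0 \le \tau \le t}\brac[\big]{\ell(s, t - \tau) + f(\alpha)\tau}.
\]
Both integrands are convex in $\tau$ (the composition of convex $\ell$ with an affine map plus a linear term), and by the gradient identification $(\alpha, f(\alpha)) = \nabla\ell(\zeta)$ from \cref{thm:BusemannExistenceGeneralDistinct}, their critical points are exactly those $\tau$ at which the interior endpoint, $(s-\tau, t)$ or $(s, t-\tau)$ respectively, lies along the ray of direction $\zeta$. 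The hypothesis places the direction of $v_n$ strictly on the horizontal side of $\zeta$: the $A$-problem then has the interior critical point $\tau^{*} = s - rt \in (0, s)$, giving $A = (s+t)^{-1}\brac[\big]{\ell(rt, t) + \alpha(s - rt)}$, while the corresponding critical time $t - s/r$ for the $B$-problem is negative, so the $B$-minimum must sit at the boundary. The right-derivative of the $B$-integrand at $\tau = 0$ equals $f(\alpha) - \partial_2 \ell(s, t)$, and the slope-gap hypothesis on $\gamma$ at $r$ and $s$ translates directly into the strict positivity of this quantity, pinning the minimum at $\tau = 0$ and giving $B = (s+t)^{-1}\ell(s, t)$.

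It then remains to show $A < B$, that is, $\ell(s, t) - \ell(rt, t) > \alpha(s - rt)$. Convexity of $\ell$ gives this secant slope $\ge \partial_1 \ell(rt, t) = \alpha$ automatically, with equality only when $x_1 \mapsto \ell(x_1, t)$ is affine over $[rt, s]$; the slope gap on $\gamma$ is precisely what rules out this affine segment, yielding the strict inequality. The main obstacle I foresee is the routine but somewhat delicate book-keeping needed to convert the one-variable hypothesis on $\gamma$ at $r$ and $s$ into both the interior-critical-point structure of $A$ and the strict convexity of $\ell(\cdot, t)$ on $[rt, s]$; once that translation is in place, all probabilistic content is absorbed by the previous lemma together with the trivial observation that an almost-sure strict inequality of limits forces the same strict inequality for $n$ large.
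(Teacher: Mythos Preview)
Your approach is correct and is precisely the argument the paper defers to: the paper does not write out a proof here but simply notes that the proof in \cite{geor-rass-sepp-17-buse} ``makes no mention of the weights and goes through word-for-word,'' and what you have sketched --- decompose $\GNE(0,v_n)$ as the minimum of the two last-step contributions, invoke the preceding shape lemma for each, and separate the two limits by a convex variational calculation --- is exactly that argument. One small remark: the paper's adaptation of the preceding lemma from LPP to FPP carries a residual $\max$ in one of the two limits (and similarly the inequality on $\gamma'$ has the LPP orientation), so your normalisation with both limits as minima is the right reading; the book-keeping you flag as delicate is indeed just the translation between the one-variable slope hypothesis on $\gamma$ and the two-variable convexity of $\ell$, which is straightforward once the homogeneity relation $\nabla\ell(\lambda\xi)=\nabla\ell(\xi)$ is used.
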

The proof in \cite{geor-rass-sepp-17-buse} makes no mention of the weights and goes through word-for-word, so we skip it.

There are some final definitions before the theorem. Write
\begin{equation}
	\scrD = \set{\xi \in \scrU : \ell \text{ is differentiable at } \xi}.
\end{equation}
For a direction $\xi \in \scrU$, consider the maximal line segments of $\ell$ to which $\xi$ belongs:
\begin{equation}
	\scrU_{\xi\pm} = \set{\zeta \in \scrU : \ell(\zeta) - \ell(\xi) = \nabla \ell(\xi \pm) \cdot (\zeta - \xi)}.
\end{equation}
Let
\begin{equation}
	\scrU_{\xi} = \scrU_{\xi-} \cup \scrU_{\xi+} = [\lowerbar{\xi}, \overbar{\xi}], \quad \text{where } \lowerbar{\xi} \cdot e_{1} \le \overbar{\xi}\cdot e_{1}.
\end{equation}

\begin{theorem}
	Fix a possibly degenerate segment $[\zeta, \eta] \subseteq \scrU$. Assume that either $[\zeta, \eta]$ consists of a single exposed point $\xi$ such that $\xi = \lowerbar{\xi} = \overbar{\xi} = \zeta = \eta$, or that $[\zeta, \eta]$ is a maximal, non-degenerate linear segment of $\ell$ so that $[\zeta, \eta] = [\lowerbar{\xi}, \overbar{\xi}]$ for all $\xi \in (\zeta, \eta)$. Then there exists an event $\widehat{\Omega}_{0}$ with $\widehat{\bP}(\widehat{\Omega}_{0}) = 1$ such that for each $\widehat{\omega} \in \widehat{\Omega}_{0}$ and for any sequence $v_{n} \in \bZ_{+}^{2}$ with
	\begin{equation}
		\abs{v_{n}}_{1} \to \infty \text{ and } \zeta \cdot e_{1} \le \liminf_{n \to \infty}\frac{v_{n} \cdot e_{1}}{\abs{v_{n}}_{1}} \le \limsup_{n \to \infty}\frac{v_{n} \cdot e_{1}}{\abs{v_{n}}_{1}} \le \eta \cdot e_{1},
	\end{equation}
	we have for all $x \in \bZ_{+}^{2}$
	\begin{align}
		B^{\zeta -}(\widehat{\omega}, x, x + e_{1}) &\le \liminf_{n \to \infty}(L(\omega, x, v_{n}) - L(\omega, x + e_{1}, v_{n}))\\
		&\le \limsup_{n \to \infty}(L(\omega, x, v_{n}) - L(\omega, x + e_{1}, v_{n})) \le B^{\eta +}(\widehat{\omega}, x, x + e_{1})
	\end{align}
	and
	\begin{align}
		B^{\eta +}(\widehat{\omega}, x, x + e_{2}) &\le \liminf_{n \to \infty}(L(\omega, x, v_{n}) - L(\omega, x + e_{2}, v_{n}))\\
		&\le \limsup_{n \to \infty}(L(\omega, x, v_{n}) - L(\omega, x + e_{2}, v_{n})) \le B^{\zeta -}(\widehat{\omega}, x, x + e_{2}).
	\end{align}
\end{theorem}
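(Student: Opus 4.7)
The plan follows the strategy of \cite{geor-rass-sepp-17-buse}, adapted to the edge-weight setting. The argument combines the path-crossing monotonicity (\cref{lem:PathCrossing}), the two preceding lemmas on $\GNE$, and the one-sided continuity of $\xi \mapsto B^{\xi \pm}$ from \cref{thm:BusemannExistenceGeneral}.

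First I would reduce to $x = 0$ by stationarity and treat only the $e_1$-inequality; the $e_2$ case is symmetric under the reflection exchanging the axes. For the upper bound on $\limsup(L(0, v_n) - L(e_1, v_n))$, pick $\xi' \in \scrU_0$ with $\xi' \cdot e_1 > \eta \cdot e_1$ and arbitrarily close to $\eta$, and choose lattice sequences $w_n \in \bZ^2_+$ with $\abs{w_n}_1 = \abs{v_n}_1$, $w_{n, 1} \ge v_{n, 1}$, and $w_n / \abs{w_n}_1 \to \xi'$. The path-crossing inequality \eqref{eq:PathCrossing1} immediately gives
\begin{equation*}
L(0, v_n) - L(e_1, v_n) \le L(0, w_n) - L(e_1, w_n).
\end{equation*}
If one establishes $L(0, w_n) - L(e_1, w_n) \to B^{\xi'}(0, e_1)$ a.s.\ along $\scrU_0$-sequences, then letting $\xi' \searrow \eta$ through $\scrU_0$ and invoking one-sided continuity yields $\limsup (L(0, v_n) - L(e_1, v_n)) \le B^{\eta +}(0, e_1)$. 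The $\liminf \ge B^{\zeta -}(0, e_1)$ bound is symmetric, using $\xi'' \in \scrU_0$ with $\xi'' \cdot e_1 < \zeta \cdot e_1$.

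The core step is thus the almost sure convergence along $\scrU_0$-directions. Fix $B = B^{\xi'}$ and the associated $\GNE$ given by \eqref{eq:EdgeRecur}. A short induction combining the recovery (adaptedness) property of $B$ with its cocycle property gives the exact identity
\begin{equation*}
\GNE(u, w) - \GNE(u + e_i, w) = B(u, u + e_i), \quad u \le w, \; i \in \set{1, 2},
\end{equation*}
so in particular $\GNE(0, w_n) - \GNE(e_1, w_n) = B^{\xi'}(0, e_1)$ exactly. The hypothesis that $\xi'$ lies strictly above $\eta$, coupled with the structure of $[\zeta, \eta]$ with respect to the differentiability of $\ell$, forces the strict gradient inequality required by the second preceding lemma. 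Thus for $n$ large we have the entry-point identity $\GNE(0, w_n) = \GNE_{w_n - e_1 \in \pi}(0, w_n)$, and likewise with $e_1$ as starting point. The asymptotic variational formula of the first preceding lemma, together with the uniform shape convergence of \cref{thm:UniformLimitShape}, then identifies $\GNE$ asymptotically with $L$ at the relevant boundary entry points, delivering the desired convergence of passage-time differences.

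The main obstacle will be this last matching step: although the cocycle identity for $\GNE$ is exact, the raw gap $L - \GNE$ is not uniformly small and can scale with $\abs{w_n}_1$. Surmounting this requires the second preceding lemma to pin the $\GNE$-optimizers from both $0$ and $e_1$ to the same macroscopic entry region along the east boundary of $w_n$, and the uniform shape theorem to control $\max_k \abs{L(0, w_n - k e_1) - \ell(w_n - k e_1)}$ at order $o(\abs{w_n}_1)$ simultaneously across all candidate entry points $k$. These two inputs together turn the variational equality at the level of means into the sharp pointwise limit for the passage-time increments, closing the argument.
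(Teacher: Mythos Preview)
Your overall strategy is on the right track, but there is a genuine gap in the core step. You propose to establish the exact limit $L(0,w_n)-L(e_1,w_n)\to B^{\xi'}(0,e_1)$ along $w_n$ in direction $\xi'$ by applying the entry-point lemma to the model $\GNE$ built from $B^{\xi'}$ with target $w_n$. But that lemma requires a \emph{strict} separation between the target direction and the Busemann direction (the hypothesis $s\in(r,\infty)$ together with $\gamma'(r+)>\gamma'(s-)$); with both equal to $\xi'$ you have $s=r$ and the lemma does not fire. Even granting the entry-point conclusion, your final ``matching step'' invokes the uniform shape theorem to control $\max_k\abs{L(0,w_n-ke_1)-\ell(w_n-ke_1)}$ at order $o(\abs{w_n}_1)$, and this cannot by itself deliver an $O(1)$ increment limit --- two functions that agree to order $o(n)$ may have entirely unrelated nearest-neighbour increments. (There is also a slip: you first assert entry through $w_n-e_1$, i.e.\ the north row, and later speak of the east boundary.)

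The argument of \cite{geor-rass-sepp-17-buse} avoids both issues by applying the entry-point lemma directly to $v_n$, whose limiting direction lies in $[\zeta,\eta]$ and is therefore strictly below $\xi'$. The symmetric ($s<r$) version then forces the $\GNE$-geodesic from $0$ to $v_n$ to end with an $e_2$-step, so it enters the east column at some $(v_{n,1},k_n)$ with $k_n<v_{n,2}$, giving $\GNE(0,v_n)=L(0,(v_{n,1},k_n))+B^{\xi'}((v_{n,1},k_n),v_n)$. Taking $k_n$ as a (suboptimal) entry height from the start $e_1$ yields $\GNE(e_1,v_n)\le L(e_1,(v_{n,1},k_n))+B^{\xi'}((v_{n,1},k_n),v_n)$; subtracting and using the cocycle identity gives the exact $O(1)$ inequality $B^{\xi'}(0,e_1)\ge I(0,(v_{n,1},k_n))$. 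The final ingredient is \cref{lem:IncrementIneqs} (a consequence of path crossing, but used here at the \emph{target}, not the start): since $k_n<v_{n,2}$, one has $I(0,v_n)\le I(0,(v_{n,1},k_n))\le B^{\xi'}(0,e_1)$ for all large $n$. Letting $\xi'\searrow\eta$ through $\scrU_0$ finishes. The variational formula and the uniform shape theorem are used only inside the proof of the entry-point lemma, not at this stage, and the preliminary path-crossing reduction to a sequence $w_n$ in direction $\xi'$ is unnecessary.
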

After the appropriate redefinition of the $L^{N}$ passage times, the proof is identical. Finally, we arrive at
\begin{corollary}
	Assume $\xi, \lowerbar{\xi}, \overbar{\xi} \in \scrD$. Then there exists an event $\widehat{\Omega}_{0}$ with $\widehat{\bP}(\widehat{\Omega}_{0}) = 1$ such that for each $\widehat{\omega} \in \widehat{\Omega}_{0}$, $x,\, y \in \bZ_{+}^{2}$, and for any sequence $v_{n} \in \bZ_{+}^{2}$ with
	\begin{equation}
		\abs{v_{n}}_{1} \to \infty \text{ and } \lowerbar{\xi} \cdot e_{1} \le \liminf_{n \to \infty}\frac{v_{n} \cdot e_{1}}{\abs{v_{n}}_{1}} \le \limsup_{n \to \infty}\frac{v_{n} \cdot e_{1}}{\abs{v_{n}}_{1}} \le \overbar{\xi} \cdot e_{1},
	\end{equation}
	we have 
	\begin{equation}
		B^{\xi}(\widehat{\omega}, x, y) = \lim_{n \to \infty}L(\omega, x, v_{n}) - L(\omega, y, v_{n}).
	\end{equation}
\end{corollary}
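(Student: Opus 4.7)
The plan is to reduce to nearest-neighbour increments — which are controlled by the preceding theorem — and then collapse the resulting two-sided bound to an equality using the uniqueness clause of \cref{thm:BusemannExistenceGeneralDistinct}.

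First, under the hypothesis $\xi, \lowerbar{\xi}, \overbar{\xi} \in \scrD$ I would observe that the one-sided gradients $\nabla \ell(\lowerbar{\xi} \pm)$, $\nabla \ell(\xi \pm)$ and $\nabla \ell(\overbar{\xi} \pm)$ all coincide. Differentiability at each of the three directions equates the left and right gradients there, while the definition of the maximal linear segment $[\lowerbar{\xi}, \overbar{\xi}]$ (when it is non-degenerate) forces the gradient to be constant across it. The distinct-means part of \cref{thm:BusemannExistenceGeneralDistinct} then gives, for every fixed pair $(x,y)$,
\begin{equation}
B^{\lowerbar{\xi}-}(x,y) \;=\; B^{\xi -}(x,y) \;=\; B^{\xi+}(x,y) \;=\; B^{\overbar{\xi}+}(x,y) \qquad \widehat{\bP}\text{-a.s.},
\end{equation}
and I denote this common cocycle by $B^{\xi}$.

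Next, I would apply the preceding theorem with $[\zeta, \eta] = [\lowerbar{\xi}, \overbar{\xi}]$, which collapses to $\set{\xi}$ when $\xi$ is an exposed point. For each $x \in \bZ_+^2$ and $i \in \set{1, 2}$, the theorem bounds both $\liminf_n$ and $\limsup_n$ of $L(\omega, x, v_n) - L(\omega, x+e_i, v_n)$ between $B^{\lowerbar{\xi}-}(x, x+e_i)$ and $B^{\overbar{\xi}+}(x, x+e_i)$ (their order depending on $i$, but the argument is symmetric). By the identification above the two bounds coincide, so the limit exists and equals $B^{\xi}(x, x+e_i)$.

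To extend from nearest-neighbour pairs to arbitrary $x, y \in \bZ_+^2$, I would fix a lattice path $x = z_0, z_1, \ldots, z_k = y$ with $z_{j+1} - z_j \in \set{\pm e_1, \pm e_2}$ (which can always be chosen inside $\bZ_+^2$) and telescope. The cocycle property in \cref{thm:BusemannExistenceGeneralAdaptedness}, together with its antisymmetry consequence $B^{\xi}(x,y) = - B^{\xi}(y,x)$, gives $B^{\xi}(x, y) = \sum_{j=0}^{k-1} B^{\xi}(z_j, z_{j+1})$, and for $n$ large enough that $v_n$ dominates every $z_j$ coordinatewise,
\begin{equation}
L(\omega, x, v_n) - L(\omega, y, v_n) = \sum_{j=0}^{k-1} \brac[\big]{L(\omega, z_j, v_n) - L(\omega, z_{j+1}, v_n)}.
\end{equation}
Each summand converges to $B^{\xi}(z_j, z_{j+1})$ by the nearest-neighbour case, so the sum converges to $B^{\xi}(x, y)$.

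A countable intersection of the resulting full-measure events over $x \in \bZ_+^2$ and $i \in \set{1,2}$ then provides a single $\widehat{\Omega}_0$ on which every limit holds simultaneously. I do not anticipate a serious obstacle: the real technical work lives in the preceding theorem, and the only delicate piece here is observing that $\xi, \lowerbar{\xi}, \overbar{\xi} \in \scrD$ pin down a single Busemann cocycle shared by all three directions, whereupon the sandwich automatically becomes an equality.
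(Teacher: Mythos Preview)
Your proposal is correct and is precisely the intended derivation: the paper does not spell out a proof of this corollary, treating it as immediate from the preceding theorem, and your argument is the natural unpacking of that step. The key observation---that $\xi,\lowerbar{\xi},\overbar{\xi}\in\scrD$ forces $h(\lowerbar{\xi}-)=h(\xi)=h(\overbar{\xi}+)$ and hence $B^{\lowerbar{\xi}-}=B^{\xi}=B^{\overbar{\xi}+}$ via \cref{thm:BusemannExistenceGeneralDistinct}, collapsing the sandwich to an equality---together with the telescoping extension to general $x,y$, is exactly what is needed.
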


The corollary shows in particular that when the limit shape is differentiable, the Busemann process is a function of the weights, so is Borel measurable and ergodic under shifts of the environment.  

\subsection{Construction of semi-infinite geodesics}
Lastly, we prove that the procedure of \cref{rem:GeodesicConstruction} does indeed produce geodesics with the correct direction. We follow Section 8 of \cite{rass-cgm-18}. Recall that we have an angle $\xi \in \scrU$ and a procedure that produces a path $\pi = (x_0, x_1, \dots)$, such that $x_{k + 1} - x_{k} \in \set{e_1, e_2}$ and such that $B^\xi(x_k, x_{k + 1}) = W(x_k, x_{k + 1})$. For our purposes, we may assume that the limit shape $\ell$ is differentiable and strictly convex at $\xi$.

\begin{lemma}
	\label{lem:GeodesicMachine}
	The path $\pi$ is a semi-infinite geodesic.
	\begin{proof}
		Consider a truncated path $\pi^{(n)} = (x_0, \dots, x_n)$. The cocycle property of the Busemann functions and our construction mean that
		\begin{equation}
			B^\xi(x_0, x_n) = \sum_{k = 0}^{n - 1}B^\xi(x_{k}, x_{k + 1}) = \sum_{k = 0}^{n - 1}W(x_k, x_{k + 1}) = L(\pi^{(n)}).
		\end{equation}
		On the other hand, if $\tilde{\pi}^{(n)} = (\tilde{x}_0, \dots, \tilde{x}_n)$ is another path with the same endpoints, $\tilde{x}_0 = x_0$ and $\tilde{x}_n = x_n$, then we may also write
		\begin{equation}
			B^\xi(x_0, x_n) = \sum_{k = 0}^{n - 1}B^\xi(\tilde{x}_{k}, \tilde{x}_{k + 1}).
		\end{equation}
		However now, the adaptedness property means that $B^\xi(\tilde{x}_{k}, \tilde{x}_{k + 1}) \le W(\tilde{x}_{k}, \tilde{x}_{k + 1})$. Hence
		\begin{equation}
			L(\pi^{(n)}) = B^\xi(x_0, x_n) = \sum_{k = 0}^{n - 1}B^\xi(\tilde{x}_{k}, \tilde{x}_{k + 1}) \le \sum_{k = 0}^{n - 1}W(\tilde{x}_{k}, \tilde{x}_{k + 1}) = L(\tilde{\pi}^{(n)}).
		\end{equation}
		Taking a minimum over the possible $\tilde{\pi}^{(n)}$ shows that $L(\pi^{(n)}) = L(x_0, x_n)$, and this segment is a geodesic. We may do this for any $n$.
	\end{proof}
\end{lemma}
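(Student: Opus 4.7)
The plan is to verify the geodesic property on every finite truncation, since a semi-infinite path is a geodesic precisely when each of its finite initial segments $\pi^{(n)} = (x_0, x_1, \ldots, x_n)$ minimises the passage time between its endpoints. The only tools required are the two structural properties of $B^\xi$ from \cref{thm:BusemannExistence}: the cocycle identity and adaptedness. Specifically, adaptedness says that $B^\xi(x, x+e_i) \le W(x, x+e_i)$ for both $i$, with equality for at least one $i$, and our construction of $\pi$ was precisely to choose, at each vertex $x_k$, an index $i_{k+1} \in \{1, 2\}$ achieving that equality.

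First I would rewrite the total weight of $\pi^{(n)}$ telescopically. By the cocycle property,
\begin{equation*}
B^\xi(x_0, x_n) = \sum_{k=0}^{n-1} B^\xi(x_k, x_{k+1}),
\end{equation*}
and by the construction of $\pi$, each summand equals $W(x_k, x_{k+1})$. Hence the right-hand side is exactly the passage time $L(\pi^{(n)})$ of the truncated path. Second, I would compare to an arbitrary competitor $\tilde{\pi}^{(n)} = (\tilde{x}_0, \ldots, \tilde{x}_n)$ with the same endpoints $\tilde{x}_0 = x_0$, $\tilde{x}_n = x_n$. The cocycle identity applied along $\tilde{\pi}^{(n)}$ gives the same total $B^\xi(x_0, x_n)$, but now adaptedness only yields the inequality $B^\xi(\tilde{x}_k, \tilde{x}_{k+1}) \le W(\tilde{x}_k, \tilde{x}_{k+1})$, so
\begin{equation*}
L(\pi^{(n)}) = B^\xi(x_0, x_n) = \sum_{k=0}^{n-1} B^\xi(\tilde{x}_k, \tilde{x}_{k+1}) \le \sum_{k=0}^{n-1} W(\tilde{x}_k, \tilde{x}_{k+1}) = L(\tilde{\pi}^{(n)}).
\end{equation*}
Minimising over $\tilde{\pi}^{(n)}$ shows $L(\pi^{(n)}) = L(x_0, x_n)$, so $\pi^{(n)}$ is a geodesic. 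Since $n$ was arbitrary, every initial segment of $\pi$ is a geodesic, which is the definition of $\pi$ being a semi-infinite geodesic.

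There is not really a hard step in this argument; the statement is essentially the standard recipe for turning an additive, weight-dominated cocycle into geodesics. The only subtlety worth flagging is that the argument nowhere uses the fact that $\pi$ actually goes to infinity in direction $\xi$ --- it uses only the two algebraic properties of $B^\xi$ at nearest-neighbour edges. The direction of the path is a separate matter addressed by the mean-vector identification in \cref{thm:BusemannExistenceDistinct}, and is not needed for the geodesic conclusion itself. Thus the entire proof reduces to the two-line telescoping calculation above.
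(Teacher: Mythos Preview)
Your proof is correct and is essentially the same argument as the paper's: telescope $B^\xi$ along $\pi^{(n)}$ using the cocycle property and the defining equality of the construction, then telescope along any competitor $\tilde\pi^{(n)}$ and invoke adaptedness for the inequality. The paper's version is line-for-line the same, so there is nothing to add.
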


\begin{lemma}
	\label{lem:BusemannErgodic}
	Suppose the limit shape $\ell$ is differentiable and strictly convex at $\xi$, and take another direction $\zeta \in \scrU$. Almost surely, we have
	\begin{equation}
		\lim_{n \to \infty}\frac{B^\xi(0, \floor{n \zeta})}{n} = \nabla \ell (\xi) \cdot \zeta.
	\end{equation}
	\begin{proof}
		The assumption on the limit shape means that the Busemann increments $B^\xi_i(x)$ are ergodic with respect to shifts. Let $\floor{n \zeta} = (p_n, q_n)$ and use the cocycle property to write
		\begin{equation}
			B^\xi(0, \floor{n \zeta}) = \sum_{k = 0}^{p_n - 1}B^\xi((k, 0), (k + 1, 0)) + \sum_{l = 0}^{q_n - 1}B^\xi((p_n, l), (p_n, l + 1)).
		\end{equation}
		By the ergodic theorem, these sums have
		\begin{equation}
			\frac{1}{p_n}\sum_{k = 0}^{p_n - 1}B^\xi((k, 0), (k + 1, 0)) \to \Ex{B^\xi(0, e_1)}, \text{ and } \frac{1}{q_n}\sum_{l = 0}^{q_n - 1}B^\xi((p_n, l), (p_n, l + 1)) \to \Ex{B^\xi(0, e_2)},
		\end{equation}
		almost surely. Now note that $p_n / n \to \zeta \cdot e_1$ and $q_n / n \to \zeta \cdot e_2$, and recall from \cref{thm:GenBusemannExistence3} that 
		\begin{equation}
			(\Ex{B^\xi (0, e_1)}, \Ex{B^\xi (0, e_2)}) = \nabla \ell (\xi).
		\end{equation}
		We put everything together to conclude that
		\begin{equation}
			\lim_{n \to \infty}\frac{B^\xi(0, \floor{n \zeta})}{n} = \Ex{B^\xi(0, e_1)}(\zeta \cdot e_1) + \Ex{B^\xi(0, e_2)}(\zeta \cdot e_2) = \nabla \ell (\xi) \cdot \zeta.
		\end{equation}
	\end{proof}
\end{lemma}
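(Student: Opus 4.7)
The plan is to apply the cocycle property (\cref{thm:BusemannExistenceCocycle}) to write $B^\xi(0,\floor{n\zeta})$ as a telescoping sum of nearest-neighbour Busemann increments along an axis-parallel lattice path, apply Birkhoff's ergodic theorem on each segment using the stationarity and ergodicity from \cref{thm:BusemannExistenceErgodicity}, and identify the limit through the mean formula $\Ex{B^\xi(0,e_i)} = \nabla\ell(\xi)\cdot e_i$ from \cref{thm:BusemannExistenceDistinct}.

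Writing $\floor{n\zeta} = (p_n,q_n)$, so that $p_n/n\to\zeta\cdot e_1$ and $q_n/n\to\zeta\cdot e_2$, the cocycle property along the path that runs first horizontally and then vertically gives
\begin{equation*}
B^\xi(0,(p_n,q_n)) = \sum_{k=0}^{p_n-1}B^\xi\bigl((k,0),(k+1,0)\bigr) + \sum_{l=0}^{q_n-1}B^\xi\bigl((p_n,l),(p_n,l+1)\bigr).
\end{equation*}
Dividing by $n$ and writing the first term as $(p_n/n)\cdot p_n^{-1}\sum_k$, and analogously for the second, Birkhoff's ergodic theorem applied to each stationary sum (with ergodicity of the relevant one-dimensional subaction inherited, under our differentiability assumption on $\ell$, from the i.i.d.\ ergodicity of the underlying weights via the fact that the Busemann process is then a measurable factor of $\omega$) gives a.s.\ convergence of the averages to $\Ex{B^\xi(0,e_1)}$ and $\Ex{B^\xi(0,e_2)}$, respectively. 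Combining via the scalings $p_n/n\to\zeta\cdot e_1$ and $q_n/n\to\zeta\cdot e_2$, and the mean identification from \cref{thm:BusemannExistenceDistinct}, yields $\nabla\ell(\xi)\cdot\zeta$.

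The hard part will be the second sum, whose base point $(p_n,l)$ shifts with $n$: Birkhoff applied to a single fixed column does not directly cover this case, since the $\bZ^2$-joint ergodicity of \cref{thm:BusemannExistenceErgodicity} delivers convergence of rectangular averages rather than of averages along a moving column. Even obtaining a.s.\ convergence of $q_n^{-1}\sum_l B^\xi((j,l),(j,l+1))\to\Ex{B^\xi(0,e_2)}$ simultaneously for every fixed column $j$ (a countable intersection of a.s.\ events) does not, by itself, yield convergence along the diagonal $j=p_n$. I would address this either by rerouting the telescoping path through a fixed column so that both sums have stationary base points (with the compensating cross-term controlled by the $L^1$ bound from \cref{thm:BusemannExistenceDistinct}), or by sandwiching the single-column sum between thin-strip rectangular averages of fixed horizontal width $M$ around column $p_n$ and sending $M\to\infty$ after $n\to\infty$. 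Modulo this subtlety, the remainder of the argument is routine bookkeeping.
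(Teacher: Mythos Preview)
Your approach is exactly the paper's: decompose $B^\xi(0,\floor{n\zeta})$ along an L-shaped path via the cocycle property, apply the ergodic theorem on each leg, and identify the means through $h(\xi)=\nabla\ell(\xi)$.

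The subtlety you flag concerning the second sum $\tfrac{1}{q_n}\sum_{l=0}^{q_n-1}B^\xi((p_n,l),(p_n,l+1))$ with its moving base column $p_n$ is genuine, and you are being more careful than the paper, which simply asserts convergence ``by the ergodic theorem'' without addressing the diagonal issue. Your proposed remedies (rerouting through a fixed column, or sandwiching by thin strips) would work. A cleaner route, already used elsewhere in the paper, is to pass to the centred cocycle $F(x,y)=\nabla\ell(\xi)\cdot(y-x)-B^\xi(x,y)$ and invoke the uniform cocycle ergodic theorem (Theorem~A.1 of \cite{geor-rass-sepp-17-buse}), which gives $|x|^{-1}F(0,x)\to 0$ uniformly in direction and hence settles both legs at once.
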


\begin{proposition}
	Suppose the limit shape $\ell$ is differentiable and strictly convex at $\xi$. Then almost surely $\pi$ is a semi-infinite geodesic with direction $\xi$ (that is, $x_n / n \to \xi$).
	\begin{proof}
		We have already shown that it is a geodesic in \cref{lem:GeodesicMachine}. For the directedness, note that by compactness the sequence $\set{x_n / n}$ does have limit points. Suppose that we have a subsequence $x_{n_i}$ such that $x_{n_i} / n_i \to \zeta$. We would like to show $\zeta = \xi$. From \cref{lem:BusemannErgodic} we see that
		\begin{equation}
			\frac{B(x_0, x_{n_i})}{n_i} \to \nabla \ell(\xi) \cdot \zeta.
		\end{equation}
		But our argument in \cref{lem:GeodesicMachine} shows that $B(x_0, x_{n_i}) = L(x_0, x_{n_i})$. Thus also
		\begin{equation}
			\frac{B(x_0, x_{n_i})}{n_i} \to \ell(\zeta),
		\end{equation}
		thus showing that $\ell(\zeta) = \zeta \cdot \nabla \ell(\xi)$.
		
		At this point we observe that differentiating the scaling relation $\ell(t \xi) = t \ell(\xi)$ shows that $\ell(\xi) = \xi \cdot \nabla \ell(\xi)$. Subtracting this from the equation derived in the previous paragraph,
		\begin{equation}
			\ell(\zeta) - \ell(\xi) = (\zeta - \xi) \cdot \nabla \ell (\xi).
		\end{equation}
		Consider the convex function 
		\begin{equation}
			f(t) = \ell(t \xi + (1 - t)\zeta) - (t \xi + (1 - t)\zeta)\nabla \ell (\xi) - \ell(\xi) + \xi \cdot \nabla \ell (\xi).
		\end{equation}
		This $f$ satisfies $f(0) = f(1) = 0$ and $f'(1-) = 0$. Hence $f$ is identically zero on $[0, 1]$, and
		\begin{equation}
			\ell(t \xi + (1 - t)\zeta) = \ell(\xi) + (1 - t)(\zeta - \xi) \cdot \nabla \ell (\xi).
		\end{equation}
		Thus $\ell$ is affine along the line connecting $\zeta$ and $\xi$. We have assumed $\ell$ is strictly convex at $\xi$, and so the only possibility is $\zeta = \xi$.
	\end{proof}
\end{proposition}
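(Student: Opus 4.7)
The plan is to split the statement into two parts: (a) $\pi$ is a geodesic segment between any two of its vertices, and (b) the empirical direction of $\pi$ converges to $\xi$.

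For (a), I would use the cocycle and adaptedness properties of $B^\xi$ together with our construction. For any $n\ge 0$, the cocycle property gives $B^\xi(x_0,x_n) = \sum_{k=0}^{n-1} B^\xi(x_k,x_{k+1})$, and by the construction rule each summand equals $W(x_k,x_{k+1})$, so $B^\xi(x_0,x_n) = L(\pi^{(n)})$ where $\pi^{(n)}$ is the truncation. For any competing path $\tilde\pi^{(n)}$ from $x_0$ to $x_n$, the cocycle gives the same telescoping identity but now each term is only bounded by $W(\tilde{x}_k,\tilde{x}_{k+1})$ thanks to the adaptedness in \cref{thm:BusemannExistenceAdaptedness}. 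Hence $L(\pi^{(n)}) \le L(\tilde\pi^{(n)})$, so $\pi^{(n)}$ is a point-to-point geodesic and in particular $L(x_0,x_n)=B^\xi(x_0,x_n)$.

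For (b), the step $x_{k+1}-x_k\in\{e_1,e_2\}$ forces $x_n/n$ to lie in the compact set $\scrU\cup\{e_1,e_2\}$, so the sequence always has subsequential limits. It is enough to show any limit point $\zeta$ is exactly $\xi$. Fix a subsequence $x_{n_i}/n_i\to \zeta$. On one hand, \cref{lem:BusemannErgodic} (applied along this subsequence, which we must argue works deterministically by approximating $\zeta$ by rationals and using continuity of the translation action, or simply by invoking the $L^1$ ergodic theorem along rays to get convergence on the whole simplex) gives $n_i^{-1}B^\xi(x_0,x_{n_i}) \to \nabla\ell(\xi)\cdot\zeta$. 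On the other hand, (a) tells us $B^\xi(x_0,x_{n_i})=L(x_0,x_{n_i})$, and the uniform shape theorem \cref{thm:LimitShapeUniform} forces $n_i^{-1}L(x_0,x_{n_i})\to \ell(\zeta)$. Hence
\begin{equation}
    \ell(\zeta) = \nabla\ell(\xi)\cdot\zeta.
\end{equation}

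The last step is to conclude $\zeta=\xi$ from this equality. Using homogeneity $\ell(t\xi)=t\ell(\xi)$ and differentiability at $\xi$, Euler's identity yields $\ell(\xi)=\nabla\ell(\xi)\cdot\xi$, so subtracting gives $\ell(\zeta)-\ell(\xi)=\nabla\ell(\xi)\cdot(\zeta-\xi)$. Combined with convexity of $\ell$, this says the supporting hyperplane to $\ell$ at $\xi$ coincides with $\ell$ at the point $\zeta$ as well, and hence on the entire segment $[\xi,\zeta]$ by convexity. Strict convexity of $\ell$ at $\xi$ rules this out unless $\zeta=\xi$.

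The main obstacle I anticipate is the ergodic-theorem step: to apply \cref{lem:BusemannErgodic} along the random sequence $\{x_{n_i}\}$ rather than along deterministic rays, one needs the convergence of $n^{-1}B^\xi(0,\lfloor n\eta\rfloor)$ to $\nabla\ell(\xi)\cdot\eta$ to hold simultaneously for all $\eta\in\scrU$ (or at least to be uniform on compact subsets). This follows by the same route as \cref{thm:LimitShapeUniform}: monotonicity in coordinate directions of the nearest-neighbour Busemann increments, together with ergodicity in each direction along a countable dense set of $\eta$, upgrades the pointwise ergodic convergence to uniform convergence over the simplex. Once that upgrade is in hand, the argument above runs cleanly.
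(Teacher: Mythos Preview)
Your proposal is correct and follows essentially the same route as the paper: part (a) reproduces \cref{lem:GeodesicMachine} verbatim, and part (b) matches the paper's argument of equating the Busemann ergodic limit $\nabla\ell(\xi)\cdot\zeta$ with the shape limit $\ell(\zeta)$, then using Euler's relation and convexity to force $\zeta=\xi$. Your explicit flagging of the uniformity issue in applying \cref{lem:BusemannErgodic} along a random subsequence is in fact more careful than the paper, which simply invokes the lemma without comment; your proposed fix via monotonicity and a dense set of rays is the standard remedy.
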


\printbibliography[ heading=bibintoc, title={References}]
\end{document}